\title[VMHS and semipositivity]{Variations of mixed Hodge structure and semipositivity theorems}
\author{Osamu Fujino}
\author{Taro Fujisawa} 
\date{2014/3/17, version 5.04}
\subjclass[2010]{Primary 14D07; Secondary 14C30, 14E30, 32G20.}
\keywords{variations of mixed Hodge structure, cohomology with compact support, 
canonical extensions of Hodge bundles, semipositivity theorems} 
\address{Department of Mathematics, Faculty of Science, 
Kyoto University, Kyoto 606-8502, Japan}
\email{fujino@math.kyoto-u.ac.jp}
\address{Tokyo Denki University, School of Engineering, 
Department of Mathematics, Tokyo, Japan}
\email{fujisawa@mail.dendai.ac.jp}
\newcommand{\red}[0]{{\operatorname{red}}}
\newcommand{\codim}[0]{{\operatorname{codim}}}
\newcommand{\Gr}[0]{{\operatorname{Gr}}}
\newcommand{\Coker}[0]{{\operatorname{Coker}}}
\newcommand{\xO}[0]{{\operatorname{\mathcal O}}}
\newcommand{\Ker}[0]{{\operatorname{Ker}}}
\newcommand{\Exc}[0]{{\operatorname{Exc}}}
\newcommand{\Supp}[0]{{\operatorname{Supp}}}
\newcommand{\im}[0]{{\operatorname{Im}}}
\newcommand{\RomI}{\uppercase\expandafter{\romannumeral 1}\ }
\newcommand{\RomII}{\uppercase\expandafter{\romannumeral 2}\ }
\newcommand{\RomIV}{\uppercase\expandafter{\romannumeral 4}}
\newcommand{\pv}{polarizable variation }
\newcommand{\gp}{graded polarizable }
\newcommand{\gpv}{graded polarizable variation }
\newcommand{\snc}{simple normal crossing }
\newcommand{\xC}{\operatorname{\mathcal{C}}}
\newcommand{\xF}{\operatorname{\mathcal{F}}}
\newcommand{\xG}{\operatorname{\mathcal{G}}}
\newcommand{\xP}{\operatorname{\mathcal{P}}}
\newcommand{\xV}{\operatorname{\mathcal{V}}}
\newcommand{\bC}{\mathbb C}
\newcommand{\bP}{\mathbb P}
\newcommand{\bQ}{\mathbb Q}
\newcommand{\bR}{\mathbb R}
\newcommand{\bV}{\mathbb V}
\newcommand{\css}{^{\bullet}}
\newcommand{\sso}{_{\bullet}}
\newcommand{\zo}[1]{{#1}^{\ast}}
\newcommand{\pd}{\Delta\!^{\ast}}
\newcommand{\gdm}{_{\rm Gdm}}
\DeclareMathOperator{\id}{id}
\DeclareMathOperator{\rec}{rec}
\DeclareMathOperator{\res}{Res}
\DeclareMathOperator{\shom}{{\mathcal H} {\it om}}
\newtheorem{thm}{Theorem}[section]
\newtheorem{lem}[thm]{Lemma}
\newtheorem{cor}[thm]{Corollary}
\newtheorem{prop}[thm]{Proposition}
\theoremstyle{definition}
\newtheorem{ex}[thm]{Example}
\newtheorem{defn}[thm]{Definition}
\newtheorem{rem}[thm]{Remark}
\newtheorem{step}{Step}
\newtheorem*{ack}{Acknowledgments}       
\newtheorem*{notation}{Notation}         
\newtheorem{notationnum}[thm]{Notation}       
\newtheorem{say}[thm]{}
\newcounter{newitem}[thm]
\renewcommand{\thenewitem}{\thethm.\arabic{newitem}}
\newenvironment{newitemize}{
\setcounter{newitem}{\value{equation}}
\begin{list}{}{%
\setlength{\topsep}{5pt}%
\setlength{\parsep}{5pt}%
\setlength{\itemsep}{0pt}%
\setlength{\leftmargin}{50pt}%
\setlength{\labelwidth}{50pt}%
}}
{\end{list}}
\newcommand{\itemno}{
\refstepcounter{newitem}
\item[{\rm (\thenewitem)}]
\setcounter{equation}{\value{newitem}}}
\newcounter{casesinthm}[thm]
\renewcommand{\thecasesinthm}{\rm (\roman{casesinthm})}
\newenvironment{subthm}{
\refstepcounter{casesinthm}
\thecasesinthm
\it}
\newenvironment{subdefn}{
\refstepcounter{casesinthm}
\thecasesinthm}
\numberwithin{equation}{thm}
\begin{document}
\bibliographystyle{amsalpha+}

\maketitle 

\begin{abstract}
We discuss the variations of mixed Hodge structure 
for cohomology with compact support of quasi-projective simple normal 
crossing pairs. We show that they are graded polarizable admissible variations of 
mixed Hodge structure. 
Then we prove a generalization of the Fujita--Kawamata 
semipositivity theorem. 
\end{abstract}

\tableofcontents
\section{Introduction}
Let $X$ be a simple normal crossing 
divisor on a smooth projective variety $M$ and 
let $B$ be a simple normal crossing 
divisor on $M$ such that $X+B$ is 
simple normal crossing on $M$ and that $X$ and $B$ have no common irreducible 
components. 
Then the pair $(X, D)$, 
where $D=B|_X$, 
is a typical example of simple normal crossing 
pairs. In this situation, a stratum of $(X, D)$ is an irreducible component 
of $T_{i_1}\cap \cdots \cap T_{i_k}\subset X$ for some 
$\{i_1, \cdots, i_k\}\subset I$, where $X+B=\sum _{i\in I}T_i$ is the irreducible decomposition of $X+B$. 
For the precise definition of simple normal crossing pairs, 
see Definition \ref{03} below.  
We note that 
simple normal crossing pairs frequently appear in the 
study of the log minimal model program for higher dimensional 
algebraic varieties with bad singularities. 
The first author has already investigated 
the mixed Hodge structures 
for $H^{\bullet}_c(X\setminus D, \mathbb Q)$ 
in \cite[Chapter 2]{book} to obtain various vanishing theorems (see also \cite{fuj-inj}). 
In this paper, we show that their variations are graded polarizable admissible 
variations of mixed Hodge structure.  
Then we prove a generalization of the Fujita--Kawamata 
semipositivity theorem. Our formulation of 
the Fujita--Kawamata semipositivity theorem is 
different from Kawamata's original one. 
However, it is more suited for our studies of 
simple normal crossing pairs. 

The following theorem is a corollary of 
Theorem \ref{main} and Theorem \ref{main2}, which are our main results of this paper 
(cf.~\cite[Theorem 5]{kawamata1}, \cite[Theorem 2.6]{ko2}, \cite[Theorem 1]{n}, 
\cite[Theorems 3.4 and 
3.9]{high}, \cite[Theorem 1.1]{kawamata}, 
and so on). 

\begin{thm}[Semipositivity theorem
(cf.~Theorem \ref{main} and Theorem \ref{main2})]\label{11}
Let $(X, D)$ be a simple normal crossing pair such that 
$D$ is reduced and let $f:X\longrightarrow Y$ be 
a projective surjective morphism onto a smooth 
complete algebraic variety $Y$.
Assume that every stratum of $(X,D)$ is dominant onto $Y$. 
Let $\Sigma$ be a simple normal crossing divisor on $Y$ such that 
every stratum of $(X, D)$ is smooth over $\zo{Y}=Y\setminus \Sigma$. 
Then $R^pf_*\omega_{X/Y}(D)$ is locally free for every $p$. 
We put $\zo{X}=f^{-1}(\zo{Y})$,
$\zo{D}=D|_{\zo{X}}$, and $d=\dim X-\dim Y$. 
We further assume that 
all the local monodromies on
$R^{d-i}(f|_{X^*\setminus D^*})_!\mathbb Q_{\zo{X} \setminus \zo{D}}$ 
around $\Sigma$ 
are unipotent. 
Then we obtain that
$R^if_*\omega_{X/Y}(D)$ is 
a semipositive locally free sheaf on $Y$. 
\end{thm}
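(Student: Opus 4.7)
My plan is to identify $R^if_*\omega_{X/Y}(D)$ with the canonical extension of the top Hodge subbundle of the variation of mixed Hodge structure (VMHS) attached to $R^{d-i}(f|_{\zo{X}\setminus \zo{D}})_!\bQ$ on $\zo{Y}$, and then to invoke the semipositivity statement for such canonical extensions, which I take to be the content of Theorem~\ref{main2}. First, Theorem~\ref{main} --- the first main result of the paper, whose conclusion I read as producing a graded polarizable admissible VMHS on $\zo{Y}$ under the stratum-smoothness hypothesis --- supplies such a VMHS $\bV$. The explicit description of the Hodge filtration on the cohomology with compact support of a simple normal crossing pair given in \cite[Chapter~2]{book}, combined with relative duality, identifies the top piece $F^d\bV$ with $R^if_*\omega_{\zo{X}/\zo{Y}}(\zo{D})$, so that the restriction of $R^if_*\omega_{X/Y}(D)$ to $\zo{Y}$ is already locally free.

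To extend this identification across the boundary $\Sigma$, I would model the argument on the Koll\'ar--Nakayama treatment of Hodge-theoretic direct images, adapted to the SNC pair setting. Working with the filtered logarithmic de Rham complex $\Omega\css_{X/Y}(\log(D+f^{-1}\Sigma))$, the sheaf $\omega_{X/Y}(D)$ sits as the top graded piece of the Hodge filtration on the relative de Rham pushforward; admissibility supplied by Theorem~\ref{main} forces the associated Hodge--de~Rham spectral sequence to degenerate at $E_1$, which yields local freeness of $R^if_*\omega_{X/Y}(D)$ on all of $Y$. The first assertion --- local freeness without any monodromy hypothesis --- may be derived directly from this, or reduced to the unipotent case by Kawamata's covering trick: pull back along a finite flat cover $Y'\to Y$ ramified along $\Sigma$ on which the pulled-back monodromy becomes unipotent, invoke the unipotent case, and descend local freeness faithfully flatly.

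Under the unipotent monodromy hypothesis, Deligne's canonical extension of $F^d\bV$ agrees globally on $Y$ with $R^if_*\omega_{X/Y}(D)$. Theorem~\ref{main2}, which I read as the Fujita--Kawamata--Zucker style assertion that the canonical extension of the top Hodge piece of a graded polarizable admissible VMHS with unipotent local monodromies is a semipositive locally free sheaf, then yields the semipositivity of $R^if_*\omega_{X/Y}(D)$ directly.

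The main obstacle is the identification, near $\Sigma$, of the globally defined coherent sheaf $R^if_*\omega_{X/Y}(D)$ with Deligne's canonical extension of the Hodge bundle $F^d\bV$. This requires tight control of the weight filtration on the limiting mixed Hodge structures in the genuinely mixed SNC pair setting, and it is precisely here that the graded polarizable \emph{admissibility} provided by Theorem~\ref{main} --- rather than a merely pointwise construction of mixed Hodge structures --- is essential. Once this identification is secured, the semipositivity follows formally from the classical Fujita--Kawamata circle of ideas, suitably extended to admissible VMHS.
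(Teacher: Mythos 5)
Your overall architecture---attach an admissible graded polarizable VMHS to $R^{d-i}(f|_{\zo{X}\setminus\zo{D}})_!\bQ_{\zo{X}\setminus\zo{D}}$, identify $R^if_*\omega_{X/Y}(D)$ with a canonical extension of a Hodge bundle, and conclude by a Hodge-theoretic semipositivity theorem for admissible VMHS---is the paper's. But there is a genuine error in the central identification. The top Hodge subbundle $F^d\bV$ of the VMHS on the compactly supported cohomology $H^{d-i}_c$ of the fibres is \emph{zero} for $i>0$, since the Hodge filtration on $H^n_c$ vanishes above level $n$. The correct statement, which is the content of Theorems \ref{main} and \ref{main2}, is $R^{d-i}f_*\mathcal O_X(-D)\simeq \Gr_F^0\mathcal V$ (the \emph{lowest} graded piece, taken in its lower canonical extension), and then Grothendieck duality gives $R^if_*\omega_{X/Y}(D)\simeq(\Gr_F^0\mathcal V)^*$, i.e.\ the top Hodge piece of the \emph{dual} VMHS (cf.\ Remark \ref{upper-lower}). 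You cannot repair this by passing instead to $R^{d+i}(f|_{\zo{X}\setminus\zo{D}})_*\bQ$ and taking its $F^d$ in the style of Koll\'ar--Nakayama, because for a reducible $X$ Poincar\'e--Verdier duality between $R^{d-i}(f|)_!\bQ$ and $R^{d+i}(f|)_*\bQ$ fails; this is exactly where the SNC-pair case departs from the classical one. Accordingly, the semipositivity input needed from Theorem \ref{semi-po} is that of $(\Gr_F^a\xV)^*$, not that of $F^b\xV$.

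The second gap is that the step you correctly flag as ``the main obstacle''---identifying the coherent sheaf $R^{d-i}f_*\mathcal O_X(-D)$ with the canonical extension across $\Sigma$, whence its local freeness---is asserted rather than proved. $E_1$-degeneration of a Hodge--de Rham spectral sequence does not by itself yield this: the paper establishes it by an explicit Steenbrink-type computation of the logarithmic relative de Rham pushforward over a disc (the case $\dim Y=1$), followed by induction on $\dim Y$ combining torsion-freeness (Corollary \ref{cor-new-torsion}) with a blow-up and a Nakayama's-lemma argument killing the cokernel of the comparison map. Your covering-trick reduction of the non-unipotent case also needs more care: after the Kummer cover the total space is no longer simple normal crossing (one lands in the semi-divisorial log terminal setting and must resolve again), and the descent is achieved by exhibiting $R^if_*\omega_{X/Y}(D)$ as a direct summand of the pushforward from the cover, not by faithfully flat descent, since higher direct images do not commute with the ramified base change.
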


We note the following definition. 

\begin{defn}[Semipositivity in the sense of Fujita--Kawamata]
A locally free sheaf $\mathcal E$ of finite rank on a complete algebraic variety 
$X$ is said to be semipositive (in the sense of Fujita--Kawamata) if and only if 
$\mathcal O_{\mathbb P_X(\mathcal E)}(1)$ is nef on 
$\mathbb P_{X}(\mathcal E)$. 
\end{defn}

In \cite{kawamata}, Kawamata obtained a weaker result 
similar to Theorem \ref{11} (see \cite[Theorem 1.1]{kawamata}). 
It is not surprising 
because both \cite{kawamata} and this paper grew from the same question 
raised by Valery Alexeev and Christopher Hacon. 
For details on Kawamata's approach, we recommend the reader 
to see \cite{ffs}, where we use the theory of mixed Hodge modules 
to give an alternative proof of Theorem \ref{11}. 
Note that \cite{ffs} was written after this paper was circulated. 

The semipositivity of $R^if_*\omega_{X/Y}(D)$ in Theorem \ref{11} 
follows from a purely Hodge theoretic semipositivity theorem:~Theorem \ref{semi-po}. 
In the proof of Theorem \ref{11}, we use the semipositivity 
of $(\Gr_F^a\mathcal V)^*$ 
in Theorem \ref{12}, where $(\Gr_F^a\mathcal V)^*=\mathcal Hom_{\mathcal O_Y}(\Gr_F^a\mathcal 
V, \mathcal O_Y)$. We do not need 
the semipositivity of $F^b\mathcal V$ in Theorem \ref{12} for Theorem \ref{11}. 
For more details, see the discussion in \ref{16ske} below.

\begin{thm}[Hodge theoretic semipositivity theorem (cf.~Theorem \ref{semi-po})]\label{12}
Let $X$ be a smooth complete complex algebraic variety,
$D$ a \snc divisor on $X$,
$\xV$ a locally free $\xO_X$-module of finite rank
equipped with a finite increasing filtration $W$
and a finite decreasing filtration $F$.
We assume the following$:$
\begin{enumerate}
\item
$F^a\xV=\xV$ and $F^{b+1}\xV=0$ for some $a < b$.
\item
$\Gr_F^p\Gr_m^W\xV$ is a locally free $\xO_X$-module of finite rank
for all $m,p$.
\item
For all $m$,
$\Gr_m^W\xV$ admits an integrable logarithmic connection
$\nabla_m$ with the nilpotent residue morphisms 
which satisfies the conditions
$$\nabla_m(F^p\Gr_m^W\xV) \subset \Omega_X^1(\log D) \otimes F^{p-1}\Gr_m^W\xV$$
for all $p$.
\item
The triple 
$(\Gr_m^W\xV, F\Gr_m^W\mathcal V, \nabla_m)|_{X \setminus D}$
underlies a polarizable variation of $\bR$-Hodge structure
of weight $m$ for every integer $m$.
\end{enumerate}
Then $(\Gr_F^a\xV)^*$ and $F^b\xV$ are semipositive.
\end{thm}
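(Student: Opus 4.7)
The plan is to reduce the theorem to the pure case via the weight filtration $W$, and then to invoke the classical Fujita--Kawamata semipositivity theorem for pure polarizable variations of Hodge structure with unipotent local monodromies along a \snc divisor. Set
\[
W_m F^b\xV := F^b\xV \cap W_m\xV,
\qquad
W_m\Gr_F^a\xV := \mathrm{Im}(W_m\xV \to \Gr_F^a\xV).
\]
The first step is a strictness statement: the natural maps
\[
W_m F^b\xV / W_{m-1} F^b\xV \longrightarrow F^b\Gr_m^W\xV
\]
and the analogous one for $\Gr_F^a\xV$ are isomorphisms of locally free sheaves. Injectivity is formal; surjectivity amounts to strictness of $F$ with respect to $W$. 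I would deduce this from hypothesis (2) together with the fibrewise mixed Hodge structure induced by $(W, F)$ at every point of $X$, which follows from hypothesis (4) applied to each pure graded piece together with the standard nilpotent orbit / limit mixed Hodge structure picture at points of $D$. This produces filtrations of $F^b\xV$ and of $\Gr_F^a\xV$ by locally free subbundles whose successive quotients are $F^b\Gr_m^W\xV$ and $\Gr_F^a\Gr_m^W\xV$ respectively.

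The second step invokes the pure case. By hypotheses (3) and (4), each triple $(\Gr_m^W\xV, F, \nabla_m)$ is the Deligne canonical extension, equipped with its Hodge filtration, of a pure polarizable $\bR$-variation of Hodge structure of weight $m$ on $X\setminus D$ with unipotent local monodromies along $D$. The classical Fujita--Kawamata semipositivity theorem, in the form developed in \cite{kawamata1, ko2, n, high, kawamata}, then yields the semipositivity of $F^b\Gr_m^W\xV$ and of $(\Gr_F^a\Gr_m^W\xV)^*$ for every $m$. To finish, I use that semipositivity is preserved under extensions: if $0\to\mathcal{A}\to\mathcal{B}\to\mathcal{C}\to 0$ is a short exact sequence of locally free sheaves with $\mathcal{A}$ and $\mathcal{C}$ semipositive, then $\mathcal{B}$ is semipositive, as one sees by testing $\xO_{\bP(\mathcal{B})}(1)$ on curves. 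Applying this inductively to the filtration of $F^b\xV$ from the first step yields semipositivity of $F^b\xV$. Dualizing the exact sequences for $\Gr_F^a\xV$ produces
\[
0 \to (\Gr_F^a\Gr_m^W\xV)^* \to (W_m\Gr_F^a\xV)^* \to (W_{m-1}\Gr_F^a\xV)^* \to 0,
\]
and the same extension principle then delivers semipositivity of $(\Gr_F^a\xV)^*$.

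The main technical obstacle is the strictness statement in the first step. If $(\xV, W, F)$ were already known to come from an admissible variation of mixed Hodge structure, strictness would be essentially built into the definition; in our setting only the algebraic data and the pure VHS condition on each $W$-graded piece are given, so strictness has to be deduced from hypotheses (2) and (4), and in particular one has to argue uniformly across points of $D$ where the full variation is controlled only through the nilpotent residue condition in (3). Once the strictness is in hand, the remainder of the argument is a formal combination of the classical pure-case semipositivity theorem with the extension property of semipositive locally free sheaves.
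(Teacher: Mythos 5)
Your first step is not where the difficulty lies, and your second step quietly assumes the difficulty away. On the first step: with the usual convention that $F^p\Gr_m^W\xV$ is the image of $F^p\xV\cap W_m\xV$ in $\Gr_m^W\xV$, the sequences
$0 \to F^b\xV\cap W_{m-1}\xV \to F^b\xV\cap W_m\xV \to F^b\Gr_m^W\xV \to 0$
and $\Gr_m^W\Gr_F^a\xV\simeq\Gr_F^a\Gr_m^W\xV$ are tautological, and hypothesis (2) makes all the terms locally free; this is why the paper disposes of the reduction to the pure case in a single sentence at the start of the proof of Theorem \ref{semi-po}. No fibrewise limit mixed Hodge structure argument is needed here. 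So identifying this as ``the main technical obstacle'' misplaces the weight of the proof.

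The genuine gap is in your second step. After the reduction you must prove: for a polarizable $\bR$-variation of Hodge structure of weight $m$ on $X\setminus D$ with unipotent local monodromies, the canonical extensions $F^b\xV$ and $(\Gr_F^a\xV)^{\ast}$ are semipositive on the \emph{higher-dimensional} base $X$. Semipositivity is tested on morphisms $\varphi:C\to X$ from curves, and the case $\varphi(C)\not\subset D$ does reduce to Zucker's curve theorem \cite{zucker1}. But when $\varphi(C)\subset D$ there is no variation of Hodge structure on any dense open subset of $C$ to which the curve case could be applied, and ``the classical Fujita--Kawamata theorem'' cannot simply be cited to cover this: making that case work is precisely the content of the paper's Section \ref{sub42}. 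The paper restricts the data to the smallest stratum $D(J)$ containing $\varphi(C)$, equips $\xO_{D(J)}\otimes\xV$ with the monodromy weight filtration $W(J)$ of $N_J=\sum_{i\in J}\res_{D_i}(\nabla)$, and proves (Proposition \ref{key proposition}, using Cattani--Kaplan \cite{ck} and Schmid \cite{schmid}, together with a nontrivial construction of polarizations $S_k(J)$ on the primitive parts) that each $\Gr_k^{W(J)}(\xO_{D(J)}\otimes\xV)$ again underlies a polarized variation of Hodge structure of weight $m+k$ satisfying all four hypotheses; the pullback to $C$ is then a \emph{mixed} object on a curve, handled by the extension principle plus Zucker. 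Your proposal contains no substitute for this descent to boundary strata, so as written it proves the theorem only for curves not contained in $D$, i.e., it does not prove semipositivity at all.
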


In this paper, we concentrate on the Hodge theoretic aspect of the Fujita--Kawamata 
semipositivity theorem (cf.~\cite{zucker1}, \cite{kawamata1}, 
\cite{ko2}, \cite{n}, \cite{high}, and \cite{ffs}). 
On the other hand, there are many results related to 
the Fujita--Kawamata semipositivity theorem from the analytic viewpoint 
(cf.~\cite{fujita}, \cite{bern}, \cite{bern-p}, \cite{mt}, and so on). 
Note that Griffiths's pioneering work on the variation of Hodge structure (cf.~\cite{grif}) 
is a starting point of the Fujita--Kawamata semipositivity theorem. 
For a related topic, see \cite{mochizuki}. 
Mochizuki's approach is completely different from 
ours and has more arithmetic geometrical flavors. 

As a special case of Theorem \ref{11}, we 
obtain the following theorem:~Theorem \ref{thm42}. 

\begin{thm}[{cf.~\cite[Theorem 5]{kawamata1}, 
\cite[Theorem 2.6]{ko2}, 
and \cite[Theorem 1]{n}}]\label{thm12}
Let $f:X\longrightarrow Y$ be a projective morphism 
between smooth complete 
algebraic varieties which satisfies the following conditions{\em{:}}
\begin{itemize}
\item[(i)] There is a Zariski open subset $\zo{Y}$ of $Y$ such that 
$\Sigma =Y \setminus \zo{Y}$ is a simple normal crossing 
divisor on $Y$. 
\item[(ii)] We put $\zo{X}=f^{-1}(\zo{Y})$. Then $f|_{X^*}$ is smooth.  
\item[(iii)] The local monodromies of $R^{d+i}(f|_{X^*})_{*}\mathbb C_{\zo{X}}$
around $\Sigma$ are unipotent,
where $d=\dim X-\dim Y$. 
\end{itemize}
Then $R^if_*\omega_{X/Y}$ is a semipositive locally free sheaf on $Y$. 
\end{thm}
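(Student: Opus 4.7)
The plan is to deduce Theorem~\ref{thm12} by applying Theorem~\ref{11} to the trivial simple normal crossing pair $(X,0)$, reducing the task to checking hypotheses. Since $X$ is smooth, $(X,0)$ is an SNC pair with $D=0$ trivially reduced. The projectivity of $f$ together with the smoothness of $f|_{\zo{X}}$ forces $f$ to be surjective: $f(X)$ is closed by properness, it contains the open set $f(\zo{X})$ (smooth morphisms are open), and a closed subset of the irreducible variety $Y$ containing a nonempty open set is all of $Y$. The only stratum of $(X,0)$ (up to passing to connected components) is $X$ itself, which is therefore dominant over $Y$ and smooth over $\zo{Y}$ by hypothesis~(ii). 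The divisor $\Sigma$ is SNC on $Y$ by (i). Thus the first conclusion of Theorem~\ref{11} immediately yields that $R^pf_*\omega_{X/Y}$ is locally free for every $p$.

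The remaining task is to match the monodromy hypothesis. Theorem~\ref{11} asks for unipotent local monodromy on $R^{d-i}(f|_{\zo{X}})_!\bQ_{\zo{X}}$, while hypothesis~(iii) concerns $R^{d+i}(f|_{\zo{X}})_*\bC_{\zo{X}}$. Since $f|_{\zo{X}}$ is smooth and proper, $R^{d-i}(f|_{\zo{X}})_! = R^{d-i}(f|_{\zo{X}})_*$, and Poincar\'e duality for this smooth proper family of relative complex dimension $d$ supplies a perfect pairing of local systems on $\zo{Y}$
\[
R^{d-i}(f|_{\zo{X}})_*\bQ_{\zo{X}} \otimes R^{d+i}(f|_{\zo{X}})_*\bQ_{\zo{X}} \longrightarrow \bQ_{\zo{Y}}.
\]
Hence the two monodromy representations are mutually contragredient, and since unipotence passes to duals and is invariant under extension of the base field from $\bQ$ to $\bC$, hypothesis~(iii) delivers exactly the monodromy condition needed by Theorem~\ref{11}. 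That theorem then gives the semipositivity of $R^if_*\omega_{X/Y}$.

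The only non-formal ingredient is this Poincar\'e-duality translation of monodromies, passing from the $R^{d+i}f_*$ formulation standard in the literature to the $R^{d-i}f_!$ formulation used in Theorem~\ref{11}. Everything else is a direct specialization of Theorem~\ref{11} to the case $D=0$, and presents no serious obstacle.
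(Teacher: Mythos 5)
Your proposal is correct and follows essentially the same route as the paper: the paper's proof of this statement (Theorem \ref{thm42}) likewise specializes the main theorem to the pair $(X,0)$ and uses Poincar\'e--Verdier duality to identify $R^{d-i}(f|_{\zo{X}})_{*}\mathbb C_{\zo{X}}$ as the dual local system of $R^{d+i}(f|_{\zo{X}})_{*}\mathbb C_{\zo{X}}$, so that unipotency of the local monodromies transfers. Your additional remarks (surjectivity of $f$, the identification $R^{d-i}(f|_{\zo{X}})_!=R^{d-i}(f|_{\zo{X}})_*$ for the proper smooth restriction) are correct housekeeping that the paper leaves implicit.
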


We note that 
Theorem \ref{thm12} was first proved by Kawamata (cf.~\cite[Theorem 5]{kawamata1}) 
under the extra assumptions that 
$i=0$ and that $f$ has connected fibers. 
The above statement 
follows from \cite[Theorem 2.6]{ko2} or \cite[Theorem 1]{n} (see also \cite[Theorem 5.4]{fujino-rem}). 
We also note that, by Poincar\'e--Verdier duality, $R^{d+i}(f|_{X^*})_*\mathbb C_{X^*}$ 
is the dual local system of $R^{d-i}(f|_{X^*})_*\mathbb C_{X^*}$ in 
Theorem \ref{thm12}.
In \cite{ko2} and \cite{n}, the variation of Hodge 
structure on $R^{d+i}(f|_{X^*})_*\mathbb C_{X^*}$ is investigated
for the proof of Theorem \ref{thm12}. 
On the other hand, in this paper, 
we concentrate on the variation of Hodge 
structure on $R^{d-i}(f|_{X^*})_*\mathbb C_{X^*}$ 
for Theorem \ref{thm12}. 

The following example shows
that the assumption $(2)$ in Theorem \ref{12}
is indispensable. 
For related examples, see \cite[(3.15) and (3.16)]{sz}. 
In the proof of Theorem \ref{11},
the admissibility of the \gpv of $\bQ$-mixed Hodge structure
on $R^{d-i}(f|_{\zo{X} \setminus \zo{D}})_{!}\bQ_{\zo{X} \setminus \zo{D}}$,
which is proved in Theorem \ref{GPVMHS for snc pair}, 
assures us the existence of the extension of the Hodge filtration
satisfying the assumption $(2)$. 
Note that the notion of {\em{admissibility}} 
is due to Steenbrink--Zucker \cite{sz} and Kashiwara \cite{kashiwara}. 

\begin{ex}\label{not admissible} 
Let $\bV$ be a $2$-dimensional $\mathbb Q$-vector space with basis $\{e_1, e_2\}$. 
We give an increasing filtration $W$ on $\bV$
by $W_{-1}\bV=0$, $W_0\bV=W_1\bV=\mathbb Qe_1$, and 
$W_2\bV=\bV$. 
The constant sheaf on $\bP^1$ whose 
fibers are $\bV$ is denoted by the same letter $\bV$,
on which an increasing filtration $W$ is given as above.
We consider $\xV=\xO_{\mathbb P^1} \otimes \bV=\xO e_1 \oplus \xO e_2$ on $\mathbb P^1$.
We set a decreasing filtration $F$ on $\xV|_{\bC^{\ast}}$ by
\begin{equation*}
F^0(\xV|_{\bC^{\ast}})=\xV|_{\bC^{\ast}},\ 
F^1(\xV|_{\bC^{\ast}})=\xO_{\bC^{\ast}}(t^{-1}e_1+e_2),\ 
F^2(\xV|_{\bC^{\ast}})=0,
\end{equation*}
where $t$ is the coordinate function of $\bC \subset \mathbb P^1$.
We can easily check that $((\bV,W)|_{\bC^{\ast}},(\xV|_{\bC^{\ast}},F))$ is
a graded polarizable variation of $\mathbb Q$-mixed Hodge 
structure on $\mathbb C^*$.
In this case, we can not extend the Hodge filtration $F$ on $\xV|_{\bC^{\ast}}$
to the filtration $F$ on $\xV$
satisfying the assumption $(2)$ in Theorem \ref{12}.
In particular, the above variation of $\mathbb Q$-mixed Hodge structure is not admissible. 

We note that we can extend the Hodge filtration
$F$ on $\xV|_{\bC^{\ast}}$
to the filtration $F$ on $\xV$ such that 
$F^2\xV=0$, 
$F^1\xV \simeq \mathcal O_{\mathbb P^1}(-1)$,
and $F^0\xV=\xV$ with
$\Gr_F^0\xV \simeq \xO_{\mathbb P^1}(1)$. 
In this case, $F^1\xV$ and 
$(\Gr_F^0\xV)^*$ are not semipositive. 
This means that 
a naive generalization of the Fujita--Kawamata semipositivity theorem for graded 
polarizable 
variations of $\mathbb Q$-mixed Hodge structure is false. 
\end{ex}

As an application of Theorem \ref{11}, the first author proves 
a semipositivity theorem for families of singular 
varieties in \cite{fujino-semipositive}. It is a generalization of \cite[4.12.~Theorem]{ko-proj} 
and implies that the moduli functor of stable varieties is semipositive in the 
sense of Koll\'ar (see \cite[2.4.~Definition]{ko-proj}). Therefore, it 
will play crucial roles for the projectivity of the moduli spaces of 
higher-dimensional 
stable varieties. For details, see \cite{ko-proj}, \cite{fujino-slc}, and \cite{fujino-semipositive}. 

We give a sketch of the proof of Theorem \ref{11} for the 
reader's convenience. 

\begin{say}[{Sketch of the proof of Theorem \ref{11}}]\label{16ske} 
In Theorem \ref{11}, we see that 
the local system $R^{d-i}(f|_{\zo{X} \setminus \zo{D}})_!\mathbb Q_{\zo{X} \setminus \zo{D}}$
underlies 
an admissible variation of $\mathbb Q$-mixed Hodge structure
by Theorem \ref{GPVMHS for snc pair}.
Let $\mathcal V$ be the canonical extension of 
$(R^{d-i}(f|_{\zo{X} \setminus \zo{D}})_!\mathbb Q_{\zo{X} \setminus \zo{D}})
\otimes \mathcal O_{\zo{Y}}$. Then 
we can prove $R^{d-i}f_*\mathcal O_X(-D)\simeq \Gr _F^0\mathcal V$ where 
$F$ is the canonical extension of the Hodge filtration. 
Note that the {\em{admissibility}}
ensures the existence of the canonical extensions of the 
Hodge bundles (cf.~Proposition \ref{extension of Hodge filtration} 
and Remark \ref{upper-lower}). 
We also note that we use an explicit description of the canonical extension of the 
Hodge filtration in order to prove $R^{d-i}f_*\mathcal O_X(-D)\simeq \Gr_F^0\mathcal V$ when 
$Y$ is a curve. 
By Grothendieck duality, we obtain 
$R^if_*\omega_{X/Y}(D)\simeq (\Gr_F^0\mathcal V)^*$. 
Therefore, $R^if_*\omega_{X/Y}(D)$ is semipositive by Theorem \ref{12}. 
It is very important to note that the local system
$R^{d-i}(f|_{\zo{X} \setminus \zo{D}})_!\mathbb Q_{\zo{X} \setminus \zo{D}}$
is not necessarily the dual local system 
of $R^{d+i}(f|_{\zo{X} \setminus \zo{D}})_*\mathbb Q_{\zo{X} \setminus \zo{D}}$ because 
$X$ is not a smooth variety but a simple normal crossing variety. 
In the proof of Theorem \ref{11}, we use the recent developments of the theory of 
partial resolution of singularities for {\em{reducible}} varieties (see \cite{bierstone-milman} and \cite{bierstone-p}) 
for the reduction to simpler cases. 
\end{say}

We quickly explain the reason why we use mixed Hodge structures 
for cohomology with compact support. 

\begin{say}[Mixed Hodge structure for cohomology with compact support]\label{1616}
Let $X$ be a smooth projective variety 
and let $D$ be a simple normal crossing divisor on $X$. 
After Iitaka introduced the notion of logarithmic Kodaira dimension, 
$\mathcal O_X(K_X+D)$ plays important roles in the birational 
geometry, where $K_X$ is the canonical 
divisor of $X$. 
In the traditional birational 
geometry, $\mathcal O_X(K_X+D)$ is recognized 
to be $\Omega^{\dim X}_X(\log D)$. 
Therefore, the Hodge to de Rham spectral 
sequence 
$$
E^{p,q}_1=H^q(X, \Omega^p_X(\log D))\Longrightarrow 
H^{p+q}(X\setminus D, \mathbb C)
$$
arising from the mixed Hodge structures 
on $H^{\bullet}(X\setminus D, \mathbb C)$ is useful. 
The first author recognizes 
$\mathcal O_X(K_X+D)$ as 
$$\mathcal Hom_{\mathcal O_X}(\mathcal O_X(-D), \mathcal O_X(K_X))$$ 
or $$R\mathcal Hom_{\mathcal O_X}(\mathcal O_X(-D), 
\omega^{\bullet}_X)[-\dim X]$$ where 
$\omega^{\bullet}_X=\mathcal O_X(K_X)[\dim X]$ is the dualizing 
complex of $X$. 
Furthermore, $\mathcal O_X(-D)$ can be 
interpreted as the $0$-th 
term of the complex 
$$
\Omega^{\bullet}_X(\log D)\otimes \mathcal O_X(-D). 
$$ 
By this observation, we can use the following 
Hodge to de Rham spectral 
sequence 
$$
E^{p,q}_1=H^q(X, \Omega^p_X(\log D)\otimes \mathcal O_X(-D))\Longrightarrow 
H^{p+q}_c(X\setminus D, \mathbb C)
$$ 
arising from the mixed Hodge structures on 
cohomology groups $H^{\bullet}_c(X\setminus D, \mathbb C)$ of $X\setminus D$ 
with compact support 
and obtain various powerful vanishing theorems. 
For details and many applications, see \cite[Chapter 2]{book}, 
\cite{fujino-on}, 
\cite{fujino-quasi}, 
\cite[Section 5]{fujino-fund}, \cite{fujino-surface}, \cite{fujino-slc}, 
and \cite{fuj-inj}. 
Therefore, it is very natural to consider the variations of 
such mixed Hodge structures. 
\end{say}

We summarize the contents of this paper. 
Section \ref{sec2} is a preliminary section. 
Section \ref{generalities on VMHS} collects some generalities on 
variations of mixed Hodge structure. 
In Section \ref{VMHS of geometric origin},
we discuss the variations of mixed Hodge structure for 
simple normal crossing pairs. 
We show that 
they are graded polarizable 
and admissible. 
Theorem \ref{GPVMHS for snc pair} is the main result of 
Section \ref{VMHS of geometric origin}. 
In Section \ref{sub42}, we discuss a purely Hodge theoretic 
aspect of the Fujita--Kawamata semipositivity theorem. 
Our formulation is different from 
Kawamata's original one but 
is suited for our results in Section \ref{sec4}. 
In Section \ref{sec-quasi-proj}, we discuss 
some generalizations of vanishing and torsion-free theorems 
for quasi-projective simple normal crossing pairs. 
They are necessary for the arguments in Section \ref{sec4}. 
Section \ref{sec4} is the main part of this paper. 
Here, we characterize higher direct images of 
log canonical divisors by using canonical 
extensions of Hodge bundles (cf.~Theorem \ref{main} and Theorem \ref{main2}). 
It is a generalization of the results 
by Yujiro Kawamata, Noboru Nakayama, J\'anos Koll\'ar, Morihiko Saito, and Osamu Fujino. 
In Section \ref{sec-final}, we treat some examples, which 
help us understand the Fujita--Kawamata semipositivity theorem, 
Viehweg's weak positivity theorem, and so on, in details. 

Let us recall basic definitions and 
notation. 

\begin{notation} 
For a proper morphism $f:X\longrightarrow Y$, the {\em{exceptional 
locus}}, which is denoted by $\Exc(f)$, 
is the locus where 
$f$ is not an isomorphism. 
$\mathbb R$ (resp.~$\mathbb Q$) denotes 
the set of real (resp.~rational) 
numbers. 
$\mathbb Z$ denotes the set of integers.  
\end{notation}

\begin{say}[Divisors, $\mathbb Q$-divisors, and $\mathbb R$-divisors]
For an $\mathbb R$-Weil divisor 
$D=\sum _{j=1}^r d_j D_j$ such that 
$D_i$ is a prime divisor for 
every $i$ and that $D_i\ne D_j$ for $i\ne j$, we define 
the {\em{round-up}} 
$\lceil D\rceil =\sum _{j=1}^{r} 
\lceil d_j\rceil D_j$ 
(resp.~the {\em{round-down}} $\lfloor D\rfloor 
=\sum _{j=1}^{r} \lfloor d_j \rfloor D_j$), 
where for any real number $x$, 
$\lceil x\rceil$ (resp.~$\lfloor x\rfloor$) is 
the integer defined by $x\leq \lceil x\rceil <x+1$ 
(resp.~$x-1<\lfloor x\rfloor \leq x$). 
The {\em{fractional part}} $\{D\}$ of $D$ denotes $D-\lfloor D\rfloor$. 
We call $D$ a {\em{boundary}} (resp.~{\em{subboundary}}) 
$\mathbb R$-divisor 
if $0\leq d_j\leq 1$ (resp.~$d_j\leq 1$) for every $j$. 
{\em{$\mathbb Q$-linear equivalence}} (resp.~{\em{$\mathbb R$-linear 
equivalence}})\index{$\sim_{\mathbb Q}$, 
$\mathbb Q$-linear equivalence} 
of two $\mathbb Q$-divisors (resp.~{\em{$\mathbb R$-divisors}}) 
$B_1$ and $B_2$ is denoted by $B_1\sim _{\mathbb Q}B_2$ (resp.~$B_1
\sim_{\mathbb R}B_2$). 
\end{say}

\begin{say}[Singularities of pairs]
Let $X$ be a normal variety and let $\Delta$ be an 
effective $\mathbb R$-divisor on $X$ 
such that $K_X+\Delta$ is $\mathbb R$-Cartier. 
Let $f:X\longrightarrow Y$ be 
a resolution such that $\Exc(f)\cup f^{-1}_*\Delta$ 
has a simple normal crossing support, 
where $f^{-1}_*\Delta$ is 
the strict transform of $\Delta$ on $Y$. We can 
write 
$$K_Y=f^*(K_X+\Delta)+\sum _i a_i E_i. 
$$
We say that $(X, \Delta)$ 
is {\em{log canonical}} ({\em{lc}}, for short) if $a_i\geq -1$ for every $i$. 
We usually write $a_i= a(E_i, X, \Delta)$
and call it the {\em{discrepancy coefficient}} of 
$E$ with respect to $(X, \Delta)$. 

If $(X, \Delta)$ is log canonical and 
there exist a resolution $f:Y\longrightarrow X$ and a divisor $E$ on $Y$ such 
that $a(E, X, \Delta)=-1$, then $f(E)$ is called a 
{\em{log canonical center}} ({\em{lc center}}, for short) with respect to $(X, \Delta)$. 
\end{say}

It is very important to understand the following example. 

\begin{say}[A basic example]\label{110basic}
Let $X$ be a smooth variety and let $\Delta$ be a reduced simple normal crossing 
divisor on $X$. 
Then the pair $(X, \Delta)$ is log canonical. 
Let $\Delta=\sum _{i\in I}\Delta_i$ be the irreducible decomposition of $\Delta$. 
Then a subvariety $W$ of $X$ is a log canonical center 
with respect to $(X, \Delta)$ if and only if 
$W$ is an irreducible component of $\Delta_{i_1}\cap \cdots \cap \Delta_{i_k}$ for some 
$\{i_1, \cdots, i_k\}\subset I$. 
\end{say}

We will work over $\mathbb C$, 
the field of complex numbers, throughout this paper. 

\begin{ack}
The first author was partially supported by The Inamori 
Foundation and by the Grant-in-Aid for Young Scientists (A) 
$\sharp$24684002 from JSPS. 
He would like to thank Professors Takeshi Abe,  
Hiraku Kawanoue, Kenji Matsuki, 
and Shigefumi Mori for discussions. 
He also thanks Professors 
Valery Alexeev and Christopher Hacon for 
introducing him this problem. 
He thanks Professor Gregory James Pearlstein and 
Professor Fouad El Zein for answering his questions and giving him some useful comments. 
He thanks Professor Kazuya Kato for fruitful discussions. 
The authors would like to thank Professors 
Kazuya Kato, Chikara Nakayama, Sampei Usui, and Morihiko Saito 
for discussions and 
comments. 
\end{ack}

\section{Preliminaries}\label{sec2}

Let us recall the definition of {\em{simple normal crossing 
pairs}}. 

\begin{defn}[Simple normal crossing pairs]\label{03} 
We say that the pair $(X, D)$ is {\em{simple normal crossing}} at 
a point $a\in X$ if $X$ has a Zariski open neighborhood $U$ of $a$ that can be embedded in a smooth 
variety 
$Y$, 
where $Y$ has regular system of parameters $(x_1, \cdots, x_p, y_1, \cdots, y_r)$ at 
$a=0$ in which $U$ is defined by a monomial equation 
$$
x_1\cdots x_p=0
$$ 
and $$
D=\sum _{i=1}^r \alpha_i(y_i=0)|_U, \quad  \alpha_i\in \mathbb R. 
$$ 
We say that $(X, D)$ is a {\em{simple normal crossing pair}} if it is simple 
normal crossing at every point of $X$. 
When $D$ is the zero divisor for a simple normal crossing 
pair $(X, D)$, 
$X$ is called a {\em{simple normal crossing variety}}.
If $(X,D)$ is a simple normal crossing pair, then $X$ has only Gorenstein singularities. 
Thus, it has an invertible dualizing sheaf $\omega_X$. 
Therefore, we can define the {\em{canonical divisor $K_X$}} such that 
$\omega_X\simeq \mathcal O_X(K_X)$. 
It is a Cartier divisor on $X$ and is well-defined up to linear equivalence. 

We say that a simple normal crossing 
pair $(X, D)$ is {\em{embedded}} 
if there exists a closed embedding 
$\iota:X\hookrightarrow M$, where 
$M$ is a smooth variety 
of dimension $\dim X+1$. 

Let $X$ be a simple normal crossing variety and 
let $D$ be a Cartier divisor on $X$. 
If $(X, D)$ is a simple normal crossing pair and 
$D$ is reduced, 
then $D$ is called a {\em{simple normal crossing divisor}} on $X$. 
\end{defn}

We note that a simple normal crossing pair is called a {\em{semi-snc pair}} 
in \cite[Definition 1.10]{kollar-book}. 

\begin{defn}[Strata and permissibility]\label{stra} 
Let $X$ be a simple normal crossing variety and let $X=\bigcup _{i\in I}X_i$ be the 
irreducible decomposition of $X$. 
A {\em{stratum}} of $X$ is an irreducible component of $X_{i_1}\cap \cdots \cap X_{i_k}$ for some 
$\{i_1, \cdots, i_k\}\subset I$. 
A Cartier divisor $B$ on $X$ is {\em{permissible}} if $B$ contains no strata of $X$ in its support. 
A finite $\mathbb Q$-linear (resp.~$\mathbb R$-linear) combination of permissible 
Cartier divisors is called a {\em{permissible $\mathbb Q$-divisor}} (resp.~{\em{$\mathbb R$-divisor}}) on 
$X$. 

Let $(X, D)$ be a simple normal crossing pair such that 
$D$ is a boundary $\mathbb R$-divisor 
on $X$. 
Let $\nu:X^\nu \longrightarrow X$ be the normalization. 
We define $\Theta$ by the formula 
$$
K_{X^\nu}+\Theta=\nu^*(K_X+D). 
$$ 
Then a {\em{stratum}} of $(X, D)$ is an irreducible component of $X$ or the $\nu$-image 
of a log canonical center of $(X^\nu, \Theta)$. 
We note that $(X^\nu, \Theta)$ is log canonical (cf.~\ref{110basic}).  
When $D=0$, 
this definition is compatible with the aforementioned case. 
An $\mathbb R$-Cartier $\mathbb R$-divisor $B$ on $X$ is 
{\em{permissible with respect to $(X, D)$}} if $B$ contains no strata of $(X, D)$ in its support. 
If $B$ is a permissible $\mathbb R$-Cartier $\mathbb R$-divisor with respect to $(X, D)$, then 
we can easily check that 
$$
B=\sum _i b_i B_i 
$$ 
where $B_i$ is a permissible Cartier divisor with respect to $(X, D)$ and 
$b_i\in \mathbb R$ for every $i$. 
\end{defn}

The reader will find that it is very useful 
to introduce the notion of {\em{globally embedded simple normal crossing 
pairs}} for the 
proof of vanishing and torsion-free theorems (cf.~\cite[Chapter 2]{book}). 

\begin{defn}[Globally embedded simple normal crossing 
pairs]\label{gsnc0} 
Let $X$ be a simple normal crossing divisor 
on a smooth 
variety $M$ and let $B$ be an $\mathbb R$-divisor 
on $M$ such that 
$\Supp (B+X)$ is a simple normal crossing divisor and that 
$B$ and $X$ have no common irreducible components. 
We put $D=B|_X$ and consider the pair $(X, D)$. 
We call $(X, D)$ a {\em{globally embedded simple normal 
crossing pair}}. 
In this case, it is obvious that 
$(X, D)$ is an embedded simple normal crossing pair. 
\end{defn}

In Section \ref{sec-quasi-proj}, 
we will discuss some vanishing and torsion-free theorems for 
{\em{quasi-projective}} simple normal crossing pairs, which 
will play crucial roles in Section \ref{sec4}. 
See also \cite{fujino-vanishing}, \cite{fujino-slc}, and \cite{fuj-inj}. 

Finally, let us recall the definition of {\em{semi divisorial log terminal 
pairs}} in the sense of Koll\'ar (see \cite[Definition 5.19]{kollar-book} 
and \cite[Definition 4.1]{fujino-vanishing}). 

\begin{defn}[Semi divisorial log terminal pairs]\label{def-sdlt}
Let $X$ be an equidimensional variety which satisfies Serre's $S_2$ 
condition and is normal crossing in codimension one. 
Let $\Delta$ be a boundary $\mathbb R$-divisor on $X$ whose 
support does not contain any irreducible components of the conductor of 
$X$. Assume that $K_X+\Delta$ is $\mathbb R$-Cartier. 
The pair $(X, \Delta)$ is {\em{semi divisorial log terminal}} 
if $a(E, X, \Delta)>-1$ for every 
exceptional divisor $E$ over $X$ such that 
$(X, \Delta)$ is not a simple normal crossing 
pair at the generic point of $c_X(E)$, where 
$c_X(E)$ is the 
center of $E$ on $X$. 
\end{defn}

\begin{rem}
The definition of semi divisorial log terminal pairs in \cite[Definition 1.1]{fujino-abun} 
is different from Definition \ref{def-sdlt}. 
\end{rem}

For the details of semi divisorial log terminal pairs, see 
\cite[Section 5.4]{kollar-book} and \cite[Section 4]{fujino-vanishing}. 

\section{Generalities on variation of mixed Hodge structure}
\label{generalities on VMHS} 

\begin{say}
Let $X$ be a complex analytic variety.
For a point $x \in X$, 
$\bC(x)(\simeq \bC)$ denotes 
the residue field at the point $x$.
For a morphism $\varphi: \xF \longrightarrow \xG$ of $\xO_X$-modules 
the morphism
\begin{equation*}
\varphi \otimes \id:
\xF \otimes \bC(x) \longrightarrow \xG \otimes \bC(x)
\end{equation*} 
is denoted by $\varphi(x)$
for a point $x \in X$.
\end{say}

\begin{rem}
\label{equivalent conditions for strict compatibility}
For a complex $K$ equipped with a finite decreasing filtration $F$
and for an integer $q$,
the four conditions
\begin{newitemize}
\itemno
\label{strict compatibility for (K,F)}
$d:K^q \longrightarrow K^{q+1}$ is strictly compatible
with the filtration $F$,
\itemno
the canonical morphism
$H^{q+1}(F^pK) \longrightarrow H^{q+1}(K)$
is injective for all $p$,
\itemno
the canonical morphism
$H^{q+1}(F^{p+1}K) \longrightarrow H^{q+1}(F^pK)$
is injective for all $p$,
\itemno
the canonical morphism
$H^q(F^pK) \longrightarrow H^q(\Gr_F^pK)$
is surjective for all $p$
\end{newitemize}
are equivalent.
Therefore the strict compatibility in \eqref{strict compatibility for (K,F)}
makes sense in the filtered derived category.
\end{rem}

On a complex variety $X$,
a complex of $\xO_X$-modules $K$ is called
a perfect complex
if, locally on $X$, it is isomorphic in the derived category
to a bounded complex
consisting of free $\xO_X$-modules of finite rank
(see e.g.~\cite[8.3.6.3]{FGAE}). 
The following definition
is an analogue of the notion of perfect complex.

\begin{defn}
\label{def of filtered perfect complex}
Let $X$ be a complex variety.
A complex of $\xO_X$-modules $K$
equipped with a finite decreasing filtration $F$ 
is called a filtered perfect 
if $\Gr_F^pK$ 
is a perfect complex for all $p$. 
\end{defn}

\begin{lem}
\label{toy base change}
Let $X$ be a complex manifold.

\begin{subthm}
\label{upper semi-continuity}
For a perfect complex $K$ on $X$,
the function
\begin{equation*}
X \ni x \mapsto \dim H^q(K \otimes \bC(x))
\end{equation*}
is upper semi-continuous for all $q$.
\end{subthm}

\begin{subthm}
\label{local freeness implies base change}
Let $K$ be a perfect complex on $X$.
If there exists an integer $q_0$
such that $H^q(K)$ is locally free of finite rank for all $q \ge q_0$,
then the canonical morphism
\begin{equation*}
H^q(K) \otimes \xF
\longrightarrow
H^q(K \otimes^L \xF)
\end{equation*}
is an isomorphism for any $\xO_X$-module $\xF$ and for all $q \ge q_0$.
\end{subthm}

\begin{subthm}
\label{equivalence of constancy of dim and local freeness}
Fix an integer $q$.
For a perfect complex $K$ on $X$,
the following two conditions are equivalent:
\begin{newitemize}
\itemno
The function
\begin{equation*}
X \ni x \mapsto \dim H^q(K \otimes^L \bC(x))
\end{equation*}
is locally constant.
\itemno
The sheaf $H^q(K)$ is locally free of finite rank
and the canonical morphism
\begin{equation*}
H^q(K) \otimes \xF
\longrightarrow
H^q(K \otimes^L \xF)
\end{equation*}
is an isomorphism for any $\xO_X$-module $\xF$.
\end{newitemize}
Moreover, if these equivalent conditions are satisfied,
then the canonical morphism
\begin{equation*}
H^{q-1}(K) \otimes \xF
\longrightarrow
H^{q-1}(K \otimes^L \xF)
\end{equation*}
is an isomorphism for any $\xO_X$-module $\xF$.
\end{subthm}

\begin{subthm}
\label{strictness from residue field to stalk}
Let $(K,F)$ be a filtered perfect complex on $X$.
Assume that
the function
\begin{equation*}
X \ni x \mapsto \dim H^q(K \otimes^L \bC(x))
\end{equation*}
is locally constant.
If the morphisms
\begin{align*}
&d(x):(K \otimes^L \bC(x))^{q-1} \longrightarrow (K \otimes^L \bC(x))^q \\
&d(x):(K \otimes^L \bC(x))^q \longrightarrow (K \otimes^L \bC(x))^{q+1}
\end{align*}
are strictly compatible with the filtration $F(K \otimes^L \bC(x))$
for every $x \in X$,
then $H^q(\Gr_F^pK)$
is locally free of finite rank,
the canonical morphism
\begin{equation}
\label{the canonical morphism in question}
H^q(\Gr_F^pK) \otimes \bC(x)
\simeq
H^q(\Gr_F^p(K \otimes^L \bC(x)))
\end{equation}
is an isomorphism for all $p$
and for every $x \in X$,
and $d:K^q \longrightarrow K^{q+1}$
is strictly compatible with the filtration $F$.
\end{subthm}
\end{lem}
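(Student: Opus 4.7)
The plan is to leverage the fiberwise strict compatibility together with the base change criterion of part \ref{equivalence of constancy of dim and local freeness} to transport all relevant statements from fibers to sheaves. The first move is to translate the hypotheses via Remark \ref{equivalent conditions for strict compatibility}. The strict compatibility of $d(x):(K \otimes^L \bC(x))^{q-1} \to (K \otimes^L \bC(x))^q$ forces $H^q(F^{p+1}(K \otimes^L \bC(x))) \to H^q(F^p(K \otimes^L \bC(x)))$ to be injective, while the strict compatibility of $d(x):(K \otimes^L \bC(x))^q \to (K \otimes^L \bC(x))^{q+1}$ forces $H^q(F^p(K \otimes^L \bC(x))) \to H^q(\Gr_F^p(K \otimes^L \bC(x)))$ to be surjective. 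Combining these with the long exact sequence attached to $0 \to F^{p+1}K \to F^p K \to \Gr_F^p K \to 0$, I obtain for each $x \in X$ and each $p$ a short exact sequence
\begin{equation*}
0 \to H^q(F^{p+1}(K \otimes^L \bC(x))) \to H^q(F^p(K \otimes^L \bC(x))) \to H^q(\Gr_F^p(K \otimes^L \bC(x))) \to 0.
\end{equation*}

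Summing dimensions over $p$ yields $\sum_p \dim H^q(\Gr_F^p(K \otimes^L \bC(x))) = \dim H^q(K \otimes^L \bC(x))$, which is locally constant in $x$ by hypothesis. Since each $\Gr_F^p K$ is a perfect complex by the definition of a filtered perfect complex, part \ref{upper semi-continuity} makes each summand $\dim H^q(\Gr_F^p(K \otimes^L \bC(x)))$ upper semi-continuous; a locally constant function that is a finite sum of upper semi-continuous functions must be term-by-term locally constant. Applying part \ref{equivalence of constancy of dim and local freeness} to the perfect complex $\Gr_F^p K$, I conclude that $H^q(\Gr_F^p K)$ is locally free of finite rank and that the canonical morphism \eqref{the canonical morphism in question} is an isomorphism for every $p$.

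It remains to promote the fiberwise strict compatibility of $d: K^q \to K^{q+1}$ to the sheaf level. Because the filtration $F$ is finite and perfect complexes form a triangulated subcategory, a decreasing induction on $p$ using the short exact sequences $0 \to F^{p+1}K \to F^p K \to \Gr_F^p K \to 0$ shows that each subcomplex $F^p K$ is itself perfect. The same dimension-counting argument then gives that $\dim H^q(F^p(K \otimes^L \bC(x)))$ is locally constant, and part \ref{equivalence of constancy of dim and local freeness} yields that $H^q(F^p K)$ is locally free with the base change property. Now the canonical morphism $H^q(F^p K) \to H^q(\Gr_F^p K)$ is a morphism between locally free sheaves of finite rank, and thanks to base change it is surjective on every fiber $\bC(x)$; it is therefore surjective globally by Nakayama's lemma. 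By Remark \ref{equivalent conditions for strict compatibility}, this surjectivity is exactly the strict compatibility of $d: K^q \to K^{q+1}$ with $F$.

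The main obstacle I expect is the passage from the fiberwise short exact sequence to the sheaf-level statement, since the hypothesis directly governs only $K$ and its graded pieces, not the intermediate subcomplexes $F^p K$. The decreasing induction on the finite filtration, together with the base change criterion of part \ref{equivalence of constancy of dim and local freeness}, is the key tool that bridges this gap.
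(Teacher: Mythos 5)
Your proof of part (iv) is correct and takes essentially the same route as the paper's: fiberwise strictness gives the short exact sequences whose dimensions sum to the locally constant $\dim H^q(K\otimes^L\bC(x))$, upper semi-continuity from part (i) forces each graded term to be locally constant, and part (iii) then yields local freeness and the base change isomorphism, after which the surjectivity of $H^q(F^pK)\to H^q(\Gr_F^pK)$ combined with Remark \ref{equivalent conditions for strict compatibility} finishes the argument. The only difference is that the paper dismisses that last surjectivity as something one can ``easily see'' from the isomorphisms \eqref{the canonical morphism in question}, whereas you make it explicit by establishing perfectness, local constancy of fiber dimensions, and base change for the subcomplexes $F^pK$ themselves --- a legitimate filling-in of the gap rather than a different approach (and, like the paper, you treat (i)--(iii) as inputs rather than proving them).
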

\begin{proof}
We can easily obtain
\ref{upper semi-continuity},
\ref{local freeness implies base change} and
\ref{equivalence of constancy of dim and local freeness}
by the arguments in \cite[Chapter 5]{mumfordAV}.

The strict compatibility conditions
in \ref{strictness from residue field to stalk}
imply the exactness of the sequence
\begin{equation*}
\begin{CD}
0 @>>> H^q(F^{p+1}(K \otimes^L \bC(x)))
@>>> H^q(F^p(K \otimes^L \bC(x))) \\
@. @>>> H^q(\Gr_F^pK \otimes^L \bC(x)) @>>> 0
\end{CD}
\end{equation*}
for all $p$ and for every $x \in X$.
Thus we obtain the equality
\begin{equation*}
\sum_{p}\dim H^q(\Gr_F^pK \otimes^L \bC(x))
= \dim H^q(K \otimes^L \bC(x))
\end{equation*}
for every $x$,
which implies that $\dim H^q(\Gr_F^pK \otimes^L \bC(x))$
is locally constant with respect to $x \in X$.
Applying \ref{equivalence of constancy of dim and local freeness},
$H^q(\Gr_F^pK)$ is locally free
and \eqref{the canonical morphism in question}
is an isomorphism for all $p$ and for any $x \in X$.
By using the isomorphisms
\eqref{the canonical morphism in question} for all $p$,
we can easily see
the surjectivity of the canonical morphism
\begin{equation*}
H^q(F^pK) \otimes \bC(x)
\longrightarrow
H^q(\Gr_F^pK) \otimes \bC(x)
\end{equation*}
for any $x \in X$,
and then the canonical morphism
\begin{equation*}
H^q(F^pK) \longrightarrow H^q(\Gr_F^pK)
\end{equation*}
is surjective for every $p$.
Thus the morphism $d:K^q \longrightarrow K^{q+1}$
is strictly compatible with the filtration $F$
by Remark \ref{equivalent conditions for strict compatibility}.
\end{proof}

\begin{defn}
Let $X$ be a complex manifold.
A pre-variation of $\bQ$-Hodge structure of weight $m$ on $X$
is a triple $V=(\bV,(\xV,F),\alpha)$ such that
\begin{itemize}
\item
$\bV$ is a local system of finite dimensional $\bQ$-vector space on $X$,
\item
$\xV$ is an $\xO_X$-module
and $F$ is a finite decreasing filtration on $\xV$,
\item
$\alpha: \bV \longrightarrow \xV$
is a morphism of $\bQ$-sheaves,
\end{itemize}
satisfying the conditions
\begin{newitemize}
\itemno
\label{isom induced by alpha}
$\alpha$ induces an isomorphism
$\xO_X \otimes \bV \simeq \xV$ of $\xO_X$-modules,
\itemno
$\Gr_F^p\xV$ is a locally free $\xO_X$-module of finite rank for every $p$,
\itemno
$(\bV_x, F(\xV(x)))$ is a Hodge structure of weight $m$
for every $x \in X$,
where we identify $\bV_x \otimes \bC$ with $\xV(x)$ by the isomorphism $\alpha(x)$.
\end{newitemize}
We denote $(\bV_x,F(\xV(x)))$ by $V(x)$ for $x \in X$.

We identify $\xO_X \otimes \bV$ with $\xV$ by the isomorphism
in \eqref{isom induced by alpha}
if there is no danger of confusion.
Under this identification,
we write $V=(\bV,F)$ for a pre-variation of $\bQ$-Hodge structure.

We define the notion of a morphism of pre-variations
in the trivial way.
\end{defn}

\begin{rem}
\label{Griffiths transversality}
A variation of $\bQ$-Hodge structure of weight $m$ on $X$
is nothing but a pre-variation $V=(\bV,F)$ of $\bQ$-Hodge structure of 
weight $m$ 
such that the canonical integrable connection
$\nabla$ on $\xV=\xO_X \otimes \bV$
satisfies the Griffiths transversality
\begin{equation}
\label{Griffiths transversality:eq}
\nabla(F^p) \subset \Omega_X^1 \otimes F^{p-1}
\end{equation}
for every $p$.
A morphism of variations of $\bQ$-Hodge structure 
is a morphism of underlying pre-variations of $\bQ$-Hodge structure.
\end{rem}

\begin{rem}
\begin{subdefn}
Let $V_1=(\bV_1,F)$ and $V_2=(\bV_2,F)$ be
pre-variations of $\bQ$-Hodge structure of weight $m_1$ and $m_2$ respectively.
Then the local systems $\bV_1 \otimes \bV_2$
and $\shom(\bV_1, \bV_2)$ underlie pre-variations of $\bQ$-Hodge structure 
of weight $m_1+m_2$ and $m_2-m_1$ respectively. 
These pre-variations of $\bQ$-Hodge structure 
are denoted by $V_1 \otimes V_2$ and $\shom(V_1,V_2)$ respectively.
\end{subdefn}

\begin{subdefn}
For an integer $n$,
$\bQ_X(n)$ denotes the pre-variation of $\bQ$-Hodge structure of Tate as usual.
This is, in fact, a variation of $\bQ$-Hodge structure of weight $-2n$ on $X$.
For a pre-variation $V$ of $\bQ$-Hodge structure of weight $m$,
$V(n)=V \otimes \bQ_X(n)$ is a pre-variation of $\bQ$-Hodge structure of weight $m-2n$,
which is called the Tate twist of $V$ as usual.
\end{subdefn}
\end{rem}

\begin{defn}
Let $X$ be a complex manifold
and $V=(\bV, F)$ a pre-variation of $\bQ$-Hodge structure of weight $m$ on $X$.
A polarization on $V$ is a morphism of pre-variations of $\bQ$-Hodge structure 
\begin{equation*}
V \otimes V \longrightarrow \bQ_X(-m)
\end{equation*}
which induces a polarization on $V(x)$ for every point $x \in X$.
A pre-variation of $\bQ$-Hodge structure of weight $m$
is said to be polarizable,
if there exists a polarization on it.
A morphism of polarizable pre-variations of $\bQ$-Hodge structure 
is a morphism of the underlying pre-variations of $\bQ$-Hodge structure.
\end{defn}

\begin{defn}
Let $X$ be a complex manifold.

\begin{subdefn}
A pre-variation of $\bQ$-mixed Hodge structure on $X$
is a triple $V=((\bV, W),(\xV,W,F),\alpha)$ consisting of
\begin{itemize}
\item
a local system of finite dimensional $\bQ$-vector space $\bV$,
equipped with a finite increasing filtration $W$ by local subsystems,
\item
an $\xO_X$-module $\xV$
equipped with a finite increasing filtration $W$
and a finite decreasing filtration $F$,
\item
a morphism of $\bQ$-sheaves
$\alpha: \bV \longrightarrow \xV$
preserving the filtration $W$
\end{itemize}
such that the triple
$\Gr_m^WV=(\Gr_m^W\bV,(\Gr_m^W\xV,F),\Gr_m^W\alpha)$
is a pre-variation of $\bQ$-Hodge structure of weight $m$
for every $m$.

We identify $(\xO_X \otimes \bV, W)$ and $(\xV,W)$ by the isomorphism
induced by $\alpha$ as before,
if there is no danger of confusion.
Under this identification,
we use the notation $V=(\bV,W,F)$
for a pre-variation of $\bQ$-mixed Hodge structure.
\end{subdefn}

\begin{subdefn}
A pre-variation $V=(\bV,W,F)$ of $\bQ$-mixed Hodge structure on $X$
is called graded polarizable,
if $\Gr_m^WV$ is a polarizable pre-variation of $\bQ$-Hodge structure
for every $m$.
\end{subdefn}

\begin{subdefn}
We define the notion of a morphism of pre-variations of $\bQ$-mixed Hodge 
structure by the trivial way.
A morphism of polarizable pre-variations of $\bQ$-mixed Hodge structure 
is a morphism of the underlying pre-variations.
\end{subdefn}
\end{defn}

Now, let us recall the definition of {\em{graded polarizable variation of 
$\mathbb Q$-mixed Hodge structure}} (GPVMHS, for short).
See, for example, \cite[\S 3]{sz}, 
\cite[Part I, Section 1]{ssu2}, \cite[Section 7]{bz}, 
\cite[Definitions 14.44 and 14.45]{ps}, and so on.

\begin{defn}[GPVMHS]\label{vmhs}
Let $X$ be a complex manifold.

\begin{subdefn}
A pre-variation $V=(\bV,W,F)$ of $\bQ$-mixed Hodge structure on $X$
is said to be a variation of $\bQ$-mixed Hodge structure,
if the canonical integrable connection
$\nabla$ on $\xV \simeq \xO_X \otimes \bV$
satisfies the Griffiths transversality
\eqref{Griffiths transversality:eq}
in Remark \ref{Griffiths transversality}.
\end{subdefn}

\begin{subdefn}
A variation of $\bQ$-mixed Hodge structure
is called graded polarizable,
if the underlying pre-variation is graded polarizable.
\end{subdefn}

\begin{subdefn}
A morphism of (graded polarizable) variations of $\bQ$-mixed Hodge structure 
is a morphism of the underlying pre-variations.
\end{subdefn}
\end{defn}

The following definition of the {\em{admissibility}}
is given by Steenbrink--Zucker \cite[(3.13) Properties]{sz}
in the one-dimensional case
and by Kashiwara
\cite[1.8, 1.9]{kashiwara} in the general case. 
See also \cite[Definition 14.49]{ps}. 

\begin{defn}[{Admissibility (cf.~\cite[1.8, 1.9]{kashiwara})}]\label{admissibility}
\begin{subdefn}
\label{pre-admissibility on Delta*}
A variation of $\mathbb Q$-mixed Hodge structure
$V=(\mathbb{V}, W, F)$ over $\pd=\Delta \setminus \{0\}$, where 
$\Delta=\{ t\in \mathbb C\, |\, |t|<1\}$,  
is said to be {\em{pre-admissible}} if it satisfies: 
\begin{newitemize}
\itemno
\label{quasi-unipotency}
The monodromy around the origin is quasi-unipotent.
\itemno
\label{extendability of Hodge filtration}
Let $\widetilde{\xV}$ and $W_k\widetilde{\xV}$
be the upper canonical extensions of $\xV=\xO_{\pd} \otimes \mathbb{V}$
and of $\xO_{\pd} \otimes W_k\mathbb{V}$
in the sense of Deligne \cite[Remarques 5.5 (i)]{Deligne} 
(see also \cite[Section 2]{ko2} and Remark \ref{upper-lower}).
Then the filtration $F$ on $\xV$
extends to the filtration $F$
on $\widetilde{\xV}$ such that
$\Gr_{F}^p\Gr_k^{W}\widetilde{\xV}$
is locally free $\xO_{\Delta}$-modules of finite rank for each $k,p$.
\itemno
\label{existence of the relative monodromy weight filtration}
The logarithm of the unipotent part of the monodromy
admits a weight filtration relative to $W$. 
\end{newitemize}
\end{subdefn}

\begin{subdefn}
Let $X$ be a complex variety
and $U$ a nonsingular Zariski open subset of $X$.
A variation of $\bQ$-mixed Hodge structure $V$
on $U$ is said to be {\em{admissible}} (with respect to $X$)
if for every morphism $i: \Delta \longrightarrow X$
with $i(\pd) \subset U$,
the variation $i\!^*V$ on $\pd$ is pre-admissible.
\end{subdefn}
\end{defn}

We can define an $\mathbb R$-mixed Hodge structure, a variation of $\mathbb R$-mixed 
Hodge structure, and so on, analogously. 

We frequently use the following lemma in Section \ref{sub42},
which is a special case of \cite[Proposition 1.11.3]{kashiwara} 
(see also Remark \ref{upper-lower} below).

\begin{prop}[{cf.~\cite{kashiwara}}]
\label{extension of Hodge filtration}
Let $X$ be a complex manifold,
$U$ the complement of a normal crossing divisor on $X$
and $V=(\mathbb{V},W,F)$
a variation of $\bR$-mixed Hodge structure on $U$. 
The upper canonical extensions of
$\xV=\xO_U \otimes \mathbb{V}$ and of $W_k\xV=\xO_U \otimes W_k\mathbb{V}$ 
are denoted 
by $\widetilde{\xV}$ and by $W_k\widetilde{\xV}$ respectively.
If $V$ is admissible on $U$
with respect to $X$,
then the filtration $F$ on $\xV$
extends to a finite filtration $F$ on $\widetilde{\xV}$ by subbundles
such that $\Gr_F^p\Gr_k^W\widetilde{\xV}$ is a locally free
$\xO_X$-module of finite rank for all $k,p$.
\end{prop}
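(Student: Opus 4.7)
The plan is to reduce the statement to the one-variable case, where the extension of the Hodge filtration is built directly into the pre-admissibility hypothesis in Definition \ref{admissibility}. Since the assertion is local on $X$, I would work in a polydisc with coordinates $(z_1,\dots,z_n)$ in which $D$ is given by $z_1\cdots z_r=0$, and then construct $F$ on $\widetilde{\xV}$ using test maps $i:\Delta\longrightarrow X$ transverse to the strata of $D$.

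The construction itself proceeds in three steps. First, I would handle the pure pieces: each $\Gr_k^WV$ is a polarizable variation of $\bR$-Hodge structure of weight $k$, so Schmid's nilpotent orbit theorem and its several-variable refinement by Cattani--Kaplan--Schmid produce a canonical extension of the Hodge filtration on $\Gr_k^W\widetilde{\xV}$ as a filtration by holomorphic subbundles with locally free graded pieces. Second, I would define $F^p\widetilde{\xV}$ as the subsheaf of local sections $s$ of $\widetilde{\xV}$ such that $i^{\ast}s$ lies in the pre-admissible extension of $F^p$ on $\widetilde{i^{\ast}\xV}$ for every holomorphic map $i:\Delta\longrightarrow X$ with $i(\pd)\subset U$. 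Pre-admissibility on each such disc guarantees that the target filtration is well defined, and a standard glueing check shows that $F^p\widetilde{\xV}$ is an $\xO_X$-coherent extension of $F^p\xV$. Third, I would verify strictness with respect to $W$, so that the filtration on $\Gr_k^W\widetilde{\xV}$ induced by $F$ agrees with the one constructed in the first step.

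The main obstacle is the local freeness assertion: showing that $F^p\widetilde{\xV}$ is a holomorphic subbundle of $\widetilde{\xV}$ and that each $\Gr_F^p\Gr_k^W\widetilde{\xV}$ is locally free of finite rank. This is where the third pre-admissibility condition, the existence of the relative monodromy weight filtration, becomes essential. I would apply Kashiwara's generalization of the $SL(2)$-orbit theorem to admissible variations of mixed Hodge structure, producing an asymptotic description of $(\widetilde{\xV},W,F)$ at each point of $D$ as a mixed nilpotent orbit. From this description one reads off that the dimensions of $F^p(\widetilde{\xV}(x))$ and of its pieces $F^p(\widetilde{\xV}(x))\cap W_k(\widetilde{\xV}(x))$ are locally constant in $x\in X$, and Lemma \ref{toy base change} then forces the required local freeness of both $F^p\widetilde{\xV}$ and the graded pieces $\Gr_F^p\Gr_k^W\widetilde{\xV}$. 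Example \ref{not admissible} confirms that dropping admissibility can destroy the conclusion, so the relative monodromy weight filtration is really doing the work in the argument.
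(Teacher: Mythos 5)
The paper does not actually prove this proposition: it is stated as ``a special case of \cite[Proposition 1.11.3]{kashiwara}'' and used as a black box, so there is no internal argument to compare yours against line by line. Measured against Kashiwara's actual proof, your outline has the right overall shape --- the pure graded pieces are handled by Schmid's nilpotent orbit theorem and its several-variable refinement, and the relative monodromy weight filtration (pre-admissibility condition \eqref{existence of the relative monodromy weight filtration}) is correctly identified as the essential hypothesis, with Example \ref{not admissible} showing it cannot be dropped.

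The gap is in your second step. Defining $F^p\widetilde{\xV}$ as the sections whose pullback along every test disc $i:\Delta\longrightarrow X$ with $i(\pd)\subset U$ lands in the one-variable extension, and then asserting that ``a standard glueing check shows that $F^p\widetilde{\xV}$ is an $\xO_X$-coherent extension,'' buries the entire difficulty of the proposition in that sentence. A subsheaf cut out by conditions on restrictions to an uncountable family of curves has no a priori reason to be coherent, let alone a subbundle; establishing this, and showing it agrees with the more natural candidate $j_*F^p\xV\cap\widetilde{\xV}$ (the definition used in Remark \ref{upper-lower} for ${}^u\!F^p$), is precisely the content of Kashiwara's theorem. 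The same issue recurs in your third step: the claim that the filtration induced on $\Gr_k^W\widetilde{\xV}$ by your curve-wise $F$ coincides with the Schmid--Cattani--Kaplan--Schmid extension is essentially equivalent to the local freeness of $\Gr_F^p\Gr_k^W\widetilde{\xV}$ that you are trying to prove, so it cannot be dismissed as a verification. Finally, the appeal to ``Kashiwara's generalization of the $SL(2)$-orbit theorem'' in several variables to get constancy of $\dim F^p(\widetilde{\xV}(x))$ is close to invoking the conclusion: the several-variable asymptotic description of an admissible variation is the theorem being proved, not an independent tool one can cite to finish it. As written, the proposal is a correct road map of where the proof lives, but every step where something could fail is deferred to machinery whose construction is the proof.
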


We give an elementary but useful remark on the quasi-unipotency of 
monodromy. 

\begin{rem}[Quasi-unipotency]\label{q-uni}
If the local system $\mathbb V$ has a $\mathbb Z$-structure, that is, 
there is a local system $\mathbb V\!_{\mathbb Z}$ on $X$ of $\mathbb Z$-modules 
of finite rank such that 
$\mathbb V=\mathbb V\!_{\mathbb Z}\otimes \mathbb Q$, 
in Definition \ref{admissibility}, 
then the quasi-unipotency automatically follows from 
Borel's theorem (cf.~\cite[(4.5) Lemma (Borel)]{schmid}). 
\end{rem}

The following lemma states
the fundamental results on pre-variations of $\bQ$-Hodge structure.

\begin{lem}
\label{lemma for pre-variations}
Let $X$ be a complex manifold.

\begin{subthm}
\label{abelian category of pre-variations}
The category of the pre-variations of $\bQ$-Hodge structure 
of weight $m$ on $X$
is an abelian category for every $m$.
\end{subthm}

\begin{subthm}
\label{a property of morphism of pre-variations}
Let $V_1$ and $V_2$ be pre-variations of $\bQ$-Hodge structure 
of weight $m_1$ and $m_2$ respectively,
and $\varphi: V_1 \longrightarrow V_2$ a morphism of pre-variations.
If $m_1 > m_2$, then $\varphi=0$.
\end{subthm}

\begin{subthm}
\label{strictness of morphism of pre-variations}
Let $\varphi:V_1 \longrightarrow V_2$ be a morphism
of pre-variations $V_1=(\bV_1,F)$ and $V_2=(\bV_2,F)$
of $\bQ$-Hodge structure of weight $m$ on $X$.
Then the induced morphism
$\varphi \otimes \id: \bV_1 \otimes \xO_X \longrightarrow \bV_2 \otimes \xO_X$
is strictly compatible with the filtration $F$.
\end{subthm}

\begin{subthm}
\label{exactness of the functor (x)}
The functor from the category 
of the pre-variations of $\bQ$-Hodge structure of weight $m$
to the category of the $\bQ$-Hodge structures of weight $m$
which assigns $V$ to $V(x)$ is an exact functor for every $x \in X$.
\end{subthm}

\begin{subthm}
\label{abelian category of polarizable variations}
The category of the polarizable variations
of $\bQ$-Hodge structure of weight $m$ on $X$
is an abelian category for every $m$.
\end{subthm}
\end{lem}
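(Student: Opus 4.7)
The plan is to reduce each assertion to the corresponding classical statement for $\bQ$-Hodge structures at a single point, using the local freeness in the definition of a pre-variation to transport pointwise information to the global sheaf level. I would prove the parts in the order (ii), (iii), (i), (iv), (v), since (iii) is the key technical input and the other parts follow from it together with the abelian-category structure on $\bQ$-Hodge structures of fixed weight.

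For (ii), at every point $x \in X$ the map $\varphi(x):V_1(x) \to V_2(x)$ is a morphism of $\bQ$-Hodge structures of weights $m_1 > m_2$, hence zero by the classical vanishing; since the fibers of the local-system morphism $\varphi: \bV_1 \to \bV_2$ vanish at every point, $\varphi = 0$. For (iii), fiberwise strictness of $\varphi(x)$ with respect to $F$ is the classical theorem that morphisms of Hodge structures are strictly compatible with the Hodge filtration. To globalize, I view $\varphi \otimes \id$ as the differential of the two-term complex $K = [\xV_1 \to \xV_2]$ equipped with the filtration induced by $F$. Since each $\Gr_F^p \xV_i$ is locally free of finite rank, $(K,F)$ is a filtered perfect complex, and the function $x \mapsto \dim H^q(K \otimes^L \bC(x))$ is locally constant because fiberwise it is computed from the ranks of the $\Gr_F^p \xV_i$. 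Lemma \ref{toy base change}\,\ref{strictness from residue field to stalk} then yields strictness of $\varphi$ at the sheaf level.

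For (i), given $\varphi: V_1 \to V_2$ of weight $m$, I form the kernel and cokernel on the level of local systems: on each connected component such a morphism is a $\pi_1$-equivariant linear map, whose kernel and cokernel are $\pi_1$-modules, hence are again local systems. I equip them with the induced filtrations $F^p(\ker \varphi) = \ker \varphi \cap F^p \xV_1$ and $F^p(\Coker \varphi) = \mathrm{image}(F^p \xV_2 \to \Coker \varphi)$. The strictness from (iii) produces the exact sequence
\begin{equation*}
0 \to \Gr_F^p(\ker \varphi) \to \Gr_F^p \xV_1 \to \Gr_F^p \xV_2 \to \Gr_F^p(\Coker \varphi) \to 0,
\end{equation*}
and local freeness of the outer terms follows from Lemma \ref{toy base change}\,\ref{equivalence of constancy of dim and local freeness}. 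Fiberwise, kernel and cokernel are Hodge structures of weight $m$ by the classical theory, so both acquire the structure of a pre-variation of $\bQ$-Hodge structure of weight $m$. Part (iv) is then immediate, because strictness makes the sequence on each $F^p$ exact at the sheaf level and hence on fibers.

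For (v), a polarization $V \otimes V \to \bQ_X(-m)$ restricts fiberwise to a polarization on any sub-pre-variation, since sub-Hodge structures of a polarizable Hodge structure are polarizable and the category of polarizable $\bQ$-Hodge structures of fixed weight is semisimple; the orthogonal complement then furnishes a polarization on the quotient, so the kernels and cokernels constructed in (i) inherit polarizations. The main obstacle is the strictness in (iii); once that is in hand, everything else is bookkeeping on top of the classical theory of $\bQ$-Hodge structures at a point.
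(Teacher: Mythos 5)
Your proposal is correct and follows essentially the same route as the paper: parts (i), (iii), and (iv) are deduced from Lemma \ref{toy base change} \ref{strictness from residue field to stalk} applied to the two-term filtered complex $[\xV_1 \to \xV_2]$ together with the classical pointwise theory of $\bQ$-Hodge structures, (ii) is the pointwise vanishing, and (v) reduces the cokernel to the kernel case by means of the fixed polarizations --- the paper does this by identifying $\Coker(\varphi)$ with $\Ker(\varphi^{\ast})$, where $\varphi^{\ast}:V_2 \to V_1$ is the adjoint of $\varphi$ obtained from the self-dualities $V_i \simeq \shom(V_i,\bQ_X(-m))$, which is the same object as your orthogonal complement of the image. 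One correction to your argument for (iii): the local constancy of $x \mapsto \dim H^q(K \otimes^L \bC(x))$, which is exactly the hypothesis you must verify before invoking Lemma \ref{toy base change} \ref{strictness from residue field to stalk}, does not follow from the constancy of the ranks of $\Gr_F^p\xV_i$ --- those only control the Euler characteristic, not $\dim\Ker\varphi(x)$ and $\dim\Coker\varphi(x)$ separately; the correct (and immediate) reason is that $\varphi$ is induced by a morphism of local systems $\bV_1 \to \bV_2$, whose fiberwise rank is locally constant on each connected component.
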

\begin{proof}
The statements
\ref{abelian category of pre-variations},
\ref{strictness of morphism of pre-variations}
and \ref{exactness of the functor (x)}
are easy consequences of
Lemma \ref{toy base change}
\ref{strictness from residue field to stalk},
and \ref{a property of morphism of pre-variations}
is easily proved by the corresponding result
for $\bQ$-Hodge structures.
So we prove
\ref{abelian category of polarizable variations} now.

Let $V_1=(\bV_1,F)$ and $V_2=(\bV_2,F)$ be
polarizable pre-variations of $\bQ$-Hodge structure 
of weight $m$ on $X$,
and $\varphi: V_1 \longrightarrow V_2$ a morphism.
We fix polarizations on $V_1$ and $V_2$ respectively.
Taking (i) into the account,
it is sufficient to prove that $\Ker(\varphi)$
and $\Coker(\varphi)$ are polarizable.
The case of $\Ker(\varphi)$ is trivial.
Then we discuss the case of $\Coker(\varphi)$.

The morphism $\varphi$ induces a morphism
\begin{equation*}
\varphi^*: \shom(V_2,\bQ_X(-m)) \longrightarrow \shom(V_1,\bQ_X(-m))
\end{equation*}
which is clearly a morphism of pre-variations of $\bQ$-Hodge structure 
of weight $m$.
On the other hand,
the polarizations on $V_1$ and $V_2$
induce the identifications
\begin{equation*}
V_1 \simeq \shom(V_1,\bQ_X(-m)), \quad
V_2 \simeq \shom(V_2,\bQ_X(-m))
\end{equation*}
which are isomorphisms of pre-variations of $\bQ$-Hodge structure.
By these identifications the morphism $\varphi^*$ above
can be considered as a morphism of pre-variations $V_2 \longrightarrow V_1$,
which is denoted 
by the same symbol $\varphi^*$ by abuse of the language.
Then the inclusion $\Ker(\varphi^*) \hookrightarrow V_2$
induces an isomorphism $\Ker(\varphi^*) \simeq \Coker(\varphi)$
of pre-variations.
Therefore we obtain a polarization expected.
\end{proof}

Here we give a brief remark
on the dual of a variation of $\bQ$-mixed Hodge structure.

\begin{rem}
\label{dual of VMHS}
Let $V=((\bV,W),(\xV,W,F),\alpha)$ be
a pre-variation of $\bQ$-mixed Hodge structure
on a complex manifold $X$.
On the dual local system $\bV^{\ast}=\shom_{\bQ}(\bV, \bQ)$,
\begin{equation*}
W_m\bV^{\ast}=(\bV/W_{-m-1})^{\ast} \subset \bV^{\ast}
\end{equation*}
defines an increasing filtration $W$.
Similarly, on $\xV^{\ast}=\shom_{\xO_X}(\xV,\xO_X)$
\begin{equation*}
W_m\xV^{\ast}=(\xV/W_{-m-1})^{\ast}, \quad
F^p\xV^{\ast}=(\xV/F^{1-p})^{\ast}
\end{equation*}
define increasing and decreasing filtrations on $\xV^{\ast}$.
We have
\begin{equation*}
\Gr_F^p\Gr_m^W\xV^{\ast} \simeq (\Gr_F^{-p}\Gr_{-m}^W\xV)^{\ast}
\end{equation*}
for every $m,p$ by definition.
Equipped with an isomorphism $\xO_X \otimes \bV^{\ast} \simeq \xV^{\ast}$,
it turns out that
$((\bV^{\ast},W),(\xV^{\ast},W,F))$
is a pre-variation of $\bQ$-mixed Hodge structure on $X$. 
It is denoted by $V^{\ast}$ for short
and is called the dual of $V$.
It is easy to see that
$V^{\ast}$ is graded polarizable 
or a variation of $\bQ$-mixed Hodge structure 
if $V$ is so.
For the case where $X$ is a Zariski open subset of another variety,
$V^{\ast}$ is admissible if $V$ is so.
\end{rem}

We close this section by the lemma 
concerning with the relative monodromy weight filtration
for a filtered $\bQ$-mixed Hodge complex. 
For the definition, 
see, for example,  \cite[6.1.4 D\'efinition]{elzein2}.

\begin{rem}
Now we remark the notation on the shift of increasing filtrations.
We set
\begin{equation*}
W[m]_k=W_{k-m}
\end{equation*}
as in \cite{DeligneII}, \cite{elzein2}.
Our notation is different from the one
in \cite{cks}.
\end{rem}

\begin{lem}
\label{lemma on relative monodromy filtration}
Let
$((A_{\bQ},W^f,W), (A_{\bC},W^f,W,F), \alpha)$
be a filtered $\bQ$-mixed Hodge complex
such that the spectral sequence $E_r^{p,q}(A_{\bC},W^f)$
degenerates at $E_2$-terms,
and $\nu: A_{\bC} \longrightarrow A_{\bC}$
a morphism of complexes
preserving the filtration $W^f$
and satisfying the condition
$\nu(W_mA_{\bC}) \subset W_{m-2}A_{\bC}$
for every $m$.
If the filtration $W[-m]$ on $H^n(\Gr_m^{W^f}A_{\bC})$
is the monodromy weight filtration of the endomorphism
$H^n(\Gr_m^{W^f}\nu)$
for all $m,n$,
then the filtration $W$ on $H^n(A_{\bC})$
is the relative weight monodromy filtration
of the endomorphism $H^n(\nu)$ with respect to the filtration $W^f$
for all $n$.
\end{lem}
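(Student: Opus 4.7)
My plan is to verify the two defining properties of the relative weight monodromy filtration. Recall that $W$ on $H^n(A_{\bC})$ is the relative monodromy weight filtration of $H^n(\nu)$ with respect to $W^f$ precisely when (a) $H^n(\nu)(W_k) \subset W_{k-2}$ for every $k$, and (b) for every $m$, the shifted filtration $W[-m]$ induced by $W$ on $\Gr_m^{W^f} H^n(A_{\bC})$ coincides with the monodromy weight filtration of the nilpotent operator on $\Gr_m^{W^f} H^n(A_{\bC})$ induced by $H^n(\nu)$. Property (a) is immediate from the hypothesis $\nu(W_k A_{\bC}) \subset W_{k-2} A_{\bC}$, so the content lies in verifying (b).

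To verify (b), I will exploit the $E_2$-degeneration of the $W^f$-spectral sequence together with the filtered mixed Hodge complex structure. Since $(A_{\bQ}, W^f, W)$ is a filtered $\bQ$-mixed Hodge complex, each $d_1$-differential
\[
d_1^{-m, n+m}\colon H^n(\Gr_m^{W^f} A_{\bC}) \longrightarrow H^{n+1}(\Gr_{m-1}^{W^f} A_{\bC})
\]
is a morphism of mixed Hodge structures, the weight filtration on $H^n(\Gr_m^{W^f} A_{\bC})$ being $W[-m]$, and is therefore strict for this filtration. Combined with $E_2$-degeneration, this yields a canonical isomorphism
\[
\Gr_m^{W^f} H^n(A_{\bC}) \;\simeq\; E_2^{-m, n+m}(A_{\bC}, W^f)
\]
under which the filtration $W[-m]$ induced by $W$ on $H^n(A_{\bC})$ on the left matches the subquotient filtration induced by $W[-m]$ on $H^n(\Gr_m^{W^f} A_{\bC})$ on the right.

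Because $\nu$ preserves $W^f$, it induces endomorphisms on every page of the $W^f$-spectral sequence that commute with all differentials. On $E_1^{-m, n+m}$ the induced operator is exactly $H^n(\Gr_m^{W^f} \nu)$, and via the identification above it corresponds to the endomorphism of $\Gr_m^{W^f} H^n(A_{\bC})$ induced by $H^n(\nu)$. By hypothesis, $W[-m]$ on $H^n(\Gr_m^{W^f} A_{\bC})$ is the monodromy weight filtration of $H^n(\Gr_m^{W^f} \nu)$; I will then transfer this property to the subquotient $E_2^{-m,n+m}$ to obtain (b).

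The main obstacle, and the only nonformal step, is this last transfer: in general the monodromy weight filtration of a nilpotent endomorphism does not descend to an arbitrary sub-MHS. In the present situation, however, both $\Ker(d_1)$ and the image of $d_1$ are cut out by morphisms of mixed Hodge structures that are strict for $W[-m]$ and commute with the nilpotent, so I expect to be able to verify that the induced nilpotent on the subquotient still satisfies the defining isomorphisms $N^j\colon \Gr_{k+j}^W \xrightarrow{\sim} \Gr_{k-j}^W$. Carrying out this verification, presumably along the lines of \cite{elzein2} or \cite{sz}, is where the filtered mixed Hodge complex formalism is genuinely used.
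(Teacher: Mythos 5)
Your proposal is correct and follows essentially the same route as the paper: identify $\Gr_m^{W^f}H^n(A_{\bC})$ with $E_2^{-m,n+m}(A_{\bC},W^f)$ via $E_2$-degeneration, match the induced filtrations using \cite[6.1.8 Th\'eor\`eme]{elzein2}, and descend the isomorphisms $N^k\colon \Gr_{k}^{W[-m]}\overset{\sim}{\to}\Gr_{-k}^{W[-m]}$ from the $E_1$-terms to the $E_2$-terms. The ``transfer'' step you flag is resolved exactly as you anticipate: since $d_1$ is a morphism of mixed Hodge structures it is strict for $W$, so $\Gr^W$ commutes with taking cohomology of the $E_1$-page, and the powers of the nilpotent, being isomorphisms of the complexes $\Gr^W E_1^{\bullet,q}$ commuting with $d_1$, remain isomorphisms on the subquotient -- which is precisely the content behind the paper's closing ``we can easily check the conclusion.''
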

\begin{proof}
The assumption implies that the morphism
$H^{p+q}(\Gr_{-p}^{W^f}\nu)^k$ induces an isomorphism
\begin{equation*}
\Gr_{q+k}^{W[p+q]}E_1^{p,q}(A_{\bC},W^f)
\longrightarrow
\Gr_{q-k}^{W[p+q]}E_1^{p,q}(A_{\bC},W^f)
\end{equation*}
for all $p,q$ and for $k \ge 0$
because of the isomorphism
$E_1^{p,q}(A_{\bC},W^f) \simeq H^{p+q}(\Gr_{-p}^{W^f}A_{\bC})$.
On the other hand,
the $E_2$-degeneracy for the filtration $W^f$
gives us the isomorphism
\begin{equation*}
E_2^{p,q}(A_{\bC},W^f)
\simeq
\Gr_{-p}^{W^f}H^{p+q}(A_{\bC})
\end{equation*}
for all $p,q$,
under which
the filtration $W_{\rec}$ on the left hand side
coincides with the filtration $W$ on the right hand side
by \cite[6.1.8 Th\'eor\`eme]{elzein2}.
Since the morphism $d_1$ of $E_1$-terms
induces a morphism of mixed Hodge structures
\begin{equation*}
d_1:
(E_1^{p,q}(A_{\bC},W^f),W[p+q],F)
\longrightarrow
(E_1^{p+1,q}(A_{\bC},W^f),W[p+q+1],F)
\end{equation*}
for all $p,q$
by \cite[6.1.8 Th\'eor\`eme]{elzein2} again,
the morphism
$(H^{p+q}(\nu))^k$ induces an isomorphism
\begin{equation*}
\Gr_{q+k}^{W[p+q]}\Gr_{-p}^{W^f}H^{p+q}(A_{\bC})
\longrightarrow
\Gr_{q-k}^{W[p+q]}\Gr_{-p}^{W^f}H^{p+q}(A_{\bC})
\end{equation*}
for $p,q$ and for $k \ge 0$.
Then we can easily check the conclusion.
\end{proof}

\section{Variation of mixed Hodge structure of geometric origin}
\label{VMHS of geometric origin} 

In this section, we discuss variations of 
mixed Hodge structure arising from mixed Hodge structures 
on cohomology with compact support for \snc pairs.
We will check that those variations of mixed Hodge structure are  
{\em{graded polarizable}} and {\em{admissible}}
(see Theorem \ref{GPVMHS for snc pair}).
These properties 
will play crucial roles in the subsequent sections.  

\begin{say}
\label{Godment resolution} 
For a morphism of topological spaces
$f:X \longrightarrow Y$,
we always use the Godment resolution
to compute the higher direct image $Rf_{\ast}$
of abelian sheaves on $X$.
This means that $Rf_{\ast}\xF$ is the genuine {\itshape complex}
$f_{\ast}\xC\gdm^{\bullet}\xF$ for an abelian sheaf $\xF$ on $X$,
where $\xC\gdm^{\bullet}$ stands for the Godment resolution
as in Peters--Steenbrink \cite[B.2.1]{ps}.
If $\xF$ carries a filtration $F$,
$Rf_{\ast}\xF$ is the genuine {\itshape filtered complex}.
For a morphism of sheaves $\varphi: \xF \longrightarrow \xG$ on $X$,
the morphism
\begin{equation*}
Rf_{\ast}(\varphi):Rf_{\ast}\xF \longrightarrow Rf_{\ast}\xG
\end{equation*}
is the genuine {\itshape morphism of complexes}
defined by using the Godment resolution.
We use the same notation
for complexes of abelian sheaves, $\bQ$-sheaves, $\bC$-sheaves,
$\xO_X$-modules and so on.
\end{say}

\begin{say}
Let $f:X\sso \longrightarrow Y$
be an augmented semi-simplicial topological space.
The morphism $X_p \longrightarrow Y$ induced by $f$
is denoted by $f_p$ for every $p$.
For an abelian sheaf $\xF\css$ on $X\sso$.
\begin{equation*}
Rf_{\bullet\ast}\xF\css
=\{Rf_{p\ast}\xF^p\}_{p \ge 0}
\end{equation*}
defines a co-semi-simplicial {\itshape complexes} on $Y$
by what we mentioned in \ref{Godment resolution}.
Then we define
\begin{equation*}
Rf_{\ast}\xF\css=
sRf_{\bullet\ast}\xF\css
\end{equation*}
as in \cite[(5.2.6.1)]{DeligneIII}.
More precisely,
$Rf_{\ast}\xF\css$ is the single complex
associated to the double complex
\begin{equation*}
\begin{CD}
@. \vdots @. \vdots \\
@. @VVV @VVV \\
\cdots @>>> (Rf_{p\ast}\xF^p)^q @>{\delta}>> (Rf_{p+1\ast}\xF^{p+1})^q @>>> \cdots \\
@. @V{(-1)^pd}VV @VV{(-1)^{p+1}d}V \\
\cdots @>>> (Rf_{p\ast}\xF^p)^{q+1} @>{\delta}>> (Rf_{p+1\ast}\xF^{p+1})^{q+1} @>>> \cdots \\
@. @VVV @VVV \\
@. \vdots @. \vdots
\end{CD}
\end{equation*}
where $\delta$ in the horizontal lines denotes the \v{C}ech type morphism
and $d$ in the vertical lines denotes the differential
of the {\itshape complexes} $Rf_{p\ast}\xF^p$ for every $p$.
The increasing filtration $L$ on $Rf_{\ast}\xF\css$ is defined by
\begin{equation}
\label{filtration L}
L_m(Rf_{\ast}\xF\css)^n
=\bigoplus_{p \ge -m}(Rf_{p\ast}\xF^p)^{n-p}
\end{equation}
for every $m,n$
(cf.~Deligne \cite[(5.1.9.3)]{DeligneIII}).
Thus we have
\begin{equation}
\label{gr of L}
\Gr_m^LRf_{\ast}\xF\css
=Rf_{-m\ast}\xF^{-m}[m]
\end{equation}
for every $m$.
For the case where $\xF\css$ admits an increasing filtration $W$,
we set
\begin{align}
&W_m(Rf_{\ast}\xF\css)^n
=\bigoplus_pW_m(Rf_{p\ast}\xF^p)^{n-p}
\label{filtration W} \\
&\delta(W,L)_m(Rf_{\ast}\xF\css)^n
=\bigoplus_pW_{m+p}(Rf_{p\ast}\xF^p)^{n-p}
\label{filtration delta}
\end{align}
for every $n,p$.
Then we have
\begin{equation*}
\Gr_m^{\delta(W,L)}Rf_{\ast}\xF
=\bigoplus_p\Gr_{m+p}^WRf_{p\ast}\xF^p[-p]
\end{equation*}
for every $m$.
The case of decreasing filtration $F$ on $\xF$
is transformed to the case of increasing filtration
by setting $W_m\xF=F^{-m}\xF$.
We use the same convention for complexes of 
abelian sheaves, $\bQ$-sheaves and so on.
\end{say}

\begin{say}
\label{the case of smooth projective morphism}
Let $X$ and $Y$ be complex manifolds and
$f: X \longrightarrow Y$ a smooth projective morphism. 
The de Rham complexes of $X$ and $Y$
are denoted by $\Omega_X$ and $\Omega_Y$ respectively,
and the relative de Rham complex for the morphism $f$
is denoted by $\Omega_{X/Y}$. 
Moreover, $F$ denotes the stupid filtration on $\Omega_X$ 
and $\Omega_{X/Y}$.
The inclusion $\bQ_X \longrightarrow \xO_X$
induces a morphism of complexes $\bQ_X \longrightarrow \Omega_{X/Y}$.
Then we obtain a morphism 
\begin{equation*}
R^if_*\bQ_X \longrightarrow R^if_*\Omega_{X/Y}
\end{equation*}
for every $i$, 
which is denoted by $\alpha_{X/Y}$ simply.
Then
\begin{equation*} 
(R^if_*\bQ_X, (R^if_*\Omega_{X/Y},F),\alpha_{X/Y})
\end{equation*}
is a polarizable variation of $\bQ$-Hodge 
structure of weight $i$ on $Y$.
Here we recall the proof of the Griffiths transversality
following \cite{katzoda}.

A finite decreasing filtration $G$ on $\Omega_X$ is defined by
\begin{equation}
\label{filtration G on Omega}
G^p\Omega_X
=\im(f^{-1}\Omega^p_Y \otimes_{f^{-1}\xO_Y} \Omega_X[-p]
\longrightarrow \Omega_X)
\end{equation}
for all $p$.
Then we have isomorphisms
\begin{equation*}
\Gr_G^p\Omega_X \simeq f^{-1}\Omega^p_Y \otimes_{f^{-1}\xO_Y} \Omega_{X/Y}[-p]
\end{equation*}
for all $p$,
which induces isomorphisms
\begin{equation*}
E_1^{p,q}(Rf_{\ast}\Omega_X,G) \simeq \Omega_Y^p \otimes R^qf_{\ast}\Omega_{X/Y}
\end{equation*}
for every $p,q$.
Thus the morphisms of the $E_1$-terms give us
\begin{equation*}
\nabla:
\Omega_Y^p \otimes R^qf_{\ast}\Omega_{X/Y}
\longrightarrow
\Omega_Y^{p+1} \otimes R^qf_{\ast}\Omega_{X/Y}
\end{equation*}
for all $p,q$.
It is easy to see that
\begin{equation*}
\nabla:
R^qf_{\ast}\Omega_{X/Y}
\longrightarrow
\Omega_Y^1 \otimes R^qf_{\ast}\Omega_{X/Y}
\end{equation*}
satisfies the Griffiths transversality.

On the other hand,
we consider the complexes $\Omega_Y$ and $f^{-1}\Omega_Y$.
The stupid filtration on $\Omega_Y$ is denoted by $G$ for a while.
We have
\begin{equation*}
\begin{split}
\Gr_G^p f^{-1}\Omega_Y
&\simeq
f^{-1}\Omega_Y^p[-p] \\
&=f^{-1}\Omega_Y^p \otimes_{\bQ} \bQ_X[-p] \\
&=f^{-1}\Omega_Y^p \otimes_{f^{-1}\xO_Y}f^{-1}\xO_Y[-p]
\end{split}
\end{equation*}
for every $p$.
Therefore the $E_1$-terms of the associated spectral sequence is identified with
\begin{equation*}
E_1^{p,q}(Rf_{\ast}f^{-1}\Omega_Y,G)=
\Omega_Y^p \otimes_{\bQ} R^qf_{\ast}\bQ_X
\end{equation*}
under which the morphisms of $E_1$-terms
are identified with
\begin{equation*}
d \otimes \id:
\Omega_Y^p \otimes_{\bQ} R^qf_{\ast}\bQ_X
\longrightarrow
\Omega_Y^{p+1} \otimes_{\bQ} R^qf_{\ast}\bQ_X
\end{equation*}
for every $p,q$.
On the other hand,
the canonical morphism
\begin{equation*}
f^{-1}\Omega_Y \longrightarrow \Omega_X
\end{equation*}
is a filtered quasi-isomorphism
by the relative Poincar\'e lemma.
Thus we obtain a commutative diagram
\begin{equation*}
\begin{CD}
\xO_Y \otimes_{\bQ} R^qf_{\ast}\bQ_X
@>{\simeq}>>
R^qf_{\ast}\Omega_{X/Y} \\
@V{d \otimes \id}VV @VV{\nabla}V \\
\Omega_Y^1 \otimes_{\bQ} R^qf_{\ast}\bQ_X
@>>{\simeq}> \Omega_Y^1 \otimes R^qf_{\ast}\Omega_{X/Y}
\end{CD}
\end{equation*}
which shows that $d \otimes \id$
on $\xO_Y \otimes_{\bQ} R^if_{\ast}\bQ_X$
satisfies the Griffiths transversality. 
\end{say}

\begin{notationnum}
\label{notation for a semi-simplicial variety} 
A semi-simplicial variety $X\sso$ is said to be strict
if there exists a non-negative integer $p_0$
such that $X_p=\emptyset$ for all $p \ge p_0$.

For an augmented semi-simplicial variety
$f:X\sso \longrightarrow Y$,
we say $f$ is smooth, projective etc.,
if $f_p: X_p \longrightarrow Y$
is smooth, projective etc.~for all $p$.
\end{notationnum}

\begin{lem} 
\label{lemma for cohomology of semi-simplicial varieties}
Let $f:X\sso \longrightarrow Y$
be a smooth projective augmented strict semi-simplicial variety.
Moreover, we assume that $Y$ is smooth.
Then $R^if_{\ast}\bQ_{X\sso}$ underlies
a graded polarizable variation of $\bQ$-mixed Hodge structure on $Y$
for all $i$.
\end{lem}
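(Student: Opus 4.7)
The plan is to adapt Deligne's construction from \cite{DeligneIII} to the relative setting, combining the filtrations $L$ and $F$ introduced above with the base-change and local-freeness results of Lemma \ref{toy base change}.

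First, by \ref{the case of smooth projective morphism}, each triple $(R^qf_{p*}\bQ_{X_p}, R^qf_{p*}\Omega_{X_p/Y}, F)$ underlies a polarizable variation of $\bQ$-Hodge structure of weight $q$ on $Y$; in particular every $R^qf_{p*}\Omega^r_{X_p/Y}$ is locally free of finite rank, the relative Hodge-to-de Rham spectral sequence degenerates at $E_1$, and these data are compatible with arbitrary base change. Form on $Y$ the (bounded, by strictness of $X\sso$) genuine complexes $A_{\bQ}=Rf_*\bQ_{X\sso}$ and $A_{\bC}=Rf_*\Omega_{X\sso/Y}$ via the totalization convention of this section, equip $A_{\bQ}$ with the filtration $L$ of \eqref{filtration L}, and equip $A_{\bC}$ with both $L$ and the decreasing filtration $F$ coming from the stupid filtration on each $\Omega_{X_p/Y}$. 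By \eqref{gr of L},
\begin{equation*}
H^n(\Gr_m^LA_{\bQ})\simeq R^{n+m}f_{-m\,*}\bQ_{X_{-m}},
\end{equation*}
and its $F$-filtered analogue for $A_{\bC}$, so each $E_1$-term of the spectral sequence for $(A_{\bC},L)$ already carries a polarizable VHS, with differentials $d_1$ strict for $F$ by Lemma \ref{lemma for pre-variations} \ref{strictness of morphism of pre-variations}; by Lemma \ref{lemma for pre-variations} \ref{abelian category of polarizable variations}, the same holds for every higher $E_r$-term.

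The central step is the $E_2$-degeneration of the spectral sequence for $(A_{\bC},L)$. Over each $y\in Y$ the fiber recovers Deligne's spectral sequence for the smooth projective semi-simplicial variety $X\sso\times_Y\{y\}$, which degenerates at $E_2$ by \cite{DeligneIII}. Applying Lemma \ref{toy base change} \ref{strictness from residue field to stalk} to the filtered perfect complex $(A_{\bC},L)$ promotes this fiberwise degeneration to a global one, while simultaneously showing that each $\Gr_m^LR^nf_*\Omega_{X\sso/Y}$ is locally free of finite rank and compatible with base change. A second pass of the same lemma, now on the bifiltered complex $(A_{\bC},L,F)$ and using that the pointwise Hodge filtration induces Deligne's mixed Hodge structure on each fiber, yields local freeness of $\Gr_F^p\Gr_m^LR^nf_*\Omega_{X\sso/Y}$ together with strict compatibility of all differentials with $F$.

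Setting $W=L[n]$ on $R^nf_*\bQ_{X\sso}$ and taking the Hodge filtration on $R^nf_*\Omega_{X\sso/Y}$ to be the image of $F$ yields $\Gr_k^WR^nf_*\bQ_{X\sso}\simeq R^kf_{(n-k)\,*}\bQ_{X_{n-k}}$ as a polarizable VHS of weight $k$, giving graded polarizability. Griffiths transversality is verified as in \ref{the case of smooth projective morphism}: introduce on each $\Omega_{X_p}$ the filtration $G$ pulled back from $\Omega_Y$, totalize over the semi-simplicial structure, and compare $Rf_*\Omega_{X\sso}$ with $Rf_*f^{-1}\Omega_Y$ via the relative Poincar\'e lemma. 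The main obstacle is the simultaneous control of $L$ and $F$ on $A_{\bC}$: the bifiltered bootstrap via Lemma \ref{toy base change}, built on the fiberwise $E_2$-degeneration from \cite{DeligneIII}, is the technical heart of the argument, after which the axioms of a graded polarizable VMHS follow formally.
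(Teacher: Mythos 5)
Your overall architecture matches the paper's: the filtration $L$, the identification of the $E_1$-terms with the polarizable variations $R^qf_{p\ast}\bQ_{X_p}$, the abelian-category lemma to put variation structures on the higher pages, and the verification of Griffiths transversality via the filtration $G$ and the relative Poincar\'e lemma. The problem is your central step. To obtain the $E_2$-degeneration of the $L$-spectral sequence you invoke the fiberwise degeneration from Deligne and claim to promote it by applying Lemma \ref{toy base change} \ref{strictness from residue field to stalk} to $(A_{\bC},L)$. That lemma does not do this: its hypothesis is that the fiberwise differentials are strictly compatible with the filtration in question, and its conclusion is strictness of $d$, i.e.\ $E_1$-degeneration. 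For the filtration $L$ the fiberwise differentials are \emph{not} strict (the weight spectral sequence of a mixed Hodge structure degenerates at $E_2$, not at $E_1$), so the hypothesis fails; and even if it held, the conclusion would be the wrong kind of degeneration. Promoting a fiberwise $E_2$-degeneration would in any case first require knowing that the $E_2$-terms are locally free and commute with base change, which is precisely what is at stake.

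The fix is already among the ingredients you assembled and is what the paper does: since each $E_1^{p,q}$ is a polarizable variation of pure weight $q$ and $d_1$ is a morphism of such, Lemma \ref{lemma for pre-variations} \ref{abelian category of polarizable variations} makes $E_2^{p,q}$ a polarizable variation of weight $q$ (with $F_{\rec}=F_d=F_{d^{\ast}}$ by the lemma on two filtrations), and then $d_2\colon E_2^{p,q}\to E_2^{p+2,q-1}$ is a morphism from weight $q$ to weight $q-1$ and therefore vanishes by Lemma \ref{lemma for pre-variations} \ref{a property of morphism of pre-variations}; no fiberwise input is needed. A smaller but real slip: $\Gr_k^{L[n]}R^nf_{\ast}\bQ_{X\sso}$ is the $E_2$-term $E_2^{n-k,k}$, i.e.\ the cohomology of the \v{C}ech-type complex of $E_1$-terms, which is a subquotient of $R^kf_{(n-k)\ast}\bQ_{X_{n-k}}$ and not that sheaf itself as you wrote; graded polarizability still follows because the subquotient is taken inside the abelian category of polarizable variations of weight $k$.
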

\begin{proof}
The morphism
\begin{equation*}
R^if_{\ast}(\alpha_{X\sso/Y}):
R^if_{\ast}\bQ_{X\sso}
\longrightarrow
R^if_{\ast}\Omega_{X\sso/Y}
\end{equation*}
is induced by the canonical morphism
$\alpha_{X\sso/Y}: \bQ_{X\sso} \longrightarrow \Omega_{X\sso/Y}$.
We can easily see that
$R^if_{\ast}(\alpha_{X\sso/Y})$
induces the isomorphism
\begin{equation}
\label{isomorphism from Q to O}
R^if_{\ast}\bQ_{X\sso/Y} \otimes \xO_Y
\longrightarrow
R^if_{\ast}\Omega_{X\sso/Y}
\end{equation}
by the relative Poincar\'e lemma.

The filtration $L$ on $R^if_{\ast}\bQ_{X\sso}$
and $R^if_{\ast}\Omega_{X\sso/Y}$ is defined by
\eqref{filtration L}.
Moreover, the stupid filtration $F$ on $\Omega_{X\sso/Y}$
induces the filtration $F$ on $R^if_{\ast}\Omega_{X\sso/Y}$
by the same way as \eqref{filtration W}.
It is sufficient to prove that
\begin{equation*}
((R^if_{\ast}\bQ_{X\sso},L[i]),
(R^if_{\ast}\Omega_{X\sso/Y},L[i],F),
R^if_{\ast}(\alpha_{X\sso/Y}))
\end{equation*}
is a \gpv of $\bQ$-mixed Hodge structure on $Y$.

To this end, we consider the data
\begin{equation*}
K=
((Rf_{\ast}\bQ_{X\sso},L),
(Rf_{\ast}\Omega_{X\sso/Y},L,F),
Rf_{\ast}(\alpha_{X\sso/Y}))
\end{equation*}
and the spectral sequence associated to the filtration $L$.
By \eqref{gr of L}, we have
\begin{equation*}
E_1^{p,q}(K,L)=
(R^qf_{p\ast}\bQ_{X_p},(R^qf_{p\ast}\Omega_{X_p/Y},F), R^qf_{p\ast}(\alpha_{X_p/Y})),
\end{equation*}
which is a polarizable variation of $\bQ$-Hodge structure of weight $q$
for every $p,q$.
Moreover, the morphism
\begin{equation*}
d_1:E_1^{p,q}(K,L) \longrightarrow E_1^{p+1,q}(K,L)
\end{equation*}
is a morphism of variations of $\bQ$-Hodge structure.
Therefore
$E_2^{p,q}(K,L)$
is a polarizable variation of $\bQ$-Hodge structure
of weight $q$ for every $p,q$
and $F_{\rec}=F_{d}=F_{d^{\ast}}$ on $E_2^{p,q}(K,L)$
by the lemma on two filtrations in \cite{DeligneIII}.
Then the morphism
\begin{equation*}
d_2:E_2^{p,q}(K,L) \longrightarrow E_2^{p+2,q-1}(K,L)
\end{equation*}
between $E_2$-terms is a morphism of variations 
of $\bQ$-Hodge structure 
of weight $q$ and of weight $q-1$ respectively,
which implies that $d_2=0$ (see Lemma \ref{lemma 
for pre-variations} \ref{a property of morphism of pre-variations}). 
Therefore the spectral sequence $E_r^{p,q}(K,L)$
degenerates at $E_2$-terms
and $F=F_{\rec}=F_{d}=F_{d^{\ast}}$
on $E_{\infty}^{p,q}(K,L)=\Gr_{-p}^LH^{p+q}(K)$
by the lemma on two filtrations again.
Thus it turns out that
\begin{equation*}
\Gr_m^{L[i]}H^i(K)
=(\Gr_m^{L[i]}R^if_{\ast}\bQ_{X\sso},
(\Gr_m^{L[i]}R^if_{\ast}\Omega_{X\sso/Y},F),
\Gr_m^{L[i]}R^if_{\ast}(\alpha_{X\sso/Y}))
\end{equation*}
is a polarizable pre-variation
of $\bQ$-Hodge structure of weight $m$ on $Y$ for every $i,m$.

What remains to prove is that
the morphism
\begin{equation}
\label{morphism d otimes id}
d \otimes \id:
\xO_Y \otimes R^if_{\ast}\bQ_{X\sso}
\longrightarrow
\Omega^1_Y \otimes R^if_{\ast}\bQ_{X\sso}
\end{equation}
satisfies the Griffiths transversality
under the identification \eqref{isomorphism from Q to O}.

Now we consider
$Rf_{\ast}f_{\bullet}^{-1}\Omega_Y$ and $Rf_{\ast}\Omega_{X\sso}$, 
where $f_{\bullet}^{-1}\Omega_Y$ denotes
the complex $\{f_p^{-1}\Omega_Y\}_{p \ge 0}$ on $X\sso$.
The filtration $G$ on $\Omega_Y$ and $\Omega_{X\sso}$
induces the filtration $G$ on
$Rf_{\ast}f_{\bullet}^{-1}\Omega_Y$
and $Rf_{\ast}\Omega_{X\sso}$.
Moreover, the filtration $F$ on $\Omega_{X\sso}$ induces
the filtration $F$ on $Rf_{\ast}\Omega_{X\sso}$.
The canonical morphism
\begin{equation*}
\gamma:f_{\bullet}^{-1}\Omega_Y \longrightarrow \Omega_{X\sso}
\end{equation*}
which is a filtered quasi-isomorphism with respect to the filtration $G$
by the relative Poincar\'e lemma,
induces the filtered quasi-isomorphism
\begin{equation*}
Rf_{\ast}(\gamma):
Rf_{\ast}f_{\bullet}^{-1}\Omega_Y
\longrightarrow
Rf_{\ast}\Omega_{X\sso}
\end{equation*}
with respect to $G$.
Now we consider the spectral sequences associated to $G$,
and the morphism of the spectral sequences
induced by $Rf_{\ast}(\gamma)$.

We have
\begin{equation}
\label{iso for OmegaY}
\Gr_G^pRf_{\ast}f_{\bullet}^{-1}\Omega_Y
\simeq
Rf_{\ast}f_{\bullet}^{-1}\Omega^p_Y[-p]
\simeq
\Omega^p_Y \otimes Rf_{\ast}\bQ_{X\sso}[-p]
\end{equation}
and 
\begin{equation}
\label{iso for OmegaX}
\begin{split}
\Gr_G^pRf_{\ast}\Omega_{X\sso}
&\simeq
Rf_{\ast}(f_{\bullet}^{-1}\Omega^p_Y \otimes \Omega_{X\sso/Y}[-p]) \\
&\simeq
\Omega^p_Y \otimes Rf_{\ast}\Omega_{X\sso/Y}[-p]
\end{split}
\end{equation}
for every $p$,
such that the diagram
\begin{equation}
\label{commutativity of alpha and gamma}
\begin{CD}
\Gr_G^pRf_{\ast}f_{\bullet}^{-1}\Omega_Y
@>{\simeq}>>
\Omega^p_Y \otimes Rf_{\ast}\bQ_{X\sso}[-p] \\
@V{\Gr_G^pRf_{\ast}(\gamma)}VV @VV{\id \otimes Rf_{\ast}(\alpha_{X\sso/Y})[-p]}V \\
\Gr_G^pRf_{\ast}\Omega_{X\sso}
@>>{\simeq}>
\Omega^p_Y \otimes Rf_{\ast}\Omega_{X\sso/Y}[-p]
\end{CD}
\end{equation}
is commutative.

The morphism
\begin{equation}
\label{regular connection nabla}
\nabla:
R^if_{\ast}\Omega_{X\sso/Y}
\longrightarrow
\Omega^1_Y \otimes R^if_{\ast}\Omega_{X\sso/Y}
\end{equation}
is induced by the morphism of $E_1$-terms
\begin{equation*}
d_1:
E_1^{0,i}(Rf_{\ast}\Omega_{X\sso},G)
\longrightarrow
E_1^{1,i}(Rf_{\ast}\Omega_{X\sso},G)
\end{equation*}
via the identification
\eqref{iso for OmegaX}.
On the other hand, the morphism of $E_1$-terms
\begin{equation*}
d_1:
E_1^{0,i}(Rf_{\ast}f_{\bullet}^{-1}\Omega_Y,G)
\longrightarrow
E_1^{1,i}(Rf_{\ast}f_{\bullet}^{-1}\Omega_Y,G)
\end{equation*}
is identified with
\begin{equation*}
d \otimes \id:
\xO_Y \otimes R^if_{\ast}\bQ_{X\sso}
\longrightarrow
\Omega^1_Y \otimes R^if_{\ast}\bQ_{X\sso}
\end{equation*}
by the isomorphism
\eqref{iso for OmegaY}.
By the commutativity of the diagram 
\eqref{commutativity of alpha and gamma},
the morphisms $\nabla$ and $d \otimes \id$
are identified under the isomorphism
\eqref{isomorphism from Q to O}.
Because $\nabla$ satisfies the Griffiths transversality,
so does $d \otimes \id$.
\end{proof}

\begin{rem}
\label{remark on E1-degeneration}
The spectral sequence
$E_r^{p,q}(Rf_{\ast}\Omega_{X\sso/Y},F)$
degenerates at $E_1$-terms
by the lemma on two filtrations
\cite[Proposition (7.2.8)]{DeligneIII}.
\end{rem}

\begin{rem} 
\label{functoriality or VMHS for semi-simplicial variety}
The construction above is functorial
and compatible with the pull-back by the morphism $Y' \longrightarrow Y$.
\end{rem}

\begin{lem}
\label{lemma for relative cohomology}
Let $f:X\sso \longrightarrow Y$
and $g:Z\sso \longrightarrow Y$ be
smooth projective augmented strict semi-simplicial varieties
and $\varphi:Z\sso \longrightarrow X\sso$
a morphism of semi-simplicial varieties
compatible with the augmentations
$X\sso \longrightarrow Y$ and $Z\sso \longrightarrow Y$.
The morphism $\varphi$ induces
a morphism of {\itshape complexes}
\begin{equation*}
\varphi^{-1}:
Rf_{\ast}\bQ_{X\sso}
\longrightarrow
Rg_{\ast}\bQ_{Z\sso}
\end{equation*}
by using the Godment resolutions
as we mentioned in $\ref{Godment resolution}$.
The cone of the morphism $\varphi^{-1}$
is denoted by $C(\varphi^{-1})$.
Then $H^i(C(\varphi^{-1}))$ underlies
a \gpv of $\bQ$-mixed Hodge structure for every $i$.
\end{lem}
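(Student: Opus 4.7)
\noindent
The plan is to mimic the proof of Lemma \ref{lemma for cohomology of semi-simplicial varieties}, replacing $Rf_{\ast}\bQ_{X\sso}$ and $Rf_{\ast}\Omega_{X\sso/Y}$ by the relative (mapping cone) versions. More precisely, the morphism $\varphi$ induces a morphism of complexes
\begin{equation*}
\varphi^{\ast}: Rf_{\ast}\Omega_{X\sso/Y} \longrightarrow Rg_{\ast}\Omega_{Z\sso/Y}
\end{equation*}
that is compatible with $\alpha_{X\sso/Y}$ and $\alpha_{Z\sso/Y}$ through $\varphi^{-1}$. Let $C=C(\varphi^{-1})$ and $C_{\Omega}=C(\varphi^{\ast})$ be the mapping cones, both computed using the Godment resolutions as in \ref{Godment resolution}. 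The induced map $C(\alpha): C \longrightarrow C_{\Omega}$ is the candidate $\bQ$-to-$\xO_Y$ structure; the relative Poincar\'e lemma applied termwise gives an isomorphism $C\otimes\xO_Y\simeq C_{\Omega}$.

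The second step is to install the filtrations. On $C_{\Omega}$, the stupid filtration $F$ on each of $\Omega_{X\sso/Y}$, $\Omega_{Z\sso/Y}$ passes to the cone and so defines a filtration $F$. On both $C$ and $C_{\Omega}$ the weight filtration $L$ is defined with a shift on the $X\sso$-summand, namely
\begin{equation*}
L_m C^n = L_{m-1}(Rf_{\ast}\bQ_{X\sso})^{n+1} \oplus L_m(Rg_{\ast}\bQ_{Z\sso})^n,
\end{equation*}
and likewise for $C_{\Omega}$. Combined with \eqref{gr of L}, this yields
\begin{equation*}
\Gr_m^L C \simeq Rf_{1-m\,\ast}\bQ_{X_{1-m}}[m]\;\oplus\;Rg_{-m\,\ast}\bQ_{Z_{-m}}[m],
\end{equation*}
and the analogous formula for $\Gr_m^L C_{\Omega}$. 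Since each $f_p$ and $g_p$ is smooth and projective, \ref{the case of smooth projective morphism} shows that $E_1^{p,q}(C,L)$ with the corresponding pieces from $C_{\Omega}$ and $C(\alpha)$ is a polarizable variation of $\bQ$-Hodge structure of weight $q$ on $Y$, and that $d_1$ is a morphism of such variations.

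The third step repeats the argument of Lemma \ref{lemma for cohomology of semi-simplicial varieties} verbatim: applying the lemma on two filtrations, $E_2^{p,q}(C,L)$ with its $F$-filtration coming from $F_{\rec}=F_d=F_{d^{\ast}}$ is a polarizable variation of $\bQ$-Hodge structure of weight $q$, the differential $d_2$ vanishes by Lemma \ref{lemma for pre-variations}\ref{a property of morphism of pre-variations} on weight grounds, the spectral sequence degenerates at $E_2$, and hence
\begin{equation*}
\Gr_m^{L[i]}H^i(C)
\end{equation*}
equipped with the induced Hodge filtration is a polarizable pre-variation of weight $m$. Shifting by $[i]$ as in the proof of Lemma \ref{lemma for cohomology of semi-simplicial varieties} gives the weight filtration on $H^i(C)$.

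Finally, Griffiths transversality is checked by the same device used in Lemma \ref{lemma for cohomology of semi-simplicial varieties}: introduce the filtration $G$ of \eqref{filtration G on Omega} on $\Omega_{X\sso}$ and $\Omega_{Z\sso}$, form the cone $C(\varphi^{\ast}:Rf_{\ast}\Omega_{X\sso}\to Rg_{\ast}\Omega_{Z\sso})$, compare with the cone $C(Rf_{\ast}f_{\bullet}^{-1}\Omega_Y \to Rg_{\ast}g_{\bullet}^{-1}\Omega_Y)$ via the relative Poincar\'e lemma, and read off that the canonical connection on $\xO_Y\otimes H^i(C)$ is identified with the Gauss--Manin connection $\nabla$ on $H^i(C_{\Omega})$, which automatically respects $F$. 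The main bookkeeping obstacle is keeping track of the weight shift by $1$ in the $X\sso$-summand and the resulting sign conventions in the differential of the cone; once this is set up correctly, every step above reduces to a direct application of the arguments already given for Lemma \ref{lemma for cohomology of semi-simplicial varieties}.
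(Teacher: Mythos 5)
Your proposal is correct and follows essentially the same route as the paper's own proof: form the mixed cone $C(\varphi^{\ast})$ with the shifted filtration $L$ and the filtration $F$, identify the $E_1$-terms of the $L$-spectral sequence as polarizable variations of weight $q$ via \eqref{gr of L}, run the two-filtrations/weight argument for $E_2$-degeneration exactly as in Lemma \ref{lemma for cohomology of semi-simplicial varieties}, and verify Griffiths transversality by passing to the cones of $Rf_{\ast}f_{\bullet}^{-1}\Omega_Y \to Rg_{\ast}g_{\bullet}^{-1}\Omega_Y$ and $Rf_{\ast}\Omega_{X\sso} \to Rg_{\ast}\Omega_{Z\sso}$ filtered by $G$. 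Your formula $\Gr_m^L C \simeq Rf_{1-m\,\ast}\bQ_{X_{1-m}}[m]\oplus Rg_{-m\,\ast}\bQ_{Z_{-m}}[m]$ is the correct bookkeeping of the shift, matching the paper's computation.
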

\begin{proof}
A filtration $L$ on $C(\varphi^{-1})$ is defined by
\begin{equation*}
L_mC(\varphi^{-1})^n
=L_{m-1}(Rf_{\ast}\bQ_{X\sso})^{n+1}
\oplus L_m(Rg_{\ast}\bQ_{Z\sso})^n
\end{equation*}
where $L$ on the right hand sides
denotes the filtrations defined
in the proof of
Lemma \ref{lemma for cohomology of semi-simplicial varieties}.

The morphism $\varphi:Z\sso \longrightarrow X\sso$
induces another morphisms
of {\itshape complexes}
\begin{equation*}
\varphi^{\ast}:
Rf_{\ast}\Omega_{X\sso/Y}
\longrightarrow
Rg_{\ast}\Omega_{Z\sso/Y}
\end{equation*}
which makes the diagram of {\itshape complexes}
\begin{equation}
\label{commutativity of Q and O for (X,Z)}
\begin{CD}
Rf_{\ast}\bQ_{X\sso}
@>{\varphi^{-1}}>>
Rg_{\ast}\bQ_{Z\sso} \\
@V{Rf_{\ast}(\alpha_{X\sso/Y})}VV
@VV{Rg_{\ast}(\alpha_{Z\sso/Y})}V \\
Rf_{\ast}\Omega_{X\sso/Y}
@>>{\varphi^{\ast}}>
Rg_{\ast}\Omega_{Z\sso/Y}
\end{CD}
\end{equation}
commutes,
where $\alpha_{X\sso/Y}$ and $\alpha_{Z\sso/Y}$
are the canonical morphism as in the proof of
Lemma \ref{lemma for cohomology of semi-simplicial varieties}.
Now we consider the mixed cone of the morphism $\varphi^{\ast}$
(see e.g. \cite[3.4]{ps}),
that is,
the cone $C(\varphi^{\ast})$
equipped with the filtrations
\begin{equation}
\begin{split}
&L_mC(\varphi^{\ast})^n
=L_{m-1}(Rf_{\ast}\Omega_{X\sso/Y})^{n+1}
\oplus L_m(Rg_{\ast}\Omega_{Z\sso/Y})^n \\
&F^pC(\varphi^{\ast})^n
=F^p(Rf_{\ast}\Omega_{X\sso/Y})^{n+1}
\oplus F^p(Rg_{\ast}\Omega_{Z\sso/Y})^n
\end{split}
\label{filtrations L and F on the cone}
\end{equation}
where $L$ and $F$ on the right hand sides
denote the filtrations defined
in the proof of
Lemma \ref{lemma for cohomology of semi-simplicial varieties}.
Then the commutative diagram
\eqref{commutativity of Q and O for (X,Z)}
induces a morphism of filtered complexes
$\alpha:(C(\varphi^{-1}),L) \longrightarrow (C(\varphi^{\ast}),L)$
which induces a filtered quasi-isomorphism
$(C(\varphi^{-1}),L) \otimes \xO_Y \longrightarrow (C(\varphi^{\ast}),L)$.
Then we have
\begin{equation*}
\begin{split}
\Gr_m^LC(\varphi^{-1})
&=\Gr_{m-1}^LRf_{\ast}\bQ_{X\sso}[1]
\oplus \Gr_m^LRg_{\ast}\bQ_{Z\sso} \\
&=
R(f_{-m+1})_{\ast}\bQ_{X_{-m}}[m]
\oplus R(g_{-m})_{\ast}\bQ_{Z_{-m}}[m]
\end{split}
\end{equation*}
and
\begin{equation*}
\begin{split}
(\Gr_m^L&C(\varphi^{\ast}),F) \\
&=
(\Gr_{m-1}^LRf_{\ast}\Omega_{X\sso/Y}[1],F)
\oplus
(\Gr_m^LRg_{\ast}\Omega_{Z\sso/Y},F) \\
&=
(R(f_{-m+1})_{\ast}\Omega_{X_{-m+1}/Y}[m],F)
\oplus
(R(g_{-m})_{\ast}\Omega_{Z_{-m}/Y}[m],F)
\end{split}
\end{equation*}
for every $m$.
Therefore the data
\begin{equation*}
(E_1^{p,q}(C(\varphi^{-1}),L),
(E_1^{p,q}(C(\varphi^{\ast}),L),F_{\rec}),
E_1^{p,q}(\alpha))
\end{equation*}
is a \pv of $\bQ$-Hodge structure of weight $q$.
Then the same argument in the proof of
Lemma \ref{lemma for cohomology of semi-simplicial varieties}
implies that the spectral sequences
$E_r^{p,q}(C(\varphi^{-1}),L)$ and
$E_r^{p,q}(C(\varphi^{\ast}),L)$ degenerate
at $E_2$-terms,
the spectral sequence $E_r^{p,q}(C(\varphi^{\ast}),F)$
degenerates at $E_1$-terms
and that the data
\begin{equation*}
((H^i(C(\varphi^{-1})),L[i]),(H^i(C(\varphi^{\ast})),L[i],F),H^i(\alpha))
\end{equation*}
is a \gp pre-variation of $\bQ$-mixed Hodge structure on $Y$
for every $i$.

What remains to prove is the Griffiths transversality.
We consider the complexes
$\Omega_{X\sso}$, $\Omega_{Z\sso}$,
$g\sso^{-1}\Omega_Y$ and $f\sso^{-1}\Omega_Y$
with the decreasing filtration $G$ as in the proof of
Lemma \ref{lemma for cohomology of semi-simplicial varieties}.
We have the commutative diagram
\begin{equation*}
\begin{CD}
Rf_{\ast}f^{-1}\sso\Omega_Y
@>>>
Rg_{\ast}g^{-1}\sso\Omega_Y \\
@VVV @VVV \\
Rf_{\ast}\Omega_{X\sso}
@>>>
Rg_{\ast}\Omega_{Z\sso}
\end{CD}
\end{equation*}
where the vertical arrows are filtered quasi-isomorphism
with respect to the filtration $G$.
The top horizontal arrow is denoted by $\psi^{-1}$
and the bottom by $\psi^{\ast}$
for a while.
Considering the cones
$C(\psi^{-1})$ and $C(\psi^{\ast})$
with the filtration $G$
defined by the same way as $F$ 
in \eqref{filtrations L and F on the cone}, 
we obtain a commutative diagram of quasi-isomorphisms
\begin{equation*}
\begin{CD}
C(\varphi^{-1})[-p] \otimes \Omega^p_Y
@>>>
\Gr_G^pC(\psi^{-1}) \\
@V{\alpha \otimes \id}VV @VVV \\
C(\varphi^{\ast})[-p] \otimes \Omega^p_Y
@>>>
\Gr_G^pC(\psi^{\ast})
\end{CD}
\end{equation*}
for every $p$.
Then we can check the Griffiths transversality
by the same way as in the proof of
Lemma \ref{lemma for cohomology of semi-simplicial varieties}.
\end{proof}

\begin{say} 
\label{review on Steenbrink's results}
Now we review Steenbrink's results
in \cite{steenbrink1}, \cite{steenbrink2}
and fix the notation for the later use.

Let $X$ be a smooth complex variety
and $f:X \longrightarrow \Delta$
a projective surjective morphism.
We set $\zo{X}=f^{-1}(\pd)$,
$E=f^{-1}(0)$. 
The coordinate function on $\Delta$ is denoted by $t$. 
We assume that $E_{\red}$ is a \snc divisor on $X$
and $f:\zo{X} \longrightarrow \pd$ is a smooth morphism.
Moreover, we assume that
$R^if_{\ast}\bQ_{\zo{X}}$ are of unipotent monodromy
for all $i$ for simplicity.

A finite decreasing filtration $G$ on $\Omega_X(\log E)$ is defined by
\begin{equation*}
\begin{split}
&G^0\Omega_X(\log E)=\Omega_X(\log E) \\
&G^1\Omega_X(\log E)
=
\im(f^{-1}\Omega^1_{\Delta}(\log 0) \otimes \Omega_X(\log E)[-1]
\longrightarrow \Omega_X(\log E)) \\
&G^2\Omega_X(\log E)=0
\end{split}
\end{equation*}
as in \eqref{filtration G on Omega}.
Then the morphism
\begin{equation*}
\nabla:
R^if_{\ast}\Omega_{X/\Delta}(\log E)
\longrightarrow
\Omega^1_{\Delta}(\log 0) \otimes R^if_{\ast}\Omega_{X/\Delta}(\log E)
\end{equation*}
is obtained as the morphism of $E_1$-terms
of the spectral sequence
$E_r^{p,q}(Rf_{\ast}\Omega_{X/\Delta}(\log E),G)$.
The restriction $\nabla|_{\pd}$ is identified with
$d \otimes \id$ on $\xO_{\pd} \otimes R^if_{\ast}\bQ_{\zo{X}}$
via the isomorphisms
\begin{equation*}
R^if_{\ast}\Omega_{X/\Delta}(\log E)|_{\pd}
\simeq
R^if_{\ast}\Omega_{\zo{X}/\pd}
\simeq
\xO_{\pd} \otimes R^if_{\ast}\bQ_{\zo{X}}
\end{equation*}
by definition.

Steenbrink proved that
$R^if_{\ast}\Omega_{X/\Delta}(\log E)$
is a locally free coherent $\xO_{\Delta}$-module
and $\res_0(\nabla)$ is nilpotent.
Therefore
$R^if_{\ast}\Omega_{X/\Delta}(\log E)$ is the canonical extension
of $\xO_{\pd} \otimes R^if_{\ast}\bQ_{\zo{X}}$
for every $i$.
Once we know the local freeness of $R^if_{\ast}\Omega_{X/\Delta}(\log E)$,
the canonical morphism
\begin{equation}
\label{base change iso for relative log de Rham complex}
R^if_{\ast}\Omega_{X/\Delta}(\log E) \otimes \bC(0)
\overset{\simeq}{\longrightarrow}
H^i(E,\Omega_{X/\Delta}(\log E) \otimes \xO_E)
\end{equation}
is an isomorphism for every $i$.

The filtration $G$ on $\Omega_X(\log E)$ induces
a filtration on $\Omega_X(\log E) \otimes \xO_E$,
which is denoted by $G$ again.
Then we have
\begin{equation}
\label{identification for G1}
\Gr_G^1\Omega_X(\log E) \otimes \xO_E
\simeq (\Omega_{X/\Delta}(\log E) \otimes \xO_E)[-1]
\end{equation}
because we have the identification
\begin{equation*}
G^1\Omega_X(\log E)
=d\log t \wedge \Omega_X(\log E)[-1]
\simeq \Omega_{X/\Delta}(\log E)[-1]
\end{equation*}
where $d\log t=dt/t$.
Therefore we have
\begin{equation*}
\begin{split}
&E_1^{0,i}(R\Gamma(E, \Omega_X(\log E) \otimes \xO_E),G)
\simeq
H^i(E, \Omega_{X/\Delta}(\log E) \otimes \xO_E) \\
&E_1^{1,i}(R\Gamma(E,\Omega_X(\log E) \otimes \xO_E),G)
\simeq
H^i(E, \Omega_{X/\Delta}(\log E) \otimes \xO_E)
\end{split}
\end{equation*} 
for every $i$.
Then the morphism of $E_1$-terms
\begin{equation*}
H^i(E,\Omega_{X/\Delta}(\log E) \otimes \xO_E)
\longrightarrow
H^i(E,\Omega_{X/\Delta}(\log E) \otimes \xO_E)
\end{equation*}
coincides with $\res_0(\nabla)$
under the identification
\eqref{base change iso for relative log de Rham complex}.

In \cite{steenbrink1}, 
Steenbrink constructed a cohomological $\bQ$-mixed Hodge complex,
denoted by
\begin{equation*}
A_{X/\Delta}
=((A^{\bQ}_{X/\Delta},W), (A^{\bC}_{X/\Delta},W,F),\alpha_{X/\Delta})
\end{equation*}
in this article,
which admits a filtered quasi-isomorphism
\begin{equation}
\label{morphism theta}
\theta_{X/\Delta}:
(\Omega_{X/\Delta}(\log E) \otimes \xO_{E_{\red}},F)
\longrightarrow
(A^{\bC}_{X/\Delta},F)
\end{equation}
where the filtration $F$ on the left hand side denotes the filtration
induced by the stupid filtration on $\Omega_{X/\Delta}(\log E)$.
Because the canonical morphism
\begin{equation*}
H^i(E,\Omega_{X/\Delta}(\log E) \otimes \xO_E)
\longrightarrow
H^i(E_{\red},\Omega_{X/\Delta}(\log E) \otimes \xO_{E_{\red}})
\end{equation*}
is an isomorphism for every $i$
by the unipotent monodromy condition,
we have the isomorphisms
\begin{equation}
\label{isomorphism induced by theta}
\begin{split}
R^if_{\ast}\Omega_{X/\Delta}(\log E) \otimes \bC(0)
&\overset{\simeq}{\longrightarrow}
H^i(E,\Omega_{X/\Delta}(\log E) \otimes \xO_E) \\
&\overset{\simeq}{\longrightarrow}
H^i(E_{\red},\Omega_{X/\Delta}(\log E) \otimes \xO_{E_{\red}}) \\
&\overset{\simeq}{\longrightarrow}
H^i(E_{\red},A^{\bC}_{X/\Delta})
\end{split}
\end{equation}
for every $i$.

Here we just recall the definition of $A^{\bC}_{X/\Delta}$
in \cite{steenbrink1}. 
The filtration $W_X(E)$ denotes the increasing filtration on $\Omega_X(\log E)$
defined by the order of poles along $E$ as usual. 
The complex $A^{\bC}_{X/\Delta}$ is the single complex
associated to the double complex
$((A^{\bC}_{X/\Delta})^{p,q},d',d'')$ given by
\begin{equation*}
(A^{\bC}_{X/\Delta})^{p,q}=\Omega^{p+q+1}_X(\log E)/W_X(E)_q
\end{equation*}
with the differentials
\begin{equation*}
\begin{split}
&d'=-d:(A^{\bC}_{X/\Delta})^{p,q}
\longrightarrow (A^{\bC}_{X/\Delta})^{p+1,q} \\
&d''=-d\log t \wedge:
(A^{\bC}_{X/\Delta})^{p,q} \longrightarrow (A^{\bC}_{X/\Delta})^{p,q+1}
\end{split}
\end{equation*}
where $d$ is the morphism induced from the differential of $\Omega_X(\log E)$
and where $d\log t \wedge$ denotes the morphism
given by the wedge product with $d\log t=dt/t$. 
For the definitions of $W$ and $F$ on $A^{\bC}_{X/\Delta}$, 
see \cite[4.17]{steenbrink1}. 
The morphism given by
\begin{equation*}
\Omega_X^p(\log E) \ni \omega
\mapsto
d\log t \wedge \omega \in (A^{\bC}_{X/\Delta})^{p,0}
\end{equation*}
induces the morphism \eqref{morphism theta}
\begin{equation*}
\theta_{X/\Delta}:
\Omega_{X/\Delta}(\log E) \otimes \xO_{E_{\red}}
\longrightarrow
A^{\bC}_{X/\Delta}
\end{equation*}
which turns out to be a filtered quasi-isomorphism
with respect to the filtrations $F$ on the both sides
(see \cite[Lemma (4.15)]{steenbrink1}).
The composite
\begin{equation*}
\Omega_X(\log E) \otimes \xO_E
\longrightarrow
\Omega_X(\log E) \otimes \xO_{E_{\red}}
\overset{\theta_{X/\Delta}}{\longrightarrow}
A^{\bC}_{X/\Delta}
\end{equation*}
is also denoted by $\theta_{X/\Delta}$ by abuse of the notation.

On the other hand,
the projection
\begin{equation*}
\Omega^p_X(\log E)
\longrightarrow
\Omega^p_X(\log E)/W_X(E)_0 =(A^{\bC}_{X/\Delta})^{p-1,0}
\subset (A^{\bC}_{X/\Delta}[-1])^p
\end{equation*}
induces the morphism
\begin{equation*}
\pi_{X/\Delta}:
\Omega^p_X(\log E) \otimes \xO_E
\longrightarrow
(A^{\bC}_{X/\Delta}[-1])^p
\end{equation*}
for every $p$.
Moreover, the projection
\begin{equation*}
\begin{split}
(A^{\bC}_{X/\Delta})^{p,q}=\Omega^{p+q+1}_X(\log &D)/W_X(E)_q \\
&\longrightarrow
\Omega^{p+q+1}_X(\log E)/W_X(E)_{q+1}=(A^{\bC}_{X/\Delta})^{p-1,q+1}
\end{split}
\end{equation*}
induces a morphism of bifiltered complexes
\begin{equation*}
(A^{\bC}_{X/\Delta},W,F)
\longrightarrow
(A^{\bC}_{X/\Delta},W[-2],F[-1])
\end{equation*}
denoted by $\nu_{X/\Delta}$ 
(see \cite[(4.22), Proposition (4.23)]{steenbrink1}). 
Then we have
\begin{equation}
\label{relation between pi and nu}
d\pi_{X/\Delta}=\pi_{X/\Delta}d+\nu_{X/\Delta}\theta_{X/\Delta}:
\Omega^p_X(\log E)\otimes \mathcal O_E
\longrightarrow
(A^{\bC}_{X/\Delta})^p
\end{equation}
for every $p$,
where $d$ on the left hand side
is the differential of $A^{\bC}_{X/\Delta}[-1]$.

We set
\begin{equation*}
N_{X/\Delta}=H^i(\nu_{X/\Delta}):
H^i(E_{\red},A^{\bC}_{X/\Delta})
\longrightarrow
H^i(E_{\red},A^{\bC}_{X/\Delta})
\end{equation*}
for every $i$.
It is proved that
the morphism
\begin{equation}
\label{coincidence of W and monodromy weight filtration}
N_{X/\Delta}^k:
\Gr_k^WH^i(E_{\red},A^{\bC}_{X/\Delta})
\longrightarrow
\Gr_{-k}^WH^i(E_{\red},A^{\bC}_{X/\Delta})
\end{equation}
is an isomorphism for every $k \ge 0$
(see Steenbrink \cite{steenbrink1}, El Zein \cite{elzein1},
Saito \cite{morihiko}, Guillen--Navarro Aznar \cite{gn},
Usui \cite{usui2}).

The complex $B_{X/\Delta}$ is defined by
\begin{equation*}
B^p_{X/\Delta}
=(A^{\bC}_{X/\Delta})^{p-1} \oplus (A^{\bC}_{X/\Delta})^p
\end{equation*}
with the differential
\begin{equation*}
d(x,y)=(-dx-\nu_{X/\Delta}(y),dy)
\end{equation*}
for $x \in (A^{\bC}_{X/\Delta})^{p-1}$
and $y \in (A^{\bC}_{X/\Delta})^p$,
where $d$ denotes the differential of the complex $A^{\bC}_{X/\Delta}$.
We define a filtration $G$ on $B_{X/\Delta}$ by
\begin{equation*}
\begin{split}
&G^0B_{X/\Delta}=B_{X/\Delta} \\
&G^1B_{X/\Delta}=A^{\bC}_{X/\Delta}[-1] \\
&G^2B_{X/\Delta}=0
\end{split}
\end{equation*}
where $A^{\bC}_{X/\Delta}[-1]$ is regarded as a subcomplex of $B_{X/\Delta}$
by the inclusion
$(A^{\bC}_{X/\Delta})^{p-1} \longrightarrow B^p_{X/\Delta}$
for every $p$.

A morphism
\begin{equation*}
\Omega^p_X(\log E) \otimes \xO_E \ni \omega
\mapsto
(\pi_{X/\Delta}(\omega), \theta_{X/\Delta}(\omega))
\in (A^{\bC}_{X/\Delta})^{p-1} \oplus (A^{\bC}_{X/\Delta})^p
\end{equation*}
defines a morphism of complexes
\begin{equation}
\label{morphism eta}
\eta_{X/\Delta}:\Omega_X(\log E) \otimes \xO_E \longrightarrow B_{X/\Delta}
\end{equation}
by \eqref{relation between pi and nu}.
It is easy to check that the morphism $\eta_{X/\Delta}$
preserves the filtration $G$ on the both sides.
Note that the diagrams
\begin{equation*}
\begin{CD}
\Gr_G^0\Omega_X(\log E) \otimes \xO_E
@>{\Gr_G^0\eta_{X/\Delta}}>>
\Gr_G^0B_{X/\Delta} \\
@| @| \\
\Omega_{X/\Delta}(\log E) \otimes \xO_E
@>>{\theta_{X/\Delta}}>
A^{\bC}_{X/\Delta}
\end{CD}
\end{equation*}
and
\begin{equation*}
\begin{CD}
\Gr_G^1\Omega_X(\log E) \otimes \xO_E
@>{\Gr_G^1\eta_{X/\Delta}}>> \Gr_G^1B_{X/\Delta} \\
@| @| \\
\Omega_{X/\Delta}(\log E) \otimes \xO_E[-1]
@>>{\theta_{X/\Delta}[-1]}> A^{\bC}_{X/\Delta}[-1]
\end{CD}
\end{equation*}
are commutative.
Considering the morphisms between $E_1$-terms
induced by $\eta_{X/\Delta}$,
we have
\begin{equation*}
\res_0(\nabla)=-N_{X/\Delta}
\end{equation*}
on $H^i(E_{\red},A^{\bC}_{X/\Delta})$
under the isomorphism \eqref{isomorphism induced by theta}.

We remark that the construction above is functorial. 
For the rational structure $A^{\bQ}_{X/\Delta}$,
we can use the construction by Steenbrink--Zucker \cite{sz}.

In \cite{steenbrink2},
the local freeness of $R^if_{\ast}\Omega^p_{X/\Delta}(\log E)$ is proved.
\end{say}

\begin{lem} 
\label{GPVMHS for semi-simplicial variety}
Let $f:X\sso \longrightarrow Y$
be a projective surjective augmented strict semi-simplicial variety
to a smooth algebraic variety $Y$.
Moreover, assume that
$X_p$ is smooth for every $p$.
Then there exists a Zariski open dense subset $\zo{Y}$ of $Y$
such that $(R^if_{\ast}\bQ_{X\sso})|_{\zo{Y}}$
underlies an admissible \gpv of $\bQ$-mixed Hodge structure 
for every $i$.
\end{lem}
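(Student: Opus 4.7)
The plan is to combine generic smoothness with Steenbrink's construction on each level of the semi-simplicial variety. By strictness, only finitely many $X_p$ are nonempty. First I would invoke generic smoothness to find a Zariski open dense subset $\zo{Y}\subset Y$ such that each $f_p|_{f_p^{-1}(\zo{Y})}:f_p^{-1}(\zo{Y})\longrightarrow \zo{Y}$ is smooth and projective. Lemma \ref{lemma for cohomology of semi-simplicial varieties} applied to the pull-back gives the \gpv of $\bQ$-mixed Hodge structure on $(R^if_*\bQ_{X\sso})|_{\zo{Y}}$. This settles the graded polarizability; the task reduces to verifying admissibility with respect to $Y$.

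To check admissibility, I take an arbitrary morphism $i:\Delta\longrightarrow Y$ with $i(\pd)\subset \zo{Y}$ and work with the pulled back semi-simplicial variety $f:X'\sso\longrightarrow \Delta$. Applying resolution of singularities and semistable reduction to each $X'_p$ (and shrinking $\Delta$ if necessary), I may assume every $X'_p$ is smooth, $E_p=(f_p)^{-1}(0)_{\red}$ is a simple normal crossing divisor, $f_p$ restricted to $X'_p\setminus E_p$ is smooth, and the local monodromies are unipotent --- the last via Borel's theorem in Remark \ref{q-uni}, which handles condition \ref{quasi-unipotency} outright.

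Next I would assemble Steenbrink's machinery across the semi-simplicial index $p$. For each $X'_p$ the data of \ref{review on Steenbrink's results} supplies the logarithmic extension $R^if_{p*}\Omega_{X'_p/\Delta}(\log E_p)$ with its stupid filtration $F$, the mixed Hodge complex $A_{X'_p/\Delta}$, the filtered quasi-isomorphism $\theta_{X'_p/\Delta}$, and the nilpotent operator $\nu_{X'_p/\Delta}$. Applying $Rf_*$ and the filtration $L$ of \eqref{filtration L} to the complexes $\Omega_{X'\sso/\Delta}(\log E\sso)$ and to the Steenbrink complexes $A^{\bC}_{X'\sso/\Delta}$ on $E\sso$, I obtain a candidate extension $\widetilde{\xV}$ of $\xO_{\pd}\otimes R^i f_*\bQ_{X'\sso\setminus E\sso}$ together with a filtration $F$. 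The same two-filtration / Hodge-theoretic degeneration argument as in Lemma \ref{lemma for cohomology of semi-simplicial varieties} --- using Lemma \ref{lemma for pre-variations} \ref{a property of morphism of pre-variations} to kill $d_2$ on the $L$-spectral sequence and Steenbrink's $E_1$-degeneration fiberwise --- shows that the $L$-spectral sequence degenerates at $E_2$, that the $F$-filtration is strict, and hence that $\Gr_F^p\Gr_k^W\widetilde{\xV}$ is locally free for all $k,p$. This verifies condition \ref{extendability of Hodge filtration}.

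The last condition \ref{existence of the relative monodromy weight filtration} is where the main obstacle lies. I would apply Lemma \ref{lemma on relative monodromy filtration} to the filtered mixed Hodge complex on $E\sso$ obtained above, with $W^f$ being the filtration $L[i]$, with $W$ the assembled Steenbrink weight filtration of \ref{review on Steenbrink's results}, and with $\nu$ the assembled $\nu_{X'_p/\Delta}$. The hypothesis on the $E_2$-degeneration has just been arranged. The hypothesis that $W[-m]$ on $H^n(\Gr_m^{W^f})$ is the monodromy weight filtration of $H^n(\Gr_m^{W^f}\nu)$ reduces, via $\Gr_m^LRf_*\Omega_{X'\sso/\Delta}(\log E\sso)=Rf_{-m*}\Omega_{X'_{-m}/\Delta}(\log E_{-m})[m]$ and the analogous identification of the $A^{\bC}$-pieces, to Steenbrink's single-level isomorphism \eqref{coincidence of W and monodromy weight filtration} for each $X'_p$. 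The delicate point is to keep track of the rational structure throughout the $L$-assembly so that one truly has a filtered $\bQ$-mixed Hodge complex; here I would rely on the Steenbrink--Zucker rational refinement used at the end of \ref{review on Steenbrink's results}. Granted this, Lemma \ref{lemma on relative monodromy filtration} produces the relative monodromy weight filtration on $H^i(E\sso,A^{\bC}_{X'\sso/\Delta})\simeq \widetilde{\xV}(0)$, completing the verification of pre-admissibility.
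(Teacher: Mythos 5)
Your proposal follows the paper's own proof in essentially every respect: generic smoothness plus Lemma \ref{lemma for cohomology of semi-simplicial varieties} for graded polarizability, reduction to $Y=\Delta$ by base-change compatibility, Borel's theorem for condition \eqref{quasi-unipotency}, the $L$-filtered assembly of $\Omega_{X\sso/\Delta}(\log E\sso)$ and of Steenbrink's complexes $A^{\bC}_{X_p/\Delta}$, the two-filtration arguments giving $E_2$-degeneration and local freeness of $\Gr_F^r\Gr_{-p}^L$ for condition \eqref{extendability of Hodge filtration}, and Lemma \ref{lemma on relative monodromy filtration} fed by the level-wise isomorphism \eqref{coincidence of W and monodromy weight filtration} for condition \eqref{existence of the relative monodromy weight filtration}.

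The one step that does not work as written is the reduction to unipotent monodromy by ``applying \dots semistable reduction to each $X'_p$''. Semistable reduction entails a ramified base change of the disc; doing it level by level would use different covers $t \mapsto t^{N_p}$ for different $p$ and destroy the semi-simplicial structure on which the entire $L$-assembly depends, and even with a single uniform Kummer cover you still owe an argument that pre-admissibility of the pull-back implies pre-admissibility of the original variation (both the upper canonical extension and the unipotent part of the monodromy change under such a cover). The paper settles both points by quoting Lemma 1.9.1 of \cite{kashiwara}, which says precisely that pre-admissibility may be checked after a ramified covering of $\Delta$; one then takes a single Kummer cover making all the finitely many local systems $R^qf_{p\ast}\bQ$ unipotent simultaneously. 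A smaller omission: to verify condition \eqref{extendability of Hodge filtration} you must know that your ``candidate extension'' $R^if_{\ast}\Omega_{X\sso/\Delta}(\log E\sso)$ really is the canonical extension, i.e., that $\res_0(\nabla)$ is nilpotent; this is exactly what the identification $\res_0(\nabla)=-H^i(\nu)$ via the complexes $B_{X\sso/\Delta}$ and the maps $\eta$ provides, and the same identification is what transports the relative weight filtration of $H^i(\nu)$ to the monodromy logarithm, as condition \eqref{existence of the relative monodromy weight filtration} requires.
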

\begin{proof}
There exists a non-empty Zariski open subset $\zo{Y}$ of $Y$
such that the morphism
$f:X\sso \longrightarrow Y$
is smooth over $\zo{Y}$.
We set $\zo{X}\sso=f^{-1}(\zo{Y})$
and denote the induced morphisms
$\zo{X}\sso \longrightarrow \zo{Y}$
by the same letter $f$ by abuse of the language.
Then
$(R^if_{\ast}\bQ_{X\sso})|_{\zo{Y}}$
underlies a \gpv of $\bQ$-mixed Hodge structure 
by Lemma \ref{lemma for cohomology of semi-simplicial varieties}.

It is sufficient to prove
that the \gpv of $\bQ$-mixed Hodge structure above is admissible. 
Because the \gpv of $\bQ$-mixed Hodge structure constructed
in Lemma \ref{lemma for cohomology of semi-simplicial varieties}
commutes with the base change
as in Remark \ref{functoriality or VMHS for semi-simplicial variety},
we may assume $Y=\Delta$ and $\zo{Y}=\Delta\!^*$.

Our variation of graded polarizable $\mathbb Q$-mixed Hodge structure 
has a $\mathbb Z$-structure. 
Therefore, the quasi-unipotency of the monodromy around the origin
is obvious by Remark \ref{q-uni}.
Thus we have the property
\eqref{quasi-unipotency}
in Definition \ref{admissibility} \ref{pre-admissibility on Delta*}. 
Once we know the quasi-unipotency of the monodromy,
Lemma 1.9.1 in \cite{kashiwara}
allows us to assume
that $f:X\sso \longrightarrow \Delta$
is of unipotent monodromy.
Moreover we may assume that
$f^{-1}(0)_{\red}$
is a \snc divisor
on the smooth semi-simplicial complex variety $X\sso$.
We set
$E\sso=f_{\bullet}^{-1}(0)$.
Note that $E\sso$ and $E_{\bullet\red}=\{(E_p)_{\red}\}_{p \ge 0}$
are strict semi-simplicial subvarieties of $X\sso$.

Consider the bifiltered complex
\begin{equation*}
(Rf_{\ast}\Omega_{X\sso/\Delta}(\log E\sso),L,F)
\end{equation*}
where $L$ is defined in \eqref{filtration L}
and $F$ as in \eqref{filtration W}.
We trivially have
\begin{equation*}
(Rf_{\ast}\Omega_{X\sso/\Delta}(\log E\sso),L,F)|_{\pd}
=(Rf_{\ast}\Omega_{\zo{X}\sso/\pd},L,F)
\end{equation*}
by definition.

\begin{step}
In this first step,
we prove the local freeness of several coherent $\xO_{\Delta}$-modules.

Consider the spectral sequence
\begin{equation*}
(E_r^{p,q}(Rf_{\ast}\Omega_{X\sso/\Delta}(\log E\sso),L),F_{\rec})
\end{equation*}
associated to the filtration $L$
on the complex $Rf_{\ast}\Omega_{X\sso/\Delta}(\log E\sso)$.

By \eqref{gr of L}, we have
\begin{equation}
\label{E1-terms of K(log 0)}
E_1^{p,q}(Rf_{\ast}\Omega_{X\sso/\Delta}(\log E\sso),L)
\simeq
R^qf_{p\ast}\Omega_{X_p/\Delta}(\log E_p)
\end{equation}
which is the canonical extension of
\begin{equation*}
\begin{split}
E_1^{p,q}(Rf_{\ast}\Omega_{\zo{X}\sso/\pd},L)
&\simeq
R^qf_{p\ast}\Omega_{\zo{X}_p/\pd} \\
&\simeq
\xO_{\pd} \otimes R^qf_{p\ast}\bQ_{\zo{X}_p}
\simeq
\xO_{\pd} \otimes E_1^{p,q}(Rf_{\ast}\bQ_{\zo{X}\sso},L)
\end{split}
\end{equation*}
by \cite{steenbrink1}.
Because taking the canonical extension is an exact functor
by \cite[Proposition 5.2 (d)]{Deligne},
$E_2^{p,q}(Rf_{\ast}\Omega_{X\sso/\Delta}(\log E\sso),L)$
is the canonical extension of
$E_2^{p,q}(Rf_{\ast}\Omega_{\zo{X}\sso/\pd},L)
\simeq \xO_{\pd} \otimes_{\bQ} E_2^{p,q}(Rf_{\ast}\bQ_{\zo{X}\sso},L)$.
Therefore
$E_2^{p,q}(Rf_{\ast}\Omega_{X\sso/\Delta}(\log E\sso),L)$
is a locally free coherent $\xO_{\Delta}$-module.
Once we know the local freeness of $E_2$-terms,
the spectral sequence
$E_r^{p,q}(Rf_{\ast}\Omega_{X\sso/\Delta}(\log E\sso),L)$
degenerates at $E_2$-terms
because its restriction on $\pd$ degenerates at $E_2$-terms.
Thus we obtain that
\begin{equation*}
\begin{split}
E_2^{p,q}(Rf_{\ast}\Omega_{X\sso/\Delta}(\log E\sso),L)
&\simeq
E_{\infty}^{p,q}(Rf_{\ast}\Omega_{X\sso/\Delta}(\log E\sso),L) \\
&\simeq
\Gr_{-p}^LR^{p+q}f_{\ast}\Omega_{X\sso/\Delta}(\log E\sso)
\end{split}
\end{equation*}
is locally free for every $p,q$.
In particular,
$R^if_{\ast}\Omega_{X\sso/\Delta}(\log E\sso)$
is a locally free coherent $\xO_{\Delta}$-module
with the property
\begin{equation*}
R^if_{\ast}\Omega_{X\sso/\Delta}(\log E\sso)|_{\pd}
\simeq
R^if_{\ast}\Omega_{\zo{X}\sso/\Delta}
\simeq
\xO_{\pd} \otimes R^if_{\ast}\bQ_{\zo{X}\sso}
\end{equation*}
for every $i$.

The morphism of $E_1$-terms
\begin{equation}
\label{morphism d1}
d_1:
R^qf_{p\ast}\Omega_{X_p/\Delta}(\log E_p)
\longrightarrow
R^qf_{p+1\ast}\Omega_{X_{p+1}/\Delta}(\log E_{p+1})
\end{equation}
preserves the filtration $F$
because $F_{\rec}=F_d=F_{d^{\ast}}$
on the $E_1$-terms in general
and because $F_{\rec}$
on $E_1^{p,q}(Rf_{\ast}\Omega_{X\sso/\Delta}(\log E\sso),L)$
coincides with $F$
on $R^qf_{p\ast}\Omega_{X_p/\Delta}(\log E_p)$
under the isomorphism \eqref{E1-terms of K(log 0)}.
On the other hand,
the filtration $F$ on $R^qf_{p\ast}\Omega_{X_p/\Delta}(\log E_p)$
coincides with the filtration
obtained by the nilpotent orbit theorem in \cite{schmid}
because
\begin{equation*}
\Gr_F^rR^qf_{p\ast}\Omega_{X_p/\Delta}(\log E_p) \\
\simeq
R^{q-r}f_{p\ast}\Omega^r_{X_p/\Delta}(\log E_p)
\end{equation*}
is locally free for every $r$
(see Corollary \ref{corollary for the extensions of a filtrartion} below).
By the $SL_2$-orbit theorem in \cite{schmid},
\begin{equation*}
(R^qf_{p\ast}\Omega_{X_p/\Delta}(\log E_p) \otimes \bC(0),W,F)
\end{equation*}
underlies a $\bQ$-mixed Hodge structure for every $p,q$,
where $W$ denotes the monodromy weight filtration.
On the other hand,
the morphism
\begin{equation*}
d_1(0):
R^qf_{p\ast}\Omega_{X_p/\Delta}(\log E_p) \otimes \bC(0)
\longrightarrow
R^qf_{p+1\ast}\Omega_{X_{p+1}/\Delta}(\log E_{p+1}) \otimes \bC(0)
\end{equation*}
induced by the morphism $d_1$ in \eqref{morphism d1}
underlies a morphism of $\bQ$-mixed Hodge structures
because the restriction of $d_1$ on $\pd$
preserves the $\bQ$-structures
$R^qf_{p\ast}\bQ_{\zo{X}_p}$ and $R^qf_{p+1\ast}\bQ_{\zo{X}_{p+1}}$.
Therefore $d_1(0)$ is strictly compatible with the filtrations $F$
on $R^qf_{p\ast}\Omega_{X_p/\Delta}(\log E_p) \otimes \bC(0)$
and $R^qf_{p+1\ast}\Omega_{X_{p+1}/\Delta}(\log E_{p+1}) \otimes \bC(0)$.
In other words,
the morphism
\begin{equation*}
\begin{split}
d_1(0):
E_1^{p,q}(Rf_{\ast}\Omega_{X\sso/\Delta}&(\log E\sso),L) \otimes \bC(0) \\
&\longrightarrow
E_1^{p+1,q}(Rf_{\ast}\Omega_{X\sso/\Delta}(\log E\sso),L) \otimes \bC(0)
\end{split}
\end{equation*}
is strictly compatible with the filtrations $F_{\rec}$
on the both sides.

Applying \ref{strictness from residue field to stalk}
in Lemma \ref{toy base change} to the complex
\begin{equation*}
(E_1^{\bullet,q}(Rf_{\ast}\Omega_{X\sso/\Delta}(\log E\sso),L),F_{\rec})
\end{equation*}
we conclude that
\begin{equation*}
\Gr_{F_{\rec}}^rE_2^{p,q}(Rf_{\ast}\Omega_{X\sso/\Delta}(\log E\sso),L)
\end{equation*}
is locally free coherent $\xO_{\Delta}$-module for every $p,q,r$
and that $F_{\rec}=F_d=F_{d^{\ast}}$ on
$E_2^{p,q}(Rf_{\ast}\Omega_{X\sso/\Delta}(\log E\sso),L)$.
Therefore
\begin{equation*}
\Gr_{F_{\rec}}^rE_2^{p,q}(Rf_{\ast}\Omega_{X\sso/\Delta}(\log E\sso),L)
\simeq
\Gr_F^r\Gr_{-p}^LR^{p+q}f_{\ast}\Omega_{X\sso/\Delta}(\log E\sso)
\end{equation*}
is locally free for every $p,q,r$. 
Moreover, the spectral sequence
\begin{equation*}
E_r^{p,q}(Rf_{\ast}\Omega_{X\sso/\Delta}(\log E\sso),F)
\end{equation*}
degenerates at $E_1$-terms
by the lemma on two filtrations as before. 
\end{step}

\begin{step}
The canonical morphism
\begin{equation*}
\Omega_{X\sso/\Delta}(\log E\sso)
\longrightarrow
\Omega_{X\sso/\Delta}(\log E\sso) \otimes \xO_{E\sso}
\end{equation*}
induces the morphism of complexes
\begin{equation}
\label{base change morphism for X bullet}
Rf_{\ast}\Omega_{X\sso/\Delta}(\log E\sso) \otimes \bC(0)
\longrightarrow
R\Gamma
(E\sso,
\Omega_{X\sso/\Delta}(\log E\sso) \otimes \xO_{E\sso})
\end{equation}
preserving the filtration $L$ on the both sides.
Then the morphism of the spectral sequences induces the morphism
\begin{equation}
\label{morphism of ss from Omega to Omega otimes OD}
\begin{split}
E_r^{p,q}(Rf_{\ast}\Omega_{X\sso/\Delta}&(\log E\sso), L) \otimes \bC(0) \\
&\longrightarrow
E_r^{p,q}(R\Gamma
(E\sso,
\Omega_{X\sso/\Delta}(\log E\sso) \otimes \xO_{E\sso}),L)
\end{split}
\end{equation}
for every $p,q,r$.
For the case of $r=1$,
the morphism above coincides with the canonical morphism
\begin{equation*}
R^qf_{p\ast}\Omega_{X_p/\Delta}(\log E_p) \otimes \bC(0)
\longrightarrow
H^q(E_p,\Omega_{X_p/\Delta}(\log E_p) \otimes \xO_{E_p})
\end{equation*}
which is an isomorphism for every $p,q$ as mentioned
in \ref{review on Steenbrink's results}.
Therefore the morphism
\begin{equation*}
\begin{split}
H^p(E_1^{\bullet,q}
(Rf_{\ast}\Omega_{X\sso/\Delta}&(\log E\sso),L) \otimes \bC(0)) \\
&\longrightarrow
E_2^{p,q}(R\Gamma
(E\sso,
\Omega_{X\sso/\Delta}(\log E\sso) \otimes \xO_{E\sso}),L)
\end{split}
\end{equation*}
is an isomorphism for every $p,q$.
Moreover, the canonical morphism
\begin{equation*}
\begin{split}
E_2^{p,q}(Rf_{\ast}\Omega_{X\sso/\Delta}&(\log E\sso), L) \otimes \bC(0) \\
&\longrightarrow
H^p(E_1^{\bullet,q}
(Rf_{\ast}\Omega_{X\sso/\Delta}(\log E\sso),L) \otimes \bC(0))
\end{split}
\end{equation*}
is an isomorphism for every $p,q$,
because
$E_2^{p,q}(Rf_{\ast}\Omega_{X\sso/\Delta}(\log E\sso), L)$
is locally free for every $p,q$ as proved in Step 1.
Thus we know that
the morphism
\eqref{morphism of ss from Omega to Omega otimes OD}
is an isomorphism for every $p,q$ and for $r=2$.
Therefore the $E_2$-degeneracy of
$E_r^{p,q}(Rf_{\ast}\Omega_{X\sso/\Delta}(\log E\sso), L)$
implies the $E_2$-degeneracy of
$E_r^{p,q}(R\Gamma(E\sso,\Omega_{X\sso/\Delta}(\log E\sso)
\otimes \xO_{E\sso}),L)$.
Moreover, we have the canonical isomorphism
\begin{equation*}
\Gr_m^LR^if_{\ast}\Omega_{X\sso/\Delta}(\log E\sso) \otimes \bC(0)
\longrightarrow
\Gr_m^LH^i(E\sso,\Omega_{X\sso/\Delta}(\log E\sso) \otimes \xO_{E\sso})
\end{equation*}
for every $i,m$.
In particular, the canonical morphism
\begin{equation}
\label{the canonical morphism from X bullet to D bullet}
R^if_{\ast}\Omega_{X\sso/\Delta}(\log E\sso) \otimes \bC(0)
\longrightarrow
H^i(E\sso,\Omega_{X\sso/\Delta}(\log E\sso) \otimes \xO_{E\sso})
\end{equation}
is an isomorphism for every $i$.
\end{step}

\begin{step}
Considering the filtered complex
\begin{equation*}
(Rf_{\ast}\Omega_{X\sso}(\log E\sso), G)
\end{equation*}
we obtain the log integrable connection
\begin{equation*}
\nabla:
R^if_{\ast}\Omega_{X\sso/\Delta}(\log E\sso)
\longrightarrow
\Omega^1_{\Delta}(\log 0)
\otimes
R^if_{\ast}\Omega_{X\sso/\Delta}(\log E\sso)
\end{equation*}
as the morphism of $E_1$-terms of the spectral sequence.
It is clear that
$\nabla|_{\pd}$ coincides with the connection \eqref{regular connection nabla}.

On the other hand,
we consider
\begin{equation*}
(R\Gamma(E\sso,\Omega_{X\sso}(\log E\sso) \otimes \xO_{E\sso}),G)
\end{equation*}
with the identification
\begin{equation*}
G^1\Omega_{X\sso}(\log E\sso) \otimes \xO_{E\sso}
\simeq
(\Omega_{X\sso/\Delta}(\log E\sso) \otimes \xO_{E\sso})[-1]
\end{equation*}
as in \eqref{identification for G1}. 
The same procedure in
\ref{review on Steenbrink's results}
shows the fact that the morphism of $E_1$-terms
\begin{equation}
\label{morphism of E1-terms for Omega otimes OD}
\begin{split}
H^i(E\sso,\Omega_{X\sso/\Delta}&(\log E\sso) \otimes \xO_{E\sso}) \\
&\longrightarrow
H^i(E\sso,\Omega_{X\sso/\Delta}(\log E\sso) \otimes \xO_{E\sso})
\end{split}
\end{equation}
coincides with $\res_0(\nabla)$
via the isomorphism
\eqref{the canonical morphism from X bullet to D bullet}. 
\end{step}

\begin{step}
The data $A_{X\sso/\Delta}$ gives us an object on
the semi-simplicial variety
$E_{\bullet\red}$
because Steenbrink's construction in \ref{review on Steenbrink's results}
is functorial as mentioned there.
Then the data
\begin{equation}
\label{fmhc A}
(R\Gamma(E_{\bullet\red}, A_{X\sso/\Delta}),L,\delta(W,L),F)
\end{equation}
is obtained.
We set
\begin{equation*}
A_{\bC}=R\Gamma(E_{\bullet\red}, A^{\bC}_{X\sso/\Delta})
\end{equation*}
for simplicity.
The morphism
\begin{equation*}
\theta_{X\sso/\Delta}:
\Omega_{X\sso/\Delta}(\log E\sso) \otimes \xO_{E\sso}
\longrightarrow
A^{\bC}_{X\sso/\Delta}
\end{equation*}
induces the morphism
\begin{equation}
\label{morphism RGamma(theta)}
\theta:
R\Gamma(E\sso,\Omega_{X\sso/\Delta}(\log E\sso) \otimes \xO_{E\sso})
\longrightarrow
A_{\bC}
\end{equation}
which preserves the filtrations $L$ and $F$.
Because
\begin{equation*}
H^i(\Gr_m^L\theta):
H^i(\Gr_m^L
R\Gamma(E\sso,\Omega_{X\sso/\Delta}(\log E\sso) \otimes \xO_{E\sso}))
\longrightarrow
H^i(\Gr_m^LA_{\bC})
\end{equation*}
coincides with the isomorphism
\begin{equation*}
H^i(E_{-m},\Omega_{X_{-m}/\Delta}(\log E_{-m}) \otimes \xO_{E_{-m}}))
\longrightarrow
H^i((E_{-m})_{\red}, A^{\bC}_{X_{-m}/\Delta})
\end{equation*}
in \eqref{isomorphism induced by theta}
for every $i,m$,
the morphism $\theta$ is a filtered quasi-isomorphism with respect to $L$.
In particular,
$\theta$ is a quasi-isomorphism,
that is,
\begin{equation}
\label{isomorphism Hi(theta)}
H^i(\theta):
H^i(E\sso,\Omega_{X\sso/\Delta}(\log E\sso) \otimes \xO_{E\sso})
\longrightarrow
H^i(A_{\bC})
\end{equation}
is an isomorphism for every $i$.

Moreover, the morphism
\begin{equation*}
\nu_{X\sso/\Delta}:
(A^{\bC}_{X\sso/\Delta},W,F)
\longrightarrow
(A^{\bC}_{X\sso/\Delta},W[-2],F[-1])
\end{equation*}
induces the morphism
\begin{equation}
\label{morphism nu}
(A_{\bC},L,\delta(W,L),F)
\longrightarrow
(A_{\bC},L,\delta(W,L)[-2],F[-1])
\end{equation}
which we simply denote by $\nu$.
Because of the property
\begin{equation*}
\nu(\delta(W,L)_mA_{\bC}) \subset \delta(W,L)_{m-2}A_{\bC}
\end{equation*}
$\nu$ is a nilpotent endomorphism.
We set
\begin{equation*}
N=H^i(\nu):
H^i(A_{\bC}) \longrightarrow H^i(A_{\bC})
\end{equation*}
for every $i$.

On the other hand,
we obtain
\begin{equation*}
(R\Gamma(E_{\bullet\red},B_{X\sso/\Delta}),G)
\end{equation*}
from $B_{X\sso/\Delta}$ with the filtration $G$.
By definition, we have
\begin{equation*}
R\Gamma(E_{\bullet\red},B_{X\sso/\Delta})^n
=A_{\bC}^{n-1} \oplus A_{\bC}^n
\end{equation*}
and
\begin{equation*}
d(x,y)=(-dx-\nu(y),dy)
\end{equation*}
for $x \in A_{\bC}^{n-1}$ and $y \in A_{\bC}^n$,
where $d$ is the differential of the complex $A_{\bC}$.
Moreover,
the filtration $G$
on $R\Gamma(E_{\bullet\red},B_{X\sso/\Delta})$ satisfies
\begin{equation*}
G^1R\Gamma(E_{\bullet\red},B_{X\sso/\Delta})
=A_{\bC}[-1]
\end{equation*}
as in \ref{review on Steenbrink's results}.
The morphism
\begin{equation}
\label{morphism eta sso}
\eta_{X\sso/\Delta}:
\Omega_{X\sso}(\log E\sso) \otimes \xO_{E\sso}
\longrightarrow
B_{X\sso/\Delta}
\end{equation}
induced by \eqref{morphism eta}
gives us the morphism
\begin{equation*}
\eta:
R\Gamma(E\sso,\Omega_{X\sso}(\log E\sso) \otimes \xO_{E\sso})
\longrightarrow
R\Gamma(E_{\bullet\red},B_{X\sso/\Delta})
\end{equation*}
preserving the filtration $G$.
Note that the diagrams
\begin{equation*}
\begin{CD}
\Gr_G^0R\Gamma(E\sso,\Omega_{X\sso}(\log E\sso) \otimes \xO_{E\sso})
@>{\Gr_G^0\eta}>>
\Gr_G^0B_{X\sso/\Delta} \\
@| @| \\
R\Gamma(E\sso,\Omega_{X\sso/\Delta}(\log E\sso) \otimes \xO_{E\sso})
@>>{\theta}>
A_{\bC}
\end{CD}
\end{equation*}
and
\begin{equation*}
\begin{CD}
\Gr_G^1R\Gamma(E\sso,\Omega_{X\sso}(\log E\sso) \otimes \xO_{E\sso})
@>{\Gr_G^1\eta}>>
\Gr_G^1B_{X\sso/\Delta} \\
@| @| \\
R\Gamma(E\sso,\Omega_{X\sso/\Delta}(\log E\sso) \otimes \xO_{E\sso})[-1]
@>>{\theta[-1]}>
A_{\bC}[-1]
\end{CD}
\end{equation*}
are commutative by definition.
Thus the morphism of $E_1$-terms
\eqref{morphism of E1-terms for Omega otimes OD}
coincides with $-N$
under the isomorphism
\eqref{isomorphism Hi(theta)}.
Therefore $\res_0(\nabla)$ is identified with $-N$
via the isomorphisms
\eqref{the canonical morphism from X bullet to D bullet}
and \eqref{isomorphism Hi(theta)}.
Because $\nu$ is nilpotent,
the morphism $N$ is nilpotent,
and then so is $\res_0(\nabla)$.
Thus we conclude that
$R^if_{\ast}\Omega_{X\sso/\Delta}(\log E\sso)$
is the canonical extension of
$R^if_{\ast}\Omega_{\zo{X}\sso/\pd}
\simeq \xO_{\pd} \otimes R^if_{\ast}\bQ_{\zo{X}\sso}$.
\end{step}

\begin{step}
We can easily see that
the data \eqref{fmhc A}
is a $\bQ$-mixed Hodge complex filtered by $L$
(for the definition of filtered $\bQ$-mixed Hodge complex,
see e.g.~\cite[6.1.4 D\'efinition]{elzein2}).
Moreover, the spectral sequence
associated to the filtration $L$
degenerates at $E_2$-terms
because
$\theta$ in 
\eqref{morphism RGamma(theta)} is a filtered quasi-isomorphism
and because
the spectral sequence for
$(R\Gamma(E\sso,\Omega_{X\sso/\Delta}(\log E\sso) \otimes \xO_{E\sso}),L)$
degenerates at $E_2$-terms.
The filtration induced by $\delta(W,L)$ on
\begin{equation*}
\Gr_m^LA_{\bC}
\simeq
R\Gamma((E_{-m})_{\red},A^{\bC}_{X_{-m}/\Delta})
\end{equation*}
coincides with $W[m]$
and the morphism
$\Gr_m^L\nu$ coincides with $\nu_{X_{-m}/\Delta}$
for every $m$.
Hence the filtration $\delta(W,L)[-m]$
on $H^i(\Gr_m^LA_{\bC})$ is the monodromy weight filtration
by the isomorphism
\eqref{coincidence of W and monodromy weight filtration}.
Therefore $\delta(W,L)$ on $H^i(A_{\bC})$
is the monodromy weight filtration of $N=H^i(\nu)$
relative to the filtration $L$
by Lemma \ref{lemma on relative monodromy filtration}.
\end{step}

Thus the condition
\eqref{extendability of Hodge filtration}
is obtained by the local freeness in Step 1
and by the fact that
$R^if_{\bullet\ast}\Omega_{X\sso/\Delta}(\log E\sso)$
is the canonical extension of
$R^if_{\bullet\ast}\Omega_{\zo{X}\sso/\pd}
\simeq \xO_{\pd} \otimes R^if_{\bullet\ast}\bQ_{\zo{X}\sso}$
for every $i$ in Step 3.
Moreover,
the condition
\eqref{existence of the relative monodromy weight filtration}
is proved
by the existence of the monodromy weight filtration of $N$
relative to $L$ in Step 5
and by the fact that $N$ coincides with $-\res_0(\nabla)$ in Step 3.
\end{proof}

\begin{rem} 
Let $(V,W)$ be a finite dimensional $\bQ$-vector space
equipped with a finite increasing filtration $W$
and $N$ a nilpotent endomorphism of $V$
preserving the filtration $W$.
On the $\bC$-vector space $V_{\bC}=\bC \otimes V$,
the filtration $W$ and the nilpotent endomorphism $N_{\bC}=\id \otimes N$
are induced by the trivial way.
Then the existence of the monodromy weight filtration
of $N$ relative to $W$ on $V$
is equivalent to the existence of the monodromy weight filtration of $N_{\bC}$
relative to $W$ on $V_{\bC}$.
We can check this equivalence 
by using Theorem (2.20) in \cite{sz}.
\end{rem}

\begin{lem}[GPVMHS for relative cohomology]
\label{admissibility for relative cohomology}
Let $f:X\sso \longrightarrow Y$
and $g:Z\sso \longrightarrow Y$ be
projective augmented strict semi-simplicial varieties
and $\varphi:Z\sso \longrightarrow X\sso$
a morphism of semi-simplicial varieties
compatible with the augmentations
$X\sso \longrightarrow Y$ and $Z\sso \longrightarrow Y$.
The cone of the canonical morphism
$\varphi^{-1}: Rf_{\ast}\bQ_{X\sso} \longrightarrow Rg_{\ast}\bQ_{Z\sso}$
is denoted by $C(\varphi^{-1})$
as in Lemma $\ref{lemma for relative cohomology}$.
Take the open subset $\zo{Y}$ such that
$f:X\sso \longrightarrow Y$ and $g:Z\sso \longrightarrow Y$
are smooth over $\zo{Y}$.
Then $H^i(C(\varphi^{-1}))|_{\zo{Y}}$ underlies
an admissible \gpv of $\bQ$-mixed Hodge structure for every $i$.
\end{lem}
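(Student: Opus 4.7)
The plan is to imitate the proof of Lemma \ref{GPVMHS for semi-simplicial variety} step by step, carrying every construction through the mixed cone of the morphism induced by $\varphi$. By Lemma \ref{lemma for relative cohomology}, $H^i(C(\varphi^{-1}))|_{\zo{Y}}$ already underlies a \gpv of $\bQ$-mixed Hodge structure on $\zo{Y}$, so only the three conditions of Definition \ref{admissibility} remain to verify. Since all the constructions involved are functorial in $Y$ and compatible with base change, we may reduce to the case $Y=\Delta$, $\zo{Y}=\pd$. The $\bZ$-structure on $C(\varphi^{-1})$ combined with Remark \ref{q-uni} gives the quasi-unipotency of the monodromy around $0$; by \cite[Lemma 1.9.1]{kashiwara} we may further assume the monodromy is unipotent, and after resolution we arrange the reductions of $E\sso=f^{-1}(0)$ and $F\sso=g^{-1}(0)$ to be \snc divisors on the smooth semi-simplicial varieties $X\sso$ and $Z\sso$.

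Next I would introduce the mixed cone $K$ of the morphism
\begin{equation*}
\varphi^{\ast}:Rf_{\ast}\Omega_{X\sso/\Delta}(\log E\sso) \longrightarrow Rg_{\ast}\Omega_{Z\sso/\Delta}(\log F\sso)
\end{equation*}
equipped with the direct-sum filtrations $L$ and $F$ as in \eqref{filtrations L and F on the cone}. On this cone I would run the argument of Steps 1--3 of Lemma \ref{GPVMHS for semi-simplicial variety} verbatim: the $E_1$-terms of $(K,L)$ are extensions of (shifts of) the $E_1$-terms of the two constituents, which are canonical extensions by Steenbrink; together with the strict compatibility on the residue field at $0$ (Lemma \ref{toy base change} \ref{strictness from residue field to stalk}), this yields local freeness of $\Gr_F^p\Gr_m^L H^i(K)$, $E_2$-degeneracy for $L$, $E_1$-degeneracy for $F$ and the base-change isomorphism analogous to \eqref{the canonical morphism from X bullet to D bullet}. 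Step 3 then produces the logarithmic connection $\nabla$ on $H^i(K)$ whose restriction to $\pd$ is the canonical one, which gives condition \eqref{extendability of Hodge filtration} of Definition \ref{pre-admissibility on Delta*}.

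For condition \eqref{existence of the relative monodromy weight filtration} I would form the mixed cone $C_A$ of the morphism of cohomological mixed Hodge complexes
\begin{equation*}
R\Gamma(E_{\bullet\red},A_{X\sso/\Delta}) \longrightarrow R\Gamma(F_{\bullet\red},A_{Z\sso/\Delta})
\end{equation*}
induced by $\varphi$, together with the triple filtration $(L,\delta(W,L),F)$ and the nilpotent endomorphism $\nu$ obtained from $\nu_{X\sso/\Delta}$ and $\nu_{Z\sso/\Delta}$. Functoriality of Steenbrink's morphisms $\theta$, $\pi$, $\nu$ and $\eta$ of \ref{review on Steenbrink's results} provides the cone analogues of \eqref{morphism RGamma(theta)} and \eqref{morphism eta sso}, showing that $C_A$ is a filtered $\bQ$-mixed Hodge complex and that $-N=-H^i(\nu)$ is identified with $\res_0(\nabla)$ on $H^i(K)$. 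Because $\Gr_m^L C_A$ is, up to shift, the direct sum of the two Steenbrink complexes $A_{X_{-m+1}/\Delta}$ and $A_{Z_{-m}/\Delta}$, the induced filtration $\delta(W,L)[-m]$ on its cohomology coincides with the monodromy weight filtration of $\Gr_m^L\nu$ by \eqref{coincidence of W and monodromy weight filtration}. Lemma \ref{lemma on relative monodromy filtration} then yields the existence of the monodromy weight filtration of $N$ relative to $L$ on $H^i(C_A)$, completing the verification of admissibility.

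The main obstacle is organizational rather than conceptual: all the filtrations $L$, $F$, $W$, $G$, $\delta(W,L)$ and the morphisms $\theta$, $\pi$, $\nu$, $\eta$ of \ref{review on Steenbrink's results} have to be transported to the mixed cone setting in a uniform way, so that each filtered quasi-isomorphism, each base-change isomorphism, and each $E_r$-degeneration from the proof of Lemma \ref{GPVMHS for semi-simplicial variety} has a direct counterpart for $C(\varphi^{-1})$ compatible with both augmentations.
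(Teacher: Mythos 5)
Your proposal is correct and follows essentially the same route as the paper's proof: reduction to $\Delta$ with unipotent monodromy, the mixed cone of $\varphi^{\ast}$ on the relative logarithmic de Rham complexes for condition \eqref{extendability of Hodge filtration}, and the cone of the induced morphism between the Steenbrink complexes $R\Gamma(E_{\bullet\red},A^{\bC}_{X\sso/\Delta})$ and $R\Gamma(F_{\bullet\red},A^{\bC}_{Z\sso/\Delta})$ together with Lemma \ref{lemma on relative monodromy filtration} for condition \eqref{existence of the relative monodromy weight filtration}. The only cosmetic difference is that the $L$-graded pieces of the mixed cone are genuine direct sums of (shifted) constituents rather than merely extensions, which is what makes the verbatim transport of Steps 1--5 work.
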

\begin{proof}
By Lemma \ref{lemma for relative cohomology},
$H^i(C(\varphi^{-1}))|_{\zo{Y}}$ is a \gpv
of $\bQ$-mixed Hodge structure.
We will prove the admissibility of it.

As in Lemma \ref{GPVMHS for semi-simplicial variety},
we may assume the following:
\begin{itemize}
\item
$Y=\Delta$, $\zo{Y}=\Delta\!^*$.
\item
$f_p:X_p \longrightarrow \Delta$
and $g_q:Z_q \longrightarrow \Delta$
are of unipotent monodromy
for all $p,q$.
\item
$f^{-1}(0)_{\red}$
and $g^{-1}(0)_{\red}$
are simple normal crossing divisors
on $X\sso$ and $Z\sso$ respectively.
\end{itemize}
We set $E\sso=f^{-1}(0)$
and $F\sso=g^{-1}(0)$.
The morphism $\varphi:Z\sso \longrightarrow X\sso$
induces the morphism of {\itshape complexes}
\begin{equation*}
\varphi^{\ast}:
Rf_{\ast}\Omega_{X\sso/\Delta}(\log E\sso)
\longrightarrow
Rg_{\ast}\Omega_{Z\sso/\Delta}(\log F\sso)
\end{equation*}
as in the proof of Lemma \ref{lemma for relative cohomology}.
Then we consider
$C(\varphi^{\ast})$
equipped with filtrations $L$ and $F$
defined by the same way as
\eqref{filtrations L and F on the cone}
from the filtrations $L$ and $F$
on the complexes
$Rf_{\ast}\Omega_{X\sso/\Delta}(\log E\sso)$
and $Rg_{\ast}\Omega_{Z\sso/\Delta}(\log F\sso)$.
Then $C(\varphi^{\ast})|_{\Delta\!^*}$
induces the mixed Hodge structure on $H^i(C(\varphi^{-1}))$
as in the proof of Lemma \ref{lemma for relative cohomology}.

Because we have
\begin{equation*}
\begin{split}
(E_1^{p,q}(&C(\varphi^{\ast}),L),F) \\
&=
(R^q(f_{p+1})_{\ast}\Omega_{X_{p+1}/\Delta}(\log E_p),F)
\oplus
(R^q(g_p)_{\ast}\Omega_{Z_p/\Delta}(\log F_p),F)
\end{split}
\end{equation*}
as in the proof of Lemma \ref{lemma for relative cohomology},
the same argument as Step 1 of the proof of
Lemma \ref{GPVMHS for semi-simplicial variety}
shows that the spectral sequence
$E_r^{p,q}(C(\varphi^{\ast}),L)$ degenerates at $E_2$-terms
and that
\begin{equation*}
\Gr_{F_{\rec}}^rE_2^{p,q}(C(\varphi^{\ast}),L)
\simeq
\Gr_F^r\Gr_{-p}^LH^{p+q}(C(\varphi^{\ast}))
\end{equation*}
are locally free coherent $\xO_{\Delta}$-modules
for all $p,q,r$.
Moreover, the spectral sequence
$E_r^{p,q}(C(\varphi^{\ast}),F)$ degenerates at $E_1$-terms
by the lemma on two filtrations as usual.

Let $A^{\bC}_{X\sso/\Delta}$
and $A^{\bC}_{Z\sso/\Delta}$ be the complexes
defined in Step 4
of the proof of Lemma \ref{GPVMHS for semi-simplicial variety}.
The morphism $\varphi$ induces a morphism
of trifiltered {\itshape complexes}
\begin{equation*}
\begin{split}
(R\Gamma(E_{\bullet\red},A^{\bC}_{X\sso/\Delta}),&L,\delta(W,L),F) \\
&\longrightarrow
(R\Gamma(F_{\bullet\red},A^{\bC}_{Z\sso/\Delta}),L,\delta(W,L),F)
\end{split}
\end{equation*}
by using the Godment resolution
as in \ref{Godment resolution}.
This morphism of {\itshape complexes}
is denoted by $\psi$ for a while.
On the complex $C(\psi)$,
the filtrations $L$ and $F$
are defined by the same way
as in \eqref{filtrations L and F on the cone} 
and the filtration $\delta(W,L)$
by the same way as $L$.
We can easily check that $(C(\psi),L,\delta(W,L),F)$
underlies a filtered $\bQ$-mixed Hodge complex.
The composites of the morphisms
\eqref{base change morphism for X bullet}
and \eqref{morphism RGamma(theta)}
for $X\sso$ and $Z\sso$
fit in the commutative diagram
\begin{equation*}
\begin{CD}
Rf_{\ast}\Omega_{X\sso/\Delta}(\log E\sso) \otimes \bC(0)
@>>> R\Gamma(E_{\bullet\red},A^{\bC}_{X\sso/\Delta}) \\
@V{\varphi^{\ast} \otimes \id}VV @VV{\psi}V \\
Rg_{\ast}\Omega_{Z\sso/\Delta}(\log F\sso) \otimes \bC(0)
@>>>
R\Gamma(F_{\bullet\red}, A^{\bC}_{Z\sso/\Delta})
\end{CD}
\end{equation*}
from which the morphism of complexes
$C(\varphi^{\ast}) \otimes \bC(0) \longrightarrow C(\psi)$
preserving the filtrations $L$ and $F$
is obtained.
This morphism induces an isomorphism
$E_1^{p,q}(C(\varphi^{\ast}),L) \otimes \bC(0)
\longrightarrow E_1^{p,q}(C(\psi),L)$
because the morphism between $E_1$-terms
coincides with the direct sum
of the isomorphisms
\eqref{the canonical morphism from X bullet to D bullet}
for $X\sso$ and $Z\sso$.
Then the canonical morphism
$E_2^{p,q}(C(\varphi^{\ast}),L) \otimes \bC(0)
\longrightarrow E_2^{p,q}(C(\psi),L)$
is an isomorphism
because
the local freeness of $E_2^{p,q}(C(\varphi^{\ast}),L)$
implies that the canonical morphism
\begin{equation*}
E_2^{p,q}(C(\varphi^{\ast}),L) \otimes \bC(0)
\longrightarrow
H^p(E_1^{\bullet,q}(C(\varphi^{\ast}),L) \otimes \bC(0))
\end{equation*}
is an isomorphism.
Therefore the spectral sequence
$E_r^{p,q}(C(\psi),L)$ degenerates at $E_2$-terms
and the canonical morphism
\begin{equation*}
H^i(C(\varphi^{\ast})) \otimes \bC(0) \longrightarrow H^i(C(\psi))
\end{equation*}
is an isomorphism for every $i$,
under which the filtration $L$ on the both sides coincides.

The morphisms
\eqref{morphism nu} for $X\sso$ and $Z\sso$
induce the morphism of complexes
\begin{equation*}
(C(\psi),L,\delta(W,L),F)
\longrightarrow
(C(\psi),L,\delta(W,L)[-2],F[-1])
\end{equation*}
which is denoted by $\nu$ again.

By using the mapping cone of the canonical morphism
\begin{equation*}
(Rf_{\ast}\Omega_{X\sso}(\log E\sso),G)
\longrightarrow
(Rg_{\ast}\Omega_{Z\sso}(\log F\sso),G)
\end{equation*}
with the decreasing filtrations $G$
defined by the same way
as $F$ in \eqref{filtrations L and F on the cone},
we obtain the log integrable connection
$\nabla$ on $H^i(C(\varphi^{\ast}))$ for every $i$
by the same way as in the proof of
Lemma \ref{lemma for relative cohomology}.
By definition,
the restriction of this $\nabla$ on $\Delta\!^{\ast}$
coincides with the original $\nabla$
on $H^i(C(\varphi^{\ast}))|_{\Delta\!^{\ast}}$.
Similarly, the morphism $\varphi:Z\sso \longrightarrow X\sso$
induces a morphism of filtered complexes
$(B_{X\sso/\Delta},G) \longrightarrow (B_{Z\sso/\Delta},G)$
such that the diagram 
\begin{equation*}
\begin{CD}
\Omega_{X\sso}(\log E\sso)
@>>>
\Omega_{Z\sso}(\log F\sso) \\
@VVV @VVV \\
B_{X\sso/\Delta} @>>> B_{Z\sso/\Delta}
\end{CD}
\end{equation*}
is commutative.
By considering the cone of the morphism
\begin{equation*}
R\Gamma(E_{\sso\red},B_{X\sso/\Delta})
\longrightarrow
R\Gamma(F_{\sso\red}, B_{Z\sso/\Delta})
\end{equation*}
with the decreasing filtration $G$,
the residue
$\res_0(\nabla)$ on $H^i(C(\varphi^{\ast})) \otimes \bC(0)$
is identified with $-H^i(\nu)$
for every $i$.
Because the morphism $\nu$ is trivially nilpotent,
we conclude that $\res_0(\nabla)$ is nilpotent.
Therefore $H^i(C(\varphi^{\ast}))$ is the canonical extension
of $H^i(C(\varphi^{\ast}))|_{\Delta\!^{\ast}}$.
Thus the condition \eqref{extendability of Hodge filtration}
is satisfied by $H^i(C(\varphi^{\ast}))$ for every $i$.
Moreover,
we can easily see that the filtration $\delta(W,L)[-m]$
on $H^i(\Gr_m^LC(\psi))$
is the monodromy weight filtration
of $\Gr_m^LH^i(\nu)$ for every $i, m$.
Therefore Lemma \ref{lemma on relative monodromy filtration}
implies that the filtration $\delta(W,L)$
is the monodromy weight filtration
of $H^i(\nu)$ relative to $L$
on $H^i(C(\psi)) \simeq H^i(C(\varphi^{\ast})) \otimes \bC(0)$.
\end{proof}

\begin{thm}[GPVMHS for cohomology with compact support]
\label{GPVMHS for compact support cohomology}
Let $f:X \longrightarrow Y$ be a projective surjective morphism
from a complex variety $X$
onto a smooth complex variety $Y$
and $Z$ a closed subset of $X$.
Then there exists a Zariski open dense subset $\zo{Y}$ of $Y$
such that
$(R^i(f|_{X \setminus Z})_!\bQ_{X \setminus Z})|_{\zo{Y}}$
underlies an admissible \gpv of $\bQ$-mixed Hodge structure for every $i$.
\end{thm}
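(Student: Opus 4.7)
The plan is to realize $R(f|_{X \setminus Z})_{!}\bQ_{X \setminus Z}$, up to a shift, as the mapping cone of a morphism to which Lemma \ref{admissibility for relative cohomology} directly applies. Write $j: X \setminus Z \hookrightarrow X$ for the open immersion and $i: Z \hookrightarrow X$ for the complementary closed immersion. The adjunction triangle
\begin{equation*}
j_{!}\bQ_{X \setminus Z} \longrightarrow \bQ_X \longrightarrow i_{\ast}\bQ_Z \xrightarrow{+1}
\end{equation*}
on $X$, combined with the properness of $f$ (which yields $Rf_{\ast}j_{!} \simeq R(f|_{X\setminus Z})_{!}$), produces a distinguished triangle
\begin{equation*}
R(f|_{X\setminus Z})_{!}\bQ_{X\setminus Z} \longrightarrow Rf_{\ast}\bQ_X \longrightarrow R(f|_Z)_{\ast}\bQ_Z \xrightarrow{+1}
\end{equation*}
on $Y$. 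In particular, $R(f|_{X\setminus Z})_{!}\bQ_{X\setminus Z}[1]$ is identified with the mapping cone of the middle arrow.

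Next I would apply Deligne's cubical/semi-simplicial hyperresolution machinery (\cite{DeligneIII}) to the pair $(Z,X)$ in order to produce smooth strict semi-simplicial varieties $X\sso$ and $Z\sso$ together with projective augmentations $X\sso \longrightarrow X$ and $Z\sso \longrightarrow Z$ and a morphism $\varphi: Z\sso \longrightarrow X\sso$ of semi-simplicial varieties lifting $Z \hookrightarrow X$. Composing with $f$ yields projective augmented strict semi-simplicial varieties $\tilde{f}: X\sso \longrightarrow Y$ and $\tilde{g}: Z\sso \longrightarrow Y$, and the canonical quasi-isomorphisms $Rf_{\ast}\bQ_X \simeq R\tilde{f}_{\ast}\bQ_{X\sso}$ and $R(f|_Z)_{\ast}\bQ_Z \simeq R\tilde{g}_{\ast}\bQ_{Z\sso}$ intertwine the middle arrow of the triangle above with the morphism $\varphi^{-1}$ of Lemma \ref{lemma for relative cohomology}.

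By strictness there are only finitely many terms, so generic smoothness produces a Zariski open dense subset $\zo{Y} \subset Y$ over which $\tilde{f}$ and $\tilde{g}$ are both smooth. Lemma \ref{admissibility for relative cohomology} then asserts that $H^{i}(C(\varphi^{-1}))|_{\zo{Y}}$ underlies an admissible \gpv of $\bQ$-mixed Hodge structure for every $i$. Under the quasi-isomorphism $C(\varphi^{-1}) \simeq R(f|_{X\setminus Z})_{!}\bQ_{X\setminus Z}[1]$ from the first paragraph, this transports to the desired admissible \gpv of $\bQ$-mixed Hodge structure on $R^{i}(f|_{X\setminus Z})_{!}\bQ_{X\setminus Z}|_{\zo{Y}}$ for every $i$.

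The main obstacle is the middle step: arranging the compatible pair of semi-simplicial hyperresolutions $\varphi: Z\sso \to X\sso$ so that every term is smooth, every augmentation over $Y$ is projective and strict, and so that the quasi-isomorphism at the level of cones is genuinely compatible with the filtered $\bQ$-mixed Hodge complex structures used inside the proof of Lemma \ref{admissibility for relative cohomology}. Once the compatible resolution is in place, the rest of the proof is a formal consequence of the preceding lemmas.
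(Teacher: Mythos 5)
Your proposal is correct and follows essentially the same route as the paper: both realize $R(f|_{X\setminus Z})_!\bQ_{X\setminus Z}$ (up to a shift) as the cone of $Rf_*\bQ_X \to R(f|_Z)_*\bQ_Z$ computed via compatible hyperresolutions of $Z\hookrightarrow X$, and then invoke Lemma \ref{admissibility for relative cohomology}. The only cosmetic differences are that the paper uses cubical hyperresolutions rather than semi-simplicial ones and records the weight normalization by taking the filtration $L[-1]$ on $C(\varphi^{-1})[-1]$; the compatibility of hyperresolutions that you flag as the main obstacle is standard and is simply asserted in the paper.
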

\begin{proof}
The open immersion $X \setminus Z \longrightarrow X$
and the closed immersion $Z \longrightarrow X$
are denoted by $\iota$ and $j$ respectively.
We set $g=fj:Z \longrightarrow Y$.
Take cubical hyperresolutions
$\varepsilon_Z:Z\sso \longrightarrow Z$
and $\varepsilon_X:X\sso \longrightarrow X$
which fits in the commutative diagram
\begin{equation*}
\begin{CD}
Z\sso @>{\varphi}>> X\sso \\
@V{\varepsilon_Z}VV @VV{\varepsilon_X}V \\
Z @>>{j}> X
\end{CD}
\end{equation*}
for some morphism $\varphi:Z\sso \longrightarrow X\sso$
of cubical varieties.
The cone of the canonical morphism
$\varphi^{-1}:R(f\varepsilon_X)_{\ast}\bQ_{X\sso}
\longrightarrow R(g\varepsilon_Z)_{\ast}\bQ_{Z\sso}$
is denoted by $C(\varphi^{-1})$
as in Lemma \ref{admissibility for relative cohomology}.
Then the composite of the canonical morphisms
\begin{equation*}
R(f|_{X \setminus Z})_!\bQ_{X \setminus Z}
\simeq
Rf_{\ast}\iota_!\bQ_{X \setminus Z}
\longrightarrow
Rf_{\ast}\bQ_X
\longrightarrow
R(f\varepsilon_X)_{\ast}\bQ_{X\sso}
\end{equation*}
induces the quasi-isomorphism
$R(f|_{X \setminus Z})_!\bQ_{X \setminus Z} \longrightarrow C(\varphi^{-1})[-1]$
from which we obtain the conclusion
by considering the filtration $L[-1]$ on $C(\varphi^{-1})[-1]$.
\end{proof}

\begin{say} 
\label{preliminaries for snc pair} 
Let $(X,D)$ be a \snc pair with $D$ reduced.
The irreducible decompositions of $X$ and $D$ are given by
\begin{equation*}
X=\bigcup_{i \in I}X_i, \quad
D=\bigcup_{\lambda \in \Lambda}D_{\lambda}
\end{equation*}
respectively.
Fixing orders $<$ on $\Lambda$ and $I$,
we set
\begin{equation*}
D_k \cap X_l
=\coprod_{\substack{\lambda_0< \lambda_1 < \cdots < \lambda_k \\
               i_0 < i_1 < \cdots < i_l}}
D_{\lambda_0} \cap D_{\lambda_1} \cap \cdots \cap D_{\lambda_k}
\cap X_{i_0} \cap X_{i_1} \cap \cdots \cap X_{i_l}
\end{equation*}
for $k,l \ge 0$. 
Here we use the convention 
\begin{align*}
&D_k=D_k \cap X_{-1}
=\coprod_{\lambda_0 < \lambda_1 < \cdots < \lambda_k}
D_{\lambda_0} \cap D_{\lambda_1} \cap \cdots \cap D_{\lambda_k} \\
&X_l=D_{-1} \cap X_l
=\coprod_{i_0 < i_1 < \cdots < i_l}
X_{i_0} \cap X_{i_1} \cap \cdots \cap X_{i_l}
\end{align*}
for $k,l \ge 0$. 
Moreover, we set
\begin{equation*}
(D \cap X)_n
=\coprod_{k+l+1=n}D_k \cap X_l
\end{equation*}
for $n \ge 0$.
Thus we obtain 
projective augmented strict semi-simplicial varieties
$\varepsilon_X: (D \cap X)\sso \longrightarrow X$,
$\varepsilon_D: D_{\bullet} \longrightarrow D$
and the morphism of semi-simplicial varieties
$\varphi: D_{\bullet} \longrightarrow (D \cap X)\sso$
compatible with the augmentation $\varepsilon_X$ and $\varepsilon_D$.
We remark that $D_k \cap X_l$ are smooth for all $k,l$
by the definition of \snc pair.
Therefore $\varepsilon_D: D_{\bullet} \longrightarrow D$
is a hyperresolution of $D$.
Now we will see that $\varepsilon_X:(D \cap X)\sso \longrightarrow X$
is also a hyperresolution of $X$.
It is sufficient to prove that $\varepsilon_X$ is of cohomological descent.
The cone of the canonical morphism
\begin{equation*}
\bQ_X
\longrightarrow
(\varepsilon_X)_{\ast}\bQ_{(D \cap X)\sso}
=R(\varepsilon_X)_{\ast}\bQ_{(D \cap X)\sso}
\end{equation*}
is the single complex associated to the double complex
\begin{equation*}
\begin{CD}
@. 0 @. 0 @. 0 @. \\
@. @VVV @VVV @VVV \\
0 @>>> \bQ_X @>>> \bQ_{X_0}
@>>> \bQ_{X_1}
@>>> \cdots \\
@. @VVV @VVV @VVV \\
0 @>>> \bQ_{D_0} @>>> \bQ_{D_0 \cap X_0}
@>>> \bQ_{D_0 \cap X_1}
@>>> \cdots \\
@. @VVV @VVV @VVV \\
0 @>>> \bQ_{D_1} @>>> \bQ_{D_1 \cap X_0}
@>>> \bQ_{D_1 \cap X_1}
@>>> \cdots \\
@. @VVV @VVV @VVV \\
@. \vdots @. \vdots @. \vdots
\end{CD}
\end{equation*}
shifted by $1$. 
All the lines of the diagram above are exact
because they are the Mayer--Vietoris exact sequences
for $X$ and for $D_k$.
Then the single complex associated to the double complex above
is acyclic.
Thus we obtain that the canonical morphism 
$\bQ_X \longrightarrow (\varepsilon_X)_{\ast}\bQ_{(D \cap X)\sso}$ 
is a quasi-isomorphism.
\end{say}

\begin{thm}[GPVMHS for snc pair]
\label{GPVMHS for snc pair}
Let $(X,D)$ be a \snc pair with $D$ reduced
and $f:X \longrightarrow Y$ a projective surjective morphism
to a smooth algebraic variety $Y$.
Let $\zo{Y}$ be the non-empty Zariski open subset of $Y$ such that
all the strata of $(X,D)$ are smooth over $\zo{Y}$.
Then $(R^i(f|_{X \setminus D})_!\bQ_{X \setminus D})|_{\zo{Y}}$
underlies an admissible \gpv of $\bQ$-mixed Hodge structure for every $i$.
\end{thm}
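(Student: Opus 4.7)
The plan is to reduce the statement to Lemma \ref{admissibility for relative cohomology} via the explicit hyperresolutions constructed in \ref{preliminaries for snc pair}.

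First, let $\iota: X\setminus D \hookrightarrow X$ be the open immersion and $j: D \hookrightarrow X$ the complementary closed immersion. The short exact sequence of sheaves
\begin{equation*}
0 \longrightarrow \iota_!\bQ_{X\setminus D} \longrightarrow \bQ_X \longrightarrow j_{\ast}\bQ_D \longrightarrow 0
\end{equation*}
combined with the identification $R(f|_{X\setminus D})_!\bQ_{X\setminus D} \simeq Rf_{\ast}\iota_!\bQ_{X\setminus D}$ shows that $R(f|_{X\setminus D})_!\bQ_{X\setminus D}$ is quasi-isomorphic to the cone of $Rf_{\ast}\bQ_X \to Rf_{\ast}j_{\ast}\bQ_D$ shifted by $-1$.

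Next I would invoke the hyperresolutions of \ref{preliminaries for snc pair}: the projective augmented strict semi-simplicial varieties $\varepsilon_X: (D\cap X)\sso \to X$ and $\varepsilon_D: D\sso \to D$, together with the morphism $\varphi: D\sso \to (D\cap X)\sso$ compatible with the augmentations. The Mayer--Vietoris argument displayed in \ref{preliminaries for snc pair} shows that $\varepsilon_X$ is of cohomological descent, and the analogous argument (using only divisorial intersections) shows the same for $\varepsilon_D$. Thus the canonical morphisms
\begin{equation*}
Rf_{\ast}\bQ_X \longrightarrow R(f\varepsilon_X)_{\ast}\bQ_{(D\cap X)\sso}, \qquad
Rf_{\ast}j_{\ast}\bQ_D \longrightarrow R(fj\varepsilon_D)_{\ast}\bQ_{D\sso}
\end{equation*}
are quasi-isomorphisms, and by the functoriality of the Godement resolution recalled in \ref{Godment resolution} they fit into a commutative square with $\varphi^{-1}$ on the right. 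Taking mapping cones and shifting yields
\begin{equation*}
R(f|_{X\setminus D})_!\bQ_{X\setminus D} \simeq C(\varphi^{-1})[-1].
\end{equation*}

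The assumption that every stratum of $(X,D)$ is smooth over $\zo{Y}$ guarantees that each component $X_l$, $D_k$ and $D_k \cap X_l$ of $(D\cap X)\sso$ and $D\sso$ is smooth over $\zo{Y}$, so that both augmentations $f\varepsilon_X$ and $fj\varepsilon_D$ are smooth over $\zo{Y}$. Lemma \ref{admissibility for relative cohomology} therefore applies, giving that $H^i(C(\varphi^{-1}))|_{\zo{Y}}$ underlies an admissible \gpv of $\bQ$-mixed Hodge structure for every $i$. Via the quasi-isomorphism above, the same conclusion holds for $R^i(f|_{X\setminus D})_!\bQ_{X\setminus D}|_{\zo{Y}}$, as required.

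The substantive Hodge-theoretic work --- the construction of the filtered $\bQ$-mixed Hodge complex on the cone, the analysis of the weight filtration relative to $L$, and the verification of admissibility --- has already been done in Lemma \ref{admissibility for relative cohomology}. The only real content here is to match the two descriptions of $R(f|_{X\setminus D})_!\bQ_{X\setminus D}$: one from the sheaf-theoretic exact sequence on $X$, the other from the cone $C(\varphi^{-1})$ of Godement complexes attached to the hyperresolutions. The main, though minor, obstacle is verifying the commutativity of the relevant diagram at the level of genuine complexes, which is exactly why the convention in \ref{Godment resolution} was installed.
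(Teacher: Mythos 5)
Your proposal is correct and follows essentially the same route as the paper: the authors also identify $R(f|_{X\setminus D})_!\bQ_{X\setminus D}$ with $C(\varphi^{-1})[-1]$ via the hyperresolutions $\varepsilon_X:(D\cap X)\sso\to X$ and $\varepsilon_D:D\sso\to D$ of \ref{preliminaries for snc pair} and then apply Lemma \ref{admissibility for relative cohomology}, exactly as in their proof of Theorem \ref{GPVMHS for compact support cohomology}. Your additional care about the commutativity at the level of genuine Godement complexes is the same point the paper handles by the convention fixed in \ref{Godment resolution}.
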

\begin{proof}
As we mentioned in \ref{preliminaries for snc pair},
we have the commutative diagram
\begin{equation*}
\begin{CD}
D\sso @>{\varphi}>> (D \cap X)\sso \\
@V{\varepsilon_D}VV @VV{\varepsilon_X}V \\
D @>>> X
\end{CD}
\end{equation*}
such that $\varepsilon_D$ and $\varepsilon_X$ are hyperresolutions.
Then we obtain the conclusion by the same way
as the proof of Theorem \ref{GPVMHS for compact support cohomology}
from Lemma \ref{admissibility for relative cohomology}.
\end{proof}

\begin{rem}
\label{remark for GrF0}
In the situation above,
the inverse images of the open subset $\zo{Y}$
are indicated by the superscript $^{\ast}$
such as $\zo{X}=f^{-1}(\zo{Y})$.
From the proof of Lemma \ref{lemma for relative cohomology},
we can check that
$\Gr_F^p(\xO_{\zo{Y}}
\otimes (R^i(f|_{X \setminus D})_!\bQ_{X \setminus D})|_{\zo{Y}})$
coincides with the $(i-p)$-th direct image
of the single complex associated to the double complex
\begin{equation*}
\begin{CD}
@. 0 @. 0 @. 0 @. \\
@. @VVV @VVV @VVV \\
0 @>>> \Omega^p_{\zo{X}_0/\zo{Y}} @>>> \Omega^p_{\zo{X}_1/\zo{Y}}
@>>> \Omega^p_{\zo{X}_2/\zo{Y}} @>>> \cdots \\
@. @VVV @VVV @VVV \\
0 @>>> \Omega^p_{\zo{D}_0 \cap \zo{X}_0/\zo{Y}}
@>>> \Omega^p_{\zo{D}_0 \cap \zo{X}_1/\zo{Y}}
@>>> \Omega^p_{\zo{D}_0 \cap \zo{X}_2/\zo{Y}} @>>> \cdots \\
@. @VVV @VVV @VVV \\
0 @>>> \Omega^p_{\zo{D}_1 \cap \zo{X}_0/\zo{Y}}
@>>> \Omega^p_{\zo{D}_1 \cap \zo{X}_1/\zo{Y}}
@>>> \Omega^p_{\zo{D}_1 \cap \zo{X}_2/\zo{Y}} @>>> \cdots \\
@. @VVV @VVV @VVV \\
@. \vdots @. \vdots @. \vdots
\end{CD}
\end{equation*}
by $f$.
Therefore we have the canonical isomorphism
\begin{equation*}
R^if_{\ast}\xO_X(-D)
\simeq
\Gr_F^0(\xO_{\zo{Y}}
\otimes (R^i(f|_{X \setminus D})_!\bQ_{X \setminus D})|_{\zo{Y}})
\end{equation*}
for every $i$.
\end{rem}

\section{Semipositivity theorem}\label{sub42}

In this section, 
we discuss a purely Hodge theoretic 
aspect of the Fujita--Kawamata semipositivity theorem 
(cf.~\cite{zucker1} and \cite[\S 4 Semi-positivity]{kawamata1}). 
Our formulation is different from
Kawamata's original one but is indispensable 
for our main theorem:~Theorem \ref{main} (4). 
For related topics, see \cite[Section 5]{mori}, 
\cite[Section 5]{fujino-rem}, 
\cite[3.2.~Semi-positivity theorem]{high}, 
and \cite[8.10]{ko-ko}. 
We use the theory of logarithmic integrable 
connections. For the basic properties and results on logarithmic integrable 
connections, see \cite{Deligne}, \cite{katz2}, and \cite[IV.~Regular 
connections, after Deligne]{borel} by Bernard Malgrange. 
For a different approach, see \cite[Section 4]{ffs}. 

We start with easy observations.

\begin{lem}
\label{lemma 5.1}
Let $X$ be a complex manifold,
$U$ a dense open subset of $X$
and $\xV$ a locally free $\xO_X$-module of finite rank.
Assume that two $\xO_X$-submodules $\xF$ and $\xG$
satisfy the following conditions:
\begin{newitemize}
\itemno
$\xG$ and $\xV/\xG$ are locally free $\xO_X$-modules of finite rank.
\itemno
$\xF|_{U}=\xG|_{U}$.
\end{newitemize}
Then we have the inclusion $\xF \subset \xG$ on $X$.
\end{lem}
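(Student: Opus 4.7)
The plan is to reduce the inclusion $\xF \subset \xG$ to the vanishing of the composite
\[
\pi : \xF \hookrightarrow \xV \twoheadrightarrow \xV/\xG
\]
of $\xO_X$-module morphisms, where the second arrow is the canonical projection. Since $\xF \subset \xV$ is given, $\xF \subset \xG$ holds if and only if $\pi = 0$.

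First I would observe that $\pi|_U = 0$: by hypothesis $\xF|_U = \xG|_U$, and $\xG$ maps to $0$ in $\xV/\xG$, so the composite vanishes on $U$. Thus for any open set $V \subset X$ and any section $s \in \xF(V)$, the section $\pi(s) \in (\xV/\xG)(V)$ restricts to zero on the dense open subset $V \cap U$ of $V$.

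Next I would invoke local freeness of $\xV/\xG$. Working locally, we may assume $\xV/\xG \simeq \xO_X^{\oplus r}$ on a small open $V$, so that $\pi(s)$ becomes an $r$-tuple of holomorphic functions on $V$ vanishing on $V \cap U$. Because $U$ is dense in $X$, the set $V \cap U$ meets every nonempty open subset of $V$, hence every connected component of $V$ in a nonempty open set. The identity theorem for holomorphic functions on a connected complex manifold then forces each component of $\pi(s)$ to vanish identically on $V$. Therefore $\pi = 0$ on $V$, and by covering $X$ by such $V$ we conclude $\pi = 0$ globally.

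The hypothesis that $\xG$ itself is locally free is not used in this argument; only the local freeness (hence torsion-freeness in the above analytic sense) of $\xV/\xG$ matters. There is no substantive obstacle: the argument is a direct consequence of the identity theorem together with the exact sequence $0 \to \xG \to \xV \to \xV/\xG \to 0$, and the statement is a convenient formal lemma intended to control extensions of filtrations across divisors in the sequel.
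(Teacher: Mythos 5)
Your proof is correct. The paper states this lemma without proof (it is introduced as an ``easy observation''), and your argument --- reducing the inclusion to the vanishing of the composite $\xF \hookrightarrow \xV \twoheadrightarrow \xV/\xG$, which vanishes on the dense open set $U$ and hence everywhere because $\xV/\xG$ is locally free --- is exactly the intended one; you also correctly note that only the local freeness of $\xV/\xG$, not of $\xG$, is actually used.
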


\begin{cor}
\label{corollary for the extensions of a filtrartion}
Let $X$, $U$ and $\xV$ be as above.
Two finite decreasing filtrations $F$ and $G$ on $\xV$
satisfy the following conditions:
\begin{itemize}
\item
$\Gr_G^p\xV$ is a locally free $\xO_X$-module of finite rank
for every $p$.
\item
$F^p\xV|_{U}=G^p\xV|_{U}$ for every $p$.
\end{itemize}
Then we have $F^p\xV \subset G^p\xV$ on $X$ for every $p$.
In particular, $F^p\xV=G^p\xV$ for every $p$,
if, in addition, $\Gr_F^p\xV$ is locally free of finite rank for every $p$.
\end{cor}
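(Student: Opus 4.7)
The plan is to deduce the corollary directly from Lemma~\ref{lemma 5.1} applied coordinate-wise to each filtration step, taking $\xF = F^p\xV$ and $\xG = G^p\xV$.

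First I would verify that $G^p\xV$ and $\xV/G^p\xV$ are locally free $\xO_X$-modules of finite rank for every $p$, so that the hypotheses of Lemma~\ref{lemma 5.1} are in place. Since the filtration $G$ is finite, there exist integers $a\le b$ with $G^a\xV=\xV$ and $G^{b+1}\xV=0$. I would then argue by descending induction on $p$: starting from $G^{b+1}\xV=0$, the short exact sequences
\begin{equation*}
0 \longrightarrow G^{p+1}\xV \longrightarrow G^p\xV \longrightarrow \Gr_G^p\xV \longrightarrow 0
\end{equation*}
exhibit $G^p\xV$ as an extension of locally free $\xO_X$-modules of finite rank, hence itself locally free of finite rank. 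By the same token, $\xV/G^p\xV$ is obtained by successive extensions by the graded pieces $\Gr_G^q\xV$ for $q<p$ (using $\xV/G^a\xV=0$ as the base case), so it is locally free of finite rank as well.

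With these freeness facts in hand, and since $F^p\xV|_U = G^p\xV|_U$ is assumed, Lemma~\ref{lemma 5.1} applies and yields $F^p\xV \subset G^p\xV$ on all of $X$ for every $p$. For the last assertion, if in addition each $\Gr_F^p\xV$ is locally free of finite rank, then the same inductive argument shows that $F^p\xV$ and $\xV/F^p\xV$ are locally free of finite rank, so the roles of $F$ and $G$ may be swapped in Lemma~\ref{lemma 5.1} to obtain the reverse inclusion $G^p\xV \subset F^p\xV$, proving equality.

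There is no real obstacle here beyond the bookkeeping above; the only non-formal input is the stability of local freeness under the extensions $0\to G^{p+1}\xV\to G^p\xV\to \Gr_G^p\xV\to 0$, which is immediate because extensions of locally free sheaves of finite rank by locally free sheaves of finite rank (on a complex manifold) are again locally free of finite rank.
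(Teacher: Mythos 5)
Your argument is correct and is exactly the intended one: the paper states this corollary without proof as an ``easy observation'' following Lemma \ref{lemma 5.1}, and the only content is precisely what you supply, namely that the local freeness of the $\Gr_G^p\xV$ propagates to $G^p\xV$ and $\xV/G^p\xV$ by the (locally split) extension argument, after which Lemma \ref{lemma 5.1} gives the inclusion and a role swap gives the equality.
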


\begin{say}
\label{setting for section 5}
Let $X$ be a complex manifold
and $D=\sum_{i \in I}D_i$ a \snc divisor on $X$,
where $D_i$ is a smooth irreducible divisor on $X$ for every $i \in I$.
We set
\begin{equation*}
D(J)=\bigcap_{i \in J}D_i, \quad
D_J=\sum_{i \in J}D_i
\end{equation*}
for any subset $J$.
Note that $D(\emptyset)=X$ and $D_{\emptyset}=0$ by definition. 
Moreover we set
$D(J)^*=D(J) \setminus D(J) \cap D_{I \setminus J}$
for $J \subset I$.
For the case of $J=\emptyset$, we set $X^*=D(\emptyset)^*=X \setminus D$.

Let $\xV$ be a locally free $\xO_X$-module of finite rank and
\begin{equation*}
\nabla: \xV \longrightarrow \Omega_X^1(\log D) \otimes \xV
\end{equation*}
a logarithmic integrable connection on $\xV$. 
The residue of $\nabla$ along $D_i$ is denoted by 
\begin{equation*}
\res_{D_i}(\nabla):
\xO_{D_i} \otimes \xV \longrightarrow \xO_{D_i} \otimes \xV. 
\end{equation*}
We assume the following condition
throughout this section:
\begin{newitemize}
\itemno
$\res_{D_i}(\nabla):
\xO_{D_i} \otimes \xV \longrightarrow \xO_{D_i} \otimes \xV$
is nilpotent for every $i \in I$.
\end{newitemize}
This is equivalent to the condition that
the local system
$\Ker(\nabla)|_{X^*}$ is of unipotent local monodromy.
\end{say}

\begin{say}
In the situation above, 
the morphism
\begin{equation*}
\id \otimes \res_{D_i}(\nabla):
\xO_{D(J)} \otimes \xV \longrightarrow \xO_{D(J)} \otimes \xV
\end{equation*} 
is denoted 
by $N_{i,D(J)}$ for a subset $J$ of $I$
and for $i \in J$.
We simply write $N_i$ if there is no danger of confusion.
We have
\begin{equation*}
N_{i,D(J)}N_{j,D(J)}=N_{j,D(J)}N_{i,D(J)}
\end{equation*}
for every $i,j \in J$.
For two subsets $J,K$ of $I$ with $K \subset J$,
we set $N_{K,D(J)}=\sum_{i \in K}N_{i,D(J)}$,
which is nilpotent by the assumption above.
Once a subset $J$ is fixed,
we use the symbols $N_K$ for short.
We have the monodromy weight filtration $W(K)$ on $\xO_{D(J)} \otimes \xV$
which is characterized by the condition that
$N_K^q$ induces an isomorphism
\begin{equation*}
\Gr_q^{W(K)}(\xO_{D(J)} \otimes \xV)
\overset{\simeq}{\longrightarrow}
\Gr_{-q}^{W(K)}(\xO_{D(J)} \otimes \xV)
\end{equation*}
for all $q \ge 0$. 
For $K=\emptyset$,
$W(\emptyset)$ is trivial, that is,
$W(\emptyset)_{-1}\xO_{D(J)} \otimes \xV=0$
and $W(\emptyset)_0\xO_{D(J)} \otimes \xV=\xO_{D(J)} \otimes \xV$. 

For the case of $J=K$,
we set
\begin{equation*}
\xP_k(J)=\Ker(N_J^{k+1}:
\Gr_k^{W(J)}(\xO_{D(J)} \otimes \xV)
\longrightarrow \Gr_{-k-2}^{W(J)}(\xO_{D(J)} \otimes \xV))
\end{equation*}
for every non-negative integer $k$,
which is called the primitive part of $\Gr_k^{W(J)}(\xO_{D(J)} \otimes \xV)$
with respect to $N_J$.
Then we have the primitive decomposition
\begin{equation*}
\Gr_k^{W(J)}(\xO_{D(J)} \otimes \xV)
= \bigoplus_{l \ge \max(0, -k)}N_J^l(\xP_{k+2l}(J))
\end{equation*}
for every $k$,
and $N_J^l$ induces an isomorphism
\begin{equation*}
\xP_{k+2l}(J) \longrightarrow N_J^l(\xP_{k+2l}(J))
\end{equation*}
for every $k, l$ with $\ge \max(0,-k)$.
\end{say}

\begin{lem}
\label{local freeness of grW(K)}
In the situation above,
$\Gr_k^{W(K)}(\xO_{D(J)} \otimes \xV)$
is a locally free $\xO_{D(J)}$-module of finite rank
for every $k$
and for every subsets $J, K$ of $I$ with $K \subset J$.
\end{lem}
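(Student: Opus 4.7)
The plan is to reduce the statement to a local computation by exhibiting, near every point of $D(J)$, a local trivialization of $\xO_{D(J)} \otimes \xV$ in which $N_K$ is represented by a constant nilpotent matrix. In such a frame the weight filtration $W(K)$ is given by constant linear subspaces of the trivial bundle, which immediately yields the local freeness of each $W(K)_m$ and hence of each $\Gr_k^{W(K)}(\xO_{D(J)} \otimes \xV)$.

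First I would observe that the logarithmic integrable connection $\nabla$ on $\xV$ with nilpotent residues along every $D_i$ exhibits $(\xV, \nabla)$ as the Deligne canonical extension of $(\xV, \nabla)|_{X \setminus D}$ in the sense of \cite{Deligne}. Near a point $p \in D(J)$ I would pick local coordinates $(t_1, \ldots, t_n)$ so that $D_i = \{t_i = 0\}$ for each $D_i$ through $p$, and then invoke the uniqueness of the canonical extension in the unipotent case to identify $(\xV, \nabla)$ locally with its constant-coefficient model $\nabla = d + \sum_i N_i(p)\, dt_i/t_i$ on a trivial bundle, where the $N_i(p)$ are the (constant, pairwise commuting, nilpotent) residue matrices at $p$.

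Restricting this frame to $D(J)$ yields a local frame of $\xO_{D(J)} \otimes \xV$ in which every $N_i$ ($i \in J$) is a constant nilpotent matrix, and hence $N_K = \sum_{i \in K} N_i$ is itself a single constant nilpotent matrix. Its monodromy weight filtration $W(K)$, being defined purely by the ranks of the powers $N_K^q$, is then given by a constant flag of linear subspaces in this frame. Consequently every $W(K)_m(\xO_{D(J)} \otimes \xV)$ is a locally free $\xO_{D(J)}$-subbundle of locally constant rank, and so is each graded piece $\Gr_k^{W(K)}(\xO_{D(J)} \otimes \xV)$. A compatible argument treats different points by patching, using that the isomorphism class of the local model depends only on the conjugacy class of the tuple $(N_i)_{i \in J}$.

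The main technical obstacle is the passage to the constant-coefficient local model in the second paragraph: one must verify that a log integrable connection on a polydisc whose residues along the coordinate hyperplanes are pairwise commuting nilpotent matrices is automatically isomorphic, as a log connection, to the constant-coefficient connection $d + \sum_i N_i\, dt_i/t_i$. This reduces to the rigidity statement that two Deligne canonical extensions with nilpotent residues are isomorphic as log connections if and only if they induce the same local system on the punctured polydisc, which in turn follows from the standard theory recalled in \cite{Deligne} and \cite{borel}. Once this local normal form is in hand, the remainder of the proof is pure linear algebra on a fixed finite-dimensional vector space.
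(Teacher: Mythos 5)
Your proof is correct and follows the same route as the paper, whose entire proof is a one-line appeal to ``the local description of logarithmic integrable connection (see e.g.~Deligne, Katz)'' --- that local description is exactly the constant-coefficient normal form $d+\sum_i N_i\,dt_i/t_i$ for the canonical extension with nilpotent residues that you spell out. Your expansion, including the rigidity/uniqueness of the canonical extension needed to reach that normal form, is the intended argument; the final remark about patching is unnecessary since local freeness is a local property and the filtration $W(K)$ is already globally characterized.
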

\begin{proof}
Easy by the local description of logarithmic integrable connection 
(see e.g.~Deligne \cite{Deligne}, Katz \cite{katz2}). 
\end{proof}

\begin{cor}
In the situation above,
we fix a subset $J$ of $I$.
For any subset $K$ of $J$
we have the equality
\begin{equation*}
W(K)=W(N_{K}(x))
\end{equation*}
on $\xV(x)=\xV \otimes \bC(x)$
for every point $x \in D(J)$,
where the left hand side denotes the filtration on $\xV(x)$
induced by $W(K)$.
\end{cor}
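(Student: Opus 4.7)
The plan is to exploit the uniqueness of the monodromy weight filtration of a nilpotent endomorphism: for a nilpotent endomorphism $N$ on a finite dimensional vector space $V$, there is at most one increasing filtration $W$ such that $N(W_k) \subset W_{k-2}$ and such that $N^q$ induces isomorphisms $\Gr_q^W V \simeq \Gr_{-q}^W V$ for every $q \ge 0$. So it suffices to show that the restriction of $W(K)$ to the fiber $\xV(x)$ satisfies these two properties with respect to $N_K(x)$.

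First I would verify compatibility with base change. By Lemma \ref{local freeness of grW(K)}, each $\Gr_k^{W(K)}(\xO_{D(J)} \otimes \xV)$ is a locally free $\xO_{D(J)}$-module, so the short exact sequences
\begin{equation*}
0 \longrightarrow W(K)_{k-1}(\xO_{D(J)} \otimes \xV)
\longrightarrow W(K)_k(\xO_{D(J)} \otimes \xV)
\longrightarrow \Gr_k^{W(K)}(\xO_{D(J)} \otimes \xV)
\longrightarrow 0
\end{equation*}
remain exact after tensoring with $\bC(x)$. Consequently, the induced filtration $W(K)$ on $\xV(x)$ satisfies
\begin{equation*}
\Gr_k^{W(K)}\xV(x) \simeq \Gr_k^{W(K)}(\xO_{D(J)} \otimes \xV) \otimes \bC(x)
\end{equation*}
for every $k$.

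Next I would transfer the defining isomorphism property. By construction of $W(K)$ as the monodromy weight filtration of $N_K$ on $\xO_{D(J)} \otimes \xV$, the morphism $N_K^q$ induces an isomorphism $\Gr_q^{W(K)}(\xO_{D(J)} \otimes \xV) \overset{\simeq}{\longrightarrow} \Gr_{-q}^{W(K)}(\xO_{D(J)} \otimes \xV)$ of locally free $\xO_{D(J)}$-modules for every $q \ge 0$, and it obviously lowers the filtration by $2$. Tensoring with $\bC(x)$ and using the base-change identification above, the induced morphism $N_K(x)^q$ gives an isomorphism $\Gr_q^{W(K)}\xV(x) \overset{\simeq}{\longrightarrow} \Gr_{-q}^{W(K)}\xV(x)$, and $N_K(x)$ sends $W(K)_k\xV(x)$ into $W(K)_{k-2}\xV(x)$. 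Thus $W(K)$ on $\xV(x)$ fulfills the two defining conditions of the monodromy weight filtration of $N_K(x)$, and the equality $W(K) = W(N_K(x))$ follows from uniqueness.

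There is no real obstacle here; the only thing that could have failed is the compatibility of $W(K)$ with restriction to a point, and this is guaranteed by the local freeness established in Lemma \ref{local freeness of grW(K)}.
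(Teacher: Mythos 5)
Your argument is correct and is exactly the one the paper implicitly relies on: the paper states this corollary without proof, as an immediate consequence of Lemma \ref{local freeness of grW(K)}, and the intended reasoning is precisely the uniqueness of the monodromy weight filtration combined with the fact that local freeness of the graded pieces makes the filtration and the graded isomorphisms induced by $N_K^q$ compatible with $\otimes\,\bC(x)$.
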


\begin{rem}
\label{functoriality of W(K)}
Let $(\xV_1,\nabla_1)$ and $(\xV_2,\nabla_2)$ be
pairs of locally free sheaves of $\xO_X$-modules of finite rank
and integrable logarithmic connections on them.
We assume that they satisfy the condition in \ref{setting for section 5}.
If the morphism $\varphi:\xV_1 \longrightarrow \xV_2$
of $\xO_X$-modules
is compatible with the connections $\nabla_1$ and $\nabla_2$,
then the diagram
\begin{equation*}
\begin{CD}
\xO_{D(J)} \otimes \xV_1 @>{\id \otimes \varphi}>> \xO_{D(J)} \otimes \xV_2 \\
@V{N_{i,D(J)}}VV @VV{N_{i,D(J)}}V \\
\xO_{D(J)} \otimes \xV_1 @>>{\id \otimes \varphi}> \xO_{D(J)} \otimes \xV_2
\end{CD}
\end{equation*}
is commutative for every subset $J$ of $I$
and for every $i \in J$.
Therefore $\id \otimes \varphi$
preserves the filtration $W(K)$ for every $K \subset J$.
\end{rem}

\begin{say}
\label{condition mh}
Let $m$ be an integer.
For a finite decreasing filtration $F$ on $\xV$,
we consider the following condition:
\begin{itemize}
\item[($m$MH)]
The triple
\begin{equation*}
(\xV(x), W(J)[m], F)
\end{equation*}
underlies an $\bR$-mixed Hodge structure
for any subset $J$ of $I$
and for any point $x \in D(J)^*$.
\end{itemize}
Here we remark that we do not assume the local freeness
of $\Gr_F^p\xV$ at the beginning.
\end{say}

The following lemma is the counterpart of
Schmid's results in \cite{schmid}.

\begin{lem}
Let $U$ be an open subset of $X \setminus D$,
such that $X \setminus U$ is nowhere dense analytic subspace of $X$.
Moreover, we are given a finite decreasing filtration $F$ on $\xV|_U$.
If $(\xV, F, \nabla)|_U$ underlies
a \pv of $\bR$-Hodge structure of weight $m$ on $U$,
then there exists a finite decreasing filtration $\widetilde{F}$ on $\xV$
satisfying the following three conditions:
\begin{enumerate}
\item
$\widetilde{F}^p\xV|_U=F^p\xV|_U$ for every $p$.
\item
$\Gr_{\widetilde{F}}^p\xV$ is a locally free $\xO_X$-module
of finite rank for every $p$.
\item
$\widetilde{F}$ satisfies the condition $(m{\rm MH})$ in $\ref{condition mh}$.
\end{enumerate}
\end{lem}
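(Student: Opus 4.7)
The plan is to reduce the assertion to Schmid's classical one-variable extension theorem for a polarizable variation of $\bR$-Hodge structure with unipotent monodromy, together with its several-variable generalization by Cattani--Kaplan--Schmid, and then verify $(m\mathrm{MH})$ via the SL$_2$-orbit theorem.

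First, I would reduce to the case $U = X \setminus D$. Let $Z = X \setminus U$, which is a nowhere dense analytic subset of $X$ containing at most $D$ plus some extra components not contained in $D$. At any point $x$ of such an extra component $Z'$ with $x \notin D$, the connection $\nabla$ has no log poles at $x$ so locally $(\xV,\nabla)$ is trivial and the local monodromy of $\Ker(\nabla)|_{X \setminus D}$ around $Z'$ is trivial. By the removable singularity property for polarizable VHS with trivial local monodromy (a special case of Schmid's nilpotent orbit theorem), the polarizable VHS on $U$ extends uniquely to a polarizable VHS on $X \setminus D$, compatible with $(\xV, \nabla)|_{X \setminus D}$. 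After this reduction I may assume $U = X \setminus D$.

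Next, I would invoke the several-variable nilpotent orbit theorem. By hypothesis each $\res_{D_i}(\nabla)$ is nilpotent, so the local monodromies of $\Ker(\nabla)|_{X \setminus D}$ around each $D_i$ are unipotent and $(\xV, \nabla)$ is the canonical Deligne extension of its restriction to $X \setminus D$. Schmid's theorem in one variable, together with its several-variable version by Cattani--Kaplan--Schmid, provides an extension of $F$ to a finite decreasing filtration $\widetilde F$ on $\xV$ by $\xO_X$-submodules such that each $\Gr_{\widetilde F}^p \xV$ is locally free of finite rank. This yields properties (1) and (2), with uniqueness of $\widetilde F$ following from Corollary \ref{corollary for the extensions of a filtrartion}.

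Finally, I would verify $(m\mathrm{MH})$ at every point $x \in D(J)^*$ for every $J \subset I$. For $J = \emptyset$ this reduces to the pointwise Hodge structure condition given on $U$. For $J \neq \emptyset$, only the residues $N_i = \res_{D_i}(\nabla)(x)$ with $i \in J$ are nonzero at $x$, and $N_J = \sum_{i \in J} N_i$ is nilpotent with monodromy weight filtration equal to $W(J)$ on $\xV(x)$ by the very definition of $W(J)$. The several-variable SL$_2$-orbit theorem of Cattani--Kaplan--Schmid then guarantees that the limiting Hodge filtration $\widetilde F(x)$ together with the shifted weight filtration $W(J)[m]$ of $N_J$ forms a polarized $\bR$-mixed Hodge structure of weight $m$, which is precisely condition $(m\mathrm{MH})$.

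The hardest step is the final verification of $(m\mathrm{MH})$ at boundary points lying in strata $D(J)^*$ of high codimension. One-variable degeneration data at each $D_i$ does not, a priori, suffice to pin down the joint asymptotic behavior of $\widetilde F$ relative to all the partial sums $N_K$ for $K \subset J$; this is genuinely several-variable and relies on the jointness assertions (commutativity of the $N_i$ and compatibility of the induced weight filtrations) provided by Cattani--Kaplan--Schmid.
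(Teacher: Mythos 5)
Your proposal is correct and takes essentially the same route as the paper: the paper's entire proof is the citation ``See [Schmid]'', and your argument is a faithful unpacking of that citation --- removable-singularity extension from $U$ to $X\setminus D$, the (several-variable) nilpotent orbit theorem for the extension $\widetilde F$ with locally free graded pieces, and the $\mathrm{SL}_2$-orbit theorem for the condition $(m\mathrm{MH})$. Your observation that the verification of $(m\mathrm{MH})$ on deeper strata $D(J)^*$ genuinely requires the several-variable results of Cattani--Kaplan--Schmid, and not only Schmid's one-variable theorems, is an accurate refinement of the paper's bare reference.
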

\begin{proof}
See \cite{schmid}.
\end{proof}

\begin{lem}
\label{lemma for the conition mh}
Let $U$ be as above,
and $F$ a finite decreasing filtration on $\xV$
in the situation {\rm \ref{setting for section 5}}.
We assume that $(\xV,F,\nabla)|_U$ underlies
a \pv of $\bR$-Hodge structure of weight $m$ on $U$.
Then $\Gr_F^p\xV$ is locally free of finite rank for every $p$
if and only if $F$ satisfies the condition $(m{\rm MH})$ in $\ref{condition mh}$.
\end{lem}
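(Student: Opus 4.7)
The plan is to invoke the preceding lemma (Schmid's extension result) to produce a filtration $\widetilde{F}$ on $\xV$ with locally free graded pieces which extends $F|_U$ and automatically satisfies $(m\mathrm{MH})$, and then to reduce both directions of the equivalence to the identity $F=\widetilde{F}$. Since $\Gr_{\widetilde{F}}^p\xV$ is locally free and $F|_U=\widetilde{F}|_U$ (both restrict to the Hodge filtration of the given \pv of $\bR$-Hodge structure on $U$), Corollary \ref{corollary for the extensions of a filtrartion} gives the inclusion $F^p\xV\subset \widetilde{F}^p\xV$ on all of $X$ for every $p$.

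For the direction $(\Rightarrow)$, if every $\Gr_F^p\xV$ is locally free of finite rank then the ``in particular'' clause of Corollary \ref{corollary for the extensions of a filtrartion} upgrades the inclusion to the equality $F^p\xV=\widetilde{F}^p\xV$, and $F=\widetilde{F}$ then inherits $(m\mathrm{MH})$ from $\widetilde{F}$.

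The substantive direction is $(\Leftarrow)$. Fix $J\subset I$ and $x\in D(J)^*$, and write $\bar F^p(x),\bar{\widetilde F}^p(x)\subset \xV(x)$ for the images of the corresponding stalks. By hypothesis and by the preceding lemma applied to $\widetilde F$, both $(\xV(x),W(J)[m],\bar F)$ and $(\xV(x),W(J)[m],\bar{\widetilde F})$ are $\bR$-mixed Hodge structures on the same space with the same weight filtration, and the sheaf-theoretic inclusion $F\subset \widetilde F$ descends to $\bar F^p(x)\subset \bar{\widetilde F}^p(x)$. The induced filtrations on each graded piece $\Gr_k^{W(J)[m]}\xV(x)$ give two pure $\bR$-Hodge structures of weight $k$ with the inclusion preserved. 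The key step is to exploit Hodge symmetry $h^{p,k-p}=h^{k-p,p}$ on each graded piece: if $a_p$ and $b_p$ denote the Hodge numbers of $F$ and $\widetilde F$ respectively on $\Gr_k^{W(J)[m]}\xV(x)$, then the inequality $\sum_{p'\ge p}a_{p'}\le \sum_{p'\ge p}b_{p'}$ applied at the complementary index $k-p+1$, combined with the symmetries $a_{p'}=a_{k-p'}$ and $b_{p'}=b_{k-p'}$ and with the equality of totals $\sum a=\sum b=\dim\Gr_k^{W(J)[m]}\xV(x)$, forces $a_p=b_p$ for every $p$. By the strict compatibility of any MHS filtration with the weight filtration, summing the resulting equalities over $k$ gives $\dim \bar F^p(x)=\dim\bar{\widetilde F}^p(x)$, and combined with the inclusion this yields the fibrewise equality $\bar F^p(x)=\bar{\widetilde F}^p(x)$ inside $\xV(x)$ at every point $x\in X$.

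To promote this fibrewise equality to a sheaf-theoretic equality, I would observe that $\widetilde{F}^p\xV$ is a subbundle of $\xV$, so that $\widetilde{F}^p\xV\otimes \bC(x)\to \xV(x)$ is injective and identifies its source with $\bar{\widetilde F}^p(x)$; consequently the map $F^p\xV\otimes \bC(x)\to \widetilde F^p\xV\otimes \bC(x)$ is surjective at every $x$, and Nakayama's lemma applied to the coherent quotient $\widetilde F^p\xV/F^p\xV$ forces $F^p\xV=\widetilde F^p\xV$. Hence $\Gr_F^p\xV=\Gr_{\widetilde F}^p\xV$ is locally free of finite rank. The main obstacle is the Hodge-symmetry comparison of Hodge numbers in $(\Leftarrow)$: it crucially uses the \emph{real} structure of the two boundary mixed Hodge structures so that the symmetry $h^{p,q}=h^{q,p}$ is available on each pure graded piece, and the argument would fail for purely complex Hodge filtrations.
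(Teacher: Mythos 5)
Your proposal is correct and follows essentially the same route as the paper: produce $\widetilde{F}$ from the preceding (Schmid) lemma, use Corollary \ref{corollary for the extensions of a filtrartion} to reduce local freeness to the equality $F=\widetilde{F}$ and to get the inclusion $F^p\xV\subset\widetilde{F}^p\xV$, and then deduce fibrewise equality of the two filtrations from the fact that both define $\bR$-mixed Hodge structures with the same weight filtration. The only difference is that you spell out, via the Hodge-number symmetry $h^{p,q}=h^{q,p}$ and Nakayama's lemma, the step the paper leaves implicit in the sentence ``Therefore we obtain $F(\xV(x))=\widetilde{F}(\xV(x))$ \dots which implies the equality $F=\widetilde{F}$.''
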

\begin{proof}
By the lemma above,
there exists a finite decreasing filtration $\widetilde{F}$ on $\xV$
satisfying the three conditions above.
By Corollary \ref{corollary for the extensions of a filtrartion},
the local freeness of $\Gr_F^p\xV$ for every $p$
is equivalent to the equality $F^p\xV=\widetilde{F}^p\xV$ for every $p$.
If $F=\widetilde{F}$ on $\xV$,
$F$ satisfies the condition ($m$MH) by the lemma above.
Thus it suffices to prove the equality $F=\widetilde{F}$ on $\xV$
under the assumption that $F$ satisfies the condition ($m$MH).
By Corollary \ref{corollary for the extensions of a filtrartion} again,
we have $F^p\xV \subset \widetilde{F}^p\xV$ for every $p$.
On the other hand,
$(\xV(x),W(J)[m],F)$ and $(\xV(x),W(J)[m],\widetilde{F})$
are $\bR$-mixed Hodge structures
for every $x \in D(J)^*$,
if $F$ satisfies the condition ($m$MH).
Therefore we obtain $F(\xV(x))=\widetilde{F}(\xV(x))$ for every $x \in X$,
which implies the equality $F=\widetilde{F}$ on $\xV$.
\end{proof}

\begin{say}
\label{setting on filtrations}
In addition to the situation \ref{setting for section 5},
we assume that we are given a finite decreasing filtration $F$ on $\xV$
satisfying the following three conditions:
\begin{itemize}
\item
The Griffiths transversality holds, that is,
we have
$\nabla(F^p) \subset \Omega_X^1(\log E) \otimes F^{p-1}$
for every $p$.
\item
$(\xV, F, \nabla)|_{X^*}$ underlies
a \pv of $\bR$-Hodge structure of weight $m$.
\item
$\Gr_F^p\xV$ is locally free of finite rank for every $p$,
or equivalently, $F$ satisfies the condition ($m$MH).
\end{itemize}
For a subset $J$ of $I$,
the Griffiths transversality implies the condition
\begin{equation*}
N_{i}(F^p(\xO_{D(J)} \otimes \xV)) \subset F^{p-1}(\xO_{D(J)} \otimes \xV)
\end{equation*}
for every $p$
and for every $i \in J$.
\end{say}

\begin{lem}
\label{lemma by Cattani-Kaplan}
In the situation above,
we have
\begin{enumerate}
\item
$N_i(W(K)_k) \subset W(K)_{k-1}$ for every $i \in K$
and for every $k$,
\item
$W(J)$ is the monodromy weight filtration of $N_K$
relative to the filtration $W(J \setminus K)$
\end{enumerate}
on $\xO_{D(J)} \otimes \xV$
for every two subsets $J, K$ of $I$ with $K \subset J$.
\end{lem}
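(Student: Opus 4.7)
The plan is to reduce both assertions to pointwise statements at fibers $\xV(x)$ for $x$ in a Zariski dense open subset of $D(J)$, and then to invoke the several variable infinitesimal theorem of Cattani--Kaplan. By Lemma \ref{local freeness of grW(K)}, each graded piece of the filtrations $W(K)$, $W(J\setminus K)$ and $W(J)$ on $\xO_{D(J)} \otimes \xV$ is locally free of finite rank. Hence the inclusion $N_i(W(K)_k) \subset W(K)_{k-1}$ in (1) and the equality of filtrations claimed in (2) both reduce to the vanishing of morphisms of $\xO_{D(J)}$-modules into locally free targets. Such vanishings can be verified on any Zariski dense open subset of $D(J)$; we shall use $D(J)^*$, which is dense in $D(J)$ because $D(J) \setminus D(J)^* = D(J) \cap D_{I \setminus J}$ is a proper closed subset of the smooth variety $D(J)$.

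For (1), by restriction along $D(J) \hookrightarrow D(K)$ it suffices to treat the case $J=K$. At any $x \in D(K)^*$, condition $(m{\rm MH})$ from the setting \ref{setting on filtrations} provides an $\mathbb{R}$-mixed Hodge structure $(\xV(x), W(K)[m], F)$, polarized by the polarization on $(\xV,F,\nabla)|_{X^*}$ together with Schmid's nilpotent orbit theorem. On this MHS the operators $\{N_i\}_{i \in K}$ act as commuting nilpotent endomorphisms, satisfy Griffiths transversality $N_i(F^p) \subset F^{p-1}$, and their sum $N_K$ has monodromy weight filtration $W(K)$. The infinitesimal theorem of Cattani--Kaplan on polarized mixed Hodge structures with several commuting infinitesimals then asserts that each $N_i$ with $i \in K$ is a morphism of MHS of type $(-1,-1)$, and in particular $N_i(W(K)_k) \subset W(K)_{k-2} \subset W(K)_{k-1}$ at $x$. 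By the density reduction, the inclusion propagates to all of $D(K)$, and then restricts to $D(J)$ for every $J \supset K$.

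For (2), the uniqueness of the relative monodromy weight filtration reduces the identity $W(J) = W(N_K, W(J\setminus K))$ to two fiberwise conditions: $N_K(W(J)_k) \subset W(J)_{k-2}$ for every $k$, and the property that on each $\Gr_l^{W(J\setminus K)}(\xO_{D(J)} \otimes \xV)$ the filtration induced by $W(J)[-l]$ coincides with the monodromy weight filtration of the endomorphism induced by $N_K$. Both conditions may be verified at the points of $D(J)^*$, where the polarized MHS $(\xV(x), W(J)[m], F)$ together with the commuting nilpotents $\{N_i\}_{i \in J}$ satisfies them by the theorem of Cattani--Kaplan--Schmid (see also Kashiwara's treatment) for polarized nilpotent orbits in several variables, which identifies the weight filtration of a sum of nilpotents in the monodromy cone with the relative weight filtration with respect to the weight filtration of any face. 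The density reduction then yields the conclusion on all of $D(J)$. The principal technical input is this face theorem of Cattani--Kaplan, which we take as a black box; the remainder is a routine density-and-local-freeness argument.
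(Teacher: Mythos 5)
Your proof is correct and takes essentially the same route as the paper, whose entire argument for this lemma is the citation to Cattani--Kaplan \cite[(3.3) Theorem, (3.4)]{ck} and Steenbrink--Zucker \cite[(3.12) Theorem]{sz}; you have simply made explicit the routine reduction to the pointwise statements on the dense open subset $D(J)^*$ and correctly identified the several-variable face theorem as the key input. The one point deserving slightly more care is the propagation of the isomorphism conditions characterizing the relative weight filtration in (2) from $D(J)^*$ to all of $D(J)$: local freeness of the graded pieces alone does not spread out an isomorphism from a dense open set, so one should either invoke the local constancy of the residues in the canonical-extension frame (exactly the local description underlying Lemma \ref{local freeness of grW(K)}) or compare the two filtrations, each having locally free graded quotients, via Lemma \ref{lemma 5.1}.
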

\begin{proof}
See Cattani--Kaplan \cite[(3.3) Theorem, (3,4)]{ck}
and Steenbrink--Zucker \cite[(3.12) Theorem]{sz}.
\end{proof}

\begin{cor}
\label{corollary for condition mh}
In the situation {\rm \ref{setting for section 5}}
and {\rm \ref{setting on filtrations}},
the induced filtration $F$ on $\Gr_k^{W(J)}(\xO_{D(J)} \otimes \xV)$
satisfies the property $((m+k){\rm MH})$
for any subset $J$ of $I$.
\end{cor}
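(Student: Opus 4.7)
The plan is to verify the defining condition $((m+k)\mathrm{MH})$ pointwise on each deeper stratum of $D(J)$. Fix a subset $K \subset I \setminus J$ and a point $x \in (D(J) \cap D(K)) \setminus \bigcup_{i \notin J \cup K} D_i$; we need to show that
\begin{equation*}
\bigl(\Gr_k^{W(J)}\xV(x),\ W'(K)[m+k],\ F\bigr)
\end{equation*}
is an $\bR$-mixed Hodge structure, where $W'(K)$ denotes the monodromy weight filtration on $\Gr_k^{W(J)}(\xO_{D(J \cup K)} \otimes \xV)$ of the residue $N_{K,D(J \cup K)}$ induced from $\nabla$.

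The first step is to identify $W'(K)[m+k]$ with the filtration induced by $W(J\cup K)[m]$ on $\Gr_k^{W(J)}\xV(x)$. By Lemma \ref{lemma by Cattani-Kaplan} (2) applied with the pair $J\cup K \supset K$, the filtration $W(J \cup K)$ on $\xO_{D(J \cup K)} \otimes \xV$ is the monodromy weight filtration of $N_K$ relative to $W(J)$. By the very definition of the relative weight filtration, the filtration $W(J \cup K)[-k]$ induced on $\Gr_k^{W(J)}(\xO_{D(J \cup K)} \otimes \xV)$ coincides with the monodromy weight filtration of $N_K$ on this graded piece, namely $W'(K)$. Composing the shifts then yields $W'(K)[m+k] = W(J \cup K)[m]$ as filtrations on $\Gr_k^{W(J)}\xV(x)$.

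The second step invokes the hypothesis: applying $(m\mathrm{MH})$ with the subset $J \cup K$ at the point $x \in D(J\cup K)^*$ gives that $(\xV(x), W(J\cup K)[m], F)$ is an $\bR$-mixed Hodge structure. The proof is then reduced to showing that $W(J)$ is a filtration of this ambient mixed Hodge structure by sub-$\bR$-mixed Hodge structures, for then the graded quotient $\Gr_k^{W(J)}\xV(x)$ automatically inherits an $\bR$-mixed Hodge structure whose weight filtration is the induced $W(J\cup K)[m]$ and whose Hodge filtration is $F$, which is exactly the desired conclusion by step one.

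The main obstacle is thus the sub-mixed Hodge structure property of $W(J)$ inside $(\xV(x), W(J\cup K)[m], F)$. This is a nontrivial consequence of the several-variable theory of polarized nilpotent orbits: the multi-variable $SL_2$-orbit theorem of Cattani--Kaplan--Schmid \cite{ck} (already invoked in Lemma \ref{lemma by Cattani-Kaplan}) produces a Deligne bigrading of the ambient mixed Hodge structure that is simultaneously compatible with every weight filtration $W(L)$ for $L \subset J \cup K$, and in particular $W(J)$ splits into sub-mixed Hodge structures. Equivalent input is available from Kashiwara's framework of admissible variations \cite{kashiwara}. Careful bookkeeping of the sign conventions for shifts of the weight filtration is needed throughout the argument.
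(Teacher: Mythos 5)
Your proposal is correct, and its skeleton is the same as the paper's: apply $(m\mathrm{MH})$ to the subset $J\cup K$ at a point $x\in D(J\cup K)^*$, show that $\Gr_k^{W(J)}\xV(x)$ inherits an $\bR$-mixed Hodge structure, and then use Lemma \ref{lemma by Cattani-Kaplan} (2) together with the defining property of the relative weight filtration to identify the induced filtration with $W(K)[m+k]$ (your shift bookkeeping here is right). The one place where you diverge is the ``main obstacle'': to see that $W(J)$ is a filtration by sub-mixed Hodge structures of $(\xV(x),W(J\cup K)[m],F)$, you appeal to the full several-variable $SL_2$-orbit theorem and a Deligne bigrading simultaneously compatible with all the $W(L)$. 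The paper gets this much more cheaply from ingredients already on the table: applying Lemma \ref{lemma by Cattani-Kaplan} (2) with $J\subset J\cup K$ shows that $W(J\cup K)$ is the weight filtration of $N_J$ relative to $W(K)$, hence $N_J$ shifts $W(J\cup K)$ by $-2$, while Griffiths transversality gives $N_J(F^p)\subset F^{p-1}$; thus $(2\pi\sqrt{-1})^{-1}N_J(x)$ is a morphism of $\bR$-mixed Hodge structures of type $(-1,-1)$. Since morphisms of mixed Hodge structures are strict and $W(J)$ is built from kernels and images of powers of $N_J$, each $W(J)_a$ is automatically a sub-mixed Hodge structure, and the graded pieces inherit the structure. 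Your route is valid --- the Cattani--Kaplan(--Schmid) compatibility statements do contain this --- but it imports a substantially heavier theorem where a two-line argument in the abelian category of mixed Hodge structures suffices; the only genuinely deep input needed is Lemma \ref{lemma by Cattani-Kaplan} itself, which both arguments share.
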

\begin{proof}
Take a subset $K$ of $I \setminus J$.
For any point $x \in D(J \cup K)^*$,
the triple
\begin{equation*}
(\xV(x),W(J \cup K)[m],F)
\end{equation*}
underlies an $\bR$-mixed Hodge structure
because $F$ satisfies the condition ($m$MH) by the assumption.
Moreover, the morphism $(2\pi\sqrt{-1})^{-1}N_{J}(x)$
is a morphism of $\bR$-mixed Hodge structures of type $(-1,-1)$
by the condition (2) in the lemma above
and by the Griffiths transversality.
Therefore
\begin{equation*}
(\Gr_k^{W(J)}\xV(x),W(J \cup K)[m],F)
\end{equation*}
is an $\bR$-mixed Hodge structure.
On the other hand,
we have
\begin{equation*}
W(J \cup K)(\Gr_k^{W(J)}\xV(x))=W(K)(\Gr_k^{W(J)}\xV(x))[k] ,
\end{equation*}
by (2) in the lemma above.
Thus
\begin{equation*}
(\Gr_k^{W(J)}\xV(x),W(K)[m+k],F)
\end{equation*}
underlies an $\bR$-mixed Hodge structure.
\end{proof}

\begin{say}\label{say614}
In the situation \ref{setting for section 5}
and \ref{setting on filtrations}
we fix a subset $J$ of $I$.
We have an exact sequence
\begin{equation*}
\begin{CD}
0 @>>> \Omega_{D(J)}^1(\log D(J) \cap D_{I \setminus J})
@>>> \Omega_X^1(\log D) \otimes \xO_{D(J)} \\
@. @>>> \xO_{D(J)}^{\oplus |J|} @>>> 0 ,
\end{CD}
\end{equation*}
where $|J|$ denotes the cardinality of $J$.
On the other hand, the integrable log connection $\nabla$
induces a commutative diagram
\begin{equation*}
\begin{CD}
\xV @>{\nabla}>> \Omega_X^1(\log D) \otimes \xV \\
@VVV @VVV \\
\xO_{D(J)} \otimes \xV @>>> \xO_{D(J)}^{\oplus |J|} \otimes \xV,
\end{CD}
\end{equation*}
where the bottom horizontal arrow coincides with
$\bigoplus_{i \in J}N_{i,D(J)}$
under the identification
$\xO_{D(J)}^{\oplus |J|} \otimes \xV \simeq (\xO_{D(J)} \otimes \xV)^{|J|}$.
Because $\nabla$ preserves the filtration $W(J)$ on $\xO_{D(J)} \otimes \xV$
by the local description in \cite{Deligne}, \cite{katz2}
and because $N_{i,D(J)}(W(J)_k) \subset W(J)_{k-1}$ for every $k$
by Lemma \ref{lemma by Cattani-Kaplan} (1),
we obtain a morphism
\begin{equation*}
\Gr_k^{W(J)}(\xO_{D(J)} \otimes \xV)
\longrightarrow
\Omega_{D(J)}^1(\log D(J) \cap D_{I \setminus J})
\otimes \Gr_k^{W(J)}(\xO_{D(J)} \otimes \xV)
\end{equation*}
for every $k$. 
It is denoted by $\nabla_k(J)$, or simply $\nabla(J)$.
It is easy to see that $\nabla(J)$ is an integrable log connection
on $\Gr_k^{W(J)}(\xO_{D(J)} \otimes \xV)$
satisfying
$\nabla(J)(F^p) \subset F^{p-1}$ for every $p$
for the induced filtration $F$ on $\Gr_k^{W(J)}(\xO_{D(J)} \otimes \xV)$.
We can easily see that the residue of $\nabla(J)$
along $D(J) \cap D_i$
coincide with $N_{i,D(J \cup \{i\})}$  for $i \in I \setminus J$.
Thus $\nabla(J)$ satisfies the condition
in \ref{setting for section 5}.
\end{say}

\begin{say}
\label{setting on polarization}
Let $(\xV, F,\nabla)$ be as in \ref{setting for section 5}
and \ref{setting on filtrations}.
Then $(\xV,F,\nabla)|_{X^*}$ is a \pv of $\bR$-Hodge structure of weight $m$.
An integrable logarithmic connection on $\xV \otimes \xV$
is defined by $\nabla \otimes \id +\id \otimes \nabla$ as usual.
Assume that we are given a morphism
\begin{equation*}
S: \xV \otimes \xV \longrightarrow \xO_X
\end{equation*}
satisfying the following:
\begin{itemize}
\item
$S$ is $(-1)^m$-symmetric.
\item
$S$ is compatible with the connections,
where $\xO_X$ is equipped with the trivial connection $d$.
\item
$S(F^p\xV \otimes F^q\xV)=0$ if $p+q > m$.
\item
$S|_{X^*}$ underlies a polarization of the variation of $\bR$-Hodge structure
$(\xV,F,\nabla)|_{X^*}$.
\end{itemize}

Now we fix a subset $J$ of $I$.
Then $S$ induces a morphism
\begin{equation*}
\xO_{D(J)} \otimes \xV \otimes \xV
\simeq (\xO_{D(J)} \otimes \xV) \otimes (\xO_{D(J)} \otimes \xV)
\longrightarrow \xO_{D(J)} ,
\end{equation*}
which is denoted by $S_J$.
\end{say}

\begin{lem}
In the situation above,
we have
\begin{equation*}
S_J(W(K)_a \otimes W(K)_b)=0
\end{equation*}
for every $K \subset J$ and for every $a,b$ with $a+b < 0$.
\end{lem}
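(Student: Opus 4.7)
The plan is to reduce the assertion to a standard linear-algebraic fact about bilinear forms with respect to which a nilpotent endomorphism is infinitesimally skew-adjoint. The first step is to establish that $N_{K,D(J)}$ is skew-adjoint with respect to $S_J$. Since $S$ is compatible with $\nabla$ (and $\xO_X$ carries the trivial connection $d$), we have the identity
\begin{equation*}
dS(v,w)=S(\nabla v,w)+S(v,\nabla w)
\end{equation*}
in $\Omega_X^1(\log D)$ for local sections $v,w$ of $\xV$. Its left-hand side lies in the subsheaf $\Omega_X^1$, which is annihilated by every residue map $\res_{D_i}:\Omega_X^1(\log D)\longrightarrow\xO_{D_i}$. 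For $i\in J$, applying $\res_{D_i}$ to this identity and restricting from $D_i$ to $D(J)$ yields
\begin{equation*}
S_J(N_{i,D(J)}v,w)+S_J(v,N_{i,D(J)}w)=0,
\end{equation*}
and summing over $i\in K$ gives the desired skew-adjointness of $N_{K,D(J)}$ with respect to $S_J$.

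Second, since $S_J$ is $\xO_{D(J)}$-bilinear and $W(K)_{\bullet}$ is a filtration of $\xO_{D(J)}\otimes\xV$ by locally free subsheaves with locally free quotients by Lemma~\ref{local freeness of grW(K)}, the vanishing $S_J(W(K)_a\otimes W(K)_b)=0$ can be checked fiberwise at each point $x\in D(J)$. At such a point the problem reduces to the following classical fact: if $N$ is a nilpotent endomorphism of a finite-dimensional $\bC$-vector space $V$ and $S:V\otimes V\longrightarrow\bC$ is a bilinear form with $S(Nv,w)+S(v,Nw)=0$, then $S(W(N)_a,W(N)_b)=0$ for all $a+b<0$. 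A quick derivation uses functoriality of the monodromy weight filtration: the map $\phi:V\longrightarrow V^{\ast}$ defined by $v\mapsto S(v,\cdot)$ intertwines $N$ on $V$ with $-N^{\ast}$ on $V^{\ast}$, and since $W(-N^{\ast})=W(N^{\ast})$ equals the dual filtration $W(N)^{\vee}_{k}=\{f\in V^{\ast}\mid f|_{W(N)_{-k-1}}=0\}$, one finds $\phi(W(N)_a)\subset W(N)^{\vee}_a$, which reads $S(W(N)_a,W(N)_{-a-1})=0$ and hence gives the claim. Alternatively, Jacobson--Morozov inside the Lie algebra of $S$-skew-adjoint operators produces an $\mathfrak{sl}_2$-triple $(N^{+},H,N)$ with $H$ also skew-adjoint, and the $H$-eigenvalue computation $(\lambda+\mu)S(v,w)=0$ on joint eigenspaces yields the same vanishing.

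The main substantive content is the classical linear-algebraic fact above; the sheaf-theoretic reduction is routine once skew-adjointness is in place. I therefore expect the authors to supply essentially the argument outlined here, either verifying the linear algebra directly from the defining property of $W(N)$ or invoking a reference such as \cite{sz} or \cite{ck}.
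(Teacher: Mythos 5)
Your proof is correct, but it takes a different route from the paper's. The paper begins with the same skew-adjointness statement, $S_J\cdot(N_K\otimes\id+\id\otimes N_K)=0$, though it asserts this directly from the compatibility of $S$ with the connections rather than spelling out the residue computation as you do (your version is a careful justification of what the authors leave implicit). From there the two arguments diverge: the paper stays at the level of sheaves, reduces to showing $S_J(W(K)_a\otimes W(K)_{-a-1})=0$ for $a\ge 0$, and runs a descending induction on $a$ using the identity $W(K)_{-k}=N_K^k(W(K)_k)$ for $k\ge 0$ (cited to Steenbrink--Zucker) together with the $(-1)^m$-symmetry of $S_J$: one writes $W(K)_{-a-1}=N_K^{a+1}(W(K)_{a+1})$, moves $N_K^{a+1}$ to the other slot with a sign, and obtains $S_J(W(K)_a\otimes W(K)_{-a-1})=\pm S_J(W(K)_{a+1}\otimes W(K)_{-a-2})$, which vanishes for $a$ large because the filtration is finite. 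You instead reduce to a pointwise linear-algebra statement --- legitimate here because $\Gr^{W(K)}(\xO_{D(J)}\otimes\xV)$ is locally free and the fibre of $W(K)$ at $x$ is $W(N_K(x))$ by the corollary preceding the lemma --- and then invoke functoriality of the monodromy weight filtration together with the duality $W(N^{\ast})=W(N)^{\vee}$ (or Jacobson--Morozov). Both arguments rest on standard structure theory of the weight filtration of a nilpotent endomorphism; the paper's computation is more elementary and self-contained modulo one cited identity, while yours is more conceptual and packages the vanishing as an instance of compatibility of $W$ with duality.
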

\begin{proof}
We fix a subset $K$ of $J$.
It is sufficient to prove that
\begin{equation*}
S_J(W(K)_a \otimes W(K)_{-a-1})=0
\end{equation*}
for every non-negative integer $a$.

Since $S$ is compatible with the connections,
we have
\begin{equation*}
S_J \cdot (N_i \otimes \id+\id \otimes N_i)=0
\end{equation*}
for every $i \in J$, from which the equality
\begin{equation*}
S_J \cdot (N_K \otimes \id + \id \otimes N_K)=0
\end{equation*}
is obtained.
Then we have
\begin{equation*}
\begin{split}
S_J(W(K)_a &\otimes W(K)_{-a-1}) \\
&=(S_J \cdot \id \otimes N_K^{a+1})(W(K)_a \otimes W(K)_{a+1}) \\
&=(-1)^{a+1}(S_J \cdot N_K^{a+1} \otimes \id)(W(K)_a \otimes W(K)_{a+1}) \\
&=(-1)^{a+1}S_J(W(K)_{-a-2} \otimes W(K)_{a+1}) \\
&=(-1)^{a+1+m}S_J(W(K)_{a+1} \otimes W(K)_{-a-2})
\end{split}
\end{equation*}
by using the equality $W(K)_{-k}=N^k(W(K)_k)$ for $k \ge 0$ 
(see e.g.~\cite[(2.2) Corollary]{sz}). 
Thus we obtain the conclusion by descending induction on $a$.
\end{proof}

\begin{cor}
In the situation above,
$S_J$ induces a morphism
\begin{equation*}
\Gr_k^{W(J)}(\xO_{D(J)} \otimes \xV) \otimes \Gr_{-k}^{W(J)}(\xO_{D(J)} \otimes \xV)
\longrightarrow \xO_{D(J)}
\end{equation*}
for a non-negative integer $k$.
\end{cor}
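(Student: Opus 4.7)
The plan is to deduce this corollary directly from the preceding lemma by specialization and a standard factorization argument. The key observation is that to obtain a well-defined morphism on the graded pieces
\begin{equation*}
\Gr_k^{W(J)}(\xO_{D(J)} \otimes \xV) \otimes \Gr_{-k}^{W(J)}(\xO_{D(J)} \otimes \xV)
\longrightarrow \xO_{D(J)},
\end{equation*}
it suffices to check that $S_J$ kills the two ``boundary'' summands $W(J)_{k-1} \otimes W(J)_{-k}$ and $W(J)_k \otimes W(J)_{-k-1}$ of $W(J)_k \otimes W(J)_{-k}$, since these correspond precisely to the filtration levels one must quotient out in order to pass from $W(J)_k$ to $\Gr_k^{W(J)}$ and from $W(J)_{-k}$ to $\Gr_{-k}^{W(J)}$, respectively.

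First, I would specialize the preceding lemma to the case $K = J$, which gives the vanishing $S_J(W(J)_a \otimes W(J)_b) = 0$ whenever $a + b < 0$. Next, I would apply this with the two pairs $(a,b) = (k-1, -k)$ and $(a,b) = (k, -k-1)$; in both cases $a+b = -1 < 0$, so both pairings vanish. Consequently $S_J$ factors through the quotient
\begin{equation*}
\bigl(W(J)_k / W(J)_{k-1}\bigr) \otimes \bigl(W(J)_{-k} / W(J)_{-k-1}\bigr)
= \Gr_k^{W(J)}(\xO_{D(J)} \otimes \xV) \otimes \Gr_{-k}^{W(J)}(\xO_{D(J)} \otimes \xV),
\end{equation*}
yielding the desired induced morphism to $\xO_{D(J)}$.

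There is essentially no obstacle here: once the preceding lemma is in hand, this is a purely formal book-keeping step. The only point deserving minor care is to observe that the hypothesis $k \ge 0$ is needed so that $-k \le k$ and the two filtration levels under consideration are genuinely below the ones we are quotienting by, ensuring the factorization is consistent. No further properties of $\nabla$, $F$, or the polarization $S$ beyond those already extracted in the lemma are required.
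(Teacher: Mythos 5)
Your proof is correct and is exactly the formal deduction the paper intends: the corollary is stated without proof precisely because it follows from the preceding lemma with $K=J$ applied to the pairs $(k-1,-k)$ and $(k,-k-1)$, both of which have $a+b=-1<0$. (The only quibble is that the hypothesis $k\ge 0$ is not actually needed for this factorization to work; it is merely the convention under which the corollary is stated.)
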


\begin{say}
In the situation above,
we define a morphism
\begin{equation*}
\overline{S}_k(J):\xP_k(J) \otimes \xP_k(J) \longrightarrow \xO_{D(J)}
\end{equation*}
by $\overline{S}_k(J)=S_J \cdot (\id \otimes N_J^k)$
for every subset $J \subset I$ and for every non-negative integer $k$.

By using the direct sum decomposition
\begin{equation*}
\Gr_k^{W(J)}(\xO_{D(J)} \otimes \xV)
=\bigoplus_{l \ge 0}N^l(\xP_{k+2l}(J))
\end{equation*}
for a non-negative integer $k$,
we obtain a morphism
\begin{equation*}
S_k(J):
\Gr_k^{W(J)}(\xO_{D(J)} \otimes \xV) \otimes \Gr_k^{W(J)}(\xO_{D(J)} \otimes \xV)
\longrightarrow \xO_{D(J)}
\end{equation*}
which is characterized by the following properties:
\begin{itemize}
\item
For non-negative integers $a,b$
we have
\begin{equation*}
S_k(J)(N^a(\xP_{k+2a}(J)) \otimes N^b(\xP_{k+2b}(J)))=0
\end{equation*}
if $a \not= b$.
\item
The diagram
\begin{equation*}
\begin{CD}
\xP_{k+2l}(J) \otimes \xP_{k+2l}(J) @>{\overline{S}_{k+2l}(J)}>> \xO_{D(J)} \\
@V{N^l \otimes N^l}VV @| \\
N^l(\xP_{k+2l}(J)) \otimes N^l(\xP_{k+2l}(J)) @>>{S_k(J)}> \xO_{D(J)}
\end{CD}
\end{equation*}
is commutative for every non-negative integer $l$.
\end{itemize}
For a positive integer $k$,
the morphism
\begin{equation*}
S_{-k}(J):
\Gr_{-k}^{W(J)}(\xO_{D(J)} \otimes \xV) \otimes \Gr_{-k}^{W(J)}(\xO_{D(J)} \otimes \xV)
\longrightarrow \xO_{D(J)}
\end{equation*}
is defined by identifying $\Gr_{-k}^{W(J)}(\xO_{D(J)} \otimes \xV)$
with $\Gr_k^{W(J)}(\xO_{D(J)} \otimes \xV)$ via the morphism $N(J)^k$.
More precisely,
$S_{-k}(J)$ is the unique morphism
such that the diagram
\begin{equation*}
\begin{CD}
\Gr_k^{W(J)}(\xO_{D(J)} \otimes \xV) \otimes \Gr_k^{W(J)}(\xO_{D(J)} \otimes \xV)
@>{S_k(J)}>> \xO_{D(J)} \\
@V{N(J)^k \otimes N(J)^k}VV @| \\
\Gr_{-k}^{W(J)}(\xO_{D(J)} \otimes \xV) \otimes \Gr_{-k}^{W(J)}(\xO_{D(J)} \otimes \xV)
@>>{S_{-k}(J)}> \xO_{D(J)}
\end{CD}
\end{equation*}
is commutative.
\end{say}

The following proposition 
plays an essential role
for the inductive argument 
in the proof of semipositivity theorem.

\begin{prop}
\label{key proposition}
In the situation {\rm \ref{setting for section 5}},
{\rm \ref{setting on filtrations}} and {\rm \ref{setting on polarization}},
the data
\begin{equation*}
(\Gr_k^{W(J)}(\xO_{D(J)} \otimes \xV), F, \nabla(J), S_k(J))
\end{equation*}
satisfies the conditions in
{\rm \ref{setting for section 5}},
{\rm \ref{setting on filtrations}}
and {\rm \ref{setting on polarization}} again.
\end{prop}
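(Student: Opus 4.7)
The plan is to verify each of the conditions in \ref{setting for section 5}, \ref{setting on filtrations}, and \ref{setting on polarization} applied to the datum $(D(J), D(J) \cap D_{I \setminus J}, \Gr_k^{W(J)}(\xO_{D(J)} \otimes \xV), \nabla(J), F, S_k(J))$, with the weight $m$ replaced by $m+k$. The conditions of \ref{setting for section 5} are almost immediate: $D(J) \cap D_{I \setminus J}$ is a simple normal crossing divisor on the smooth $D(J)$; Lemma \ref{local freeness of grW(K)} gives local freeness of $\Gr_k^{W(J)}(\xO_{D(J)} \otimes \xV)$; and the residues of $\nabla(J)$ along $D(J) \cap D_i$ ($i \in I \setminus J$) were identified in \ref{say614} with the nilpotent operators induced by $N_{i, D(J \cup \{i\})}$.

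For \ref{setting on filtrations}, Griffiths transversality of $\nabla(J)$ with respect to the induced $F$ is recorded in \ref{say614}, and Corollary \ref{corollary for condition mh} provides the $((m+k)\mathrm{MH})$ condition on $\Gr_k^{W(J)}(\xO_{D(J)} \otimes \xV)$. Once the open-part restriction is known to underlie a polarized variation of $\bR$-Hodge structure of weight $m+k$ (addressed in the last paragraph), Lemma \ref{lemma for the conition mh} upgrades $((m+k)\mathrm{MH})$ to the local freeness of each $\Gr_F^p\Gr_k^{W(J)}(\xO_{D(J)} \otimes \xV)$.

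The formal conditions on $S_k(J)$ from \ref{setting on polarization} are verified by direct computation. Flatness of $S_k(J)$ with respect to $\nabla(J)$ reduces to flatness of $S$ and the horizontality of the residues $N_{i, D(J)}$ as endomorphisms of $(\xO_{D(J)} \otimes \xV, \nabla)$, so that $\overline{S}_k(J) = S_J \cdot (\id \otimes N_J^k)$ is flat and the primitive decomposition defining $S_k(J)$ is compatible with $\nabla(J)$. The identity $S_J \cdot (N_J \otimes \id + \id \otimes N_J) = 0$ used in the preceding lemma, combined with the $(-1)^m$-symmetry of $S$, gives
\[
\overline{S}_k(J)(a,b)=S_J(a,N_J^k b)=(-1)^k S_J(N_J^k a,b)=(-1)^{m+k} S_J(b,N_J^k a)=(-1)^{m+k}\overline{S}_k(J)(b,a),
\]
which yields the $(-1)^{m+k}$-symmetry of $S_k(J)$ through the primitive decomposition; the orthogonality $S_k(J)(F^p \otimes F^q) = 0$ for $p + q > m + k$ follows from $N_J(F^q) \subset F^{q-1}$ together with $S(F^p \otimes F^{q-k}) = 0$ valid for $p + (q - k) > m$.

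The central and most delicate point is that $(\Gr_k^{W(J)}(\xO_{D(J)} \otimes \xV), F, \nabla(J))|_{D(J)^*}$ underlies a variation of $\bR$-Hodge structure of weight $m + k$ polarized by $S_k(J)|_{D(J)^*}$. At any $x \in D(J)^*$, the condition $(m\mathrm{MH})$ applied to $K = J$ says $(\xV(x), W(J)[m], F)$ is an $\bR$-mixed Hodge structure, and $(S(x), N_J(x), W(J))$ is a polarization of it in the sense of Cattani--Kaplan \cite{ck}; their structural theorem then yields that the primitive decomposition equips $\Gr_k^{W(J)}\xV(x)$ with a pure $\bR$-Hodge structure of weight $m + k$ polarized by $S_k(J)(x)$. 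Assembling these pointwise data into a genuine variation on $D(J)^*$, with local system given by horizontal sections of $\nabla(J)$ and $\bR$-structure inherited via the relative monodromy filtration, is the main obstacle, and relies on the SL$_2$-orbit theorems of Schmid \cite{schmid} and of Cattani--Kaplan \cite{ck}.
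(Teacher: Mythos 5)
Your proposal follows essentially the same route as the paper's proof: the formal conditions are verified via \ref{say614}, Lemma \ref{local freeness of grW(K)}, and Corollary \ref{corollary for condition mh}; the compatibility and orthogonality of $S_k(J)$ are checked directly from $S_J\cdot(N_J\otimes\id+\id\otimes N_J)=0$; and the key point that the restriction to $D(J)^*$ underlies a polarized variation of weight $m+k$ is delegated to Schmid and Cattani--Kaplan, after which Lemma \ref{lemma for the conition mh} upgrades the $((m+k)\mathrm{MH})$ condition to local freeness of $\Gr_F^p$. The only (harmless) deviation is that you derive the $(-1)^{m+k}$-symmetry of $S_k(J)$ by the explicit algebraic identity, whereas the paper deduces it by continuity from the open part.
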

\begin{proof}
By Lemma \ref{local freeness of grW(K)} and \ref{say614},
the pair $(\Gr_k^{W(J)}(\xO_{D(J)} \otimes \xV),\nabla(J))$
satisfies the condition \ref{setting for section 5}.
As remarked in \ref{say614},
we have $\nabla(J)(F^p) \subset F^{p-1}$ for every $p$.
Moreover, the filtration $F$ on $\Gr_k^{W(J)}(\xO_{D(J)} \otimes \xV)$
satisfies the condition ($(m+k)$MH)
by Corollary \ref{corollary for condition mh}.

By definition,
the morphism
\begin{equation*}
S_J:
\Gr_k^{W(J)}(\xO_{D(J)} \otimes \xV) \otimes \Gr_{-k}^{W(J)}(\xO_{D(J)} \otimes \xV)
\end{equation*}
is compatible with the connections on both sides.
Therefore $\overline{S}_k(J)$ is compatible with the connections
because
\begin{equation*}
N_J:
\Gr_k^{W(J)}(\xO_{D(J)} \otimes \xV)
\longrightarrow \Gr_{k-2}^{W(J)}(\xO_{D(J)} \otimes \xV)
\end{equation*}
is compatible with the connection $\nabla(J)$ on the both sides.
Thus $S_k(J)$ is compatible with the connection.
Moreover we can check the equality
\begin{equation*}
S_k(J)(F^p \otimes F^q)=0
\end{equation*}
for $p+q > m+k$
by using $N_J^k(F^q) \subset F^{q-k}$.

There exists an open subset $U$ of $D(J)^*$
such that $\Gr_F^p\Gr_k^{W(J)}(\xO_{D(J)} \otimes \xV)$
is a locally free $\xO_{D(J)}$-module of finite rank for every $p$
and that $D(J) \setminus U$ 
is a nowhere dense closed analytic subspace of $D(J)$.

By the local description as in Deligne \cite{Deligne},
Katz \cite{katz2}
and by the property (1) in Lemma \ref{lemma by Cattani-Kaplan},
we can easily check that $\Ker(\nabla_k(J))|_{D(J)^*}$
admits an $\bR$-structure,
that is,
there exists a local system $\mathbb V_k(J)$
of finite dimensional $\bR$-vector spaces
with the property
$\bC \otimes \mathbb V_k(J) \simeq \Ker(\nabla_k(J))|_{D(J)^*}$.
Then the data
\begin{equation*}
(\mathbb V_k(J), (\Gr_k^{W(J)}(\xO_{D(J)} \otimes \xV), F), \nabla(J),S_k(J))|_U
\end{equation*}
is a polarized variation of $\bR$-Hodge structure of weight $m+k$,
by Schmid \cite{schmid}.
By Lemma \ref{lemma for the conition mh},
$\Gr_F^p\Gr_k^{W(J)}(\xO_{D(J)} \otimes \xV)$ turns out
to be locally free for every $k,p$
and then
\begin{equation*}
(\Gr_k^{W(J)}(\xO_{D(J)} \otimes \xV), F, \nabla(J), S_k(J))|_{D(J)^*}
\end{equation*}
underlies a polarized variation of $\bR$-Hodge structure of weight $m+k$ as desired.
By the continuity, $S_k(J)$ is $(-1)^{m+k}$-symmetric.
\end{proof}

Let us recall the definition of semipositive vector bundles in the 
sense of Fujita--Kawamata. Example \ref{ex-abe} below helps us 
understand the Fujita--Kawamata semipositivity. 

\begin{defn}[Semipositivity]\label{def620} 
A locally free sheaf (or a vector bundle) 
$\mathcal E$ of finite rank on a complete algebraic variety 
$X$ is said to be
{\em{semipositive}} if for 
every smooth curve $C$, for every morphism $\varphi:C\longrightarrow X$, and for every quotient 
invertible sheaf (or line bundle) $\mathcal Q$ of $\varphi^*\mathcal E$, we have 
$\deg _C\mathcal Q\geq 0$. 

It is easy to see that $\mathcal E$ is semipositive if and only if 
$\mathcal O_{\mathbb P_X(\mathcal E)}(1)$ is nef where 
$\mathcal O_{\mathbb P_X(\mathcal E)}(1)$ is the tautological 
line bundle on $\mathbb P_X(\mathcal E)$. 
\end{defn}

The following theorem is the main result of this section (cf.~\cite[Theorem 5]{kawamata1}).
It is a completely Hodge theoretic result.

\begin{thm}[Semipositivity theorem]\label{semi-po}
Let $X$ be a smooth complete complex variety,
$D$ a \snc divisor on $X$,
$\xV$ a locally free $\xO_X$-module of finite rank
equipped with a finite increasing filtration $W$
and a finite decreasing filtration $F$.
We assume the following$:$
\begin{enumerate}
\item
$F^a\xV=\xV$ and $F^{b+1}\xV=0$ for some $a < b$.
\item
$\Gr_F^p\Gr_m^W\xV$ is a locally free $\xO_X$-module of finite rank
for all $m,p$.
\item
For all $m$,
$\Gr_m^W\xV$ admits an integrable logarithmic connection
$\nabla_m$ with the nilpotent residue morphisms
which satisfies the conditions
$$\nabla_m(F^p\Gr_m^W\xV) \subset \Omega_X^1(\log D) \otimes F^{p-1}\Gr_m^W\xV$$
for all $p$.
\label{connection on Gr}
\item
The triple 
$(\Gr_m^W\xV, F\Gr_m^W\mathcal V, \nabla_m)|_{X \setminus D}$
underlies a polarizable variation of $\bR$-Hodge structure
of weight $m$ for every integer $m$.
\label{polarization on Gr}
\end{enumerate}
Then $(\Gr_F^a\xV)^*$ and $F^b\xV$ are semipositive.
\end{thm}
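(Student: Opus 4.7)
The first step would be to use the $W$-filtration on $\mathcal V$ together with the crucial assumption that $\Gr_F^p\Gr_m^W\mathcal V$ is locally free for all $m,p$. This local freeness guarantees that the three filtrations are sufficiently compatible: the induced filtration $F^b\mathcal V\cap W_m\mathcal V$ on $F^b\mathcal V$ has locally free graded pieces canonically isomorphic to $F^b\Gr_m^W\mathcal V$, and dually, $\Gr_F^a\mathcal V$ carries a filtration with successive quotients $\Gr_F^a\Gr_m^W\mathcal V$. Since semipositivity is preserved under extensions (an extension of nef bundles is nef), and since the dual of a short exact sequence of locally free sheaves is again exact, the theorem reduces to the case where $\mathcal V=\Gr_m^W\mathcal V$ underlies a polarizable variation of $\bR$-Hodge structure of pure weight $m$ on $X\setminus D$, with logarithmic connection having nilpotent residues along $D$. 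Note that in this pure setting, Hodge symmetry forces $a+b=m$, so the two conclusions—semipositivity of $F^b\mathcal V$ and of $(\Gr_F^a\mathcal V)^*$—become equivalent via the polarization-induced duality $\mathcal V\simeq\mathcal V^*(-m)$; it thus suffices to treat $F^b\mathcal V$ in the pure case.

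\textbf{Reduction to curves.} By Definition \ref{def620}, semipositivity of a locally free sheaf on $X$ is tested on smooth projective curves: $\mathcal E$ is semipositive if and only if for every morphism $\varphi:C\longrightarrow X$ from a smooth projective curve $C$ and every quotient line bundle $\mathcal Q$ of $\varphi^*\mathcal E$, one has $\deg_C\mathcal Q\ge 0$. Given such a $\varphi$, I would pull the entire Hodge-theoretic package back to $C$; after a further base change or blow-up we may assume $\varphi^{-1}(D)$ is a finite set of points $D_C\subset C$. The pullback $\varphi^*\mathcal V$ is locally free, carries the pulled-back filtrations $W$ and $F$ with locally free bigraded pieces, admits a log connection with nilpotent residues at $D_C$, and restricts to a polarized VHS of weight $m$ on $C\setminus D_C$. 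Hence the problem is reduced to showing that $F^b\mathcal V$ is semipositive on a smooth complete curve $C$ with $D\subset C$ a finite set of points.

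\textbf{The curve case.} On a curve, $\mathcal V$ is the Deligne canonical extension of a polarized VHS with unipotent local monodromies at $D$. The Hodge metric $h$ coming from the polarization is a smooth metric on $\mathcal V|_{C\setminus D}$ whose asymptotic behaviour at each point of $D$ is governed by Schmid's nilpotent and $SL_2$-orbit theorems: it has mild (logarithmic) growth and the induced metric on the canonical extension is continuous up to the boundary in a precise sense. A Griffiths curvature computation on $C\setminus D$ shows that the restriction of $h$ to the bottom Hodge subbundle $F^b\mathcal V|_{C\setminus D}$ is positively curved (the first Hodge piece sees only the Kodaira--Spencer pairing with itself). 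Combining this with the boundary asymptotics, one shows that the pullback of $h$ to any quotient line bundle $\mathcal Q$ of $F^b\mathcal V$ defines a singular hermitian metric whose curvature current is a positive measure of mass $\deg\mathcal Q$, forcing $\deg_C\mathcal Q\ge 0$. This is the classical Fujita--Kawamata--Zucker argument applied to our canonical extension.

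\textbf{Main obstacle.} The technical heart of the proof is the boundary analysis on the curve: one must verify that the logarithmic growth of the Hodge norm at the points of $D$ does not destroy the Griffiths positivity when passing to the canonical extension. The cleanest conceptual handle on this is provided by Proposition \ref{key proposition}, which asserts that the $W(J)$-graded pieces of $\mathcal O_{D(J)}\otimes\mathcal V$ inherit polarized VHS structures of shifted weight; in the curve situation this produces the polarized limit mixed Hodge structures at the points of $D$ that control the local asymptotics of $h$. An alternative (essentially equivalent) route is to invoke Kawamata's covering trick to reduce to the unipotent case globally, and then to quote Zucker's computation directly. Either way, the non-trivial analytic estimate at $D$ is the single serious step; all higher-dimensional features of $(X,D)$ and of the $W$-filtration are handled by the formal reductions in the first two paragraphs.
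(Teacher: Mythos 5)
Your reduction to curves has a genuine gap: you assume that after a base change or blow-up of $C$ one may arrange $\varphi^{-1}(D)$ to be a finite set of points, but this is impossible whenever $\varphi(C)$ is contained in some component $D_i$ of $D$ (then $\varphi^{-1}(D)=C$, and no modification of $C$ changes this). Definition \ref{def620} requires testing \emph{every} morphism $\varphi:C\longrightarrow X$, including curves lying inside the boundary, and nefness cannot be checked only on curves meeting $X\setminus D$. For such curves there is no variation of Hodge structure to pull back at all, so your entire curve-case argument (Hodge metric, Griffiths curvature, boundary asymptotics) has nothing to apply to. This is precisely the point of Proposition \ref{key proposition} in the paper: setting $J=\{i\in I: \varphi(C)\subset D_i\}$, one has $\varphi(C)\subset D(J)$ with $\varphi(C)\cap D(J)^*\neq\emptyset$, and the proposition shows that $\xO_{D(J)}\otimes\xV$, equipped with the relative monodromy weight filtration $W(J)$, the induced $F$, the induced connection $\nabla(J)$, and the induced pairings $S_k(J)$, again satisfies all of the hypotheses (1)--(4) on $D(J)$ with respect to the residual simple normal crossing divisor $D(J)\cap D_{I\setminus J}$. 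Pulling \emph{that} package back to $C$ produces an honest effective divisor $\varphi^*D_{I\setminus J}$ on $C$, and only then does the one-dimensional case (Zucker's theorem, which is what your analytic paragraph reproves) apply. You do cite Proposition \ref{key proposition}, but you deploy it for the wrong purpose --- as a tool controlling the local asymptotics of the Hodge norm at isolated boundary points of $C$ --- rather than as the restriction-to-strata device that handles curves inside $D$; the former role is already played by Schmid's theory inside Zucker's curve argument.

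Two smaller remarks. First, your appeal to Hodge symmetry to get $a+b=m$ in the pure case is only valid for the actual extreme Hodge indices of each pure piece, not for the $a,b$ given in the hypothesis (which may be strict over- or under-estimates on a given $\Gr_m^W$); this is harmless because the offending graded pieces vanish, but as written it is not quite a proof that the two conclusions are interchangeable, and the paper simply treats both via the curve case. Second, the reduction to the pure case and the extension of the polarization to the canonical extension are as in the paper and are fine.
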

\begin{proof}
Since a vector bundle which is an extension of 
two semipositive vector bundles is also semipositive,
we may assume that the given $\xV$ is pure of weight $m$,
that is,
$W_m\xV=\xV, W_{m-1}\xV=0$ for an integer $m$, without loss of generality.
Then $\xV$ carries an integrable logarithmic connection $\nabla$
whose residue morphisms are nilpotent.
Thus the data $(\xV,F,\nabla)$
satisfies the conditions
in \ref{setting for section 5} and \ref{setting on filtrations}.
Note that $\xV$ is the canonical extension of $\xV|_{X \setminus D}$
because the residue morphisms of $\nabla$ are nilpotent.

By the assumption \eqref{polarization on Gr} above,
$\xV|_{X \setminus D}$ carries a polarization
which extends to a morphism
\begin{equation*}
S : \xV \otimes \xV \longrightarrow \xO_{X}
\end{equation*}
by functoriality of the canonical extensions.
We can easily see that the data $(\xV,F,\nabla)$ and $S$
satisfies the conditions
in \ref{setting for section 5},
\ref{setting on filtrations}
and \ref{setting on polarization}.

For the case of $\dim X=1$,
we obtain the conclusion by Zucker \cite{zucker1} 
(see also Kawamata \cite{kawamata1} and the proof of \cite[Theorem 5.20]{subadd}).

Next, we study the case of $\dim X > 1$.
Let $\varphi:C \longrightarrow X$ be a morphism from a smooth
projective curve.
The irreducible decomposition of $D$ is denoted by
$D=\sum_{i \in I}D_i$ as in \ref{setting for section 5}.
We set $J=\{i \in I ; \varphi(C) \subset D_i\} \subset I$.
Then $\varphi(C) \subset D(J)$, $\varphi(C) \cap D(J)^* \not= \emptyset$
and $\varphi^*D_{I \setminus J}$ is an effective divisor on $C$.
By Proposition \ref{key proposition},
the locally free sheaf $\xO_{D(J)} \otimes \xV$
with the finite increasing filtration $W(J)$
and the finite decreasing filtration $F$
satisfies the assumptions (1)-(4)
for $D(J)$ with the \snc divisor $D(J) \cap D_{I \setminus J}$.
Therefore
$\varphi^*\xV=\varphi^*(\xO_{D(J)} \otimes \xV)$
with the induced filtrations $W$ and $F$
satisfies the assumptions (1)-(4)
for $C$ with the effective divisor $\varphi^*D_{I \setminus J}$.
Then we conclude the desired semipositivity by the case of $\dim X=1$.
\end{proof}

\begin{rem}
In Theorem \ref{semi-po}, if $X$ is not complete, 
then we have the following statement. 
Let $V$ be a complete subvariety 
of $X$. 
Then $(\Gr_F^a\mathcal V)^*|_V$ and $(F^b\mathcal V)|_V$ 
are semipositive 
locally free sheaves on $V$. 
It is obvious by the proof of Theorem \ref{semi-po}. 
\end{rem}

\begin{cor}\label{cor622}
Let $X$ and $D$ be as in {\em{Theorem \ref{semi-po}}}.
Assume that we are given an admissible \gpv of $\bR$-mixed Hodge structure
$V=((\bV, W),F)$ on $X \setminus D$ of unipotent monodromy.
We assume the conditions $F^a\xV=\xV$ and $F^{b+1}\xV=0$.
The canonical extensions of $\xV=\xO_{X \setminus D} \otimes \bV$
and of $W_k\xV=\xO_{X \setminus D} \otimes W_k$
are denoted by $\widetilde{\xV}$ and by $W_k\widetilde{\xV}$
for all $k$.
As stated in {\em{Proposition \ref{extension of Hodge filtration}}},
the Hodge filtration $F$ extends to $\widetilde{\xV}$ such that
$\Gr_F^p\Gr_k^W\widetilde{\xV}$ is locally free of finite rank for all $k,p$.
Then $(\Gr_F^a\widetilde{\xV})^*$ and $F^b\widetilde{\xV}$
are semipositive.
\end{cor}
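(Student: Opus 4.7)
The plan is to reduce the corollary to a direct application of Theorem \ref{semi-po} to the triple $(\widetilde{\xV}, W, F)$ on the complete manifold $X$. Conditions (1) and (2) of Theorem \ref{semi-po} are either explicit in the hypothesis or supplied by Proposition \ref{extension of Hodge filtration}, so the real content lies in verifying conditions (3) and (4).

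For condition (3), I would first invoke the basic property of Deligne's canonical extension: since $\bV$ has unipotent local monodromies around $D$, the sheaf $\widetilde{\xV}$ carries a canonical integrable logarithmic connection $\nabla$ whose residue along each component of $D$ is nilpotent. By functoriality of the canonical extension applied to the inclusion of local subsystems $W_k\bV \hookrightarrow \bV$, the subsheaf $W_k\widetilde{\xV}$ is preserved by $\nabla$, and one obtains induced integrable logarithmic connections $\nabla_m$ on each $\Gr_m^W\widetilde{\xV}$, still with nilpotent residues. The remaining point is the Griffiths transversality $\nabla_m(F^p\Gr_m^W\widetilde{\xV}) \subset \Omega_X^1(\log D) \otimes F^{p-1}\Gr_m^W\widetilde{\xV}$. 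This inclusion holds on $X \setminus D$ as part of the original VMHS structure on $\Gr_m^WV$; bootstrapping from Proposition \ref{extension of Hodge filtration} one sees inductively that both $F^{p-1}\Gr_m^W\widetilde{\xV}$ and the quotient $\Gr_m^W\widetilde{\xV}/F^{p-1}\Gr_m^W\widetilde{\xV}$ are locally free on $X$, so Lemma \ref{lemma 5.1}, applied to the coherent $\xO_X$-submodule generated by $\nabla_m(F^p\Gr_m^W\widetilde{\xV})$ inside the locally free sheaf $\Omega_X^1(\log D) \otimes \Gr_m^W\widetilde{\xV}$, promotes the inclusion to all of $X$. Condition (4) is automatic from the graded polarizability of $V$, since each $\Gr_m^WV$ is, by definition, a polarizable variation of $\bR$-Hodge structure of weight $m$ on $X \setminus D$.

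Once (1)--(4) are in place, Theorem \ref{semi-po} yields the semipositivity of $(\Gr_F^a\widetilde{\xV})^*$ and $F^b\widetilde{\xV}$. The main technical subtlety I anticipate is the compatibility between Deligne's canonical extension and the weight filtration: one must confirm that $W_k\widetilde{\xV}$, defined as the canonical extension of $W_k\xV$, sits inside $\widetilde{\xV}$ as an $\xO_X$-subsheaf in such a way that the canonical connection $\nabla$ preserves it and the resulting graded pieces $\Gr_k^W\widetilde{\xV}$ agree with those appearing in Proposition \ref{extension of Hodge filtration}. This follows from applying the functoriality of the canonical extension to the short exact sequences $0 \to W_{k-1}\bV \to W_k\bV \to \Gr_k^W\bV \to 0$ of local subsystems with unipotent monodromy, but it is the most delicate bookkeeping point of the verification.
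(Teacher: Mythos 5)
Your proposal is correct and follows exactly the route the paper intends: the corollary is stated without a separate proof precisely because it is the application of Theorem \ref{semi-po} to $(\widetilde{\xV},W,F)$, with condition (2) supplied by Proposition \ref{extension of Hodge filtration} and conditions (1), (3), (4) supplied by the standard properties of Deligne's canonical extension (exactness of the extension functor, nilpotency of residues in the unipotent case, and the density argument of Lemma \ref{lemma 5.1} to propagate Griffiths transversality across $D$). The ``delicate bookkeeping'' you flag is handled in the paper by the same exactness statement you cite, so there is no gap.
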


We learned the following 
remark from Hacon. 

\begin{rem}
The proof of the semipositivity theorem 
in \cite[Theorem 8.10.12]{ko-ko} contains 
some ambiguities. 
In the same notation as in \cite[Theorem 8.10.12]{ko-ko}, 
if $D$ is a simple normal crossing 
divisor 
but is not a {\em{smooth}} 
divisor, then it is not clear how to express 
$R^mf_*\omega_{X/Y}(D)$ as an extension of $R^mf_*\omega_{D_J/Y}$'s. 
The case when $D=F$ is a {\em{smooth}} 
divisor is treated in the proof of \cite[Theorem 8.10.12]{ko-ko}. 
The same argument does not seem to be sufficient for the general case. 

Fortunately, \cite[Theorem 3.9]{high} 
is sufficient for all applications in \cite{ko-ko} (see also 
\cite{fujino-gongyo2}). 
For some related topics, see \cite{ffs}. 
\end{rem}

\section{Vanishing and torsion-free theorems}\label{sec-quasi-proj}
In this section, we discuss some 
generalizations of torsion-free and vanishing theorems 
for {\em{quasi-projective}} simple normal crossing pairs. 

First, let us recall the following very useful lemma. 
For a proof, see, for example, \cite[Lemma 3.3]{fujino-vanishing}. 

\begin{lem}[Relative vanishing lemma]
\label{rf} 
Let $f:Y\longrightarrow X$ be a proper morphism 
from a simple normal crossing 
pair $(Y, \Delta)$ to an algebraic variety $X$ 
such that $\Delta$ is a boundary 
$\mathbb R$-divisor on $Y$. 
We assume that $f$ is an isomorphism 
at the generic point of any stratum of the pair $(Y, \Delta)$. 
Let $L$ be a Cartier divisor on $Y$ such 
that $L\sim _{\mathbb R}K_Y+\Delta$. 
Then $R^qf_*\mathcal O_Y(L)=0$ for every $q>0$. 
\end{lem}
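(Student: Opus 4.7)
The plan is to argue by induction on the number $n$ of irreducible components of $Y$. Since the assertion is local on $X$, we may assume $X$ is quasi-projective. For the inductive step ($n \geq 2$), fix an irreducible component $Y_1$ and set $Y'=\overline{Y\setminus Y_1}$, a simple normal crossing variety with $n-1$ components. A local snc computation identifies the ideal sheaf of $Y_1$ in $Y$ with $\mathcal{O}_{Y'}(-Y'\cap Y_1)$, yielding the short exact sequence
$$
0\longrightarrow \mathcal{O}_{Y'}\bigl(L|_{Y'}-Y'\cap Y_1\bigr)\longrightarrow \mathcal{O}_Y(L)\longrightarrow \mathcal{O}_{Y_1}(L|_{Y_1})\longrightarrow 0.
$$
Using the snc adjunctions $\omega_Y|_{Y_1}\simeq \omega_{Y_1}(Y'\cap Y_1)$ and $\omega_Y|_{Y'}\simeq \omega_{Y'}(Y_1\cap Y')$, the two outer terms take the form $\mathcal{O}_{Y_1}(L_1)$ and $\mathcal{O}_{Y'}(L')$ with
$$
L_1\sim_{\mathbb{R}} K_{Y_1}+\Delta|_{Y_1}+Y'\cap Y_1,\qquad L'\sim_{\mathbb{R}} K_{Y'}+\Delta|_{Y'}.
$$

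Both $\Delta|_{Y_1}+Y'\cap Y_1$ and $\Delta|_{Y'}$ are boundary $\mathbb{R}$-divisors: the snc hypothesis forces $\Supp\Delta$ to be disjoint from the singular locus of $Y$, so in the sum on $Y_1$ the coefficients still do not exceed $1$. A direct check from the normalization definition of stratum shows that every stratum of $(Y_1,\Delta|_{Y_1}+Y'\cap Y_1)$ (respectively of $(Y',\Delta|_{Y'})$) is a stratum of $(Y,\Delta)$ contained in $Y_1$ (respectively in $Y'$), so the stratum hypothesis on $f$ descends to both factors. By the inductive hypothesis the outer terms of the sequence are $f_{\ast}$-acyclic in positive degrees, and the associated long exact sequence gives the conclusion for $\mathcal{O}_Y(L)$.

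For the base case $n=1$, $Y$ is smooth and irreducible; the hypothesis forces $f$ to be birational and to be an isomorphism at the generic point of every log canonical center of $(Y,\Delta)$. The desired vanishing is then the standard relative vanishing for smooth lc pairs with no contracted lc center, which may be obtained by a secondary induction on the components of $\lfloor\Delta\rfloor$: for each component $S$ of $\lfloor\Delta\rfloor$ one peels off $S$ via the sequence $0\to \mathcal{O}_Y(L-S)\to \mathcal{O}_Y(L)\to \mathcal{O}_S(L|_S)\to 0$ and invokes adjunction on $S$, terminating at the klt case $\lfloor\Delta\rfloor=0$ where relative Kawamata--Viehweg vanishing applies to the birational morphism $f$. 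The main obstacle in the whole argument is the combinatorial bookkeeping of the inductive step: one must verify simultaneously that the adjunction really produces boundary $\mathbb{R}$-divisors on both $Y_1$ and $Y'$, and that the strata-compatibility is preserved. Both properties depend crucially on the snc assumption (which keeps $\Supp\Delta$ transverse to the singular locus of $Y$) and on the full ``stratum'' hypothesis on $f$, as opposed to a weaker hypothesis only on lc centers of $(Y,\Delta)$.
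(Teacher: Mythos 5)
The paper does not actually prove this lemma; it defers to \cite[Lemma 3.3]{fujino-vanishing}, and your argument is essentially the standard proof given there: a double induction (on the number of irreducible components, with the Mayer--Vietoris-type sequence $0\to\mathcal O_{Y'}(-Y_1\cap Y')\to\mathcal O_Y\to\mathcal O_{Y_1}\to 0$ twisted by $L$, and a secondary induction in the irreducible case peeling off the components of $\lfloor\Delta\rfloor$ until relative Kawamata--Viehweg vanishing applies to the generically finite morphism $f$). The identification $I_{Y_1/Y}\simeq\mathcal O_{Y'}(-Y_1\cap Y')$, the two adjunctions, the resulting $\mathbb R$-linear equivalences, and the descent of the stratum hypothesis to $(Y_1,\Delta|_{Y_1}+Y_1\cap Y')$ and $(Y',\Delta|_{Y'})$ all check out. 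One justification is misstated, though not in a way that breaks the proof: the snc hypothesis does \emph{not} force $\Supp\Delta$ to be disjoint from the singular locus of $Y$ --- in local coordinates with $Y=\{x_1\cdots x_p=0\}$ and $\Delta=\sum\alpha_i\{y_i=0\}$, the divisors $\{y_i=0\}$ certainly meet the double locus $\{x_j=x_k=0\}$. What the definition gives is that $\Supp\Delta$ and the double locus share no irreducible component, and that is exactly what you need for the coefficients in $\Delta|_{Y_1}+Y_1\cap Y'$ not to exceed $1$; your later word ``transverse'' is the correct one.
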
 

As an application of Lemma \ref{rf}, we obtain 
Lemma \ref{lemA}. We will use it several times in Section \ref{sec4}. 

\begin{lem}[{cf.~\cite[Lemma 2.7]{fujino-lc}}]\label{lemA}
Let $(V_1, D_1)$ and $(V_2, D_2)$ be simple normal crossing 
pairs such that 
$D_1$ and $D_2$ are reduced. 
Let $f:V_1\longrightarrow V_2$ be a proper morphism. 
Assume that there is a Zariski open subset $U_1$ {\em{(}}resp.~$U_2${\em{)}} of $V_1$ 
{\em{(}}resp.~$V_2${\em{)}} such that 
$U_1$ {\em{(}}resp.~$U_2${\em{)}} contains the generic point of any stratum of $(V_1, D_1)$ 
{\em{(}}resp.~$(V_2, D_2)${\em{)}} and that 
$f$ induces an isomorphism between $U_1$ and $U_2$. 
Then $R^if_*\omega_{V_1}(D_1)=0$ for every $i>0$ and 
$f_*\omega_{V_1}(D_1)\simeq \omega_{V_2}(D_2)$. 
By Grothendieck duality, 
we obtain that 
$R^if_*\mathcal O_{V_1}(-D_1)=0$ for every $i>0$ and $f_*\mathcal O_{V_1}(-D_1)\simeq 
\mathcal O_{V_2}(-D_2)$. 
\end{lem}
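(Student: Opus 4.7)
The first assertion, $R^i f_* \omega_{V_1}(D_1) = 0$ for $i > 0$, is an immediate application of Lemma \ref{rf}. Take $\Delta := D_1$ and $L := K_{V_1} + D_1$. Because $V_1$ is Gorenstein (every simple normal crossing variety is) and $D_1$ is Cartier, $L$ is a Cartier divisor satisfying $L \sim_{\mathbb R} K_{V_1} + \Delta$. The assumption that $U_1$ contains the generic point of every stratum of $(V_1, D_1)$, together with the fact that $f|_{U_1}\colon U_1 \xrightarrow{\sim} U_2$ is an isomorphism, is exactly the hypothesis of Lemma \ref{rf}. Hence the vanishing follows at once.

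For the identification $f_* \omega_{V_1}(D_1) \simeq \omega_{V_2}(D_2)$, the plan is to produce the isomorphism on the dense open $U_2 \subset V_2$ and then extend globally. Since $U_i$ contains the generic point of every irreducible component of $V_i$ and of $D_i$ (all of which are strata of $(V_i, D_i)$), and $f|_{U_1}$ is a scheme isomorphism, it necessarily carries the reduced divisor $D_1|_{U_1}$ onto the reduced divisor $D_2|_{U_2}$. This yields a canonical identification
\[
(f_* \omega_{V_1}(D_1))\big|_{U_2} \;\simeq\; \omega_{V_2}(D_2)\big|_{U_2}.
\]
To globalize this, construct a natural morphism $f_* \omega_{V_1}(D_1) \to \omega_{V_2}(D_2)$ using the Grothendieck trace associated with the proper morphism $f$ between Gorenstein varieties of the same dimension, and verify it is an isomorphism. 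The main obstacle is precisely this global extension: one must check that no additional kernel or cokernel appears along the boundary locus $V_2 \setminus U_2$; by hypothesis this locus contains no stratum of $(V_2, D_2)$, hence no irreducible component of $V_2$ or of $D_2$, and this codimension condition combined with the invertibility of $\omega_{V_2}(D_2)$ forces the morphism to be an isomorphism everywhere.

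Finally, the dual statement follows by Grothendieck duality applied to the invertible sheaf $\omega_{V_1}(D_1)$ and the proper morphism $f$. With $d := \dim V_1 = \dim V_2$, duality gives
\[
Rf_* R\shom_{\mathcal{O}_{V_1}}\bigl(\omega_{V_1}(D_1),\omega_{V_1}^{\bullet}\bigr)
\;\simeq\;
R\shom_{\mathcal{O}_{V_2}}\bigl(Rf_*\omega_{V_1}(D_1),\omega_{V_2}^{\bullet}\bigr).
\]
By invertibility of $\omega_{V_1}(D_1)$, the left side simplifies to $Rf_* \mathcal{O}_{V_1}(-D_1)[d]$, while using the isomorphism $Rf_*\omega_{V_1}(D_1) \simeq \omega_{V_2}(D_2)$ just established (and the vanishing from the first part, which collapses $Rf_*$ to $f_*$), the right side becomes $R\shom\bigl(\omega_{V_2}(D_2), \omega_{V_2}[d]\bigr) \simeq \mathcal{O}_{V_2}(-D_2)[d]$. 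Comparing the two sides yields both $R^i f_* \mathcal{O}_{V_1}(-D_1) = 0$ for $i > 0$ and $f_* \mathcal{O}_{V_1}(-D_1) \simeq \mathcal{O}_{V_2}(-D_2)$.
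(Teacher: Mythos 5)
Your first step (the vanishing $R^if_*\omega_{V_1}(D_1)=0$ for $i>0$ via Lemma \ref{rf}) and your final step (Grothendieck duality) agree with the paper. The gap is in the middle step, and it sits exactly where you yourself locate ``the main obstacle'': the claim that a map $f_*\omega_{V_1}(D_1)\to\omega_{V_2}(D_2)$ which is an isomorphism over $U_2$ is forced to be an isomorphism everywhere because $V_2\setminus U_2$ contains no stratum and $\omega_{V_2}(D_2)$ is invertible. This does not follow. The hypothesis only guarantees that $V_2\setminus U_2$ misses the generic points of the strata; it can perfectly well contain a divisor of $V_2$ (one that is neither an irreducible component of $D_2$ nor an intersection of components of $V_2$), so there is no codimension-two condition available. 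And even where the bad locus is small, a morphism into an invertible sheaf that is an isomorphism on a dense open set can have image $I_Z\cdot\omega_{V_2}(D_2)$ for a nontrivial ideal $I_Z$; invertibility of the target rules out nothing. Moreover, the existence of the twisted trace map is itself not automatic: the trace gives $f_*\omega_{V_1}\to\omega_{V_2}$, and extending it across the divisors $D_1$ and $D_2$ already requires knowing how $K_{V_1}+D_1$ compares with $f^*(K_{V_2}+D_2)$.

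The paper supplies precisely this comparison, and it is the real content of the lemma. One writes $K_{V_1}+D_1=f^*(K_{V_2}+D_2)+E$ with $E$ $f$-exceptional, pulls back to the normalizations to get $K_{V_1^\nu}+\Theta_1=(f^\nu)^*(K_{V_2^\nu}+\Theta_2)+\nu_1^*E$, and observes that $(V_2^\nu,\Theta_2)$ is log canonical with $K_{V_2^\nu}+\Theta_2$ Cartier, so every relevant discrepancy is an integer $\ge -1$, with $-1$ occurring only for divisors whose centers are lc centers of $(V_2^\nu,\Theta_2)$ --- and over the generic points of those, $f^\nu$ is an isomorphism by the stratum hypothesis, which forces such a divisor to appear in $\Theta_1$ with coefficient $1$. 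Hence $\nu_1^*E\ge 0$, so $E\ge 0$, and then $f_*\omega_{V_1}(D_1)\simeq\omega_{V_2}(D_2)$ follows from the projection formula together with $f_*\mathcal O_{V_1}\simeq\mathcal O_{V_2}$ (which itself uses that $V_2$ satisfies Serre's $S_2$ condition). This discrepancy computation is the only place where the hypothesis on strata of $(V_2,D_2)$ actually enters this part of the statement; your write-up needs it, or an equivalent argument, to close the gap.
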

\begin{proof}
We can write 
$$
K_{V_1}+D_1=f^*(K_{V_2}+D_2)+E
$$ 
such that $E$ is $f$-exceptional. 
We consider the following commutative diagram 
$$
\begin{CD}
V_1^{\nu}@>{f^\nu}>> V_2^{\nu}\\
@V{\nu_1}VV @VV{\nu_2}V \\
V_1@>>{f}> V_2
\end{CD}
$$ 
where 
$\nu_1: V_1^\nu\longrightarrow V_1$ and $\nu_2: V_2^\nu \longrightarrow V_2$ are the normalizations. 
We can write $K_{V_1^{\nu}}+\Theta_1=\nu_1^*(K_{V_1}+D_1)$ and 
$K_{V_2^{\nu}}+\Theta_2=\nu_2^*(K_{V_2}+D_2)$. 
By pulling back $K_{V_1}+D_1=f^*(K_{V_2}+D_2)+E$ to $V_1^\nu$ by $\nu_1$, 
we have 
$$
K_{V_1^\nu}+\Theta_1=(f^\nu)^*(K_{V_2^\nu}+\Theta_2)+\nu_1^*E. 
$$ 
Note that $V_2^\nu$ is smooth and $\Theta_2$ is a reduced simple normal crossing divisor on $V_2^\nu$. 
By assumption, $f^\nu$ is an isomorphism 
over the generic point of any lc center of the 
pair $(V_2^\nu, \Theta_2)$ (cf.~\ref{110basic}). 
Therefore, $\nu_1^*E$ is effective since $K_{V_2^\nu}+\Theta_2$ is Cartier. 
Thus, we obtain that $E$ is effective. Since $V_2$ satisfies Serre's $S_2$ condition, 
we can check that $\mathcal O_{V_2}\simeq f_*\mathcal O_{V_1}$ and $f_*\mathcal O_{V_1}
(K_{V_1}+D_1)\simeq \mathcal O_{V_2}(K_{V_2}+D_2)$. 
On the other hand, 
we obtain $R^if_*\mathcal O_{V_1}(K_{V_1}+D_1)=0$ for 
every $i>0$ by Lemma \ref{rf}. 
Therefore, $Rf_*\mathcal O_{V_1}(K_{V_1}+D_1)\simeq \mathcal O_{V_2}(K_{V_2}+D_2)$ in the derived 
category of coherent sheaves on $V_2$. 
Since $V_1$ and $V_2$ are Gorenstein, 
we have 
\begin{align*}
Rf_*\mathcal O_{V_1}(-D_1)&
\simeq R\mathcal H om (Rf_*\omega^\bullet_{V_1}(D_1), \omega^\bullet_{V_2})
\\&\simeq R\mathcal H om (Rf_*\omega_{V_1}(D_1), \omega_{V_2})\\ 
&\simeq R\mathcal Hom (\omega_{V_2}(D_2), \omega_{V_2})\simeq \mathcal O_{V_2}(-D_2) 
\end{align*} 
in the derived category of coherent sheaves on $V_2$ by Grothendieck duality. 
Therefore, 
we have $R^if_*\mathcal O_{V_1}(-D_1)=0$ for every $i>0$ and 
$f_*\mathcal O_{V_1}(-D_1)\simeq \mathcal O_{V_2}(-D_2)$. 
\end{proof}

Next, we prove the following theorem, which 
was proved for {\em{embedded simple normal crossing pairs}} 
in \cite[Theorem 2.39]{book} and \cite[Theorem 2.47]{book}.
We note that we do not assume the existence of 
ambient spaces in Theorem \ref{quasi-proj}. However, we need the assumption 
that $X$ is {\em{quasi-projective}}. 

\begin{thm}[{cf.~\cite[Theorem 2.39 and Theorem 2.47]{book}}]\label{quasi-proj} 
Let $(X, B)$ be a {\em{quasi-projective}} 
simple normal crossing 
pair such that $B$ is a boundary $\mathbb R$-divisor on $X$. 
Let $f:X\longrightarrow Y$ be a proper morphism between algebraic 
varieties and 
let $L$ be a Cartier divisor on $X$. 
Let $q$ be an arbitrary integer. 
Then we have the following 
properties. 
\begin{itemize}
\item[(i)] Assume that $L-(K_X+B)$ is $f$-semi-ample, 
that is, $L-(K_X+B)=\sum _i a_iD_i$ where 
$D_i$ is an $f$-semi-ample Cartier divisor on $X$ and $a_i$ is a 
positive real number for every $i$. 
Then every associated prime of $R^qf_*\mathcal O_X(L)$ is 
the generic point of the $f$-image 
of some stratum of $(X, B)$. 
\item[(ii)] Let $\pi:Y\longrightarrow Z$ be a proper 
morphism. We assume that 
$L-(K_X+B)\sim _{\mathbb R}f^*A$ for some 
$\mathbb R$-Cartier $\mathbb R$-divisor $A$ on $Y$ such that 
$A$ is nef and log big over $Z$ with respect to 
$f:(X, B)\longrightarrow Y$ {\em{(}}see \cite[Definition 2.46]{book}{\em{)}}. 
Then $R^qf_*\mathcal O_X(L)$ is 
$\pi_*$-acyclic, that is, 
$R^p\pi_*R^qf_*\mathcal O_X(L)=0$ for 
every $p>0$. 
\end{itemize}
\end{thm}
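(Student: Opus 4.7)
The plan is to reduce Theorem \ref{quasi-proj} to the embedded cases \cite[Theorem 2.39, Theorem 2.47]{book} via a birational modification. Since $(X,B)$ is \emph{quasi-projective} --- this is the essential use of quasi-projectivity --- one can invoke the partial resolution results for reducible varieties due to Bierstone--Milman \cite{bierstone-milman} and Bierstone--Pan \cite{bierstone-p} (as signalled in \ref{16ske}) to construct a projective birational morphism $g:X'\to X$ from a \emph{globally embedded} simple normal crossing pair $(X',B')$, with $B'$ a boundary $\mathbb{R}$-divisor satisfying $K_{X'}+B'=g^{\ast}(K_X+B)$, and such that $g$ induces a stratum-preserving bijection which is an isomorphism at the generic point of every stratum. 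Set $L'=g^{\ast}L$, so that $L'-(K_{X'}+B')=g^{\ast}(L-(K_X+B))$.

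The first reduction step is to establish $Rg_{\ast}\mathcal{O}_{X'}(L')\simeq\mathcal{O}_X(L)$ in the derived category. The vanishing $R^{i}g_{\ast}\mathcal{O}_{X'}(L')=0$ for $i>0$ follows from Lemma \ref{rf} applied fiberwise to $g:(X',B')\to X$, using that $L'-(K_{X'}+B')$ is the pullback of an $\mathbb{R}$-Cartier $\mathbb{R}$-divisor and therefore $\mathbb{R}$-linearly trivial on fibers of $g$. The identification $g_{\ast}\mathcal{O}_{X'}(L')\simeq\mathcal{O}_X(L)$ is obtained by combining the projection formula with a Lemma \ref{lemA}-style Grothendieck duality argument applied to the reduced parts of $B'$ and $B$, using that $g$ is an isomorphism at the generic point of every stratum. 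The Leray spectral sequence for $f\circ g$ then yields
\begin{equation*}
R^{q}f_{\ast}\mathcal{O}_X(L)\simeq R^{q}(f\circ g)_{\ast}\mathcal{O}_{X'}(L')
\end{equation*}
for every $q$.

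Once this identification is in place, the embedded versions apply directly to $(X',B')$, $L'$, and $f\circ g$. For (i), $f$-semi-ampleness of $L-(K_X+B)$ pulls back to $(f\circ g)$-semi-ampleness of $L'-(K_{X'}+B')$, and \cite[Theorem 2.39]{book} describes the associated primes of $R^{q}(f\circ g)_{\ast}\mathcal{O}_{X'}(L')$ as generic points of $(f\circ g)$-images of strata of $(X',B')$; the stratum correspondence makes these coincide with generic points of $f$-images of strata of $(X,B)$. For (ii), the hypothesis that $A$ is nef and log big over $Z$ with respect to $f:(X,B)\to Y$ transfers verbatim to $f\circ g:(X',B')\to Y$ under pullback along $g$ (since the stratum bijection matches log canonical centers), so \cite[Theorem 2.47]{book} delivers the desired $\pi_{\ast}$-acyclicity.

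The main obstacle is the construction and analysis of the birational modification $g:X'\to X$: one must simultaneously arrange the globally-embedded condition for $(X',B')$, the identity $K_{X'}+B'=g^{\ast}(K_X+B)$ with $B'$ a boundary $\mathbb{R}$-divisor, and the stratum-preserving isomorphism property. This rests precisely on the partial resolution theorems for reducible varieties of \cite{bierstone-milman, bierstone-p}, together with a careful check that no spurious $g$-exceptional contributions disturb either the semi-ampleness hypothesis of (i) or the nef-and-log-big hypothesis of (ii) after pullback.
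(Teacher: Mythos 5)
Your overall strategy---pass to a birational model that is a globally embedded simple normal crossing pair and then quote the embedded versions \cite[Theorems 2.39 and 2.47]{book}---is exactly the paper's strategy, but two of your key claims do not hold as stated. First, you cannot in general arrange a crepant modification $g:X'\to X$ with $K_{X'}+B'=g^{*}(K_X+B)$ and $B'$ still a \emph{boundary}: since $g$ is an isomorphism only at the generic points of the strata, the crepant pullback assigns to the $g$-exceptional divisors the coefficients $-a(E_i,X,B)$, which can be negative, so $B'$ is merely a subboundary. This is not a technicality you can defer: with a non-effective $B'$ you can apply neither Lemma \ref{rf} (which requires a boundary) to get $R^{i}g_{*}\mathcal O_{X'}(g^{*}L)=0$, nor the embedded theorems downstream. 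The paper's fix is to write $K_{X'}+B'=\varphi^{*}(K_X+B)+E$ with $\lceil E\rceil$ effective and exceptional, and then replace $L$ by $\varphi^{*}L+\lceil E\rceil$ and $B'$ by $B'+\{-E\}$, so that $\varphi^{*}L+\lceil E\rceil-(K_{X'}+B'+\{-E\})=\varphi^{*}(L-(K_X+B))$ and all hypotheses (semi-ampleness, nef and log big, boundary condition, stratum matching) survive. Your proposal is missing this roundup/fractional-part adjustment entirely.

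Second, you attribute the globally embedded structure to the Bierstone--Milman and Bierstone--Vera Pacheco theorems, but those results do not produce an ambient smooth variety of dimension $\dim X+1$ containing $X'$ as a simple normal crossing divisor with $B'$ cut out by an ambient divisor, which is what ``globally embedded'' demands. The paper instead embeds $X$ into a smooth projective variety $V$, uses the elementary blow-up Lemma \ref{lem02} to arrange $B=D|_X$ for a divisor $D$ on the ambient space, takes a \emph{general complete intersection} $W\subset V$ with $\dim W=\dim X+1$ containing $X$ via Bertini (this is precisely where quasi-projectivity is used, not for invoking resolution of reducible varieties), and then resolves $W$ to obtain the ambient manifold $M$. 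Without some such construction your reduction to the embedded case has no foundation, so the proposal has a genuine gap at both the numerical and the geometric level.
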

\begin{proof}
Since $X$ is quasi-projective, 
we can embed $X$ into a smooth projective 
variety $V$. 
By Lemma \ref{lem02} below, 
we can replace $(X, B)$ and $L$ with 
$(X_k, B_k)$ and $\sigma^*L$ 
and assume that there exists 
an $\mathbb R$-divisor 
$D$ on $V$ such that 
$B=D|_X$. 
Then, by using Bertini's theorem, 
we can take a general complete intersection $W\subset 
V$ such that 
$\dim W=\dim X+1$, 
$X\subset W$, 
and $W$ is smooth at the 
generic point of every stratum of $(X, B)$ 
(cf.~the proof of \cite[Proposition 10.59]{kollar-book}). 
We take a suitable resolution $\psi:M\longrightarrow W$ with the following 
properties.  
\begin{itemize}
\item[(A)] The strict transform $X'$ of 
$X$ is a simple normal crossing 
divisor on $M$. 
\item[(B)] We can write 
$$
K_{X'}+B'=\varphi^*(K_X+B)+E
$$
such that $\varphi=\psi|_{X'}$, $(X', B'-E)$ is a globally embedded simple normal 
crossing pair (cf.~Definition \ref{gsnc0}), $B'$ is a boundary $\mathbb R$-divisor 
on $X'$, $\lceil E\rceil$ is effective and $\varphi$-exceptional, and 
the $\varphi$-image of every stratum of $(X', B')$ is a stratum of $(X, B)$. 
\item[(C)] $\varphi$ is an isomorphism over the 
generic point of every stratum of $(X, B)$. 
\item[(D)] $\varphi$ is an isomorphism at the generic point of 
every stratum of $(X', B')$. 
\end{itemize} 
Then  $$
K_{X'}+B'+\{-E\}=\varphi^*(K_X+B)+\lceil E\rceil, 
$$
$$\varphi_*\mathcal  O_{X'}(\varphi^*L+\lceil E\rceil)\simeq \mathcal O_X(L), 
$$
and 
$$
R^q\varphi_*\mathcal O_{X'}(\varphi^*L+\lceil E\rceil)=0
$$  for every $q>0$ by Lemma \ref{rf}. 
We note that 
$$
\varphi^*L+\lceil E\rceil -(K_{X'}+B'+\{-E\})=\varphi^*(L-(K_X+B))
$$ 
and that $\varphi$ is an isomorphism at the generic point of 
every stratum of $(X', B'+\{-E\})$.  

Therefore, by replacing $(X, B)$ and $L$ with 
$(X', B'+\{-E\})$ and $\varphi^*L+\lceil E\rceil$, 
we may assume that 
$(X, B)$ is a quasi-projective globally 
embedded simple normal crossing 
pair (cf.~Definition \ref{gsnc0}). 
In this case, the claims have already been established by \cite[Theorem 2.39]{book} 
and \cite[Theorem 2.47]{book}. 
\end{proof}

For some generalizations of Theorem \ref{quasi-proj} for 
{\em{semi log canonical 
pairs}}, see \cite{fujino-slc}. 
\begin{rem}
Theorem \ref{quasi-proj} (i) is contained in \cite[Theorem 1.1 (i)]{fujino-vanishing}.  
In \cite[Theorem 1.1]{fujino-vanishing}, 
$X$ is not assumed to be {\em{quasi-projective}}. 
On the other hand, 
we do not know how to 
remove the quasi-projectivity of $X$ from Theorem \ref{quasi-proj} (ii). 
\end{rem}

By direct calculations, we can obtain the following elementary 
lemma. It was used in the proof of Theorem \ref{quasi-proj}. 

\begin{lem}[{cf.~\cite[Lemma 3.60]{book}}]\label{lem02}
Let $(X, B)$ be a simple normal crossing 
pair such that 
$B$ is a boundary $\mathbb R$-divisor. 
Let $V$ be a smooth 
variety such that 
$X\subset V$. 
Then we can construct a sequence of blow-ups
$$
V_k\longrightarrow V_{k-1}\longrightarrow \cdots \longrightarrow V_0=V
$$
with the following properties. 
\begin{itemize}
\item[(1)] $\sigma _{i+1}:V_{i+1}\longrightarrow V_i$ is the 
blow-up along a smooth irreducible component of $\Supp B_i$ 
for every $i\geq 0$. 
\item[(2)] We put $X_0=X$, $B_{0}=B$, and 
$X_{i+1}$ is the strict transform of $X_i$ for every $i\geq 0$. 
\item[(3)]  We put $K_{X_{i+1}}+B_{i+1}=\sigma ^*_{i+1}(K_{X_i}+B_i)$ for 
every $i\geq 0$. 
\item[(4)] There exists an $\mathbb R$-divisor $D$ on $V_k$ such that 
$D|_{X_k}=B_k$ and $B_k$ is a boundary $\mathbb R$-divisor on $X_k$. 
\item[(5)] $\sigma_*\mathcal O_{X_k}\simeq \mathcal O_X$ and 
$R^q\sigma_*\mathcal O_{X_k}=0$ for 
every $q>0$, 
where $\sigma: V_k\longrightarrow V_{k-1}\longrightarrow \cdots \longrightarrow V_0=V$. 
\end{itemize}
\end{lem}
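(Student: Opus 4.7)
The plan is induction on the complexity of the embedding $(X,B)\hookrightarrow V$, measured by the number of irreducible components of $\Supp B$ that are not the restriction to $X$ of a Cartier divisor on $V$. In the base case (zero such components) we take $k=0$ and let $D$ be the obvious $\mathbb R$-linear combination of extending divisors; properties (1)--(5) hold trivially.

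For the inductive step, pick a component $B'\subset\Supp B$ that does not extend. Because $(X,B)$ is a simple normal crossing pair, $B'$ is a stratum of $(X,B)$, contained in a single irreducible component of $X$; in particular $B'$ is smooth as a closed subvariety of $X$, hence also of $V$. Let $\sigma_1\colon V_1\longrightarrow V_0=V$ be the blow-up of $V$ along $B'$, let $X_1\subset V_1$ be the strict transform of $X$, and define $B_1$ by $K_{X_1}+B_1=\sigma_1^*(K_X+B)$. Working in local coordinates $(x_1,\dots,x_p,y_1,\dots,y_r)$ on $V$ with $X=(x_1\cdots x_p=0)$ and $B'\subset(y_j=0)|_X$ for some $j$, a chart-by-chart calculation shows that $(X_1,B_1)$ is again a simple normal crossing pair with $B_1$ a boundary, and that every newly-introduced component of $\Supp B_1$ is supported on the exceptional divisor $E_1$ of $\sigma_1$. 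The contribution from $B'$ itself is thereby replaced by components extending from $V_1$ via $E_1$ or strict transforms of already-extending divisors, so the number of non-extending components of $\Supp B_1$ strictly decreases. Iterating, after finitely many steps we arrive at $(X_k,B_k)\hookrightarrow V_k$ satisfying property (4); (1)--(3) hold by construction.

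For property (5), apply Lemma \ref{lemA} with trivial boundaries to each restricted morphism $\sigma_{i+1}|_{X_{i+1}}\colon X_{i+1}\longrightarrow X_i$, where $\sigma_{i+1}$ is the blow-up of $V_i$ along a component $B'_i\subset\Supp B_i$. Since $B_i$ is a boundary, $B'_i$ is not an entire irreducible component of $X_i$, and a local inspection shows that $B'_i$ contains no generic point of any stratum of $X_i$. Hence $\sigma_{i+1}|_{X_{i+1}}$ is an isomorphism over $X_i\setminus B'_i$, and the open set $X_{i+1}\setminus \Exc(\sigma_{i+1}|_{X_{i+1}})$ contains the generic point of every stratum of $X_{i+1}$ and maps isomorphically to $X_i\setminus B'_i$, which in turn contains the generic point of every stratum of $X_i$. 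Lemma \ref{lemA} then yields $(\sigma_{i+1}|_{X_{i+1}})_*\mathcal O_{X_{i+1}}\simeq\mathcal O_{X_i}$ and $R^q(\sigma_{i+1}|_{X_{i+1}})_*\mathcal O_{X_{i+1}}=0$ for $q>0$; applying Leray's spectral sequence to the composite $\sigma=\sigma_k\circ\cdots\circ\sigma_1$ gives (5). The main obstacle is the coordinate-level verification in the inductive step: one must check chart-by-chart that the simple normal crossing structure is preserved, that the coefficients of $B_{i+1}$ remain in $[0,1]$, and that the complexity genuinely drops. This is especially delicate in charts where $B'$ meets several irreducible components of $X$ or several other components of $\Supp B$ simultaneously, where the pullback of $B'$ can spread nontrivially across several strict transforms and force further blow-ups along the resulting non-extending components before (4) is attained.
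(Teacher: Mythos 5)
Your overall strategy is the paper's: the blow-up sequence and properties (1)--(4) are treated as a direct local computation, and property (5) is obtained by applying Lemma \ref{lemA} with trivial boundaries to each step $\sigma_{i+1}|_{X_{i+1}}:X_{i+1}\longrightarrow X_i$ and composing. Your argument for (5) is correct and essentially identical to the paper's (the paper routes it through $\omega_{X_i}$ and Grothendieck duality, but Lemma \ref{lemA} already contains both forms), and your verification that the center contains no stratum of $X_i$, so that Lemma \ref{lemA} applies, is the right point to check.

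The gap is in the inductive step for (4), which you yourself flag as ``the main obstacle'' and do not carry out; moreover the invariant you induct on does not work as stated. A component $B'$ of $\Supp B$ lying in an irreducible component $X^{(1)}$ of $X$ has codimension two in $V$, so any divisor $D'$ on $V$ with $B'\subset \Supp D'$ meets every other component $X^{(j)}$ of $X$ along $B'\cap X^{(j)}$, and $D'|_X$ then necessarily contains components of $\Supp B$ besides $B'$. Hence individual components of $\Supp B$ essentially never ``extend'' one at a time when $X$ is reducible, and the base case of your induction is not reached. What Definition \ref{03} actually provides is that all local branches of $(y_i=0)|_X$ carry a common coefficient $\alpha_i$; it is these groupings that one must extend, and it is exactly this coefficient matching that makes $\alpha\, E_1|_{X_1}$ account simultaneously for the new copy of $B'$ on $\widetilde{X^{(1)}}$ and for the new exceptional components $E_1\cap\widetilde{X^{(j)}}$ created on the other branches. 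Two further points must then be checked before the induction closes: (i) a group already extended by some divisor $D_\lambda$ on $V_i$ must remain extended on $V_{i+1}$, which is not automatic because $\widetilde{D_\lambda}|_{X_{i+1}}$ differs from the strict transform of $D_\lambda|_{X_i}$ by multiples of $E_{i+1}|_{X_{i+1}}$ whenever $D_\lambda$ meets the center, so the count of ``non-extending'' objects is not obviously monotone; and (ii) your smoothness claim for the center is not justified --- being contained in a single smooth component of $X$ does not make $B'$ smooth, and an irreducible component of $\Supp B$ can self-intersect while $(X,B)$ remains simple normal crossing in the sense of Definition \ref{03} (the two local branches may sit on distinct coordinates $y_i$, $y_{i'}$); nor is $B'$ a stratum of $(X,B)$ unless its coefficient is one. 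None of this defeats the strategy, but as written the termination of your induction is asserted rather than proved.
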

\begin{proof}
All we have to do is to check the property (5). 
We note that 
$
\sigma_{i+1*}\mathcal O_{V_{i+1}}(K_{V_{i+1}})\simeq \mathcal O_{V_{i+1}}(K_{V_{i+1}})$
and 
$
R^q\sigma_{i+1*}\mathcal O_{V_{i+1}}(K_{V_{i+1}})=0$ for 
every $q$ and for each step $\sigma_{i+1}: V_{i+1}\longrightarrow V_i$ 
by Lemma \ref{lemA}. 
Therefore we obtain 
$
R^q\sigma_*\mathcal O_{X_k}(K_{X_k})=0$ for every $q>0$ and 
$\sigma_*\mathcal O_{X_k}(K_{X_k})\simeq \mathcal O_X(K_X)$. 
Thus by Grothendieck duality we obtain $R^q\sigma_*\mathcal O_{X_k}=0$ 
for every $q>0$ and $\sigma_*\mathcal O_{X_k}\simeq \mathcal O_X$ as in the 
proof of Lemma \ref{lemA}. 
\end{proof}

As a special case of Theorem \ref{quasi-proj} (i), 
we have: 

\begin{cor}[Torsion-freeness]\label{cor-new-torsion} 
Let $(X, D)$ be a quasi-projective simple normal crossing 
pair such that $D$ is reduced and let $f:X\to Y$ be a projective 
surjective morphism onto a smooth algebraic variety $Y$. Assume that 
every stratum of $(X, D)$ is dominant onto $Y$. 
Then $R^if_*\omega_{X/Y}(D)$ is torsion-free for every $i$. 
\end{cor}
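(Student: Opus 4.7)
The plan is to deduce the corollary directly from Theorem \ref{quasi-proj}~(i) by making the obvious choice of data: set $B := D$ and $L := K_X + D$, so that $\mathcal{O}_X(L) = \omega_X(D)$. With this choice we have $L - (K_X + B) = 0$, which qualifies as $f$-semi-ample (as the empty $\mathbb{R}_{>0}$-linear combination of $f$-semi-ample Cartier divisors). The hypotheses of Theorem \ref{quasi-proj}~(i) are therefore satisfied: $(X, D)$ is a quasi-projective simple normal crossing pair with $D$ a reduced (hence boundary) $\mathbb{R}$-divisor, and $f: X \to Y$ is proper (indeed projective) onto an algebraic variety $Y$.

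Applying Theorem \ref{quasi-proj}~(i) yields that every associated prime of $R^if_{*}\omega_X(D) = R^if_{*}\mathcal{O}_X(K_X + D)$ is the generic point of the $f$-image of some stratum of $(X, D)$. By our dominance hypothesis, the $f$-image of every stratum is $Y$ itself, so the only associated prime of $R^if_{*}\omega_X(D)$ as an $\mathcal{O}_Y$-module is the generic point of $Y$. Since $Y$ is smooth and hence integral, this is precisely the statement that $R^if_{*}\omega_X(D)$ is a torsion-free $\mathcal{O}_Y$-module.

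To pass from $\omega_X(D)$ to $\omega_{X/Y}(D)$, I will use $\omega_{X/Y}(D) \simeq \omega_X(D) \otimes f^{*}\omega_Y^{-1}$ (valid since $X$ is Gorenstein and $Y$ is smooth), together with the projection formula, to obtain $R^if_{*}\omega_{X/Y}(D) \simeq R^if_{*}\omega_X(D) \otimes \omega_Y^{-1}$. Tensoring a torsion-free sheaf with the line bundle $\omega_Y^{-1}$ preserves torsion-freeness, so the conclusion follows. There is no genuine obstacle here: all the real work is done in Theorem \ref{quasi-proj}~(i), whose proof in turn reduces the quasi-projective simple normal crossing case to the globally embedded case already handled in \cite[Theorem 2.39]{book}. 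The only point to verify with some care is that the degenerate case $L - (K_X + B) = 0$ is indeed covered by the statement of Theorem \ref{quasi-proj}~(i), which is the standard convention.
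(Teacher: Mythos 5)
Your proposal is correct and follows essentially the same route as the paper: the authors likewise reduce to $R^if_*\mathcal O_X(K_X+D)$ via the invertibility of $\omega_Y$, apply Theorem \ref{quasi-proj} (i) with $B=D$ and $L=K_X+D$, and conclude from the dominance hypothesis that the only associated prime is the generic point of $Y$. The point you flag about $L-(K_X+B)=0$ being $f$-semi-ample is indeed the standard convention (take $D_1=0$, $a_1=1$) and is used silently in the paper.
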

\begin{proof} 
It is sufficient to prove that $R^if_*\mathcal O_X(K_X+D)$ is torsion-free 
for every $i$ since $\mathcal O_Y(K_Y)$ is locally free. 
By Theorem \ref{quasi-proj} (i), every associated prime of $R^if_*\mathcal O_X(K_X
+D)$ is the generic point of $Y$ for every $i$. 
This means that $R^if_*\mathcal O_X(K_X+D)$ is torsion-free for every $i$. 
\end{proof}

We will use it in Section \ref{sec4}. 

\section{Higher direct images of log canonical divisors}
\label{sec4}
This section is the main part of this paper. 
The following theorem is our main theorem 
(cf.~\cite[Theorem 5]{kawamata1}, 
\cite[Theorem 2.6]{ko2}, \cite[Theorem 1]{n}, 
\cite[Theorems 3.4 and 3.9]{high}, 
and \cite[Theorem 1.1]{kawamata}), 
which is a {\em{natural}} generalization of 
the Fujita--Kawamata semipositivity theorem for 
simple normal crossing pairs. 

\begin{thm}\label{main}
Let $(X, D)$ be a simple normal crossing pair such that 
$D$ is reduced and let $f:X\longrightarrow Y$ be 
a projective surjective morphism onto a smooth 
algebraic variety $Y$. 
Assume that every stratum of $(X, D)$ is dominant onto $Y$. 
Let $\Sigma$ be a simple normal crossing divisor on $Y$ such that 
every stratum of $(X, D)$ is smooth over $Y^*=Y\setminus \Sigma$. 
We put $X^*=f^{-1}(Y^*)$, $D^*=D|_{X^*}$, and $d=\dim X-\dim Y$. 
Let $\iota: X^*\setminus D^*\longrightarrow X^*$ be the natural open immersion. 
Then we obtain 
\begin{itemize}
\item[(1)] $R^k(f|_{X^*})_{*}\iota_!\mathbb Q_{X^*\setminus D^*}
\simeq R^k(f|_{X^*\setminus D^*})_!\mathbb 
Q_{X^*\setminus D^*}$ underlies a graded polarizable 
variation of $\mathbb Q$-mixed Hodge structure on $Y^*$ for every $k$. 
Moreover, it is {\em{admissible}}. 
\end{itemize}
We put $\mathcal V^{k}_{Y^*}=
R^k(f|_{X^*})_{*}\iota_!\mathbb Q_{X^*\setminus D^*}\otimes 
\mathcal O_{Y^*}$ for every $k$. 
Let $$\cdots \subset  F^{p+1}(\mathcal V^k_{Y^*})
\subset F^{p}(\mathcal V^k_{Y^*})
\subset F^{p-1}(\mathcal V^k_{Y^*})\subset \cdots$$ be 
the Hodge filtration. 
We assume 
that all the local monodromies on $R^{d-i}(f|_{X^*})_{*}\iota_!
\mathbb Q_{X^*\setminus D^*}$ around $\Sigma$ are unipotent. 
Then we obtain 
\begin{itemize}
\item[(2)] $R^{d-i}f_*\mathcal O_X(-D)$ is isomorphic to 
the canonical extension of $$\Gr^0_F(\mathcal 
V^{d-i}_{Y^*})=F^0(\mathcal V^{d-i}_{Y^*})/ F^1(\mathcal V^{d-i}_{Y^*}).$$ 
It is denoted by $\Gr^0_F(\mathcal V^{d-i}_Y)$. 
In particular, $R^{d-i}f_*\mathcal O_X(-D)$ is locally free. 
\end{itemize}
By Grothendieck duality, we obtain 
\begin{itemize}
\item[(3)] $R^{i}f_*\omega_{X/Y}(D)$ is isomorphic to the 
canonical extension of 
$$(\Gr ^0_F(\mathcal V^{d-i}_{Y^*}))^*=\mathcal {H}om_{\mathcal O_{Y^*}}(\Gr^0_F(\mathcal V^{d-i}_{Y^*}), 
\mathcal O_{Y^*}).$$ 
Thus, we have $R^if_*\omega_{X/Y}(D)\simeq (\Gr^0_F(\mathcal V^{d-i}_Y))^*$. 
In particular, $R^{i}f_*\omega_{X/Y}(D)$ is locally free. 
\item[(4)] We further assume that 
$Y$ is complete. 
Then $R^if_*\omega_{X/Y}(D)$ is semipositive. 
\end{itemize}
\end{thm}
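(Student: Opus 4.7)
Part (1) is a direct application of Theorem \ref{GPVMHS for snc pair} to $f:(X,D)\longrightarrow Y$ with the given open subset $\zo{Y}$; the hypothesis that every stratum of $(X,D)$ is smooth over $\zo{Y}$ is precisely the assumption needed there. Furthermore, Remark \ref{remark for GrF0} furnishes, on $\zo{Y}$, a canonical isomorphism
\begin{equation*}
R^{d-i}(f|_{\zo{X}})_{*}\mathcal O_{\zo{X}}(-\zo{D})\simeq \Gr_F^0\bigl(\mathcal V^{d-i}_{\zo{Y}}\bigr),
\end{equation*}
which serves as the launching point for Part (2).

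The bulk of the work lies in Part (2). By admissibility of the \gpv of $\bQ$-mixed Hodge structure produced in Part (1), together with the unipotent monodromy hypothesis, Proposition \ref{extension of Hodge filtration} supplies the upper canonical extension $\widetilde{\mathcal V}^{d-i}$ together with an extended Hodge filtration $F$ such that every $\Gr_F^p\Gr_k^W\widetilde{\mathcal V}^{d-i}$ is locally free of finite rank on $Y$. What must be proved is that $R^{d-i}f_*\mathcal O_X(-D)$ is locally free and that the isomorphism above extends to a canonical isomorphism $R^{d-i}f_*\mathcal O_X(-D)\simeq \Gr_F^0\widetilde{\mathcal V}^{d-i}$ on all of $Y$. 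The strategy, indicated in \ref{16ske}, is to reduce to the case $\dim Y=1$. For this, I would cut $Y$ by a general pencil and restrict to a disc $\Delta\subset Y$ meeting $\Sigma$ transversally at a single point, replacing the preimage $f^{-1}(\Delta)$ by a simple normal crossing pair $(X',D')$ over $\Delta$ via the partial resolution of singularities for reducible varieties from \cite{bierstone-milman} and \cite{bierstone-p}, and using Corollary \ref{cor-new-torsion} together with Theorem \ref{quasi-proj} to guarantee that both $R^{d-i}f_*\mathcal O_X(-D)$ and the upper canonical extension commute with this base change. The main obstacle lies precisely at the one-dimensional level: one must feed the semi-simplicial hyperresolution of $(X',D')$ from \ref{preliminaries for snc pair} into Steenbrink's machinery recalled in \ref{review on Steenbrink's results}, and adapt Step 1 of the proof of Lemma \ref{GPVMHS for semi-simplicial variety} and the argument of Lemma \ref{admissibility for relative cohomology} to the $\Gr_F^0$-piece $\mathcal O_{X'}(-D')$ of the relative logarithmic de Rham complex, thereby directly identifying $R^{d-i}f_*\mathcal O_{X'}(-D')$ with the upper canonical extension of $\Gr_F^0\mathcal V^{d-i}_{\pd}$.

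Part (3) is then a formal corollary of Part (2) via Grothendieck duality. Since $X$ is Gorenstein one has $f^!\mathcal O_Y=\omega_{X/Y}[d]$, and duality yields
\begin{equation*}
Rf_*\omega_{X/Y}(D)[d]\simeq R\shom_{\mathcal O_Y}(Rf_*\mathcal O_X(-D),\mathcal O_Y).
\end{equation*}
Applying Part (2) to every $i$ in the range of interest shows that all $R^jf_*\mathcal O_X(-D)$ are locally free, whence the local-to-global Ext spectral sequence for the right-hand side collapses and provides the degreewise identification
$R^if_*\omega_{X/Y}(D)\simeq (R^{d-i}f_*\mathcal O_X(-D))^*\simeq (\Gr_F^0\widetilde{\mathcal V}^{d-i})^*$,
which is locally free on $Y$. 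Finally, Part (4) is immediate from Part (3) together with Theorem \ref{semi-po}: when $Y$ is complete, the data $(\widetilde{\mathcal V}^{d-i},W,F)$ satisfies the four hypotheses of that theorem with $a=0$, and hence $(\Gr_F^0\widetilde{\mathcal V}^{d-i})^*\simeq R^if_*\omega_{X/Y}(D)$ is semipositive in the sense of Fujita--Kawamata.
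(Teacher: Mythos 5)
Your treatment of (1), (3) and (4) agrees with the paper, and your one-dimensional analysis for (2) (feeding the semi-simplicial hyperresolution of \ref{preliminaries for snc pair} into Steenbrink's machinery and identifying $R^{d-i}f_*\mathcal O_X(-D)$ with the canonical extension of $\Gr_F^0\mathcal V^{d-i}_{\zo{Y}}$) is exactly Step 1 of the paper's proof. The gap is in how you pass from $\dim Y=1$ to $\dim Y\ge 2$. Cutting $Y$ by a general pencil and restricting to discs transversal to $\Sigma$ only establishes the isomorphism over an open subset $Y_1$ with $\codim(Y\setminus Y_1)\ge 2$. From that point all you have is a morphism $\varphi^i_Y: R^if_*\omega_{X/Y}(D)\longrightarrow (\Gr_F^0\mathcal V^{d-i}_Y)^*$ which, by the torsion-freeness of Corollary \ref{cor-new-torsion}, is injective. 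Injectivity plus agreement in codimension one does \emph{not} force surjectivity: a torsion-free sheaf can sit inside a locally free sheaf with cokernel supported in codimension $\ge 2$ (an ideal sheaf of a point on a surface inside $\xO$), and $R^if_*\omega_{X/Y}(D)$ is not known to be reflexive or locally free at this stage. The paper's Step 2 is devoted precisely to killing this cokernel $G^i_Y$: an induction on $\dim Y$ shows $\Supp G^i_Y$ is finite, one blows up a point $P$ of the support, modifies $X$ by \cite{bierstone-milman}, \cite{bierstone-p} so that $f$ factors through the blow-up $\mu:W\longrightarrow Y$, uses Kawamata's base-change result to get $(\Gr_F^0\mathcal V^{d-i}_W)^*\simeq\mu^*(\Gr_F^0\mathcal V^{d-i}_Y)^*$, twists by $\xO_W(\nu E)$, runs the Leray spectral sequence for $\mu$ against $\omega_W=\mu^*\omega_Y\otimes\xO_W((l-1)E)$, and concludes $G^i_W=0$ by Nakayama's lemma. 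None of this is replaced by your appeal to Theorem \ref{quasi-proj} and base change to a disc.

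A second, smaller omission: the unipotency hypothesis of the theorem concerns only the single local system $R^{d-i}(f|_{X^*})_*\iota_!\bQ_{X^*\setminus D^*}$, whereas the $\dim Y=1$ argument via $\Omega_{(D\cap X)_\bullet/Y}(\log E_\bullet)$ needs all the local monodromies on $R^kf_*\bC_{S^*}$ for every stratum $S$ of $(X,D)$ to be unipotent, since the $E_1$-terms of the weight spectral sequence are canonical extensions only under that stronger assumption. The paper first proves (2) and (3) under the stronger assumption (Steps 1--2) and then removes it by the unipotent reduction with a Kummer covering $\tau:Y'\longrightarrow Y$ together with Lemma \ref{lemA} and a Galois-invariants argument (Steps 3--4), which is also what yields the upper/lower canonical extension statements of Theorem \ref{main2}. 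Your proposal does not address this reduction.
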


Even the following 
very special case of Theorem \ref{main} has never been checked before. 
It does not follow from \cite[Theorem 1.1]{kawamata}. 

\begin{cor}
Let $f:X\longrightarrow Y$ be a projective morphism 
from a simple normal crossing 
variety $X$ to a smooth complete algebraic variety $Y$. 
Assume that every stratum of $X$ is smooth over $Y$. Then 
$R^if_*\omega_{X/Y}$ is a semipositive locally free sheaf for every $i$. 
\end{cor}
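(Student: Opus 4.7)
The plan is to deduce this corollary as a direct specialization of Theorem \ref{main} to the case where the boundary vanishes and the discriminant divisor on $Y$ is empty. Concretely, I would set $(X, D) = (X, 0)$ and take $\Sigma = \emptyset$, and then verify that all the hypotheses of Theorem \ref{main} are satisfied in this setup. By Definition \ref{03}, the pair $(X, 0)$ is a simple normal crossing pair with reduced boundary whenever $X$ is a simple normal crossing variety, and by Definition \ref{stra} its strata coincide with the ordinary strata of $X$, namely the irreducible components of intersections of irreducible components of $X$. The hypothesis that every stratum is smooth over $Y$ implies in particular that each irreducible component of $X$ is smooth and hence flat over the connected smooth complete variety $Y$; since $f$ is projective, the image of each such component is simultaneously open and closed in $Y$ and nonempty, so it equals $Y$. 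Therefore every stratum of $(X,0)$ is dominant onto $Y$.

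With $\Sigma = \emptyset$ one has $Y^{\ast} = Y$, $X^{\ast} = X$, and $D^{\ast} = 0$, so the condition in Theorem \ref{main} that every stratum be smooth over $Y^{\ast}$ is exactly the corollary's assumption. Moreover, because $\Sigma$ is empty, the unipotency assumption on the local monodromies of $R^{d-i}(f|_{X^{\ast}})_{\ast}\iota_!\bQ_{X^{\ast}\setminus D^{\ast}} = R^{d-i}f_{\ast}\bQ_X$ around $\Sigma$ is vacuous: there are simply no punctures around which a monodromy can occur. Thus all the hypotheses needed to invoke parts (3) and (4) of Theorem \ref{main} are in place.

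Applying Theorem \ref{main} (3) with $D = 0$ gives that $R^if_{\ast}\omega_{X/Y}(D) = R^if_{\ast}\omega_{X/Y}$ is isomorphic to the canonical extension of $(\Gr_F^0\mathcal V^{d-i}_{Y^{\ast}})^{\ast}$, and since $Y^{\ast} = Y$ this canonical extension is simply the locally free sheaf itself. Applying Theorem \ref{main} (4), which requires the completeness of $Y$ that is assumed in the corollary, then yields the semipositivity of $R^if_{\ast}\omega_{X/Y}$ in the sense of Fujita--Kawamata. The only nontrivial point to check is the dominance of every stratum onto $Y$, addressed in the first paragraph; beyond that, no further Hodge-theoretic or geometric input is needed, because all the substantive content (graded polarizable admissibility of the variation of $\bQ$-mixed Hodge structure on cohomology with compact support, the comparison of $R^if_{\ast}\omega_{X/Y}(D)$ with the dual of a graded piece of a canonical extension, and the semipositivity input from Theorem \ref{semi-po}) is already packaged inside Theorem \ref{main}. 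The main obstacle, therefore, is not in the present deduction but in the proof of Theorem \ref{main} itself.
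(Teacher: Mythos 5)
Your deduction is correct and is essentially the paper's own route: the corollary is presented there as an immediate special case of Theorem \ref{main} with $D=0$ and $\Sigma=\emptyset$, with no separate proof supplied. Your verification that every stratum is automatically dominant onto $Y$ (open image by smoothness, closed image by properness, $Y$ connected) is precisely the one point that needs checking before invoking parts (3) and (4) of Theorem \ref{main}.
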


It is natural to prove Theorem \ref{main2}, which is a slight generalization of 
(2) and (3) in Theorem \ref{main}, simultaneously with Theorem \ref{main}. 

\begin{thm}[{cf.~\cite[Theorem 2.6]{ko2}}]\label{main2}
We use the same notation and assumptions as in {\em{Theorem \ref{main}}}. 
We do not assume that the local monodromies on 
$R^{d-i}{(f|_{X^*})}_*\iota_!\mathbb Q_{X^*\setminus D^*}$ around $\Sigma$ are unipotent. 
Then we obtain the following properties. 
\begin{itemize}
\item[(a)] $R^{d-i}f_*\mathcal O_X(-D)$ is isomorphic to 
the lower canonical extension of $$\Gr^0_F(\mathcal 
V^{d-i}_{Y^*})=F^0(\mathcal V^{d-i}_{Y^*})/F^1(\mathcal V^{d-i}_{Y^*}).$$ 
In particular, $R^{d-i}f_*\mathcal O_X(-D)$ is locally free. 
\end{itemize}
By Grothendieck duality, we obtain 
\begin{itemize}
\item[(b)] $R^{i}f_*\omega_{X/Y}(D)$ is isomorphic to the 
upper canonical extension of 
$$(\Gr ^0_F(\mathcal V^{d-i}_{Y^*}))^*=\mathcal {H}om_{\mathcal O_{Y^*}}(\Gr^0_F(\mathcal V^{d-i}_{Y^*}), 
\mathcal O_{Y^*}).$$ 
In particular, $R^{i}f_*\omega_{X/Y}(D)$ is locally free. 
\end{itemize}
\end{thm}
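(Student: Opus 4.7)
My plan is to reduce Theorem \ref{main2} to Theorem \ref{main} via a finite ramified cover of the base that trivializes the monodromy, following the classical approach of Kawamata and Koll\'ar. First I would choose a finite Galois cover $\pi\colon Y'\longrightarrow Y$, with Galois group $G$, such that $Y'$ is smooth, $\pi$ is \'etale over $Y^{\ast}$, $\Sigma':=\pi^{-1}(\Sigma)_{\red}$ is a simple normal crossing divisor on $Y'$, and the pullback to $Y'\setminus\Sigma'$ of the local system $R^{d-i}(f|_{X^{\ast}})_{\ast}\iota_!\mathbb Q_{X^{\ast}\setminus D^{\ast}}$ has unipotent local monodromy around $\Sigma'$. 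Such a $\pi$ exists by a Kawamata-type cyclic cover combined with Abhyankar's lemma, because the monodromies are quasi-unipotent by Theorem \ref{main}\,(1) and Remark \ref{q-uni}.

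Next I would construct a simple normal crossing pair $(X',D')$ with $D'$ reduced and a projective surjective morphism $f'\colon X'\longrightarrow Y'$, together with a projective generically finite morphism $\pi_X\colon X'\longrightarrow X$ that commutes with $\pi$, $f$, $f'$, such that every stratum of $(X',D')$ dominates $Y'$ and is smooth over $Y'\setminus\Sigma'$. This is obtained by base-changing $X$ along $\pi$ and then applying the partial resolution results for reducible varieties cited in \ref{16ske}. A careful choice of the resolution, together with Lemma \ref{lemA}, will let me relate $R(\pi_X)_{\ast}\mathcal O_{X'}(-D')$ and $R(\pi_X)_{\ast}\omega_{X'}(D')$ cleanly to $\mathcal O_X(-D)$ and $\omega_X(D)$. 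Theorem \ref{main}\,(2)--(3) applied on $Y'$ then identifies $R^{d-i}f'_{\ast}\mathcal O_{X'}(-D')$ and $R^i f'_{\ast}\omega_{X'/Y'}(D')$ with the canonical extensions on $Y'$ of $\Gr_F^0(\pi^{\ast}\mathcal V^{d-i}_{Y^{\ast}})$ and of its dual, respectively (upper and lower canonical extensions coincide on $Y'$ by unipotency of the pulled-back monodromy).

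Finally, I would descend from $Y'$ to $Y$ by pushing forward along $\pi$ and extracting the $G$-invariant part. Since $\pi$ is finite flat, this expresses $R^{d-i}f_{\ast}\mathcal O_X(-D)$ and $R^i f_{\ast}\omega_{X/Y}(D)$ as invariant subsheaves of the corresponding objects on $Y'$. The main obstacle will be to check that these invariant pushforwards are precisely the lower, respectively upper, canonical extensions on $Y$ of $\Gr_F^0(\mathcal V^{d-i}_{Y^{\ast}})$ and of its dual. Concretely, using Abhyankar's lemma one can describe the descent locally around a point of $\Sigma$ and track the eigenvalues of the residues of the induced logarithmic connection; the required statement is that these eigenvalues lie in $(-1,0]$ for (a) and in $[0,1)$ for (b). This asymmetry between the two half-open intervals reflects the asymmetry between the $-D$ appearing in $\mathcal O_X(-D)$ and the $+D$ appearing in $\omega_X(D)$, and is the conceptual reason the lower canonical extension appears in (a) and the upper in (b). A secondary technical point is to arrange the middle step so that the hypotheses of Lemma \ref{lemA} genuinely hold, which is where the assumption that every stratum of $(X,D)$ is dominant onto $Y$ plays an essential role.
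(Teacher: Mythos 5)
Your proposal follows essentially the same route as the paper's own proof (Steps 3 and 4 of the combined proof of Theorems \ref{main} and \ref{main2}): a Kummer covering $\tau\colon Y'\to Y$ ramified only along $\Sigma$ achieving unipotent reduction, a normalized base change followed by the Bierstone--Milman/Vera Pacheco resolution to recover a simple normal crossing pair over $Y'$, comparison of direct images via Lemma \ref{lemA} and Grothendieck duality, and descent to $Y$ by taking the $G$-invariant part of $\tau_*$ to identify the result with the lower (resp.\ upper) canonical extension. One small slip: with the conventions of Remark \ref{upper-lower} the lower canonical extension in (a) corresponds to residue eigenvalues in $[0,1)$ and the upper canonical extension in (b) to eigenvalues in $(-1,0]$, i.e.\ the opposite of the assignment you wrote, though this does not affect the structure of the argument.
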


Before we start the proof of Theorem \ref{main} and Theorem \ref{main2},
we give a remark on the canonical extensions. 

\begin{rem}[Upper and lower canonical extensions of 
Hodge bundles]\label{upper-lower} 
Let ${}^l\mathcal V_{Y^*}^k$ (resp.~${}^u\mathcal V_{Y^*}^k$) be the 
{\em{Deligne extension}} of $\mathcal V_{Y^*}^k$ on $Y$ such that 
the eigen values of the residue 
of the connection are contained in $[0, 1)$ (resp.~$(-1, 0]$). 
We call it the {\em{lower canonical extension}} (resp.~{\em{upper 
canonical extension}}) of $\mathcal V_{Y^*}^k$ following 
\cite[Definition 2.3]{ko2}. 
If the local monodromies on $R^k(f|_{X^*})_*\iota_!\mathbb Q_{X^*\setminus D^*}
\simeq R^k(f|_{X^*\setminus D^*})_!\mathbb Q_{X^*\setminus D^*}$ 
around $\Sigma$ are unipotent, then 
$$
{}^l\mathcal V_{Y^*}^k={}^u\mathcal V_{Y^*}^k
$$ 
holds. In this case, we set  
$$
\mathcal V_Y^k={}^l\mathcal V_{Y^*}^k={}^u\mathcal V_{Y^*}^k
$$ 
and call it the {\em{canonical extension}} of $\mathcal V_{Y^*}^k$. 
Let $j:Y^*\longrightarrow Y$ be the natural open immersion. 
We set 
$$
{}^l\!F^p(\mathcal V_{Y^*}^k)=j_*F^p(\mathcal V_{Y^*}^k)\cap {}^l\mathcal V_{Y^*}^k
$$ 
and call it the {\em{lower canonical extension}} of $F^p(\mathcal V_{Y^*}^k)$ 
on $Y$. 
We can define the {\em{upper canonical extension}} 
${}^u\!F^p(\mathcal V_{Y^*}^k)$ of 
$F^p(\mathcal V_{Y^*}^k)$ on $Y$ similarly.
As above, when the local monodromies 
on $R^k(f|_{X^*})_*\iota_!\mathbb Q_{X^*\setminus D^*}$ 
around $\Sigma$ are unipotent, we write $F^p(\mathcal V_Y^k)$ to 
denote ${}^l\!F^p(\mathcal V_{Y^*}^k)=
{}^u\!F^p(\mathcal V_{Y^*}^k)$ and call 
it the {\em{canonical extension}} of $F^p(\mathcal V_{Y^*}^k)$. 
Theorem \ref{main2} (a) means that 
the following short exact sequence 
\begin{align}\label{541}
0\longrightarrow F^1(\mathcal V_{Y^*}^{d-i})\longrightarrow 
F^0(\mathcal V_{Y^*}^{d-i})
\longrightarrow \Gr_F^0(\mathcal V_{Y^*}^{d-i})\longrightarrow 0
\end{align}
is extended to the short exact sequence 
\begin{align}\label{542}
0\longrightarrow {}^l\!F^1(\mathcal V_{Y^*}^{d-i})\longrightarrow 
{}^l\!F^0(\mathcal V_{Y^*}^{d-i})
\longrightarrow R^{d-i}f_*\mathcal O_X(-D)\longrightarrow 0. 
\end{align}
Let us consider the dual variation of mixed Hodge 
structure (cf.~Remark \ref{dual of VMHS}). 
The dual local system of 
$R^k(f|_{X^*})_*\iota_!\mathbb Q_{X^*\setminus D^*}$
underlies $(\xV_{Y^*}^k)^*$. 
The locally free sheaf $(\xV_{Y^*}^k)^*$ 
carries the Hodge filtration $F$
defined in Remark \ref{dual of VMHS}.
Theorem \ref{main2} (b) means that
the short exact sequence
\begin{align}
\label{543}
0 \longrightarrow F^1((\mathcal V_{Y^*}^{d-i})^*)
\longrightarrow F^0((\mathcal V_{Y^*}^{d-i})^*)
\longrightarrow \Gr_F^0((\mathcal V_{Y^*}^{d-i})^*) \longrightarrow 0
\end{align}
is extended to the short exact sequence 
\begin{align}
\label{544}
0 \rightarrow {}^u\!F^1((\mathcal V_{Y^*}^{d-i})^*)
\rightarrow {}^u\!F^0((\mathcal V_{Y^*}^{d-i})^*)
\rightarrow R^if_*\omega_{X/Y}(D) \rightarrow 0.
\end{align}
We note that
\begin{equation*}
\Gr_F^{-p}((\mathcal V_{Y^*}^{d-i})^*)
\simeq
(\Gr_F^p(\mathcal V_{Y^*}^{d-i}))^*
\end{equation*}
for every $p$ as in Remark \ref{dual of VMHS}.
We also note that all the terms in \eqref{542} and \eqref{544} are 
locally free sheaves by \cite[Proposition 1.11.3]{kashiwara} since 
$R^k(f|_{X^*})_*\iota_!\mathbb Q_{X^*\setminus D^*}$ underlies 
an admissible graded polarized variation of $\mathbb Q$-mixed Hodge structure 
on $Y^*$ for every $k$ by Theorem \ref{main} (1). 
See also Proposition \ref{extension of Hodge filtration}. 
Let us see the relationship between \eqref{542} and \eqref{544} 
in details for the reader's convenience. 
By definition, it is easy to see that 
$$
({}^l\mathcal V_{Y^*}^k)^*={}^u((\mathcal V_{Y^*}^k)^*)
$$ 
for every $k$. 
We can check that 
$$
0\longrightarrow {}^u\!F^p((\mathcal V_{Y^*}^{k})^*)\longrightarrow 
({}^l\mathcal V_{Y^*}^k)^*\longrightarrow 
({}^l\!F^{1-p}(\mathcal V_{Y^*}^{k}))^*\longrightarrow 0
$$ 
is exact for every $p$ and $k$ (cf.~Lemma \ref{lemma 5.1}). 
Then we have  the following big commutative diagram. 
$$
\xymatrix{
&&&0\ar[d]&\\
&0\ar[d]&&R^if_*\omega_{X/Y}(D)\ar[d]&\\
0\ar[r]&{}^u\!F^1((\mathcal V_{Y^*}^{d-i})^*)\ar[d]\ar[r]&({}^l\mathcal V_{Y^*}^{d-i})^*
\ar@{=}[d]\ar[r]&({}^l\!F^0(\mathcal V_{Y^*}^{d-i}))^*\ar[r]\ar[d]&0\\
0\ar[r]&{}^u\!F^0((\mathcal V_{Y^*}^{d-i})^*)\ar[d]\ar[r]
&({}^l\mathcal V_{Y^*}^{d-i})^*\ar[r]&({}^l\!F^1(\mathcal V_{Y^*}^{d-i}))^*\ar[d]\ar[r]&0\\
&R^if_*\omega_{X/Y}(D)\ar[d]&&0&\\ 
&0&&&
}
$$
The first vertical line is nothing but \eqref{544} and 
the third 
vertical line is the dual 
of \eqref{542}. 

Theorem \ref{main} (2) (resp.~(3)) is a special case of Theorem 
\ref{main2} (a) (resp.~(b)). 
\end{rem}

Let us start the proof of Theorem \ref{main} and Theorem \ref{main2}. 

\begin{proof}[Proof of {\em{Theorem \ref{main}}} and {\em{Theorem \ref{main2}}}] 
The statement (1) in Theorem \ref{main} follows from 
Theorem \ref{GPVMHS for snc pair}.
We note that 
(4) in Theorem \ref{main} follows 
from Theorem \ref{semi-po} and Corollary \ref{cor622} 
by (3) in Theorem \ref{main}. 

Without loss of generality, by \cite[Theorem 1.4]{bierstone-p} and Lemma \ref{lemA}, we may 
assume that $\Supp (f^*\Sigma \cup D)$ is a simple normal crossing divisor on $X$.

In Step \ref{step1} and Step \ref{step2}, 
we prove (2) and (3) in Theorem \ref{main} for every $i$ under the assumption that all 
the local monodromies 
on $R^kf_*\mathbb C_{S^*}$, where 
$S$ is a stratum of $(X, D)$, $S^*=S|_{X^*}$, and 
$k$ is any integer, 
around $\Sigma$ are unipotent. 
In Step \ref{step33} and Step \ref{step44}, we prove Theorem \ref{main2}, 
which contains (2) and (3) in Theorem \ref{main}. 

From now on, we assume that all the local monodromies 
on $R^kf_*\mathbb C_{S^*}$, where 
$S$ is a stratum of $(X, D)$, $S^*=S|_{X^*}$, and 
$k$ is any integer, around $\Sigma$ 
are unipotent. 

\setcounter{step}{0}
\begin{step}[The case when $\dim Y=1$]\label{step1}
By shrinking $Y$, we may assume that 
$Y$ is the unit disc $\Delta$ in $\mathbb C$ and $\Sigma =\{0\}$ in $\Delta$. 
We put $E=f^{-1}(0)$. 
By considering
$\Omega_{(D\cap X)_{\bullet}/Y}(\log E_\bullet)$
as in the proof of Lemma \ref{GPVMHS for semi-simplicial variety},
we obtain 
that $R^{d-i}f_*\mathcal O_X(-D)$ is isomorphic to the canonical extension of 
$\Gr ^0_F(\mathcal V_{Y^*}^{d-i})$ for every $i$ (see also 
Remark \ref{remark for GrF0}). 
Therefore, we obtain
$R^if_*\omega_{X/Y}(D)\simeq 
(\Gr_F^0(\mathcal V_Y^{d-i}))^*$ for every $i$ by 
Grothendieck duality.  
\end{step}

\begin{step}
[The case when $l:=\dim Y\geq 2$]\label{step2}
We shall prove the statement (3) by induction on $l$ for every $i$. 

By Step \ref{step1}, there is an open subset $Y_1$ of $Y$ 
such that $\codim (Y\setminus Y_1)\geq 2$ and 
that 
$$
R^if_*\omega_{X/Y}(D)|_{Y_1}\simeq (\Gr^0_F(\mathcal V^{d-i}_Y))^*|_{Y_1}. 
$$ 
Since $(\Gr^0_F(\mathcal V^{d-i}_Y))^*$ is locally free 
(see Remark \ref{upper-lower}), we obtain a 
homomorphism 
$$
\varphi_Y^i: 
R^if_*\omega_{X/Y}(D)\longrightarrow (\Gr^0_F(\mathcal V^{d-i}_Y))^*. 
$$ 
We will prove that $\varphi^i_Y$ is an isomorphism. Without loss of generality, 
we may assume that $X$ and $Y$ are quasi-projective by shrinking $Y$.  
By Corollary \ref{cor-new-torsion}, $R^if_*\omega_{X/Y}(D)$ is torsion-free. 
Therefore, $\Ker \varphi_Y^i=0$. 
We put $G_Y^i:= \Coker \varphi_Y^i$. 
Taking a general hyperplane cut, we see that $\Supp G_Y^i$ 
is a finite set by the induction hypothesis. 
Assume that $G_Y^i\ne 0$. 
We may also assume that $\Supp G^i_Y=\{P\}$ by shrinking $Y$. 
Let $\mu:W\longrightarrow Y$ be the blowing up at $P$ and 
set $E=\mu^{-1}(P)$. 
Then $E\simeq \mathbb P^{l-1}$. 
By \cite[Theorem 1.5]{bierstone-milman} and \cite[Theorem 1.4]{bierstone-p}, 
we can take a projective birational 
morphism $\pi:X'\longrightarrow X$ from a simple normal crossing  
variety $X'$ with the following properties: 
\begin{itemize}
\item[(i)] the composition 
$X'\longrightarrow X\longrightarrow Y\dashrightarrow W$ 
is a morphism. 
\item[(ii)] $\pi$ is an isomorphism over $X^*$. 
\item[(iii)] $\Exc (\pi)\cup D'$ is a 
simple normal crossing divisor on $X'$, 
where $D'$ is the strict transform of $D$. 
\end{itemize}
We obtain that $R^qf_*\omega _{X/Y}(D)\simeq 
R^q(f\circ \pi)_*\omega_{X'/Y}(D')$ for every $q$ because 
$R\pi_*\omega_{X'}(D')\simeq \omega_X(D)$ in the derived category of coherent sheaves on $X$ by Lemma 
\ref{lemA}. 
We note that every stratum of $(X', D')$ is dominant onto 
$Y$. We also note the following commutative diagram. 
$$
\begin{CD}
X'@>{\pi}>>X\\ 
@V{g}VV @VV{f}V\\
W@>>{\mu}> Y
\end{CD}
$$
By replacing $(X,D)$ with 
$(X',D')$, we may assume that there is a morphism 
$g:X\longrightarrow W$ such that $f=\mu\circ g$. 
Since $g:X\longrightarrow W$ is in the same situation as $f$, 
we obtain the exact sequence: 
$$
0\longrightarrow R^ig_*\omega_{X/W}(D)\longrightarrow 
(\Gr^0_F(\mathcal V^{d-i}_W))^*\longrightarrow 
G_W^i \longrightarrow 0.
$$ 
Tensoring $\mathcal O_W(\nu E)$ for $0\leq \nu \leq l-1$ and 
applying $R^j\mu_*$ for $j\geq 0$ to each $\nu$, 
we have a exact sequence 
\begin{eqnarray*}
0 & \longrightarrow & \mu_*(R^ig_*\omega_{X/W}(D)\otimes \mathcal O_W(\nu E)) 
\longrightarrow \mu_*((\Gr^0_F(\mathcal V^{d-i}_W))^*\otimes \mathcal O_W(\nu E)) \\ 
&\longrightarrow &\mu_*(G_W^i\otimes \mathcal O_W(\nu E)) 
\longrightarrow  R^1\mu_*(R^ig_*\omega _{X/W}(D)\otimes \mathcal O_{W}(\nu E)) \\ 
&\longrightarrow & R^1\mu_*((\Gr^0_F(\mathcal V^{d-i}_W))^*\otimes 
\mathcal O_W(\nu E)) 
\longrightarrow 0 
\end{eqnarray*} 
and $R^q\mu_*(R^ig_*\omega_{X/W}(D)\otimes \mathcal O_W(\nu E))
\simeq R^q\mu_*((\Gr^0_F(\mathcal V^{d-i}_W))^*\otimes \mathcal O_W(\nu E))
$ for $q\geq 2$. 

By \cite[Proposition 1]{kawamata2} 
and Remark \ref{upper-lower}, we obtain 
$(\Gr^0_F(\mathcal V^{d-i}_W))^*\simeq 
\mu^*(\Gr^0_F(\mathcal V^{d-i}_Y))^*$. 
We have 
$$
\mu_*((\Gr^0_F(\mathcal V^{d-i}_W))^*\otimes 
\mathcal O_W(\nu E))\simeq (\Gr^0_F(\mathcal V^{d-i}_Y))^*
$$ 
and 
$$
R^q\mu_*((\Gr^0_F(\mathcal V^{d-i}_W))^*\otimes 
\mathcal O_W(\nu E))=0
$$ for $q\geq 1$. 
Therefore, $R^q\mu_*(R^ig_*\omega_{X/W}(D)\otimes \mathcal O_W(\nu E))=0$ 
for $q\geq 2$ and 
\begin{eqnarray*}
0 & \longrightarrow & \mu_*(R^ig_*\omega_{X/W}(D)\otimes \mathcal O_W(\nu E)) 
\longrightarrow \mu_*((\Gr^0_F(\mathcal V^{d-i}_W))^*\otimes \mathcal O_W(\nu E)) \\ 
&\longrightarrow &\mu_*(G_W^i\otimes \mathcal O_W(\nu E)) 
\longrightarrow  R^1\mu_*(R^ig_*\omega _{X/W}(D)\otimes \mathcal O_{W}(\nu E))\\ &\longrightarrow 
& 0 
\end{eqnarray*} 
is exact. 
Since $\omega_W=\mu^*\omega_Y\otimes \mathcal O_W((l-1)E)$, 
we have a spectral sequence 
$$
E^{p,q}_2=R^p\mu_*(R^qg_*\omega_{X/W}(D)\otimes \mathcal O_W((l-1)E)) 
\Longrightarrow R^{p+q}f_*\omega_{X/Y}(D). 
$$ 
However, $E^{p,q}_2=0$ for $p\geq 2$ by the above argument. Thus 
\begin{eqnarray*}
0&\longrightarrow& R^1\mu_*R^{i-1}g_*\omega_{X/Y}(D)\longrightarrow R^if_*\omega_{X/Y}(D)\\ 
&\longrightarrow& 
\mu_*(R^ig_*\omega _{X/W}(D)\otimes \mathcal O_W((l-1)E))\longrightarrow 0. 
\end{eqnarray*} 
By Corollary \ref{cor-new-torsion}, $R^if_*\omega_{X/Y}(D)$ is torsion-free.  
So, we obtain $$R^1\mu_*R^{i-1}g_*\omega_{X/Y}(D)=0. $$
Therefore, for $q\geq 1$, we 
obtain 
\begin{itemize}
\item[(A)] $R^if_*\omega_{X/Y}(D)\simeq \mu_*(R^ig_*\omega_{X/W}(D)
\otimes \mathcal O_W((l-1)E))$ and  
\item[(B)] $R^q\mu_*(R^ig_*\omega_{X/W}(D)\otimes \mathcal O_W((l-1)E))=0$ 
\end{itemize} 
for every $i$. 

Next, we shall consider the following commutative 
diagram. 
$$
\begin{matrix}
0 & &0\\
\downarrow & &\downarrow\\ 
R^ig_*\omega _{X/W}(D)\otimes \mathcal O_W((l-2)E)&
\to &
R^ig_*\omega _{X/W}(D)\otimes \mathcal O_W((l-1)E) \\ 
\downarrow & & \downarrow\\ 
(\Gr^0_F(\mathcal V^{d-i}_W))^*\otimes \mathcal O_W((l-2)E)
& \to &
(\Gr^0_F(\mathcal V^{d-i}_W))^*\otimes \mathcal O_W((l-1)E) \\ 
\downarrow & & \downarrow\\
G_W^{i}\otimes \mathcal O_W((l-2)E)
&\to & 
G_W^{i}\otimes \mathcal O_W((l-1)E)\\ 
\downarrow & & \downarrow\\ 
0 &  &0
\end{matrix}
$$ 
By applying $\mu_*$, we have the next commutative diagram. 
$$
\begin{matrix}
0 & &0\\
\downarrow & &\downarrow\\ 
\mu_*(R^ig_*\omega _{X/W}(D)\otimes \mathcal O_W((l-2)E))
&\to &
\mu_*(R^ig_*\omega _{X/W}(D)\otimes \mathcal O_W((l-1)E)) \\
\downarrow & & \downarrow\\ 
(\Gr^0_F(\mathcal V^{d-i}_Y))^*
& \simeq &
(\Gr^0_F(\mathcal V^{d-i}_Y))^*\\
\downarrow & & \downarrow\\
\mu_*(G_W^{i}\otimes \mathcal O_W((l-2)E))
&\to & 
\mu_*(G_W^{i}\otimes \mathcal O_W((l-1)E))\\
& & \downarrow\\ 
&  &0
\end{matrix}
$$ 
By (A) and (B), 
$G_Y^i\simeq \mu_*(G_W^i\otimes \mathcal O_W((l-1)E))$ 
and 
$$
\mu_*(G_W^i\otimes \mathcal O_W((l-2)E))\longrightarrow 
\mu_*(G_W^i\otimes \mathcal O_W((l-1)E)) 
$$ 
is surjective. 
Since $\dim \Supp G_W^i=0$ and $E\cap \Supp G_W^i\ne \emptyset$, 
it follows that $G^i_W=0$ by Nakayama's lemma. 
Therefore, $G_Y^i=0$. 
This implies $R^if_*\omega_{X/Y}(D)\simeq 
(\Gr^0_F(\mathcal V^{d-i}_Y))^*$. 
By Grothendieck duality, 
$R^{d-i}f_*\mathcal O_X(-D)\simeq \Gr^0_F(\mathcal V^{d-i}_Y)$. 
\end{step}
From now on, we treat the general case, that is, we do not assume that 
local monodromies are unipotent. 
\begin{step}\label{step33} 
In this step, we 
prove the local freeness of 
$R^if_*\omega_{X/Y}(D)$ for every $i$. 
We use the unipotent reduction with 
respect to all the local systems 
after shrinking $Y$ suitably. 
This means that, shrinking $Y$, we have 
the following commutative diagram: 
$$
\begin{CD}
X@<{\alpha}<<X' @<{\beta}<<{\widetilde X}\\ 
@V{f}VV @V{f'}VV @VV{\widetilde f}V \\
Y@<<{\tau}<Y'@=Y',  
\end{CD}
$$ 
which satisfies the following properties. 
\begin{enumerate}
\item[(i)] $\tau:Y'\longrightarrow Y$ is a finite Kummer covering from a nonsingular 
variety $Y'$ and $\tau$ ramifies 
only along $\Sigma$. 
\item[(ii)] $f':X'\longrightarrow Y'$ is the base change of $f:X\longrightarrow Y$ by $\tau$ over $Y\setminus \Sigma$. 
\item[(iii)] $(X', \alpha^*D)$ is a 
semi divisorial log terminal pair in the sense of 
Koll\'ar (see Definition \ref{def-sdlt}). 
Let $X_j$ be any irreducible component of $X$. 
Then $X'_j =\alpha^{-1}(X_j)$ is the 
normalization of the base change of $X_j\longrightarrow Y$ by $\tau:Y'\longrightarrow Y$ and 
$X'=\bigcup _j X'_j$. We note that $X'_j$ is a $V$-manifold for 
every $j$. 
More precisely, 
$X'_j$ is toroidal for every $j$.  
\item[(iv)] $\beta$ is a projective birational 
morphism from a simple normal crossing variety $\widetilde X$ 
and $\widetilde D\cup \Exc(\beta)$ is a simple normal crossing divisor 
on $\widetilde X$, where $\widetilde D$ is the strict transform of 
$\alpha^*D$ (cf.~\cite[Theorem 1.5]{bierstone-milman} and \cite[Theorem 1.4]{bierstone-p}). 
We may further assume that 
$\beta$ is an isomorphism over the 
largest Zariski open set $U$ of $X'$ 
such that 
$(X', \alpha^*D)|_{U}$ is a simple normal crossing pair. 
\item[(v)] $\widetilde f: \widetilde X\longrightarrow Y'$, $\widetilde D$, and 
$\tau^{-1}\Sigma$ satisfy the conditions 
and assumptions in Theorem \ref{main} and all the local monodromies on all the local 
systems around $\tau^{-1}\Sigma$ 
are unipotent. 
\end{enumerate}
Therefore, $R^i\widetilde f_*\omega_{\widetilde X}(\widetilde D)$ is locally free by Step \ref{step1} and 
Step \ref{step2}. 
On the other hand, 
we can prove 
$$R^p\widetilde f_*\omega_{\widetilde X}(\widetilde D)\simeq R^pf'_*\omega_
{X'}(\alpha ^*D)$$ for every $p\geq 0$.  
We note that 
$$
K_{\widetilde X}+\widetilde D=\beta^*(K_{X'}+\alpha^*D)+F
$$ 
where $F$ is $\beta$-exceptional, 
$F$ is permissible on $\widetilde X$, $\Supp F$ is a simple normal crossing divisor on $\widetilde X$, 
and 
$\lceil F\rceil$ is effective. 
Thus we obtain that $\beta_*\omega_{\widetilde X}(\widetilde D)\simeq \omega_{X'}(\alpha^*D)$ and 
that $R^q\beta_*\omega_{\widetilde X}(\widetilde D)=0$ for every $q>0$ by 
Lemma \ref{rf}. 
Thus, $R^if'_*\omega_
{X'}(\alpha ^*D)$ is locally free for every $i$. 
Since $R^if_*\omega_X(D)$ is a direct summand of $$\tau_*
R^if'_*\omega_{X'}(\alpha ^*D)\simeq R^if_*(\alpha_*\omega_{X'}(\alpha^*D)),$$ we obtain 
that $R^if_*\omega_X(D)$ is locally free, equivalently, 
$R^if_*\omega_{X/Y}(D)$ is locally free for every $i$. 
We note that, by Grothendieck duality, $R^{d-i}f_*\mathcal O_X(-D)$ is also locally free for every $i$. 
\end{step}
\begin{step}\label{step44}
In this last step, 
we prove that $R^{d-i}f_*\mathcal O_X(-D)$ is the 
lower canonical extension for every $i$. 
By Grothendieck duality and Step \ref{step33}, 
$R^{d-i}\widetilde {f}_*\mathcal O_{\widetilde X}(-\widetilde D)$ is locally free. 
By Step \ref{step33}, we obtain 
$R\beta_*\omega_{\widetilde X}(\widetilde D)\simeq \omega_{X'}(\alpha^*D)$ in the derived 
category of coherent sheaves on $X'$. Therefore, 
we obtain 
\begin{align*}
R\beta_*\mathcal O_{\widetilde X}(-\widetilde D)&\simeq R\mathcal H om 
(R\beta_*\omega^\bullet_{\widetilde X}(\widetilde D), 
\omega^\bullet_{X'})\\
&\simeq R\mathcal H om (\omega^\bullet_{X'}
(\alpha^*D), \omega^\bullet_{X'})\simeq \mathcal O_{X'}(-\alpha^*D)
\end{align*} 
in the derived category of coherent sheaves on $X'$. 
Note that $X'$ is Cohen--Macaulay (cf.~\cite[Theorem 4.2]{fujino-vanishing}) 
and that $\omega^\bullet_{X'}\simeq \omega_{X'}[\dim X']$. 
Thus, we have  
$$R^p\widetilde f_*\mathcal O_{\widetilde X}(-\widetilde D)\simeq R^pf'_*\mathcal O_{X'}(-\alpha^*D)$$ 
for every $p$. Let $G$ be the Galois group 
of $\tau:Y'\longrightarrow Y$. 
Then we have 
\begin{align*}
(\tau_*R^pf'_*\mathcal O_{X'}(-\alpha^*D))^G\simeq 
R^pf_*(\alpha_*\mathcal O_{X'}(-\alpha^*D))^G\simeq 
R^pf_*\mathcal O_X(-D). 
\end{align*}
Thus, we obtain that $R^{d-i}f_*\mathcal O_X(-D)$ is the 
lower canonical extension for every $i$ (cf.~\cite[Notation 2.5 (iii)]{ko2}). 
By Grothendieck duality, 
$R^if_*\omega_{X/Y}(D)$ is the upper canonical extension for every $i$. 
\end{step}
We finish the proof of Theorem \ref{main} and Theorem \ref{main2}. 
\end{proof}

The following theorem is a generalization of \cite[Proposition 7.6]{kollar1}. 

\begin{thm}
Let $f:X\longrightarrow Y$ be a projective surjective morphism 
from a simple normal crossing variety to a smooth 
algebraic variety $Y$ with connected fibers. 
Assume that every stratum of $X$ is dominant onto $Y$. 
Then $R^df_*\omega_X\simeq \omega_Y$ where 
$d=\dim X-\dim Y$. 
\end{thm}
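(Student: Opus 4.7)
The plan is to apply Theorem \ref{main} to the simple normal crossing pair $(X,D)$ with $D=0$ and extract the case $i=d$. First I would choose the divisor $\Sigma$: since every stratum of $X$ is dominant onto the smooth variety $Y$, by generic smoothness (applied to each of the finitely many strata) together with embedded desingularization of the resulting discriminant, we can take a simple normal crossing divisor $\Sigma \subset Y$ such that every stratum of $X$ is smooth over $Y^{\ast} := Y\setminus\Sigma$. Put $X^{\ast}=f^{-1}(Y^{\ast})$ and $\iota: X^{\ast}\setminus D^{\ast}=X^{\ast}\longrightarrow X^{\ast}$ the identity.

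Next I would identify the variation of mixed Hodge structure on $R^0(f|_{X^{\ast}})_{\ast}\iota_{!}\bQ_{X^{\ast}}=R^0(f|_{X^{\ast}})_{\ast}\bQ_{X^{\ast}}$ (which is graded polarizable and admissible by Theorem \ref{main} (1), applied with $D=0$) as the trivial variation. Because $f|_{X^{\ast}}:X^{\ast}\longrightarrow Y^{\ast}$ is proper and is a topologically locally trivial fibration (every stratum of $X$ is smooth over $Y^{\ast}$), the stalk at $y\in Y^{\ast}$ is $H^0(f^{-1}(y),\bQ)$. Since $f$ has connected fibers and $f^{-1}(y)\subset X^{\ast}$, each such fiber is connected, so $R^0(f|_{X^{\ast}})_{\ast}\bQ_{X^{\ast}}\simeq \bQ_{Y^{\ast}}$, carrying the Hodge structure of type $(0,0)$ and weight $0$. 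Thus $\xV^0_{Y^{\ast}}\simeq \xO_{Y^{\ast}}$, the Hodge filtration is $F^0\xV^0_{Y^{\ast}}=\xV^0_{Y^{\ast}}$ and $F^1\xV^0_{Y^{\ast}}=0$, giving $\Gr_F^0(\xV^0_{Y^{\ast}})\simeq \xO_{Y^{\ast}}$. The monodromy is trivial, hence unipotent, so the hypotheses of Theorem \ref{main} (3) are satisfied.

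Finally I would take $i=d$ (so $d-i=0$) in Theorem \ref{main} (3). The canonical extension of the trivial variation $\xO_{Y^{\ast}}$ is $\xO_Y$, and $\Gr_F^0(\xV^0_Y)\simeq \xO_Y$. Theorem \ref{main} (3) then yields
\begin{equation*}
R^df_{\ast}\omega_{X/Y}\simeq (\Gr_F^0(\xV^0_Y))^{\ast}\simeq \xO_Y.
\end{equation*}
Tensoring with $\omega_Y$ gives the desired isomorphism $R^df_{\ast}\omega_X\simeq \omega_Y$.

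There is no real obstacle once Theorem \ref{main} is available: the only content beyond a direct invocation is the identification of the underlying variation as trivial, which in turn hinges on the connected-fibers hypothesis applied over $Y^{\ast}$. Alternatively, one can invoke Theorem \ref{main2} (b) instead of Theorem \ref{main} (3), thereby bypassing any need for unipotency, but since the constant variation is already unipotent this refinement is unnecessary here.
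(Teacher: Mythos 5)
Your reduction to Theorem \ref{main} with $D=0$ and $i=d$ is attractive, and the identification of $R^0(f|_{X^*})_*\mathbb{Q}_{X^*}$ with the constant variation $\mathbb{Q}_{Y^*}$ via the connected-fibers hypothesis is correct (as is the observation that unipotency is then automatic for this particular local system). But there is a genuine gap at the very first step: you cannot in general produce a \emph{simple normal crossing} divisor $\Sigma$ on the given $Y$ such that every stratum of $X$ is smooth over $Y\setminus\Sigma$. The discriminant locus of the strata is an arbitrary proper closed subset of $Y$; if it has an irreducible divisorial component with bad singularities (say a nodal curve in a surface $Y$), then any divisor containing it must contain that component, hence cannot be simple normal crossing. ``Embedded desingularization of the discriminant'' produces an SNC divisor only on a blow-up $W\longrightarrow Y$, not on $Y$ itself, and the theorem is asserted for $Y$ as given, with no hypothesis on the discriminant. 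Since Theorem \ref{main} explicitly requires such a $\Sigma$ on $Y$, it is not directly applicable, and everything downstream in your argument collapses.

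This is precisely the difficulty the paper's proof is built to circumvent: it constructs a commutative square $V\to X$, $V\to W$, $W\to Y$ with $W\to Y$ a birational modification on which an SNC divisor $\Sigma$ with the required property \emph{does} exist, obtains local freeness of $R^jg_*\omega_V$ on $W$ from Theorem \ref{main2}, deduces $R^dg_*\omega_V\simeq\omega_W$ from Grothendieck duality together with $g_*\mathcal{O}_V\simeq\mathcal{O}_W$ (Zariski's main theorem, using that every stratum dominates $Y$ and fibers are connected), and then descends to $Y$ using Theorem \ref{quasi-proj} (ii) and Lemmas \ref{rf} and \ref{lemA} to identify $p_*R^dg_*\omega_V$ with $R^df_*\omega_X$ and $p_*\omega_W$ with $\omega_Y$. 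Your Hodge-theoretic identification of the weight-zero piece would be a valid alternative to the duality step \emph{on $W$}, but without the modification-and-descent machinery it does not prove the stated theorem.
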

\begin{proof} 
By \cite[Theorem 1.5]{bierstone-milman} and \cite[Theorem 1.4]{bierstone-p}, 
we can construct a commutative diagram 
$$
\begin{CD}
V@>{\pi}>> X\\ 
@V{g}VV @VV{f}V\\
W@>>{p}>Y
\end{CD}
$$ 
with the following properties. 
\begin{itemize}
\item[(i)] $p:W\longrightarrow Y$ is a projective birational morphism from a smooth quasi-projective 
variety $W$. 
\item[(ii)] $V$ is a simple normal crossing variety. 
\item[(iii)] $\pi$ is projective birational and $\pi$ induces an isomorphism $\pi^0=
\pi|_{V^0}:V^0\longrightarrow X^0$ where 
$X^0$ (resp.~$V^0$) is a Zariski open set of $X$ (resp.~$V$) which 
contains the generic point of any stratum of $X$ (resp.~$V$). 
\item[(iv)] $g$ is projective. 
\item[(v)] there is a simple normal crossing divisor $\Sigma$ on $W$ such that 
every stratum of $V$ is smooth over $W\setminus \Sigma$. 
\end{itemize}
We note that $R^jg_*\omega_V$ is locally free for every $j$ by Theorem \ref{main2}. 
By Grothendieck duality, 
we have
$$
Rg_*\mathcal O_V\simeq R\mathcal {H}om _{\mathcal O_W}(Rg_*\omega^{\bullet}_V, \omega^{\bullet}_W). 
$$ 
Therefore, we have 
$$
\mathcal O_W\simeq \mathcal {H}om_{\mathcal O_W}(R^dg_*\omega_V, \omega_W). 
$$ 
Note that, 
by Zariski's main theorem, $f_*\mathcal O_X\simeq \mathcal O_Y$ 
since every 
stratum of $X$ is dominant onto $Y$. 
Therefore, $g_*\mathcal O_V\simeq \mathcal O_W$. 
Thus, we obtain $R^dg_*\omega_V\simeq \omega_W$. 
By applying $p_*$, we have $p_*R^dg_*\omega_V\simeq p_*\omega_W\simeq \omega_Y$. 
We note that 
$p_*R^dg_*\omega_V\simeq R^d(p\circ g)_*\omega_V$ since 
$R^ip_*R^dg_*\omega_V=0$ for every $i>0$ (cf.~Theorem \ref{quasi-proj} (ii)). 
On the other hand, 
$$
R^d(p\circ g)_*\omega_V\simeq  R^d(f\circ \pi)_*\omega_V\simeq R^df_*\omega_X 
$$ 
since $R^i\pi_*\omega_V=0$ for every $i>0$ by Lemma \ref{rf} 
and $\pi_*\omega_V\simeq \omega _X$ (cf.~Lemma \ref{lemA}). 
Therefore, we obtain $R^df_*\omega_X\simeq \omega_Y$. 
\end{proof}

In geometric applications, we sometimes have a projective surjective morphism 
$f:X\longrightarrow Y$ from a simple normal crossing 
variety to a smooth variety $Y$ with {\em{connected fibers}} 
such that every stratum of 
$X$ is mapped onto $Y$. 
The example below shows that in general 
there is no stratum $S$ of $X$ such that 
general fibers of $S\longrightarrow Y$ are connected. 
Therefore, Kawamata's result (\cite[Theorem 1.1]{kawamata}) 
is very restrictive. He assumes that 
$S\longrightarrow Y$ has connected fibers for every stratum $S$ of $X$. 

\begin{ex}\label{disco}
We consider $W=\mathbb P^1\times \mathbb P^1\times \mathbb P^1$. 
Let $p_i:\mathbb P^1\times \mathbb P^1\times \mathbb P^1\longrightarrow \mathbb P^1$ be the 
$i$-th projection for $i=1, 2, 3$. We take general members 
$X_1\in |p^*_1\mathcal O_{\mathbb P^1}(1)\otimes p^*_2\mathcal O_{\mathbb P^1}(2)|$ 
and $X_2\in |p^*_1\mathcal O_{\mathbb P^1}(1)\otimes p^*_3\mathcal O_{\mathbb P^1}(2)|$. 
We define $X=X_1\cup X_2$, $Y=\mathbb P^1$, and 
$f=p_1|_{X}:X\longrightarrow Y$. 
Then $f$ is a projective morphism from a simple normal crossing 
variety $X$ to a smooth projective curve $Y$. 
We can directly check that 
$$
H^1(W, \mathcal O_W(-X_1))=H^1(W, \mathcal O_W(-X_2))=0
$$ 
and 
$$
H^1(W, \mathcal O_W(-X_1-X_2))=H^2(W, \mathcal O_W(-X_1-X_2))=0. 
$$ 
Therefore, by using 
$$
0\longrightarrow \mathcal O_W(-X_1-X_2)\longrightarrow \mathcal O_W(-X_2)\longrightarrow \mathcal O_{X_1}(-X_2)\longrightarrow 0, 
$$ 
we obtain $H^1(X_1, \mathcal O_{X_1}(-X_2))=0$. 
By using 
$$
0\longrightarrow \mathcal O_{X_1}(-X_2)\longrightarrow \mathcal O_{X_1}\longrightarrow \mathcal O_{X_1\cap X_2}\longrightarrow 0, 
$$ 
we obtain $H^0(X_1\cap X_2, \mathcal O_{X_1\cap X_2})=\mathbb C$ since 
$H^0(X_1, \mathcal O_{X_1})=\mathbb C$. 
This means that $C=X_1\cap X_2$ is a smooth 
connected curve. 
Therefore, every stratum of $X$ is mapped onto $Y$ by $f$. 
We note that general fibers of $f:X_1\longrightarrow Y$, 
$f:X_2\longrightarrow Y$, and $f:C\longrightarrow Y$ are disconnected. 
\end{ex}

As a special case of 
Theorem \ref{main}, we obtain the following 
theorem.  

\begin{thm}[{cf.~\cite[Theorem 5]{kawamata1}, 
\cite[Theorem 2.6]{ko2}, 
and \cite[Theorem 1]{n}}]\label{thm42}
Let $f:X\longrightarrow Y$ be a projective morphism 
between smooth complete 
algebraic varieties which satisfies the following conditions{\em{:}}
\begin{itemize}
\item[(i)] There is a Zariski open subset $Y^*$ of $Y$ such that 
$\Sigma =Y\setminus Y^*$ is a simple normal crossing 
divisor on $Y$. 
\item[(ii)] We put $X^*=f^{-1}(Y^*)$. 
Then $f|_{X^*}$ is smooth. 
\item[(iii)] The local monodromies of $R^{d+i}(f|_{X^*})_{*}\mathbb C_{X^*}$ around 
$\Sigma$ are unipotent, where 
$d=\dim X-\dim Y$. 
\end{itemize}
Then $R^if_*\omega_{X/Y}$ is a semipositive locally free sheaf on $Y$. 
\end{thm}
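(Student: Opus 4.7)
The plan is to deduce Theorem \ref{thm42} as a direct specialization of Theorem \ref{main} (in particular, part (4)) by taking $D=0$. To this end, I would first verify that all the hypotheses of Theorem \ref{main} are satisfied for the pair $(X,D)=(X,0)$ and the morphism $f:X\longrightarrow Y$. Since $X$ is smooth, $(X,0)$ is trivially a simple normal crossing pair with $D$ reduced, and its unique stratum is $X$ itself. Assumption (i) of Theorem \ref{thm42} gives the required simple normal crossing divisor $\Sigma$ on $Y$; assumption (ii) says precisely that this unique stratum is smooth over $Y^*=Y\setminus\Sigma$; and $f$ being surjective (which is implicit, for otherwise the conclusion is trivial over components of $Y$ not hit by $f$) ensures that the stratum $X$ is dominant onto $Y$. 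Hence the conclusions of parts (1)--(3) of Theorem \ref{main} already give that $R^if_*\omega_{X/Y}$ is locally free for every $i$ and can be described as the canonical extension of the appropriate graded piece.

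Next I would match the unipotent monodromy hypothesis. Theorem \ref{main} (4) requires unipotency of the local monodromies of
$$R^{d-i}(f|_{X^*\setminus D^*})_!\bQ_{X^*\setminus D^*}=R^{d-i}(f|_{X^*})_!\bQ_{X^*}$$
around $\Sigma$. Because $f|_{X^*}$ is smooth and proper (since $f$ is projective), this equals $R^{d-i}(f|_{X^*})_*\bQ_{X^*}$. Poincar\'e--Verdier duality for the smooth proper morphism $f|_{X^*}$ of relative dimension $d$ then identifies this local system (up to a Tate twist, which is irrelevant for unipotency) with the $\bQ$-dual of $R^{d+i}(f|_{X^*})_*\bQ_{X^*}$. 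Since the dual of a unipotent endomorphism is again unipotent, assumption (iii) of Theorem \ref{thm42} implies the unipotency required by Theorem \ref{main} (4).

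With the hypotheses checked, Theorem \ref{main} (4) immediately gives that $R^if_*\omega_{X/Y}=R^if_*\omega_{X/Y}(0)$ is a semipositive locally free sheaf on the complete variety $Y$, which is the desired conclusion. The main (and essentially only) point requiring care is the translation of the unipotency hypothesis from $R^{d+i}$ to $R^{d-i}$ via Poincar\'e--Verdier duality; once this is in place there is no further work to do, since the heavy Hodge-theoretic input has already been isolated in Theorems \ref{main} and \ref{semi-po}. In particular, no further resolution or hyperresolution argument is needed here, because the starting datum $(X,0)$ is already a simple normal crossing pair with smooth ambient variety.
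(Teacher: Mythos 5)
Your proposal is correct and follows essentially the same route as the paper: specialize Theorem \ref{main} to $D=0$ and transfer the unipotency hypothesis from $R^{d+i}(f|_{X^*})_*\mathbb C_{X^*}$ to $R^{d-i}(f|_{X^*})_*\mathbb C_{X^*}$ via Poincar\'e--Verdier duality, using that the dual of a unipotent monodromy operator is unipotent. The extra care you take in checking the stratum/dominance hypotheses and in identifying $R^{d-i}(f|_{X^*})_!\mathbb Q_{X^*}$ with $R^{d-i}(f|_{X^*})_*\mathbb Q_{X^*}$ for the proper smooth morphism is implicit in the paper's two-line argument but entirely consistent with it.
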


\begin{proof}
By Poincar\'e--Verdier duality (see, for example, \cite[Theorem 13.9]{ps}), 
$R^{d-i}(f|_{X^*})_{*}\mathbb C_{X^*}$ is the 
dual local system of $R^{d+i}(f|_{X^*})_{*}\mathbb C_{X^*}$. 
Therefore, the local monodromies of 
$R^{d-i}(f|_{X^*})_{*}\mathbb C_{X^*}$ around $\Sigma$ 
are unipotent. 
Thus, by Theorem \ref{main}, 
we obtain that 
$R^if_*\omega_{X/Y}\simeq 
(R^{d-i}f_*\mathcal O_X)^*$ 
is a semipositive locally free sheaf on $Y$. 
\end{proof}

Similarly, the semipositivity theorem in \cite{high} (cf.~\cite[Theorem 3.9]{high}) 
can be recovered by Theorem \ref{main} . 
We note that \cite[Theorem 1.1]{kawamata} 
does not cover \cite[Theorem 3.9]{high}. 
This is because Kawamata's theorem needs that $S\longrightarrow Y$ 
has connected fibers for every stratum $S$ of $(X, D)$ 
(cf.~Example \ref{disco}). 

\begin{rem}
Let $f:X\longrightarrow Y$ be a projective morphism between smooth projective varieties. 
Assume that 
there exists a simple normal crossing divisor $\Sigma$ on $Y$ such that 
$f$ is smooth 
over $Y\setminus \Sigma$. 
Then $R^if_*\omega_{X/Y}$ is locally free for every $i$ (cf.~Theorem 
\ref{main2} and \cite[Theorem 2.6]{ko2}). 
We note that 
$R^if_*\omega_{X/Y}$ is not always 
semipositive 
if we assume nothing 
on monodromies around $\Sigma$. 
\end{rem}

We close this section with an easy example. 

\begin{ex}[Double cover]\label{ex510}
We consider $\pi:Y=\mathbb P_{\mathbb P^1}(\mathcal O_{\mathbb P^1}\oplus 
\mathcal O_{\mathbb P^1}(2))\longrightarrow \mathbb P^1$. 
Let $E$ and $G$ be the sections of 
$\pi$ such 
that $E^2=-2$ and $G^2=2$. 
We note that 
$E+2F\sim G$ where $F$ is a fiber of $\pi$. 
We put 
$\mathcal L=\mathcal O_Y(E+F)$. 
Then $E+G\in |\mathcal L^{\otimes 2}|$. 
Let $f:X\longrightarrow Y$ be the double 
cover constructed 
by $E+G\in |\mathcal L^{\otimes 2}|$. 
Then $f:X\longrightarrow Y$ is \'etale outside $\Sigma =E+G$ and 
$$f_*\omega_{X/Y}\simeq \mathcal O_Y\oplus \mathcal L. 
$$
In this case, $f_*\omega_{X/Y}$ is not semipositive since $\mathcal L\cdot E=-1$. 
We note that 
the local monodromies 
on $(f|_{X^*})_{*}\mathbb C_{X^*}$ around 
$\Sigma$ are not unipotent, 
where $Y^*=Y\setminus \Sigma$ and 
$X^*=f^{-1}(Y^*)$. 
\end{ex}

In Example \ref{ex510}, $f:X\longrightarrow Y$ is finite and the general fibers of $f$ are disconnected. 
In Section \ref{sec-final}, we discuss an example $f:X\longrightarrow Y$ whose general fibers are elliptic curves 
such that $f_*\omega_{X/Y}$ is not semipositive (cf.~Corollary \ref{cor7} and 
Example \ref{214-a}). 

\section{Examples}\label{sec-final}

In this final section, we give supplementary examples for the Fujita--Kawamata 
semipositivity theorem (cf.~\cite[Theorem 5]{kawamata1}), 
Viehweg's weak positivity theorem, and the Fujino--Mori canonical bundle formula (cf.~\cite{fm}). 
For details of the original Fujita--Kawamata semipositivity 
theorem, see, for example, \cite[\S 5]{mori} and \cite[Section 5]{fujino-rem}. 

\begin{say}[Semipositivity in the sense of Fujita--Kawamata]
The following example is due to Takeshi Abe. It is a small 
remark on Definition \ref{def620}. 

\begin{ex}\label{ex-abe}
Let $C$ be an elliptic curve and let $E$ be a stable 
vector bundle on $C$ such that 
the degree of $E$ is $-1$ and 
the rank of $E$ is two. 
Let $f_m:C\longrightarrow C$ be the multiplication by $m$ where 
$m$ is a positive integer. 
In this case, every quotient line bundle 
$L$ of $E$ has non-negative degree. 
However, $\mathcal O_{\mathbb P(E)}(1)$ is not nef. 
It is because we can find a quotient line bundle 
$M$ of $f_m^*E$ whose 
degree is negative for some positive integer $m$.  
\end{ex}
\end{say}

\begin{say}[Canonical bundle formula]
We give sample computations of our canonical bundle formula obtained 
in \cite{fm}. 
We will freely use the notation in \cite{fm}. 
For details of our canonical bundle formula, 
see \cite{fm}, \cite[\S 3]{fujino-nagoya1}, 
and \cite[\S 3, \S 4, \S 5, and \S 6]{fujino-nagoya2}. 
\end{say} 

\begin{say}[Kummer manifolds]
Let $E$ be an elliptic curve and let $E^n$ be the 
$n$-times direct product of $E$. 
Let $G$ be the cyclic 
group of order two of analytic 
automorphisms of $E^n$ generated 
by an automorphism 
$$\tau:E^n\longrightarrow E^n: (z_1, \cdots, z_n)\mapsto 
(-z_1, \cdots, -z_n). $$
The automorphism $\tau$ has $2^{2n}$ fixed points. 
Each singular point is terminal for $n\geq 3$ and 
is canonical for $n\geq 2$. 
\end{say}
\begin{say}[Kummer surfaces]\label{22ku}
First, we consider 
$q:E^2/G\longrightarrow E/G\simeq \mathbb P^1$, 
which is induced by the first projection, and 
$g=q\circ \mu: Y\longrightarrow \mathbb P^1$, where 
$\mu:Y\longrightarrow E^2/G$ is the minimal resolution of sixteen 
$A_1$-singularities. 
It is easy to see that $Y$ is a $K3$ surface. 
In this case, it is obvious that 
$$g_*\mathcal O_Y(mK_{Y/\mathbb P^1})\simeq \mathcal 
O_{\mathbb P^1}(2m)$$ for every $m\geq 1$. 
Thus, we can put $L_{Y/\mathbb P^1}=D$ for 
any degree two Weil divisor $D$ on $\mathbb P^1$. 
For the definition of $L_{Y/\mathbb P^1}$, see \cite[Definition 2.3]{fm}.  
We obtain 
$K_Y=g^*(K_{\mathbb P^1}+L_{Y/\mathbb P^1})$. 
Let $Q_i$ be the branch point of $E\longrightarrow E/G\simeq \mathbb P^1$ 
for $1\leq i\leq 4$. 
Then we have 
$$L^{ss}_{Y/\mathbb P^1}=D-\sum _{i=1}^{4}\left(1-\frac{1}{2}\right)
Q_i=D-\sum _{i=1}^{4}\frac{1}{2}Q_i$$ 
by the definition of the semi-stable part $L^{ss}_{Y/\mathbb P^1}$ 
(see \cite[Proposition 2.8, Definition 4.3, and Proposition 4.7]{fm}).   
Therefore, we obtain 
$$K_Y=g^*\left(K_{\mathbb P^1}+L^{ss}_{Y/\mathbb P^1}
+\sum _{i=1}^{4}\frac{1}{2}Q_i\right). $$  
Thus, $$L^{ss}_{Y/\mathbb P^1}=D-\sum _{i=1}^{4}\frac{1}{2} 
Q_i\not \sim 0$$ but 
$$2L^{ss}_{Y/\mathbb P^1}=2D-\sum _{i=1}^{4}Q_i\sim 0.$$  
Note that $L^{ss}_{Y/\mathbb P^1}$ is not a Weil 
divisor but a $\mathbb Q$-Weil divisor on $\mathbb P^1$.  
\end{say}
\begin{say}[Elliptic fibrations]\label{23}Next, 
we consider $E^3/G$ and $E^2/G$. 
We consider the morphism $p:E^3/G\longrightarrow E^2/G$ induced by 
the projection $E^3\longrightarrow E^2: (z_1, z_2, z_3)\mapsto
(z_1, z_2)$. Let $\nu:X'\longrightarrow 
E^3/G$ 
be the weighted blow-up of $E^3/G$ at 
sixty-four $\frac{1}{2}(1, 1, 1)$-singularities. 
Thus 
$$
K_{X'}=\nu^*K_{E^3/G}+\sum _{j=1}^{64}\frac{1}{2}
E_j, 
$$ 
where $E_j\simeq \mathbb P^2$ is the 
exceptional divisor for 
every $j$. 
Let $P_i$ 
be an $A_1$-singularity of $E^2/G$ for $1\leq i\leq 16$. 
Let $\psi:X\longrightarrow X'$ be the blow-up of $X'$ along 
the strict transform of $p^{-1}(P_i)$, which is 
isomorphic to $\mathbb P^1$,  
for every $i$. Then we obtain the following 
commutative diagram. 
$$
\begin{CD}
E^3/G@<{\phi:=\nu\circ\psi}<<X\\ 
@V{p}VV @VV{f}V\\
E^2/G@<<{\mu}<Y
\end{CD}
$$ 
Note that $$K_X=\phi^*K_{E^3/G}+\sum _{j=1}^{64}\frac{1}{2}
E_j+\sum _{k=1}^{16}F_k, $$ where 
$E_j$ is 
the strict transform of $E_j$ on $X$ and 
$F_k$ is the $\psi$-exceptional 
prime divisor for every $k$. 
We can check that $X$ is a smooth projective threefold. 
We put $C_i=\mu ^{-1}(P_i)$ for every $i$. 
It can be checked that $C_i$ is a $(-2)$-curve for every 
$i$. 
It is easily checked that $f$ is smooth outside $\sum _{i=1}^{16} 
C_i$ and that the degeneration of $f$ is of 
type $I^*_0$ along 
$C_i$ for every $i$. 
We renumber $\{E_j\}_{j=1}^{64}$ as $\{E_i^j\}$, 
where $f(E_i^j)=C_i$ for every $1\leq i\leq 16$ and $1\leq 
j\leq 4$. 
We note that $f$ is flat since $f$ is equidimensional. 

Let us recall the following theorem (cf.~\cite[Theorem 20]{kawamata2} 
and \cite[Corollary 3.2.1 and Theorem 3.2.3]{nakayama}). 

\begin{thm}[..., Kawamata, Nakayama, ...]\label{nakayama} 
We have the following isomorphism. 
$$(f_*\omega_{X/Y})^{\otimes 12}\simeq 
\mathcal O_Y\left(\sum _{i=1}^{16}6C_i\right), $$ 
where $\omega_{X/Y}\simeq \mathcal O_X(K_{X/Y})=\mathcal 
O_X(K_X-f^*K_Y)$. 
\end{thm}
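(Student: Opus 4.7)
The plan is to invoke the canonical bundle formula for elliptic fibrations due to Kawamata and Nakayama, whose references are cited in the statement. First I would verify that $\mathcal L := f_*\omega_{X/Y}$ is a line bundle on $Y$. Since $X$ is smooth (hence a simple normal crossing pair with $D = 0$ whose only stratum is $X$ itself), $f$ is projective and surjective with every stratum dominant onto $Y$, and $\Sigma = \sum_{i=1}^{16} C_i$ is a simple normal crossing divisor on $Y$ over whose complement $f$ is smooth, Theorem \ref{main2} guarantees that $f_*\omega_{X/Y}$ is locally free. The general fiber being the elliptic curve $E$ with $h^0(\omega_E) = 1$ forces $\mathcal L$ to have rank one, so $\mathcal L \in \operatorname{Pic}(Y)$.

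Next I would determine $\mathcal L^{\otimes 12}$ by reducing to the classical theory of elliptic surfaces. Fix a smooth curve $B \subset Y$ meeting $\Sigma$ transversally; such curves form a very ample family on the K3 surface $Y$. Because $f$ is equidimensional from the smooth source $X$ to the regular base $Y$ it is flat, and base change produces an elliptic surface $f_B \colon X_B \to B$ with $X_B$ smooth. By hypothesis the singular fibers of $f_B$ are of Kodaira type $I_0^*$, one over each point of $B \cap \Sigma$, and $f_B$ is relatively minimal. The local freeness established in the first step together with flatness of $f$ lets me invoke Grauert's base change theorem to identify $\mathcal L|_B$ with $(f_B)_*\omega_{X_B/B}$. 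The classical Noether-type identity for relatively minimal elliptic surfaces then gives
\[
\deg (f_B)_*\omega_{X_B/B} \;=\; \chi(\mathcal O_{X_B}) \;=\; \frac{1}{12}\sum_{p \in B \cap \Sigma} e(F_p) \;=\; \frac{1}{2}\,\#(B \cap \Sigma),
\]
since an $I_0^*$ fiber has Euler number $6$. Hence $\deg(\mathcal L|_B)^{\otimes 12} = 6\,(B \cdot \Sigma) = \deg(\mathcal O_Y(\sum_i 6 C_i)|_B)$ for every such $B$.

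Varying $B$ in the chosen very ample family shows that $\mathcal L^{\otimes 12}$ and $\mathcal O_Y(\sum_{i=1}^{16} 6 C_i)$ are numerically equivalent line bundles on $Y$. Since $Y$ is a K3 surface, $H^1(Y, \mathcal O_Y) = 0$ and $H^2(Y, \mathbb Z)$ is torsion-free, so the first Chern class map $\operatorname{Pic}(Y) \hookrightarrow H^2(Y,\mathbb Z)$ is injective and numerical equivalence of line bundles on $Y$ coincides with isomorphism. This yields the desired identification $\mathcal L^{\otimes 12} \simeq \mathcal O_Y(\sum_{i=1}^{16} 6 C_i)$.

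The main obstacle I expect is the verification that the base-changed surface $X_B$ is indeed smooth and that its singular fibers really are of Kodaira type $I_0^*$ (rather than of some non-minimal flavor), which requires honest transversality of $B$ with respect to $\Sigma$ as well as to all vertical components of $f^{-1}(\Sigma)$. One can bypass this restriction argument entirely by a direct appeal to \cite[Theorem 20]{kawamata2} and \cite[Corollary 3.2.1, Theorem 3.2.3]{nakayama}, which package the computation into a single higher-dimensional canonical bundle formula: the moduli part is $\mathbb Q$-trivial because the $j$-invariant of $f$ is the constant $j(E)$, and the boundary part contributes $\tfrac{1}{2}C_i$ at each $I_0^*$ singular locus, giving $12 K_{X/Y} \sim f^*(\sum_i 6 C_i)$ and hence the stated isomorphism at one stroke.
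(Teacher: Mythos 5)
The paper does not actually prove Theorem \ref{nakayama}: it is quoted as a known result, with references to \cite[Theorem 20]{kawamata2} and \cite[Corollary 3.2.1 and Theorem 3.2.3]{nakayama}, and the text only records that those proofs rest on the upper canonical extension of the Hodge filtration and the period map. Your second suggested route (direct appeal to those references, with $\mathbb Q$-trivial moduli part and discriminant coefficient $\tfrac{1}{2}$ at each $I_0^*$ locus) is therefore exactly the paper's treatment. Your first, self-contained argument is a genuinely different and more elementary route: rank one and local freeness of $f_*\omega_{X/Y}$, restriction to general smooth curves $B$, the identity $\deg (f_B)_*\omega_{X_B/B}=\chi(\mathcal O_{X_B})=\tfrac{1}{12}\sum e(F_p)$ with $e(I_0^*)=6$, and the injectivity of $\operatorname{Pic}(Y)\to H^2(Y,\mathbb Z)$ on the K3 surface $Y$. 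This buys independence from the canonical bundle formula machinery at the price of the transversality and base-change verifications you already flag; note that the identification $\mathcal L|_B\simeq (f_B)_*\omega_{X_B/B}$ is most cleanly justified not by Grauert's theorem alone but by the local freeness of $R^1f_*\omega_{X/Y}$ (Theorem \ref{main2}) combined with cohomology and base change, or by Koll\'ar's restriction theorem for higher direct images of dualizing sheaves.

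One step as written is too weak. Equality of degrees on all members of a \emph{single} very ample family only gives equality of intersection numbers against that one ample class, which does not imply numerical equivalence (on $\mathbb P^1\times\mathbb P^1$ the bundle $\mathcal O(1,-1)$ has degree $0$ on every member of $|\mathcal O(1,1)|$ but is not numerically trivial). You should let $B$ range over general members of \emph{all} very ample linear systems, or over enough smooth curves meeting $\Sigma$ transversally to span $N_1(Y)$; since very ample classes span $N^1(Y)$, this does yield numerical equivalence, and then $H^1(Y,\mathcal O_Y)=0$ together with the torsion-freeness of $H^2(Y,\mathbb Z)$ upgrades it to the stated isomorphism. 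With that repair your argument is sound.
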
 
The proof of Theorem \ref{nakayama} depends on the 
investigation of the upper canonical extension of 
the Hodge filtration and the period map. 
It is obvious that 
$$
2K_X=f^*\left(2K_Y+\sum _{i=1}^{16}C_i\right)$$ 
and 
$$
2mK_X=f^*\left(2mK_Y+m\sum _{i=1}^{16}C_i\right)$$ 
for all $m\geq 1$ since $f^*C_i=2F_i+\sum _{j=1}^{4} 
E_i^j$ for every $i$. 
Therefore, we have $2L_{X/Y}\sim \sum _{i=1}^{16}C_i$. 
On the other hand, $f_*\omega_{X/Y}\simeq 
\mathcal O_Y(\lfloor L_{X/Y}\rfloor)$. 
Note that $Y$ is a smooth surface and $f$ is flat. 
Since $$\mathcal O_Y(12\lfloor L_{X/Y}\rfloor )
\simeq (f_*\omega_{X/Y})^{\otimes 12}\simeq 
\mathcal O_Y\left(\sum _{i=1}^{16}6C_i\right),$$ we have 
$$12L_{X/Y}\sim 6\sum _{i=1}^{16}C_i\sim 
12\lfloor L_{X/Y}\rfloor. $$  
Thus, $L_{X/Y}$ is a Weil divisor on $Y$. It is because 
the fractional part $\{L_{X/Y}\}$ is effective 
and linearly equivalent to zero. 
So, $L_{X/Y}$ is numerically equivalent to 
$\frac{1}{2}\sum _{i=1}^{16}C_i$. 
We have $g^*Q_i=
2G_i+\sum_{j=1}^{4} C_i^j$ for every $i$. 
Here, we renumbered 
$\{C_j\}_{j=1}^{16}$ as 
$\{C_i^j\}_{i,j=1}^{4}$ such 
that $g(C_i^j)=Q_i$ for 
every $i$ and $j$. 
More precisely, we put $2G_i=g^*Q_i-\sum _{j=1}^4 C_i^j$ for 
every $i$. 
We note that we used notations in \ref{22ku}. 
We consider $A:=g^*D-\sum _{i=1}^{4}G_i$. 
Then $A$ is a Weil divisor and $2A\sim 
\sum _{i=1}^{16}C_i$. 
Thus, $A$ is numerically equivalent to 
$\frac{1}{2}\sum _{i=1}^{16}C_i$. 
Since $H^1(Y, \mathcal O_Y)=0$, we 
can put $L_{X/Y}=A$. 
So, we have 
$$L^{ss}_{X/Y}=g^*D-\sum _{i=1}^{4}G_i-\sum _{j=1}^{16} 
\frac{1}{2}C_j.$$
We obtain the following canonical bundle formula.  
\begin{thm} The next formula holds. 
$$K_X=f^*\left(K_Y+L^{ss}_{X/Y}+\sum _{j=1}^{16}\frac{1}{2}C_j\right),$$ 
where $L^{ss}_{X/Y}=g^*D-\sum _{i=1}^{4}G_i-\sum _{j=1}^{16} 
\frac{1}{2}C_j$. 
\end{thm}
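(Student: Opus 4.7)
The plan is to assemble the formula by substitution, using the three computations recorded in the discussion immediately preceding the statement. First, I would invoke the identity
$$K_X = f^*\Bigl(K_Y + \tfrac12 \sum_{j=1}^{16} C_j\Bigr),$$
which is already established there via the resolution data $K_X = \phi^*K_{E^3/G} + \sum_j \tfrac12 E_j + \sum_k F_k$, the relation $f^*C_i = 2F_i + \sum_{j=1}^4 E_i^j$, and the fact that $K_{E^3/G}$ is trivial. (The $\tfrac12$ coefficients from the $\frac{1}{2}(1,1,1)$-singularities of $E^3/G$ are absorbed exactly by the shift coming from $f^*C_i$.)

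Second, I would use the identification $L_{X/Y} = g^*D - \sum_{i=1}^4 G_i$, which is available from the paragraph above the statement. Indeed, the isomorphism $(f_*\omega_{X/Y})^{\otimes 12} \simeq \mathcal O_Y(6\sum C_j)$ of Theorem \ref{nakayama} forces $L_{X/Y}$ to be an integral Weil divisor numerically equivalent to $\tfrac12 \sum C_j$, and since $Y$ is a K3 surface with $H^1(Y,\mathcal O_Y)=0$, numerical and linear equivalence of $\mathbb Q$-divisors coincide, so the choice $L_{X/Y} = g^*D - \sum G_i$ (whose double is linearly equivalent to $\sum C_j$) is legitimate.

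Third, I would extract the semistable part using \cite[Definition 4.3, Proposition 4.7]{fm}:
$$L^{ss}_{X/Y} = L_{X/Y} - \sum_{P \subset Y}(1-b_P)\,P,$$
where $b_P$ is the log canonical threshold of $(X,f^*P)$ along $P$. The fiber over a generic point of each $C_j$ consists of the component $F_j$ with multiplicity $2$ meeting four disjoint $(-2)$-components $E_j^k$, which is exactly the Kodaira fiber of type $I_0^{\ast}$; hence $b_{C_j} = \tfrac12$ and the correction term is $\tfrac12 C_j$ for each $j$. This gives $L^{ss}_{X/Y} = g^*D - \sum_{i=1}^4 G_i - \sum_{j=1}^{16} \tfrac12 C_j$, and combining with the first identity yields the claim.

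The main obstacle in this plan is not conceptual but bookkeeping: one has to verify that the multiplicity pattern $f^*C_j = 2F_j + \sum_{k=1}^4 E_j^k$ indeed matches an $I_0^*$ fiber (so that the log canonical threshold is $\tfrac12$), and that the contributions of the $\frac{1}{2}(1,1,1)$-singularities of $E^3/G$ (which produce the $\frac12 E_j$ terms in $K_X$) are correctly attributed to the ramification of $f$ along $C_j$ rather than to an extra boundary on $Y$. Both points follow from a direct local analysis at a generic point of each $C_j$.
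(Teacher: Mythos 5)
Your proposal is correct and follows essentially the same route as the paper, whose ``proof'' is precisely the discussion preceding the statement: the ramification computation $2K_X=f^*\bigl(2K_Y+\sum_{i}C_i\bigr)$ coming from $f^*C_i=2F_i+\sum_{j}E_i^j$, the identification $L_{X/Y}=g^*D-\sum_i G_i$ forced by the Kawamata--Nakayama theorem together with $H^1(Y,\mathcal O_Y)=0$, and the discriminant coefficient $1-\tfrac12$ attached to each $C_j$ because the fibers there are of type $I_0^*$. The only nitpick is that $K_{E^3/G}$ is $2$-torsion rather than trivial (the paper notes its index is two), but since every step is taken modulo $\mathbb Q$-linear equivalence this does not affect the argument.
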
 
We note that $2L^{ss}_{X/Y}\sim 0$ 
but $L^{ss}_{X/Y}\not \sim 0$. 
The semi-stable part $L^{ss}_{X/Y}$ is 
not a Weil divisor but a $\mathbb Q$-divisor on $Y$.  

The next lemma is obvious since the index 
of $K_{E^3/G}$ is two. We give a direct proof here. 
\begin{lem}
$H^0(Y, L_{X/Y})=0$. 
\end{lem}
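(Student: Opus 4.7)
The plan is to show that no effective divisor can represent $L_{X/Y}$ in its linear equivalence class, by combining the relation $2L_{X/Y}\sim \sum_{j=1}^{16}C_j$ established above with a parity obstruction on the multiplicities of the $(-2)$-curves $C_j$.

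First I will compute $h^0(Y,\mathcal O_Y(\sum_{j=1}^{16}C_j))=1$, so that $\sum_jC_j$ is the \emph{unique} effective divisor in its linear equivalence class. Since the $C_j$ are pairwise disjoint smooth $(-2)$-curves on the Kummer $K3$ surface $Y$, the standard short exact sequence
\begin{equation*}
0\longrightarrow \mathcal O_Y\longrightarrow \mathcal O_Y\Bigl(\sum_{j=1}^{16}C_j\Bigr)\longrightarrow \bigoplus_{j=1}^{16}\mathcal O_{C_j}(C_j)\longrightarrow 0
\end{equation*}
together with $\mathcal O_{C_j}(C_j)\simeq \mathcal O_{\mathbb P^1}(-2)$ (hence $H^0(C_j,\mathcal O_{C_j}(C_j))=0$) will give $H^0(Y,\mathcal O_Y(\sum_jC_j))\simeq H^0(Y,\mathcal O_Y)=\mathbb C$ from the long exact sequence.

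Next, I will argue by contradiction: suppose $H^0(Y,L_{X/Y})\neq 0$ and pick an effective divisor $D'\sim L_{X/Y}$. Then $2D'$ is an effective divisor linearly equivalent to $\sum_{j=1}^{16}C_j$, so by the previous paragraph $2D'=\sum_jC_j$ as honest divisors on $Y$. Writing $D'=\sum_ja_jC_j+F$ with $a_j\in\mathbb Z_{\ge 0}$ and $F\ge 0$ containing none of the $C_j$ in its support, the equality $2\sum_ja_jC_j+2F=\sum_jC_j$ forces $F=0$ and $2a_j=1$ for every $j$, which is impossible.

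The only real subtlety is the first step: linear equivalence of divisors alone is too weak to produce a contradiction, so the argument genuinely requires the sharper statement that $\sum_jC_j$ is \emph{uniquely} the effective representative in its class. Once $h^0=1$ is in hand, the parity obstruction from $2a_j=1$ closes the proof immediately.
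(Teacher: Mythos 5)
Your proof is correct, but it follows a genuinely different route from the paper's. The paper argues purely with intersection numbers: since $L_{X/Y}$ is numerically equivalent to $\frac{1}{2}\sum_{i=1}^{16}C_i$ and the $C_i$ are $(-2)$-curves, any effective $B\sim L_{X/Y}$ satisfies $B\cdot C_i=-1$, which forces $B\geq \frac{1}{2}C_i$ for every $i$; then $B-\sum_{i}\frac{1}{2}C_i$ is effective and numerically trivial, hence zero, so $B=\sum_i\frac{1}{2}C_i$, contradicting the integrality of $B$. You instead first prove the rigidity $h^0(Y,\mathcal O_Y(\sum_jC_j))=1$ and then run the parity argument on the equality of divisors $2D'=\sum_jC_j$. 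Both proofs are, at bottom, the same parity obstruction (an integral divisor cannot have multiplicity $\frac{1}{2}$ along each $C_j$), but they reach it differently. The paper's version is shorter and needs only the numerical class of $L_{X/Y}$ together with the elementary fact that a nonzero effective divisor on a projective surface cannot be numerically trivial; yours needs the genuine linear equivalence $2L_{X/Y}\sim\sum_jC_j$ and the computation that $\lvert\sum_jC_j\rvert$ is zero-dimensional, but in exchange it avoids that numerical-triviality step and makes the obstruction visible as uniqueness of the member of a linear system. Your cohomological input is correct as stated: the $C_j$ are pairwise disjoint exceptional curves of the resolution $\mu$, so $\mathcal O_{C_j}\bigl((\sum_kC_k)|_{C_j}\bigr)\simeq\mathcal O_{\mathbb P^1}(-2)$ has no sections and the restriction sequence gives $H^0(Y,\mathcal O_Y(\sum_jC_j))\simeq H^0(Y,\mathcal O_Y)=\mathbb C$, independently of $H^1(Y,\mathcal O_Y)$.
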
 
\begin{proof}
Suppose that there exists an effective Weil divisor $B$ on $Y$ such that 
$L_{X/Y}\sim B$. 
Since $B\cdot C_i=-1$, we have 
$B\geq \frac{1}{2}C_i$ for all $i$. 
Thus $B\geq \sum _{i=1}^{16}\frac{1}{2}C_i$. 
This implies that $B-\sum _{i=1}^{16}\frac{1}{2}C_i$ is 
an effective 
$\mathbb Q$-divisor and is numerically 
equivalent to zero. 
Thus $B=\sum _{i=1}^{16}\frac{1}{2}C_i$. 
It is a contradiction. 
\end{proof} 
We can easily check the following corollary. 

\begin{cor}\label{cor7} 
We have 
\begin{equation*}
f_*\omega^{\otimes m}_{X/Y}\simeq 
\begin{cases} 
\mathcal 
O_Y(\sum _{i=1}^{16}nC_i) & {\text{if $m=2n,$}}\\ 
\mathcal O_Y(L_{X/Y}+\sum _{i=1}^{16}nC_i)& {\text{if $m=2n+1$}}.
\end{cases}
\end{equation*} 
In particular, $f_*\omega^{\otimes m}_{X/Y}$ 
is not nef for any $m\geq 1$. 
We can also check that 
\begin{equation*}
H^0(Y, f_*\omega^{\otimes m}_{X/Y})\simeq 
\begin{cases}
\mathbb C &{\text{if $m$ is even,}}\\ 
0 &{\text{if $m$ is odd.}}
\end{cases}
\end{equation*}
\end{cor}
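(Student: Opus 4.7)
The plan is to compute $mK_{X/Y}$ directly from the canonical bundle formula $K_X = f^*(K_Y + L^{ss}_{X/Y} + \tfrac{1}{2}\sum_{j=1}^{16} C_j)$ established just above, and then push forward using the projection formula together with $f_*\mathcal{O}_X \simeq \mathcal{O}_Y$ (which holds because $f$ has connected fibers onto the normal surface $Y$).

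First, using $2L^{ss}_{X/Y} \sim 0$ together with the identity $L_{X/Y} = L^{ss}_{X/Y} + \tfrac{1}{2}\sum_{j=1}^{16} C_j$ (immediate from the two definitions), I would rewrite $mK_{X/Y}$ as the pullback of an integral Weil divisor on $Y$. For $m = 2n$ this gives $2nK_{X/Y} \sim f^*(2nL^{ss}_{X/Y} + n\sum_j C_j) \sim f^*(n\sum_j C_j)$, and for $m = 2n+1$ it gives $(2n+1)K_{X/Y} \sim f^*(L_{X/Y} + n\sum_j C_j)$. Since the right-hand sides are genuine integral divisors on the smooth surface $Y$, the projection formula combined with $f_*\mathcal{O}_X \simeq \mathcal{O}_Y$ yields both displayed formulas for $f_*\omega_{X/Y}^{\otimes m}$.

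For the non-nefness claim, I would intersect the predicted line bundle with a single $C_i$: each $C_i$ is a $(-2)$-curve on the $K3$ surface $Y$ and the sixteen $C_i$ are pairwise disjoint, so $(n\sum_j C_j) \cdot C_i = -2n$ (negative for $n \ge 1$) in the even case, and $(L_{X/Y} + n\sum_j C_j) \cdot C_i = -1 - 2n$ (always negative) in the odd case, using $L_{X/Y} \cdot C_i = -1$ as noted in the proof of the preceding lemma.

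The cohomology statement requires the most care. For $m = 2n$ even, any effective divisor $B \sim n\sum_j C_j$ has $B \cdot C_i = -2n$, which by the disjointness of the $C_i$ and the $(-2)$-curve condition forces the multiplicity of each $C_i$ in $B$ to be at least $n$; subtracting $n\sum_j C_j$ leaves an effective divisor linearly equivalent to $0$, so $B = n\sum_j C_j$, giving $H^0 = \mathbb{C}$. For $m = 2n+1$ odd, the same intersection analysis applied to an effective $B \sim L_{X/Y} + n\sum_j C_j$ gives $B \cdot C_i = -(2n+1)$; writing $B = \sum_i a_i C_i + B'$ with $B'$ not containing any $C_i$, one finds $2a_i = B' \cdot C_i + 2n + 1 \ge 2n+1$, forcing $a_i \ge n+1$ by integrality; then $B - n\sum_j C_j$ is an effective divisor linearly equivalent to $L_{X/Y}$, contradicting the preceding lemma that $H^0(Y, L_{X/Y}) = 0$. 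The only real obstacle is keeping the integral-versus-$\mathbb{Q}$-linear-equivalence bookkeeping straight, which the identity $L_{X/Y} = L^{ss}_{X/Y} + \tfrac{1}{2}\sum_j C_j$ handles transparently.
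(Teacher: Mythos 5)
The paper gives no argument for this corollary beyond ``we can easily check,'' and your computation is essentially the intended one: it rests exactly on the facts established in the surrounding text ($2mK_{X/Y}\sim f^{*}(m\sum_{i}C_{i})$, $f_{*}\omega_{X/Y}\simeq \mathcal O_Y(\lfloor L_{X/Y}\rfloor)=\mathcal O_Y(L_{X/Y})$, the $C_i$ being sixteen disjoint $(-2)$-curves with $L_{X/Y}\cdot C_i=-1$, and $H^0(Y,L_{X/Y})=0$). Your non-nefness argument and your $H^0$ computation (the multiplicity bound $a_i\ge n$, resp.\ $a_i\ge n+1$ by integrality, followed by subtracting $n\sum_j C_j$ and invoking either triviality of an effective divisor in the trivial class or the vanishing $H^0(Y,L_{X/Y})=0$) are correct and are exactly parallel to the proof of the lemma immediately preceding the corollary.

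The one soft spot is the odd case of the first display. The canonical bundle formula $K_X=f^{*}(K_Y+L^{ss}_{X/Y}+\tfrac12\sum_j C_j)$ is only a $\mathbb Q$-linear equivalence, so manipulating it with $2L^{ss}_{X/Y}\sim 0$ gives $(2n+1)K_{X/Y}\sim f^{*}(L_{X/Y}+n\sum_j C_j)$ a priori only up to a $2$-torsion class in $\operatorname{Pic}(X)$; the identity $L_{X/Y}=L^{ss}_{X/Y}+\tfrac12\sum_j C_j$ does not by itself rule out such a discrepancy, so the projection formula plus $f_{*}\mathcal O_X\simeq\mathcal O_Y$ is not quite enough as you state it. (One can in fact show $K_{X/Y}\sim f^{*}L_{X/Y}$ integrally, e.g.\ via the evaluation map $f^{*}f_{*}\omega_{X/Y}\to\omega_{X/Y}$, but that requires a separate argument.) The clean route, which uses only what is already proved integrally in the text, is to write
\begin{equation*}
\omega_{X/Y}^{\otimes(2n+1)}\simeq \omega_{X/Y}\otimes \mathcal O_X(2nK_{X/Y})\simeq \omega_{X/Y}\otimes f^{*}\mathcal O_Y\Bigl(n\sum_{i=1}^{16}C_i\Bigr),
\end{equation*}
using the integral relation $2nK_{X/Y}\sim f^{*}(n\sum_i C_i)$, and then apply the projection formula together with the already established isomorphism $f_{*}\omega_{X/Y}\simeq \mathcal O_Y(L_{X/Y})$. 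With that adjustment your proof is complete.
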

Corollary \ref{cor7} shows that \cite[Theorem 1.9 (1)]{tsuji} 
is sharp. 
\end{say}

\begin{say}[Weak positivity]
Let us recall the definition of Viehweg's weak positivity 
(cf.~\cite[Definition 1.2]{v2} and 
\cite[Definition 2.11]{vie-book}). 
The reader can find some interesting applications of a generalization of 
Viehweg's weak positivity theorem in \cite{fujino-gongyo}. 

\begin{defn}[Weak positivity]\label{2929} 
Let $W$ be a smooth quasi-projective variety and 
let $\mathcal F$ be a locally free sheaf on $W$. 
Let $U$ be an open subvariety of $W$. 
Then, $\mathcal F$ is {\em{weakly positive over $U$}} 
if for every ample invertible sheaf $\mathcal H$ and 
every positive integer $\alpha$ there exists 
some positive integer $\beta$ such that 
$S^{\alpha\cdot \beta}(\mathcal F)\otimes \mathcal H^{\beta}$ is generated 
by global sections over $U$ where $S^k$ denotes the $k$-th symmetric product 
for every positive integer $k$. 
This means that the natural map 
$$
H^0(W, S^{\alpha\cdot\beta}(\mathcal F)\otimes \mathcal H^\beta)\otimes 
\mathcal O_W\longrightarrow S^{\alpha\cdot \beta}(\mathcal F)\otimes \mathcal H^\beta
$$ 
is surjective over $U$. 
\end{defn}

\begin{rem}
[{cf.~\cite[(1.3) Remark.~iii)]{v2}}] 
In Definition \ref{2929}, 
it is enough to check 
the condition for one invertible 
sheaf $\mathcal H$, 
not necessarily ample, and all $\alpha >0$. 
For details, see \cite[Lemma 2.14 a)]{vie-book}. 
\end{rem}

\begin{rem}
In \cite[Definition 3.1]{v3}, 
$S^{\alpha\cdot \beta}(\mathcal F)\otimes \mathcal H^{\otimes \beta}$ is 
only required to be generically generated. 
See also \cite[(5.1) Definition]{mori}. 
\end{rem}

We explicitly check the weak positivity 
for the elliptic fibration constructed in \ref{23} 
(cf.~\cite[Theorem 4.1 and Theorem III]{v2} and 
\cite[Theorem 2.41 and Corollary 2.45]{vie-book}). 

\begin{prop}\label{29}
Let $m$ be a positive integer.  
Let $f:X\longrightarrow Y$ be the elliptic fibration constructed in {\em{\ref{23}}}. 
Then $f_*\omega^{\otimes m}_{X/Y}$ is weakly positive 
over $Y_0=Y\setminus \sum _{i=1}^{16}C_i$. Let $U$ be 
a Zariski open set such that 
$U\not\subset Y_0$. Then 
$f_*\omega^{\otimes m}_{X/Y}$ is 
not weakly positive over $U$.  
\end{prop}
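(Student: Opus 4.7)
The plan is to reduce both halves of the proposition to direct calculations on the K3 surface $Y$, exploiting Corollary~\ref{cor7} to make $\mathcal{F}:=f_*\omega_{X/Y}^{\otimes m}$ completely explicit as an invertible sheaf: for $m=2n$ we have $\mathcal{F}\simeq \mathcal{O}_Y(n\sum_{i=1}^{16}C_i)$, and for $m=2n+1$ we have $\mathcal{F}\simeq \mathcal{O}_Y(L_{X/Y}+n\sum_{i=1}^{16}C_i)$. Since $\mathcal{F}$ is a line bundle, $S^{\alpha\beta}(\mathcal{F})=\mathcal{F}^{\otimes\alpha\beta}$, so the condition in Definition~\ref{2929} reduces to a question of global generation of the line bundle $\mathcal{F}^{\otimes\alpha\beta}\otimes\mathcal{H}^{\otimes\beta}$ over the given open set.

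First I would establish weak positivity over $Y_0$. Fix an ample invertible sheaf $\mathcal{H}$ and a positive integer $\alpha$; take $\beta$ to be an even positive integer large enough that $\mathcal{H}^{\otimes\beta}$ is generated by its global sections on $Y$. Using the linear equivalence $2L_{X/Y}\sim\sum_{i=1}^{16}C_i$, the bundle $\mathcal{F}^{\otimes\alpha\beta}$ is isomorphic to $\mathcal{O}_Y(k\sum_{i=1}^{16}C_i)$ for some positive integer $k$ depending on $m,\alpha,\beta$. The canonical section of this bundle with divisor $k\sum_{i=1}^{16}C_i$ is nowhere vanishing on $Y_0=Y\setminus\sum_{i=1}^{16}C_i$, so tensoring it against a set of global sections generating $\mathcal{H}^{\otimes\beta}$ produces global sections of $\mathcal{F}^{\otimes\alpha\beta}\otimes\mathcal{H}^{\otimes\beta}$ that generate this sheaf at every point of $Y_0$. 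This gives the required weak positivity over $Y_0$.

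Next, I would prove the failure of weak positivity over any Zariski open $U$ with $U\not\subset Y_0$. Such a $U$ necessarily meets some curve $C_{i_0}$. Using $C_i\cdot C_j=-2\delta_{ij}$ together with the numerical equivalence $L_{X/Y}\equiv \tfrac{1}{2}\sum_{j=1}^{16}C_j$, the intersection numbers compute to
\begin{equation*}
\mathcal{F}\cdot C_{i_0}=
\begin{cases} -2n & \text{if } m=2n,\\ -2n-1 & \text{if } m=2n+1,\end{cases}
\end{equation*}
a strictly negative integer in both cases. Given any ample $\mathcal{H}$, choose $\alpha$ large enough that $\alpha(\mathcal{F}\cdot C_{i_0})+\mathcal{H}\cdot C_{i_0}<0$. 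Then for every $\beta\geq 1$ the restriction of $\mathcal{F}^{\otimes\alpha\beta}\otimes\mathcal{H}^{\otimes\beta}$ to the smooth rational curve $C_{i_0}$ has strictly negative degree, hence admits no nonzero global section. Thus every global section on $Y$ vanishes identically on $C_{i_0}$, and at any point $p\in U\cap C_{i_0}$ the evaluation map on global sections is the zero map into the one-dimensional fibre. Consequently $\mathcal{F}^{\otimes\alpha\beta}\otimes\mathcal{H}^{\otimes\beta}$ fails to be generated by global sections at $p$ for every $\beta$, contradicting weak positivity over $U$.

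The argument is essentially mechanical once Corollary~\ref{cor7} is in hand; the substantive input is the negativity of $\mathcal{F}\cdot C_i$, forced by the $(-2)$-curve nature of the $C_i$ together with the half-integral class $L_{X/Y}$. The only point demanding care is the parity choice of $\beta$ in the positive direction, which is needed because $H^0(Y,L_{X/Y})=0$ prevents us from representing $\mathcal{F}$ itself by an effective divisor in the odd case, and so forces passage to $\mathcal{F}^{\otimes 2}$ before the ``nowhere vanishing on $Y_0$'' section becomes available.
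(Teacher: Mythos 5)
Your proof is correct and follows essentially the same route as the paper: both halves rest on the explicit description of $f_*\omega_{X/Y}^{\otimes m}$ from Corollary \ref{cor7}, with global generation over $Y_0$ coming from the fact that the obstruction divisor is effective and supported on $\sum_i C_i$, and failure over $U$ coming from the negativity of the degree on the $(-2)$-curves $C_i$. The only (cosmetic) differences are that the paper fixes a single convenient $\mathcal H$ and works with $\beta=1$ via Viehweg's reduction, while you take an arbitrary ample $\mathcal H$ with a large even $\beta$, and that you phrase the negative direction via vanishing of sections on $C_{i_0}\simeq\mathbb P^1$ rather than via an effective member $G\geq\sum_i C_i$ of the linear system.
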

\begin{proof}
Let $H$ be a very ample Cartier divisor on $Y$ such that 
$L_{X/Y}+H$ is very ample. 
We put $\mathcal H=\mathcal O_Y(H)$. 
Let $\alpha$ be an arbitrary positive integer. 
Then 
$$
S^{\alpha}(f_*\omega^{\otimes m}_{X/Y})\otimes \mathcal H
\simeq \mathcal O_Y\left(\alpha \sum _{i=1}^{16}nC_i+H\right)  
$$ 
if $m=2n$. When $m=2n+1$, 
we have 
\begin{align*}
&S^{\alpha}(f_*\omega^{\otimes m}_{X/Y})\otimes \mathcal H
\\ 
&\simeq 
\begin{cases}
\mathcal O_Y(\alpha \sum _{i=1}^{16}nC_i+H+L_{X/Y}+\lfloor 
\frac{\alpha}{2}\rfloor\sum _{i=1}^{16}C_i) \ \ 
\text{if $\alpha$ is 
odd,}\\
\mathcal O_Y(\alpha \sum _{i=1}^{16}nC_i+H+ 
\frac{\alpha}{2}\sum _{i=1}^{16}C_i) 
\ \ 
\text{if $\alpha$ is even.} 
\end{cases}
\end{align*} Thus, 
$S^{\alpha}(f_*\omega^{\otimes m}_{X/Y})\otimes \mathcal H$ 
is generated by global sections over $Y_0$ for every $\alpha>0$. 
Therefore, $f_*\omega^{\otimes m}_{X/Y}$ is weakly positive 
over $Y_0$. 

Let $\mathcal A$ be an ample invertible sheaf on $Y$. 
We put $k=\underset{j}{\max}  (C_j\cdot \mathcal A)$. 
Let $\alpha$ be a positive integer with 
$\alpha >k/2$. 
We note that 
\begin{align*}
S^{2\alpha\cdot \beta}(f_*\omega^{\otimes m}_{X/Y})
\otimes \mathcal A^{\otimes 
\beta}\simeq \left(\mathcal O_Y(\alpha \sum _{i=1}^{16}mC_i)
\otimes \mathcal A\right)^{\otimes \beta}. 
\end{align*} 
If $H^0(Y, S^{2\alpha\cdot \beta}(f_*\omega^{\otimes m}_{X/Y})
\otimes \mathcal A^{\otimes \beta})\ne 0$, 
then we can take 
$$G\in \left|\left(\mathcal O_Y(\alpha \sum _{i=1}^{16}mC_i)
\otimes \mathcal A\right)^{\otimes \beta}\right|. $$ 
In this case, $G\cdot C_i<0$ for every $i$ because $\alpha >k/2$. 
Therefore, we obtain $G\geq \sum _{i=1}^{16}C_i$. 
Thus, $S^{2\alpha \cdot \beta}(f_*\omega^{\otimes m}_{X/Y})\otimes \mathcal 
A^{\otimes \beta}$ is not generated 
by global sections over $U$ for any $\beta\geq 1$. 
This means that $f_*\omega^{\otimes m}_{X/Y}$ 
is not weakly positive over 
$U$.  
\end{proof}

Proposition \ref{29} implies that \cite[Corollary 2.45]{vie-book} 
is the best result. 
\end{say}

\begin{ex}\label{214-a} 
Let $f:X\longrightarrow Y$ be the elliptic fibration 
constructed in \ref{23}. 
Let $Z:=C\times X$, where $C$ is a smooth projective curve with 
the genus $g(C)=r\geq 2$. 
Let $\pi_1:Z\longrightarrow C$ (resp.~$\pi_2:Z\longrightarrow X$) 
be the first (resp.~second) projection. 
We put $h:=f\circ \pi_2:Z\longrightarrow Y$. 
In this case, $K_Z=\pi^*_1K_C\otimes 
\pi^*_2K_X$. 
Therefore, we obtain 
$$h_*\omega ^{\otimes m}_{Z/Y}=
f_*\pi_{2*}(\pi^*_1\omega ^{\otimes m}_{C}
\otimes 
\pi^*_2\omega ^{\otimes m}_{X})\otimes 
\omega^{\otimes -m}_Y=(f_*\omega^{\otimes m}_{X/Y})^{\oplus 
l},$$  
where $l=\dim H^0(C, \mathcal O_C(mK_C))$. 
Thus, $l=
(2m-1)r-2m+1$ if $m\geq 2$ and 
$l=r$ if $m=1$. 
So, $h_*\omega_{Z/Y}$ is a rank $r\geq 2$ vector bundle on $Y$ 
such that $h_*\omega_{Z/Y}$ is not semipositive. 
We note that 
$h$ is smooth over $Y_0=Y\setminus \sum _{i=1}^{16}C_i$. 
We also note that 
$h_*\omega^{\otimes m}_{Z/Y}$ is weakly positive 
over $Y_0$ for every $m \geq 1$ by \cite[Theorem 2.41 and 
Corollary 2.45]{vie-book}. 
\end{ex}
Example \ref{214-a} shows that 
the assumption on the local monodromies 
around $\sum _{i=1}^{16}C_i$ is indispensable 
for the 
semipositivity theorem. 

We close this section with a comment on \cite{fm}. 

\begin{say}[Comment]
We give a remark on \cite[Section 4]{fm}. 
In \cite[4.4]{fm}, $g:Y\longrightarrow X$ is a log resolution of $(X, \Delta)$. 
However, it is better to assume that 
$g$ is a log resolution of $(X, \Delta -(1/b)B^{\Delta})$ for the 
proof of \cite[Theorem 4.8]{fm}. 
\end{say}


\end{document}